\def\R {\mathbb{R}}
\def\eps{\varepsilon}
\def\PosS{\{u>0\}}
\def\ConS{\{u=0\}}
\def\PPosS{\partial\{u>0\}}
\def\OEta{\Omega_\eta}
\def\CO{\mathcal{U}}
\def\BO{\mathcal{O}}
\def\En{\mathcal{E}}
\def\EnG{\mathcal{E}_\gamma}
\def\DStar{d^*_\gamma}
\def\URad{u_{\mathrm{rad}}}
\def\UAS{u_{\mathrm{as}}}
\def\Diri{|\nabla u|^2}
\def\MGa{\mathcal{M}_\gamma}
\def\MPGa{\mathcal{M}^+_\gamma}
\def\MMGa{\mathcal{M}^-_\gamma}
\def\SGa{\mathcal{S}_\gamma}
\def\SPGa{\mathcal{S}^+_\gamma}
\def\SMGa{\mathcal{S}^-_\gamma}
\def\DTheta{\frac{d}{d\theta}}
\def\PR{\frac{\partial}{\partial r}}
\def\BU{\bar{u}}
\def\BV{\bar{v}}
\def\LURad{L_{\URad}}
\def\CRad{c_{\mathrm{rad}}}
\def\LP{L_p}
\def\LU{L_u}
\def\LBU{L_{\bar{u}}}
\def\RIn{R_{\mathrm{in}}}
\def\UIn{U_{\mathrm{in}}}
\def\ROut{R_{\mathrm{out}}}
\def\UOut{U_{\mathrm{out}}}
\def\hem{\hspace{0.5em}}
\def\vem{\vspace{0.6em}}
\newtheorem{thm}{Theorem}[section]
\newtheorem{prop}[thm]{Proposition}
\newtheorem{cor}[thm]{Corollary}
\newtheorem{lem}[thm]{Lemma}
\theoremstyle{definition}
\newtheorem{defi}[thm]{Definition}
\numberwithin{equation}{section}
\theoremstyle{remark}
\newtheorem{rem}[thm]{Remark}
\title[Cones in the Alt-Phillips problem]{Stable and minimizing cones in the Alt-Phillips problem} 
\author{Ovidiu Savin}
\address{Department of Mathematics,	Columbia University, New York, USA}
\email{savin@math.columbia.edu}
\author{Hui Yu}
\address{Department of Mathematics,	National University of Singapore, Singapore}
\email{huiyu@nus.edu.sg}
\begin{document}

\begin{abstract}
We study homogeneous solutions to the Alt-Phillips problem when the exponent $\gamma$ is close to 1. 

In dimension $d\ge3$, we show that the radial cone is minimizing when $\gamma$ is close to 1.

In dimension $d \ge 4$, we construct an axially symmetric cone whose contact set has with positive density. We show that it is a global minimizer. It is analogous to the De Silva-Jerison \cite{DJ} cone for the Alt-Caffarelli functional which corresponds to exponent $\gamma=0$. 
The cone we construct bifurcates from another minimizing cone whose contact set has zero density,  obtained as the trivial extension of the radial solution. This second cone is analogous to a quadratic polynomial solution in the classical obstacle problem which corresponds to exponent $\gamma=1$. In particular our results show that, when $\gamma<1$ is sufficiently close to 1, there are axis symmetric cones that exhibit the properties of both end point cases $\gamma=0$ and $\gamma=1$.


\end{abstract}
\maketitle
\tableofcontents

\section{Introduction}
The one-phase Alt-Phillips problem studies non-negative minimizers of the \textit{Alt-Phillips functional}\footnote{For brevity, we refer to this functional as the \textit{$\gamma$-Alt-Phillips functional}.}
\begin{equation}
\label{EqnAP}
\En_\gamma(u;\Omega):= \int_\Omega \frac \Diri 2+u^\gamma\chi_{\PosS}, 
\end{equation} 
where $\Omega$ is a domain in the Euclidean space. This functional leads to interesting free boundary problems when the exponent $\gamma$ is in $(-2,2)$. 

Since its introduction by Phillips \cite{P}, this problem has received intense attention in the past few decades. 
Apart from its applications in porous catalysts \cite{A} and population dynamics \cite{GM}, the Alt-Phillips problem embeds two of the most well-studied elliptic free boundary problems as special cases, namely, the \textit{Alt-Caffarelli problem} when $\gamma=0$ \cite{V} and the \textit{classical obstacle problem} when $\gamma=1$ \cite{PSU}. It also provides the paradigm case for the study of general semilinear and quasilinear problems \cite{AS, DS1, FeRo, ReRo}. Recently, it was shown that minimizers of \eqref{EqnAP} approximate minimal surfaces as $\gamma\to-2$ \cite{DS3, DS4}.

In this work we focus on the study of axially symmetric cones when the exponent $\gamma$ is sufficiently close to $1$. The obstacle problem case $\gamma=1$ is quite special since the classification of cones is available in all dimensions \cite{C, PSU}. Beside the least energy solution $\frac 12 (x_n^+)^2$ and its rotations, there is the continuous family of quadratic polynomials $\frac 12 x^T A x$ with $A \ge 0$, $tr A=1$. 

It is natural to investigate the behavior of this large connected family of cones as we perturb $\gamma$ away from 1. Recently, in \cite{SY} we showed that only the radial quadratic polynomials and their trivial extensions  can appear as limits of cones as $\gamma \to 1$, suggesting a much more rigid family of conical solutions when $\gamma \ne 1$.

\vem

Under reasonable assumptions, the existence of a minimizer $u$ of \eqref{EqnAP} follows by the direct method. More subtle is the regularity of the \textit{free boundary} $\PPosS$, which separates the \textit{positive set} $\PosS$ from the \textit{contact set} $\ConS$.  Depending on the exponent $\gamma$, the free boundary exhibits different behaviors. In this work, \textit{we focus on the range $\gamma\in[0,2)$}. 

 Following either the classical approach in Alt-Caffarelli \cite{AC} and Alt-Phillips \cite{AP} or the modern approach in De Silva \cite{D} and De Silva-Savin \cite{DS1}, we know that the free boundary is locally a $C^{1,\alpha}$-hypersurface\footnote{This initial regularity can be bootstrapped to $C^\infty$. See  \cite{DS5, ReRo}.} under a flatness assumption.  This assumption can be verified by the blow-up analysis at a free boundary point. The blow-up limit is a homogeneous minimizer, known as a \textit{minimizing cone} \cite{W}. The flatness assumption is satisfied at the point if and only if the blow-up limit is a \textit{flat cone}\footnote{The flat cone is given by $U(x)=u(x\cdot e)$, where $e$ is a unit vector and $u$ is the global minimizer on $\R$ which vanishes on $(-\infty,0]$.}.
In particular,  the flatness assumption is always satisfied in $\R^d$ when all minimizing cones are flat in this dimension. 

For each exponent $\gamma\in[0,2)$, a dimension reduction argument gives a \textit{critical dimension} $d^*_\gamma$, defined as 
\begin{align}
\label{EqnCriticalDimension}
d^*_\gamma:&=\max\{d: \text{ Minimizing cones are flat in }\R^d\}\\
&=\min\{d:\text{There exists a non-flat minimizing cone in }\R^{d+1}\}.\nonumber
\end{align}
Non-flat minimizing cones are often referred to as \textit{singular minimizing cones}. As a consequence, the free boundary of a minimizer is smooth in $\R^d$ if $d\le\DStar$. In general dimensions, this smoothness holds outside a  \textit{singular set} of dimension at most $(d-\DStar-1)$ \cite{W, FeY}. This critical dimension $\DStar$ also gives information on Bernstein-type results  \cite{EFeY} and on generic regularity of free boundaries \cite{FeY, FeG}.

\vem

To know the value of  $\DStar$, we need to rule out singular minimizing cones in low dimensions and construct them in high dimensions. For $\gamma\ge1,$ the convexity of $\EnG$ in  \eqref{EqnAP} allows simple constructions of singular minimizing cones on the real line. This gives $\DStar=1$ for $\gamma\ge1$. For $\gamma\in[0,1)$, we do not have complete information in either of these two directions. 

For the Alt-Caffarelli problem ($\gamma=0$),  we know that minimizing cones are flat in $\R^d$ for $d\le 4$ by Caffarelli-Jerison-Kenig \cite{CJK} and Jerison-Savin \cite{JS}. Their proofs also work for stable cones. This gives the lower bound $d^*_0\ge 4.$ Assuming axial symmetry, singular stable cones can be ruled out up to dimension 6 \cite{CJK, JS, FeRo}.

In terms of the construction of singular cones, still for the case $\gamma=0$, Hong built homogeneous solutions to the Euler-Lagrange equation \cite{H}. However,  most of these are not stable or minimizing. Currently, the only known singular minimizing cone is the \textit{De Silva-Jerison cone}  in dimension $7$ \cite{DJ}. This gives the upper bound  $d^*_0\le 6$. 

The exact value of  $d^*_0$ remains  an open question. 

For general $\gamma\in(0,1)$, almost nothing is known, except that there is no singular minimizing cone in the plane \cite{AP}. This leads to a lower bound $\DStar\ge 2$.  Assuming axial symmetry, singular stable cones can be ruled out in low dimensions  when $\gamma$ is close to $0$ \cite{KSP}. On the other hand, there is no construction of stable or minimizing singular cones. No upper bound is known on $\DStar$ for any $\gamma\in(0,1)$.

\vem

Our first result determines the value of this critical dimension (see \eqref{EqnCriticalDimension}) as 
$$
\DStar=2 \hem\text{ for }\gamma\in(0,1) \text{ close to }1.
$$

To be precise, we have
\begin{thm}
\label{ThmMainRadial}
For $d\ge3$, there are dimensional constants $0<\gamma_d^1\le\gamma_d^2<1$ such that for $\EnG$ in  \eqref{EqnAP} in $\R^d$, we have:
\begin{enumerate}
\item{If $\gamma\in(0,\gamma_d^1)$, then the radial cone $u_{\mathrm{rad}}$ is unstable;}
\item{If $\gamma\in[\gamma_d^1,2)$, then $u_{\mathrm{rad}}$ is stable and  one-sided minimizing from above;}
\item{If $\gamma\in(\gamma_d^2,2)$, then  $u_{\mathrm{rad}}$ is minimizing.}
\end{enumerate}
\end{thm}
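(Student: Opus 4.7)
The plan is to treat the three parts separately. For (1) and the stability half of (2), we compute the second variation of $\EnG$ at the cone $\URad(x)=\CRad|x|^{2/(2-\gamma)}$; the Euler--Lagrange relation gives $\CRad^{\gamma-2}=\alpha(\alpha+d-2)/\gamma$ with $\alpha=2/(2-\gamma)$, so for $\varphi\in H^1_0(B_1)$,
\[
Q_\gamma(\varphi) \;=\; \int_{B_1}|\nabla\varphi|^2\,dx \;+\; (\gamma-1)\alpha(\alpha+d-2)\int_{B_1}|x|^{-2}\varphi^2\,dx.
\]
A spherical-harmonic decomposition combined with the sharp radial Hardy inequality reduces $Q_\gamma\ge 0$ to the scalar inequality $(1-\gamma)\alpha(\alpha+d-2)\le (d-2)^2/4$. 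The left side is continuous and monotone in $\gamma$, takes the value $d-1$ at $\gamma=0$, and vanishes at $\gamma=1$; thus there is a unique threshold $\gamma_d^1\in(0,1)$, yielding the stability dichotomy of (1) and (2). For $\gamma<\gamma_d^1$ the instability is realized by a Hardy-extremizer truncation in the trivial angular mode.

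For the one-sided minimality in (2), let $v\ge\URad$ with $v=\URad$ on $\partial B_1$, and set $\varphi=v-\URad\ge 0$ and $v_t=\URad+t\varphi$ for $t\in[0,1]$. Since $v_t\ge\URad>0$ away from the origin, the map $g(t):=\EnG(v_t)$ is smooth with $g'(0)=0$ by the Euler--Lagrange equation for $\URad$ and
\[
g''(t) \;=\; \int_{B_1}|\nabla\varphi|^2 + \gamma(\gamma-1)v_t^{\gamma-2}\varphi^2\,dx.
\]
For $\gamma\in[1,2)$ the coefficient $\gamma(\gamma-1)$ is nonnegative and $g''\ge 0$ trivially; for $\gamma\in[\gamma_d^1,1)$ it is nonpositive, and $v_t\ge\URad$ together with $\gamma-2<0$ gives $v_t^{\gamma-2}\le\URad^{\gamma-2}$, whence $g''(t)\ge Q_\gamma(\varphi)\ge 0$ by stability. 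Thus $\EnG(v)=g(1)\ge g(0)=\EnG(\URad)$.

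For the full minimality in (3), the pointwise truncation identity
\[
\EnG(v)+\EnG(\URad) = \EnG(\min\{v,\URad\}) + \EnG(\max\{v,\URad\}),
\]
which holds because $|\nabla v|^2+|\nabla\URad|^2=|\nabla\min|^2+|\nabla\max|^2$ and the potential $u^\gamma\chi_{\{u>0\}}$ respects the multiset $\{v,\URad\}\mapsto\{\min,\max\}$ pointwise, reduces the problem to one-sided minimality from below. For $\gamma\in[1,2)$ this is immediate from convexity of $u\mapsto u^\gamma$ on $[0,\infty)$ and $\URad$ being a critical point. For $\gamma\in(\gamma_d^2,1)$ we argue by contradiction: if the one-sided claim from below fails along $\gamma_n\uparrow 1$, the minimizers $m_n$ of the constrained problem (with $m_n\le u_{\mathrm{rad},\gamma_n}$ in $B_1$ and $m_n=u_{\mathrm{rad},\gamma_n}$ on $\partial B_1$) satisfy $\En_{\gamma_n}(m_n)<\En_{\gamma_n}(u_{\mathrm{rad},\gamma_n})$, and by Alt-Phillips compactness a subsequence converges in $H^1(B_1)$ to a minimizer of the obstacle problem with boundary datum $\tfrac{1}{2d}|x|^2$. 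Uniqueness of the obstacle problem forces this limit to be $\tfrac{1}{2d}|x|^2=\lim u_{\mathrm{rad},\gamma_n}$, so $m_n\to u_{\mathrm{rad},\gamma_n}$ strongly in $H^1$. The quantitative stability $Q_{\gamma_n}(\varphi)\ge\tfrac12\int|\nabla\varphi|^2$ valid for $\gamma_n$ near $1$ (following from Hardy, since the defect $4(1-\gamma)\alpha(\alpha+d-2)/(d-2)^2$ tends to $0$) combined with a second-order expansion of $\En_{\gamma_n}(m_n)$ about $u_{\mathrm{rad},\gamma_n}$ then forces $m_n=u_{\mathrm{rad},\gamma_n}$ for large $n$, a contradiction, giving $\gamma_d^2<1$.

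The principal technical difficulty lies in this final expansion step: $m_n$ may carry a contact set of positive measure distinct from $\{0\}$, where the perturbation $\varphi_n=m_n-u_{\mathrm{rad},\gamma_n}$ equals $-u_{\mathrm{rad},\gamma_n}$ and is not small pointwise, and the vanishing of the potential $u^\gamma\chi_{\{u>0\}}$ there contributes a genuinely non-perturbative term to the energy difference. Absorbing this term requires uniform near-obstacle regularity estimates as $\gamma_n\to 1$, showing that the contact set shrinks in measure to $\{0\}$ fast enough that its energetic cost is dominated by the strictly positive bulk quadratic form coming from the Hardy gap.
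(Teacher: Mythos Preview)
Your treatment of (1) and the stability half of (2) via the Hardy inequality is correct and gives the same threshold as the paper's discriminant criterion $\Delta(d,\gamma)\ge 0$ (the paper argues instead via a maximum principle for the linearized operator, but the content is equivalent). Your argument for one-sided minimality from above in (2) is correct and is in fact \emph{simpler} than what the paper does: the paper builds an explicit upper foliation by solving an ODE initial value problem and rescaling, whereas your observation that $v_t\ge\URad$ forces $v_t^{\gamma-2}\le\URad^{\gamma-2}$, hence $g''(t)\ge Q_\gamma(\varphi)\ge 0$, gives the result directly. This is a genuine simplification that the foliation machinery does not need for the upper side.

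The gap is in (3). For competitors below $\URad$ the Taylor inequality goes the \emph{wrong way}: with $\gamma(\gamma-1)<0$ and $v_t\le\URad$ one gets $v_t^{\gamma-2}\ge\URad^{\gamma-2}$, hence $g''(t)\le Q_\gamma(\varphi)$, which is useless for a lower bound on the energy. Your compactness argument tries to circumvent this, but the step ``a second-order expansion of $\En_{\gamma_n}(m_n)$ about $u_{\mathrm{rad},\gamma_n}$ then forces $m_n=u_{\mathrm{rad},\gamma_n}$'' does not close: on the contact set $\{m_n=0\}$ one has $\varphi_n=-u_{\mathrm{rad},\gamma_n}$ so $\varphi_n/u_{\mathrm{rad},\gamma_n}=-1$ is not small, and even off the contact set the remainder term has the unfavorable sign, so no expansion of the form $\En_{\gamma_n}(m_n)-\En_{\gamma_n}(u_{\mathrm{rad},\gamma_n})\ge \tfrac12 Q_{\gamma_n}(\varphi_n)+o(\|\varphi_n\|^2)$ is available. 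You acknowledge this difficulty in your last paragraph but do not resolve it; ``the contact set shrinks fast enough'' is an assertion, not an argument, and in fact there is no a priori reason the energetic deficit from the contact set is $o(\|\varphi_n\|_{H^1}^2)$. The paper avoids this entirely by constructing an explicit \emph{lower} foliation: it perturbs $\URad$ downward by a supersolution to the linearized operator in a region away from the origin, and glues in a concrete one-dimensional profile $A[(|x|-r_\eta)_+]^\beta$ near the origin, checking by hand that the resulting function is a global subsolution lying below $\URad$. The constraint on $\gamma$ in (3) arises precisely from ordering the radial derivatives at the gluing interface.
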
 
The radial cone $\URad$ is given in \eqref{EqnURad}. For notions of minimality and one-sided minimality, see Definition \ref{DefMinimality}. For stability, see \eqref{EqnStabURad}. 

\begin{rem}
For more details on the stability of $\URad$, see Proposition \ref{PropStabOfURad}, where the value of $\gamma_d^1$ is given. Combining statements (1) and (2) in Theorem \ref{ThmMainRadial}, we see that this value  is sharp.

For more details on minimality and one-sided minimality, see Proposition \ref{PropMinimalityOfURad}, which gives an upper bound of $\gamma_d^2$. We do not know the sharp range of $\gamma$ for which $\URad$ is minimizing. 
\end{rem} 

\vem

Geometric properties of singular minimizing cones give information about the free boundary of a general minimizer. It is natural to first study cones with axial symmetry as they represent the most typical behavior as $\gamma$ is close to $1$ \cite{SY}. We are interested in the density of their contact sets. For a general minimizer $u$, this gives information on the  behavior when two regular patches of $\ConS$ collide with each other. 

For $\gamma\in [1,2)$, convexity implies that  the contact set of a singular cone has zero measure \cite{B,WY}. As a result, a cusp forms when two regular patches of $\ConS$ collide. On the other hand, for $\gamma\in(-2,0]$, we have density lower bounds on the contact set \cite{AC,DS3}. Two patches collide and  approximate a cone with non-empty interior. 

It is unclear what is the behavior when $\gamma\in(0,1)$.

For the Alt-Phillips problem when $\gamma$ is close to $1$,  the radial cone $\URad$ from Theorem \ref{ThmMainRadial}, together with its extensions to higher dimensions, are  minimizing cones whose contact sets have zero measure. This suggests cusp-like behavior when  patches of the contact set collide. 

However, this is not the only possibility. Our second result, to which most of the current work is devoted, gives the construction of singular minimizing cones  whose contact sets have positive measure. 
To the knowledge of the authors, this is the first free boundary problem where cusp-like and cone-like behaviors are both allowed. 

\begin{thm}
\label{ThmMainAS}
For $d\ge4$, there is a dimensional constant $0<\gamma_d^3<1$ such that for $\EnG$ in  \eqref{EqnAP} in $\R^{d}$ with $$\gamma_d^3<\gamma<1,$$ there is a singular minimizing cone $\UAS$ that is axially symmetric, and whose contact set $\{\UAS=0\}$ satisfies
$$
|\{\UAS=0\}\cap B_1|\sim_d (1-\gamma)^{\frac{d-1}{d-3}}.
$$ 
\end{thm}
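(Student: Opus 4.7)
The plan is to construct $\UAS$ as a bifurcation from the trivial extension of the $(d-1)$-dimensional radial cone, and then to verify its minimality. Since $\UAS$ is axially symmetric and $\beta$-homogeneous with $\beta=2/(2-\gamma)$, write $\UAS(x)=|x|^\beta\phi(\theta)$ where $\theta$ is the polar angle to the symmetry axis. Imposing the $x_d\mapsto -x_d$ symmetry \emph{a priori}, the Euler-Lagrange equation reduces to the ODE
\begin{equation*}
\phi''+(d-2)\cot\theta\cdot\phi'+\beta(\beta+d-2)\,\phi=\gamma\,\phi^{\gamma-1}
\end{equation*}
on $(\theta_0,\pi/2]$, together with the free-boundary conditions $\phi(\theta_0)=\phi'(\theta_0)=0$ and the symmetry condition $\phi'(\pi/2)=0$. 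Both the profile $\phi$ and the aperture $\theta_0>0$ are to be determined.

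\medskip

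The scaling $\theta_0\sim(1-\gamma)^{1/(d-3)}$ emerges from matched asymptotics. The \emph{outer} profile, valid on compact subsets of $(0,\pi/2]$, is $\phi_{\mathrm{out}}(\theta)=c_{d-1}(\sin\theta)^\beta$, which solves the ODE exactly and corresponds to the trivial extension of the $(d-1)$-dimensional radial cone. The \emph{inner} profile, on the rescaled interval $t=\theta/\theta_0\in[1,\infty)$, is obtained from the ansatz $\phi=\theta_0^\beta F(t)$; dropping the lower-order $\theta_0^2$ term, $F$ satisfies the $(d-1)$-dimensional radial Alt-Phillips equation
\begin{equation*}
F''+\tfrac{d-2}{t}F'=\gamma F^{\gamma-1},\qquad F(1)=F'(1)=0,\qquad F(t)\sim c_{d-1}t^\beta\text{ as }t\to\infty.
\end{equation*}
The hypothesis $d\ge 4$ is precisely what guarantees the presence of the decaying homogeneous mode $t^{-(d-3)}$; at $\gamma=1$ the problem is linear with the explicit solution
\begin{equation*}
F_1(t)=\tfrac{t^2}{2(d-1)}-\tfrac{1}{2(d-3)}+\tfrac{t^{-(d-3)}}{(d-1)(d-3)}.
\end{equation*}
As $t\to\infty$, $\theta_0^\beta F$ exceeds the outer leading profile by a constant deficit of order $\theta_0^2$ and a power tail of order $\theta_0^{d-1}\theta^{-(d-3)}$. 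Matching these against an outer correction $\phi_1$ that solves the linearized ODE with $\phi_1'(\pi/2)=0$ yields the compatibility condition. At $\gamma=1$, the linearized operator has a Gegenbauer-polynomial kernel regular at $\theta=0$, which forces the $\theta^{-(d-3)}$ coefficient to vanish; the condition degenerates to $\theta_0=0$. For $\gamma<1$, the $(1-\gamma)$ perturbation shifts the indicial roots at $\theta=0$ to $\alpha_\pm\approx 0,-(d-3)$ and introduces a nontrivial coupling of order $(1-\gamma)$ between the two modes in the regular-at-$\pi/2$ solution. The resulting matching identity takes the form $\theta_0^{d-3}(1+o(1))=c_d\,(1-\gamma)$, giving $\theta_0\sim(1-\gamma)^{1/(d-3)}$. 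The contact set is the double spherical cap $\{\theta\le\theta_0\}\cup\{\theta\ge\pi-\theta_0\}$, whose measure in $B_1$ is of order $\theta_0^{d-1}\sim(1-\gamma)^{(d-1)/(d-3)}$, as claimed.

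\medskip

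To make the formal analysis rigorous, I would form an approximate solution by gluing the inner and outer profiles with a cutoff around $\theta\sim\sqrt{\theta_0}$, linearize the ODE around it, and invert via a contraction argument in a weighted Hölder space tracking the $(\theta-\theta_0)^\beta$ degeneracy at the free boundary; the invertibility reduces to excluding spurious zero modes, a finite-dimensional ODE computation that can be checked at $\gamma=1$ and extended by continuity. Stability of the resulting $\UAS$ then follows by computing the second variation along the lines of the stability portion of Theorem~\ref{ThmMainRadial}. The main obstacle is upgrading stability to global minimality. Following the De Silva-Jerison strategy for the analogous cone in the Alt-Caffarelli problem \cite{DJ}, I would construct a one-parameter family of sub- and super-solutions foliating a neighborhood of $\UAS$, built from perturbed inner and outer profiles along the dilation orbit, and use the foliation as a calibration: any competitor $v$ agreeing with $\UAS$ outside $B_1$ then satisfies $\En_\gamma(\UAS)\le\En_\gamma(v)$ by leaf-by-leaf comparison. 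Producing the foliation so that the sub-/super-solution property holds uniformly up to the free boundary, where the obstacle nonlinearity $u^\gamma$ degenerates, is the most technically demanding step.
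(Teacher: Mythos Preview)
Your plan---reduce to an ODE on the sphere, build the profile by matching an outer region modeled on $(\sin\theta)^\beta$ with an inner region modeled on the rescaled radial obstacle solution, then prove minimality by foliation---coincides with the paper's strategy, and your scaling heuristic for $\theta_0$ is correct. The chief technical difference is a preliminary transformation $v\propto u^{2/\beta}$: this makes $v$ two-homogeneous and converts the equation to $\Delta v+\tfrac{\beta-2}{2}\,|\nabla v|^2/v=\mathrm{const}$; since $\beta-2\sim(1-\gamma)$, the nonlinearity becomes an $O(\kappa)$ perturbation of a \emph{linear} problem with explicit quadratic base solution $p$ (your $(\sin\theta)^\beta$ is precisely $p$ pulled back), which buys clean two-term expansions in both regions without ever confronting $\phi^{\gamma-1}$ directly. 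For the gluing, rather than a contraction in weighted H\"older spaces, the paper shoots from both ends (perturbation amplitude $\mu$ at the equator, contact radius $\lambda$ at the pole), reads off the expansions, and solves the resulting $2\times2$ matching system for $(\mu,\lambda)$ by the inverse function theorem at $\theta_m=T\theta_0$ with $T$ a large dimensional constant; your Lyapunov--Schmidt route would have to handle the kernel you allude to, which the paper's finite-dimensional reduction sidesteps entirely. For minimality the foliation is made fully explicit: leaves are $u\pm r^{-1/2}\bar V(\theta)$ with $\bar V$ a piecewise-power supersolution of the linearized operator, extended across the free-boundary strip by quadratic profiles along horizontal slices, and every sub/super-solution inequality is checked by hand using only the smallness of $\kappa$.
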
 
\begin{rem}
\label{RemSimAndO}
We use the \textit{big-O} notation. For two quantities $Q_1$ and $Q_2$, we write
$$
Q_1\le \BO_{c_1,c_2,\dots,c_m}(Q_2)
$$
if $Q_1\le C Q_2$ for some constant $C$ depending only on $c_1,c_2,\dots, c_m.$

When $Q_1\le C Q_2$ for a constant $C$ depending only on the dimension $d$, we simply write $Q_1\le\BO(Q_2)$.

We use the \textit{similarity} notation. We write 
$$
Q_1\sim_{c_1,c_2,\dots,c_m}Q_2
$$
if 
$$Q_1\le \BO_{c_1,c_2,\dots,c_m}(Q_2) \text{ and }Q_2\le\BO_{c_1,c_2,\dots,c_m}(Q_1).$$
\end{rem} 
\begin{rem}
The cone $\UAS$ is constructed explicitly, which gives an upper bound for the value of $\gamma_d^3$. It will be interesting to know the sharp range of $\gamma$ for which this construction is possible.
\end{rem} 

\vem 

For an axially symmetric cone, the Euler-Lagrange equation for $\EnG$ \eqref{EqnAP} reduces to an ordinary differential equation.  To construct the cone $\UAS$ in Theorem \ref{ThmMainAS}, we study this ODE in two regions, one near the equator of the unit sphere and one near the pole. Detailed expansions of solutions in these regions allow us to match the data at one point. This gives the solution $\UAS$. 

The minimality of  the cones  in Theorem \ref{ThmMainRadial} and  Theorem \ref{ThmMainAS} is more subtle. To this end, we construct foliations near the cones.

Around a minimizer, it is often possible to build foliations of minimizers. For minimal surfaces, this was done in the classical work of Hardt-Simon \cite{HS}. For the Alt-Caffarelli problem ($\gamma=0$), this was recently done by De Silva-Jerison-Shahgholian \cite{DJS} and Edelen-Spolaor-Velichkov \cite{EdSV}. 

Without assuming minimality, however, the construction of explicit families of sub and supersolutions on each side of the cone is quite delicate. In the context of minimal surfaces, this was achieved by Bombieri-De Giorgi-Giusti in the isotropic setting \cite{BDGG} and by Mooney-Yang in the anisotropic setting \cite{MY}. In the context of free boundary problems, the only known construction is for the Alt-Caffarelli problem in $\R^7$ \cite{DJ}. We remark there is a key step in showing the minimality of the De Silva-Jerison cone, that requires computer assistance. 

For our construction, we take advantage of the smallness of $|\gamma-1|$. This simplifies the situation since  the axially symmetric cone $\UAS$ becomes almost cylindrical at large scales. With these, we are able to give explicit foliations around both $\URad$ from Theorem \ref{ThmMainRadial} and $\UAS$ from Theorem \ref{ThmMainAS}.

\vem

This paper is \textbf{structured as follows}: 

In Section \ref{SectionPreliminaries}, we collect some preliminaries on the Alt-Phillips energy \eqref{EqnAP}. We briefly discuss the relation between minimality and the existence of foliations. We also recall some properties of the obstacle problem. 

 The remaining of the work is divided into two parts.  
 
 In the \textbf{first part}, we focus on the radial cone $\URad$ in Theorem \ref{ThmMainRadial}, not only for its own interest, but also to introduce some ideas that are useful for the treatment of axially symmetric cones. In Section \ref{SectionStabilityOfURad}, we study the stability of $\URad$. In Section \ref{SectionMinimalityOfURad}, we study its minimality. 

The \textbf{second part} focuses on the axially symmetric cone $\UAS$ in Theorem \ref{ThmMainAS}.  In Section \ref{SectionCAS}, we construct the cone $\UAS$. The remaining part is devoted to the construction of a foliation around $\UAS$. In Section \ref{SectionLowerFoliationAS}, we construct the lower leaves in this foliation. In Sections \ref{SectionUFAS}, we construct the upper leaves.

Several technical tools are left to the appendices.

\section{Preliminaries}
\label{SectionPreliminaries}
In this section, we collect some information about the Alt-Phillips functional. We also discuss briefly the calibration argument and some properties of solutions to the obstacle problem.

\subsection{The Alt-Phillips functional}
The $\gamma$-Alt-Phillips functional $\EnG$ is given in \eqref{EqnAP}. We work with exponents $\gamma\in(0,2)$. For each exponent, the corresponding \textit{scaling parameter} is  
\begin{equation}
\label{EqnScalingParameter}
\beta:=\frac{2}{2-\gamma}\in (1,+\infty),
\end{equation}
in relation to the scaling symmetry of the functional.
To be precise, for $r>0$, if we define 
\begin{equation}
\label{EqnRescaling}
u_r(x):=u(rx)/r^\beta,
\end{equation}
then the $\gamma$-Alt-Phillips functional in $\R^d$ satisfies
\begin{equation}
\label{EqnScalingSymmetryOfEnergy}
\EnG(u_r;B_1)=r^{2-2\beta-d}\cdot\EnG(u;B_r).
\end{equation} 

As a consequence, cones of $\gamma$-Alt-Phillips functional are $\beta$-homogeneous.
When $\gamma$ is close to $1$, the scaling parameter $\beta$ is close to $2$ with
$$
\beta-2=\frac{2}{2-\gamma}(\gamma-1).
$$

\vem
We need various notions of minimality:
\begin{defi}
\label{DefMinimality}
For the $\gamma$-Alt-Phillips functional \eqref{EqnAP}, we say that $u$ is a \textit{minimizer} in a domain $\Omega$, and write
$$
u\in\mathcal{M}_\gamma(\Omega),
$$
if 
\begin{equation}
\label{EqnLocalComparisonInMinimality}
\En_\gamma(u;\Omega)\le \En_\gamma(v;\Omega)
\end{equation}
whenever 
$$
v=u \text{ on }\partial\Omega.
$$

We say that $u$ is a \textit{one-sided minimizer from above} in $\Omega$, and write
$$
u\in\mathcal{M}^+_\gamma(\Omega),
$$ 
if the comparison \eqref{EqnLocalComparisonInMinimality} holds whenever 
$$
v=u \text{ on }\partial\Omega, \text{ and }\hem v\ge u \text{ in }\Omega.
$$ 

\textit{One-sided minimizers from below}  in $\Omega$  is defined in the symmetric way, and form the class $\mathcal{M}^-_\gamma(\Omega).$

\vem

We say that $u$ is a \textit{global minimizer/ global one-sided minimizer} in $\R^d$ if the corresponding property holds in 
$B_R \text{ for all }R>0.$ 

The class of global minimizers is denoted by $\mathcal{M}_\gamma(\R^d)$. The class of global one-sided minimizers is denoted by $\mathcal{M}^{\pm}_\gamma(\R^d).$
\end{defi}

 For a minimizer, it is straightforward to compute the Euler-Lagrange equation, understood in the viscosity sense \cite{DS1}.
\begin{defi}
\label{DefSolution}
For $\gamma\in(0,2)$ and $u\in C(\Omega)$ with $u\ge0$, we say that $u$ is a \textit{solution to the $\gamma$-Alt-Phillips problem} in $\Omega$, and write
$$
u\in\mathcal{S}_\gamma(\Omega)
$$
 if it satisfies $$
\Delta u=\gamma u^{\gamma-1}\hem \text{ in }\PosS\cap\Omega,
\hem\text{ and }\hem
|\nabla u|^2/u^\gamma=2\hem \text{ along }\PPosS\cap\Omega.
$$

We say that  $u$ is a \textit{subsolution} in $\Omega$, and write
$$
u\in\mathcal{S}^+_\gamma(\Omega)
$$
if it satisfies
$$
\Delta u\ge\gamma u^{\gamma-1} \hem\text{ in }\PosS\cap\Omega, \hem \text{ and }\hem|\nabla u|^2/u^\gamma\ge2\hem \text{ along }\PPosS\cap\Omega.
$$

A \textit{supersolution} $u$ is defined in the symmetric way. The class of supersolutions in $\Omega$ is denoted by $\mathcal{S}^-_\gamma(\Omega)$,

\end{defi}

The scaling symmetry in \eqref{EqnScalingSymmetryOfEnergy} leads to the same symmetry of solutions. For the rescaled function $u_r$ in \eqref{EqnRescaling}, we have
\begin{prop}
\label{PropSymmetryOfEquation}
 If $u\in\mathcal{S}(B_r)$, then $u_r\in\mathcal{S}(B_1)$.

Similar properties hold for $\mathcal{S}^\pm_\gamma(B_r).$
\end{prop}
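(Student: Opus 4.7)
The plan is to verify directly by chain-rule differentiation that the scaling \eqref{EqnRescaling} is the unique homogeneity for which the Euler-Lagrange system in Definition \ref{DefSolution} is invariant, and then to read off the three cases $\mathcal{S}_\gamma, \mathcal{S}_\gamma^+, \mathcal{S}_\gamma^-$ from this calculation. No subtle analysis is needed; the content is that $\beta=2/(2-\gamma)$ is precisely calibrated so that the interior semilinear equation and the free boundary gradient condition share the same scaling exponent.

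First I would observe that the positive set transforms covariantly: $\{u_r>0\}\cap B_1 = \frac{1}{r}(\{u>0\}\cap B_r)$, and likewise for the free boundary. Then I would compute, for any $x$ with $rx\in\{u>0\}\cap B_r$,
\begin{equation*}
\nabla u_r(x) = r^{1-\beta}\nabla u(rx),\qquad \Delta u_r(x)=r^{2-\beta}\Delta u(rx).
\end{equation*}
Assuming $u\in\mathcal{S}_\gamma(B_r)$ and substituting $\Delta u(rx)=\gamma u(rx)^{\gamma-1}$, I get
\begin{equation*}
\Delta u_r(x) = \gamma\, r^{2-\beta}u(rx)^{\gamma-1} = \gamma\, r^{2-\beta+\beta(\gamma-1)}u_r(x)^{\gamma-1}.
\end{equation*}
The exponent of $r$ on the right equals $2-\beta(2-\gamma)=0$ by \eqref{EqnScalingParameter}, so $\Delta u_r = \gamma u_r^{\gamma-1}$ in $\{u_r>0\}\cap B_1$.

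Next I would verify the free-boundary relation. At a point of $\partial\{u_r>0\}\cap B_1$, which corresponds to a point $rx\in\partial\{u>0\}\cap B_r$, the identity $|\nabla u(rx)|^2=2u(rx)^\gamma$ together with the gradient scaling gives
\begin{equation*}
|\nabla u_r(x)|^2 = r^{2-2\beta}|\nabla u(rx)|^2 = 2\,r^{2-2\beta}u(rx)^\gamma = 2\,r^{2-2\beta+\beta\gamma}u_r(x)^\gamma,
\end{equation*}
and again $2-\beta(2-\gamma)=0$ makes the prefactor equal to $1$. Thus $u_r\in\mathcal{S}_\gamma(B_1)$, in the viscosity sense, because viscosity touching is preserved under this smooth change of variables.

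Finally, for the one-sided classes $\mathcal{S}_\gamma^\pm$, I would just notice that every equality above was obtained by a positive rescaling of the corresponding quantity for $u$, so the inequalities $\Delta u\gtreqless \gamma u^{\gamma-1}$ and $|\nabla u|^2\gtreqless 2u^\gamma$ each propagate to $u_r$ with the same sign. I do not foresee an obstacle here; the entire argument hinges on the arithmetic identity $2-\beta(2-\gamma)=0$, which is the defining relation of $\beta$.
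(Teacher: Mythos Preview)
Your computation is correct and is exactly the routine verification the paper has in mind; the paper states the proposition without proof, treating it as an immediate consequence of the scaling \eqref{EqnRescaling} and the identity $\beta(2-\gamma)=2$. Your remark that the free-boundary condition must be read in the viscosity sense (so that the diffeomorphism $x\mapsto rx$ carries touching test functions to touching test functions) is the only point requiring a word of care, and you have flagged it appropriately.
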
 

The $\gamma$-Alt-Phillips functional is convex when $\gamma\in[1,2)$. As a result, solutions are minimizers.
\begin{prop}
\label{PropSolutionsAreMinimizers}
Suppose that $u\in\SGa(B_1)$ for $\gamma\in[1,2)$, then $u\in\MGa(B_1)$.
\end{prop}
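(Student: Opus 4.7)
The proof rests on the convexity of $\EnG$ for $\gamma\in[1,2)$. First I would observe that, since $u,v\ge 0$ competitors satisfy $z^\gamma\chi_{\{z>0\}}=z^\gamma$ (with $0^\gamma=0$), and the function $z\mapsto z^\gamma$ is convex on $[0,\infty)$ for $\gamma\ge 1$, the integrand $\frac{|p|^2}{2}+z^\gamma$ is jointly convex in $(p,z)\in\R^d\times[0,\infty)$. Consequently, on the class of non-negative $H^1$-functions, the functional $\EnG(\,\cdot\,;B_1)$ is convex.

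Next, given a competitor $v\in H^1(B_1)$ with $v=u$ on $\partial B_1$, I would reduce to the case $v\ge 0$. The replacement $v\leadsto v^+$ preserves the boundary data (as $u\ge 0$), decreases $|\nabla v|^2$ almost everywhere, and decreases $v^\gamma\chi_{\{v>0\}}$ (which becomes $(v^+)^\gamma$), so $\EnG(v^+;B_1)\le\EnG(v;B_1)$. For such a non-negative $v$ the pointwise convexity inequality yields
\begin{equation*}
\EnG(v;B_1)-\EnG(u;B_1)\ge \int_{B_1}\nabla u\cdot\nabla(v-u)+\gamma u^{\gamma-1}\chi_{\PosS}(v-u),
\end{equation*}
where the choice $p=0$ is used as a subgradient of $z^\gamma$ at $z=0$.

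Finally, I would show the right-hand side equals $0$ by using that $u\in\SGa(B_1)$ satisfies $\Delta u=\gamma u^{\gamma-1}\chi_{\PosS}$ in the distributional sense on all of $B_1$. In $\PosS$ this is the classical interior equation; in the interior of $\ConS$ both sides vanish; and the free boundary $\PPosS$ carries no singular measure because $\nabla u$ is continuous across it (the free boundary condition $|\nabla u|^2=2u^\gamma$ forces $|\nabla u|\to 0$ as one approaches $\PPosS$ from inside $\PosS$, while $\nabla u\equiv 0$ on the interior of $\ConS$). Testing this distributional identity against $v-u\in H^1_0(B_1)$ gives exactly the vanishing of the right-hand side above, hence $\EnG(v;B_1)\ge \EnG(u;B_1)$.

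The only delicate point is the distributional identity on $B_1$, i.e.\ the absence of a singular part of $\Delta u$ supported on $\PPosS$. This is where the hypothesis $\gamma\ge 1$ is essential, since the free boundary condition in Definition \ref{DefSolution} then forces $|\nabla u|^2/u^\gamma=2$ to degenerate to the continuity $\nabla u\to 0$ at $\PPosS$; for $\gamma<1$ this fails and the functional is no longer convex, which is precisely why Proposition \ref{PropSolutionsAreMinimizers} is restricted to $[1,2)$.
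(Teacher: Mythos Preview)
Your approach is correct and is exactly what the paper has in mind: the paper gives no detailed proof of this proposition but simply remarks that ``the $\gamma$-Alt-Phillips functional is convex when $\gamma\in[1,2)$; as a result, solutions are minimizers,'' and your argument (convexity of the integrand, reduction to $v\ge 0$, the subgradient inequality, and vanishing of the first variation via the weak Euler--Lagrange identity) is the standard way to unpack this one-line claim.

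One small correction to your final paragraph: the free-boundary condition $|\nabla u|^2=2u^\gamma$ forces $|\nabla u|\to 0$ at $\PPosS$ for \emph{every} $\gamma\in(0,2)$, so continuity of $\nabla u$ across the free boundary is not what singles out $\gamma\ge 1$; what fails for $\gamma<1$ is purely the convexity of $z\mapsto z^\gamma$, i.e.\ your subgradient inequality in Step~3, not the absence of a singular measure in Step~4.
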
 

Another feature of the Alt-Phillips problem when $\gamma\ge 1$ is the convexity of global solutions:
\begin{prop}[\cite{B, C, PSU, WY}]
\label{PropConvexityOfSolutions}
Suppose that $u\in\SGa(\R^d)$ for $\gamma\in[1,2)$ with $0\in\partial\PosS$, then 
$$
D^2u\ge0 \text{ in }\R^d.
$$

If $u$ is a minimizing cone for $\gamma\in[1,2)$, then either $u$ is the flat cone or $\{u=0\}$ has zero measure.
\end{prop}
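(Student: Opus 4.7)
For the convexity statement, I would fix a unit direction $e$ and set $v := u_{ee}$. Differentiating $\Delta u = \gamma u^{\gamma-1}$ twice in direction $e$ inside $\PosS$ gives
\[
\Delta v - \gamma(\gamma-1) u^{\gamma-2} v \;=\; \gamma(\gamma-1)(\gamma-2)\, u^{\gamma-3}\, u_e^2.
\]
For $\gamma \in [1,2)$ the right-hand side is $\le 0$ (since $\gamma-2 \le 0$), and the potential $-\gamma(\gamma-1)u^{\gamma-2}$ appearing on the left is non-positive in $\PosS$; the weak maximum principle for the linear operator $\mathcal{L} w := \Delta w - \gamma(\gamma-1) u^{\gamma-2} w$ then prevents $v$ from attaining a strictly negative interior minimum in $\PosS$. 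To pass across $\PPosS$, I would exploit that $u \in C^{1,\alpha}$ with $\nabla u = 0$ on $\PPosS$ and $u \equiv 0$ in the interior of $\ConS$, so that $v$ extends as a viscosity supersolution to all of $\R^d$ with $v \equiv 0$ on $\mathrm{int}\,\ConS$. A Liouville-type argument using the $\beta$-homogeneous growth of the cone and the scaling symmetry from Proposition~\ref{PropSymmetryOfEquation} then forces $v \ge 0$ globally, i.e.\ $u_{ee} \ge 0$ for every unit $e$.

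For the rigidity part, the convexity just proved makes $K := \ConS$ convex, and $\beta$-homogeneity makes it a convex cone with apex $0$. Assume $|K| > 0$, so $K$ has non-empty interior. At $\mathcal{H}^{d-1}$-almost every point $x_0 \in \partial K \setminus \{0\}$, $K$ admits a unique supporting hyperplane $H$, which, because $K$ is a cone through $0$, must be a linear hyperplane containing $x_0$. Blowing up $u$ at $x_0$ yields a minimizing cone (apex $0$) that is additionally translation-invariant along $x_0$, hence an effective minimizing cone in dimension $d-1$; by induction on $d$ (base case $d=1$ handled via Proposition~\ref{PropSolutionsAreMinimizers}) this blow-up must be the flat cone in the direction $n \perp H$. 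The Alt-Phillips free-boundary regularity theory then makes $\partial K$ smooth near $x_0$; combined with $\partial K$ being a convex cone through $0$ whose unit normal is locally constant along rays, this forces $\partial K = H$ and $K = H^+$. Convexity together with $\beta$-homogeneity of $u$ and $K = H^+$ then imply that $u$ depends only on $x \cdot n$, so $u$ is the flat cone.

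The \textbf{main obstacle} is the maximum-principle step in the first part: $u$ is typically only $C^{1,\alpha}$ across $\PPosS$ (and in general no better than $C^{1,1}$ even for $\gamma=1$), so $u_{ee}$ has no classical meaning at the free boundary and the PDE for $v$ must be interpreted in a viscosity or distributional sense. Writing $v$ as a limit of second difference quotients $(u(x+he)+u(x-he)-2u(x))/h^2$ in direction $e$, and verifying that the supersolution property survives across $\PPosS$, is the technical heart of the argument; once convexity is secured, the second part reduces to the comparatively routine geometric analysis of convex cones described above.
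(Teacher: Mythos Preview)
The paper does not prove this proposition; it is stated as a known result with citations to \cite{B, C, PSU, WY} and no argument is given. Your sketch is consistent with the approach in those references.

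For the convexity part, your strategy---derive the linearized inequality for $u_{ee}$ and run a maximum principle---is precisely the method of the cited works (Caffarelli for $\gamma=1$, extended to $\gamma\in[1,2)$ in \cite{B,WY}), and you have correctly located the technical crux: $u$ is only $C^{1,\alpha}$ across $\PPosS$, so one must work with second difference quotients and check that the supersolution inequality persists through the free boundary. One minor correction: the first assertion does not assume $u$ is a cone, so the phrase ``$\beta$-homogeneous growth of the cone'' should be replaced by the optimal growth $u(x)\le C|x|^\beta$ for global solutions with $0\in\PPosS$, which is what the Liouville step actually uses.

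For the rigidity part your blow-up/induction scheme is valid. A slightly shorter route, closer to what one finds in \cite{PSU,WY}, avoids the induction: since $K=\ConS$ is convex with non-empty interior, at any $x_0\in\partial K\setminus\{0\}$ with a unique supporting hyperplane the tangent cone to $K$ at $x_0$ is already a half-space by pure convex geometry; the blow-up of $u$ at $x_0$ is then a minimizing cone, translation-invariant along $x_0$, whose contact set is a half-space, and one checks directly (using the free boundary condition and convexity, without descending in dimension) that this forces the blow-up to be flat. Either way the conclusion---$\partial K$ smooth away from the origin, hence a hyperplane, hence $u$ flat---follows as you describe.
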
 
Recall that a flat cone is, up to a rotation, of the form $c_\gamma[(x_1)_+]^\beta$.
\subsection{Foliation and minimality} 
\label{SubsectionFoliationsAndMinimality}
We discuss the relation between the minimality of a solution and the existence of a foliation around it. Since the argument is classical, the proofs are only sketched. For more details of this argument in various contexts, interested readers may consult \cite{AAC, BDGG, CEF, EFeY, MY}.

In the following, we use notations $\SGa,\SPGa$ and $\SMGa$ from Definition \ref{DefSolution}.
\begin{defi}
\label{DefUpperFoliation}
Suppose that $u\in\SGa(\R^d)$.

An \textit{upper foliation} of $u$  is a family of functions $\{\Psi_t\}_{t\in(0,+\infty)}$ satisfying the following properties:
\begin{enumerate}
\item{For each $t\in(0,+\infty)$, we have 
$$
\Psi_t\ge u \text{ in }\R^d, \text{ and }\hem \Psi_t>u \text{ in }\overline{\PosS};
$$}
\item{For each $t\in(0,+\infty)$, $\Psi_t\in\SMGa(\R^d)$;}
\item{The map $(t,x)\mapsto\Psi_t(x)$ is continuous;}
\item{As $t\to 0$, we have 
$$
\Psi_t\to u\hem\text{locally uniformly on }\PosS;
$$ and}
\item{As $t\to+\infty$, we have 
$$
\Psi_t\to+\infty \hem\text{locally uniformly.}
$$}
\end{enumerate}

Each function $\Psi_t$ is called an \textit{upper leaf}.
\end{defi}

The existence of an upper foliation gives one-sided minimality from above.
\begin{prop}
\label{PropMinimalityFromAbove}
Suppose that $u\in\SGa(\R^d)$ has an upper foliation  $\{\Psi_t\}_{t\in(0,+\infty)}$ and satisfies
$
\En_\gamma(u;B_1)<+\infty.
$

Then $u\in\MPGa(B_1)$.
\end{prop}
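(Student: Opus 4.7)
The strategy is the classical calibration-via-foliation argument, in the spirit of the references already mentioned in the discussion above. Fix an arbitrary competitor $v$ with $v \geq u$ in $B_1$ and $v = u$ on $\partial B_1$; the goal is to show $\En_\gamma(u; B_1) \leq \En_\gamma(v; B_1)$ by using the leaves $\Psi_t$ as upper barriers for $v$ and then sliding $t \to 0^+$.

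For each $t > 0$ the first step is to exploit the lattice identity
\begin{equation*}
\En_\gamma\bigl(\min(v, \Psi_t); B_1\bigr) + \En_\gamma\bigl(\max(v, \Psi_t); B_1\bigr) = \En_\gamma(v; B_1) + \En_\gamma(\Psi_t; B_1),
\end{equation*}
which holds because the Dirichlet integrand splits a.e.\ under min/max for Sobolev functions and because the positivity sets partition additively via $\{\min(v,\Psi_t)>0\} = \{v>0\} \cap \{\Psi_t>0\}$ and $\{\max(v,\Psi_t)>0\} = \{v>0\} \cup \{\Psi_t>0\}$. By property (1) of the foliation together with the boundary condition $v = u$ on $\partial B_1$, one has $\Psi_t \geq v$ on $\partial B_1$, so $W_t := \max(v, \Psi_t)$ agrees with $\Psi_t$ on $\partial B_1$ and satisfies $W_t \geq \Psi_t$ in $B_1$. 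The heart of the proof is to upgrade the supersolution hypothesis $\Psi_t \in \SMGa$ to the one-sided minimality $\En_\gamma(\Psi_t; B_1) \leq \En_\gamma(W_t; B_1)$, which combined with the lattice identity yields $\En_\gamma(\min(v, \Psi_t); B_1) \leq \En_\gamma(v; B_1)$.

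To conclude, I would send $t \to 0^+$ and use properties (3) and (4) of the foliation to obtain $\min(v, \Psi_t) \to u$ locally uniformly on $\overline{\PosS}$; on the interior of $\{u = 0\}$ the limit cannot contribute more energy than $u$ does (which is zero) and is controlled by the corresponding piece of $v$. Lower semicontinuity of $\EnG$ on $H^1(B_1)$ then gives
\begin{equation*}
\En_\gamma(u; B_1) \leq \liminf_{t \to 0^+} \En_\gamma\bigl(\min(v, \Psi_t); B_1\bigr) \leq \En_\gamma(v; B_1),
\end{equation*}
which together with the hypothesis $\En_\gamma(u; B_1) < +\infty$ concludes that $u \in \MPGa(B_1)$.

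\noindent\textbf{Main obstacle.}
The delicate step is the one-sided minimality of each leaf against upward perturbations. For $\gamma \in [1, 2)$ this is immediate from the supersolution condition combined with the convexity of $\EnG$ (Proposition \ref{PropSolutionsAreMinimizers} and its one-sided analogue). For $\gamma \in (0, 1)$ convexity fails, and one must exploit the strict ordering of the neighbouring leaves $\Psi_{t+\eps}$: a viscosity comparison with such a leaf keeps the free boundary of the competitor $W_t$ strictly separated from that of $\Psi_t$, so the energy difference $\En_\gamma(W_t;B_1) - \En_\gamma(\Psi_t;B_1)$ reduces to an integral of the (non-positive) Euler-Lagrange defect of $\Psi_t$ against $W_t - \Psi_t \geq 0$. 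Controlling the interaction between $\partial\{W_t > 0\}$ and $\partial\{\Psi_t > 0\}$ in this reduction, and checking the convergence of $\min(v,\Psi_t)$ on the contact set of $u$, is the technical crux.
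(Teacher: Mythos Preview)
Your approach is genuinely different from the paper's, and the obstacle you flag is real---in fact it is fatal for $\gamma\in(0,1)$ as written. The energy comparison you need for each leaf,
\[
\En_\gamma(\Psi_t;B_1)\le \En_\gamma(W_t;B_1)\qquad\text{whenever }W_t\ge\Psi_t,\ W_t=\Psi_t\text{ on }\partial B_1,
\]
does \emph{not} follow from $\Psi_t\in\SMGa$ when $\gamma<1$. After the convexity of the Dirichlet term and an integration by parts (the free-boundary contribution vanishes since $|\nabla\Psi_t|\to 0$ there for $\gamma>0$), one obtains
\[
\En_\gamma(W_t)-\En_\gamma(\Psi_t)\ \ge\ \int_{\{\Psi_t>0\}}\Bigl[-\Delta\Psi_t\,(W_t-\Psi_t)+W_t^\gamma-\Psi_t^\gamma\Bigr]\ +\ \text{(nonneg.\ term on }\{\Psi_t=0\}\text{)}.
\]
The supersolution inequality gives $-\Delta\Psi_t\ge -\gamma\Psi_t^{\gamma-1}$, but for $\gamma<1$ the map $s\mapsto s^\gamma$ is \emph{concave}, so $W_t^\gamma-\Psi_t^\gamma\le \gamma\Psi_t^{\gamma-1}(W_t-\Psi_t)$. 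The two pieces combine to something $\le 0$, not $\ge 0$; the ``Euler--Lagrange defect'' term you want is exactly cancelled (and overshot) by the concavity error. Your proposed fix via a viscosity comparison with $\Psi_{t+\eps}$ does not help: $W_t=\max(v,\Psi_t)$ involves the \emph{arbitrary} competitor $v$, which carries no sub/supersolution structure, so no comparison principle applies to it; and even an ordering $W_t\le\Psi_{t+\eps}$ would not repair the sign in the energy expansion above.

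The paper avoids this difficulty entirely by reversing the order of operations. Instead of comparing energies leaf by leaf and then passing to the limit, it first uses the direct method to produce a minimizer $w$ in the class $\{v\ge u,\ v|_{\partial B_1}=u\}$, observes that $\tilde w:=\max\{u,w\}$ is a \emph{subsolution} in $B_1$, and then slides the leaves $\Psi_t$ down from $t=+\infty$ until one of them touches $\tilde w$ from above at an interior point of $\overline{\{\tilde w>0\}}$. This contradicts the viscosity comparison principle between a subsolution and a supersolution, forcing $\tilde w\equiv u$ and hence $\En_\gamma(u)\le\En_\gamma(v)$ for every upward competitor. No energy inequality for the individual leaves is ever needed, which is precisely why the argument is insensitive to the convexity or concavity of $s\mapsto s^\gamma$.
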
 
Recall the class  $\MPGa$ from  Definition \ref{DefMinimality}. 

\begin{proof}
By the direct method, we can find $w$, a minimizer of $\EnG(\cdot)$ in the family
$
\{v:\hem v=u \hem\text{along }\partial B_1, \text{ and }\hem v\ge u \text{ in } B_1\}.
$
We have $w\in\SGa(\{w>u\})$. Define
$
\tilde{w}:=\max\{u,w\},
$
then $\tilde{w}\in\SPGa(B_1)$.

It suffices to show that 
$
\tilde{w}=u \text{ in }B_1.
$

\vem

Suppose not, then the set $\{\tilde w>u\}$ is non-empty and contained in $\overline{B_1}$. By the definition of an upper foliation, we can find $t_0\in(0,+\infty)$ such that 
$$
\Psi_{t_0}\ge\tilde w \text{ in }B_1, \text{ and }\hem\Psi_{t_0}(x_0)=\tilde w(x_0) \text{ at some }x_0\in\overline{\{\tilde w>0\}}\cap B_1.
$$
This is a contradiction as a subsolution cannot be touched from above by a supersolution \cite{DS1}.
\end{proof} 

Symmetric to Definition \ref{DefUpperFoliation}, we have
\begin{defi}
\label{DefLowerFoliation}
Suppose that $u\in\SGa(\R^d)$.

An \textit{lower foliation} of $u$ is a family of functions $\{\Phi_t\}_{t\in(0,+\infty)}$ satisfying the following properties:
\begin{enumerate}
\item{For each $t\in(0,+\infty)$, we have 
$$
0\le\Phi_t\le u \text{ in }\R^d, \text{ and }\hem \Phi_t<u \text{ in }\overline{\{\Phi_t>0\}};
$$}
\item{For each $t\in(0,+\infty)$, $\Phi_t\in\SPGa(\R^d)$;}
\item{The map $(t,x)\mapsto\Phi_t(x)$ is continuous;}
\item{The map $t\mapsto \overline{\{\Phi_t>0\}}$ is locally continuous in the Hausdorff distance;}
\item{As $t\to 0$, we have 
$$
\Phi_t\to u \hem\text{locally uniformly in $\PosS$};
$$ and}
\item{Given a compact set $K$, there is $t_K$ such that 
$$
\Phi_t=0 \hem\text{ on $K$ for all }t>t_K.
$$}
\end{enumerate}

Each function $\Phi_t$ is called a \textit{lower leaf}.
\end{defi}

Similar to Proposition \ref{PropMinimalityFromAbove}, we have
\begin{prop}
\label{PropMinimalityFromBelow}
Suppose that $u\in\SGa(\R^d)$ has a lower foliation  and satisfies
$
\En_\gamma(u; B_1)<+\infty.
$

Then $u\in\MMGa(B_1)$.
\end{prop}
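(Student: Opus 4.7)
The proof will be the symmetric counterpart of Proposition \ref{PropMinimalityFromAbove}, with subsolutions and supersolutions interchanged, and with the upper foliation replaced by the lower foliation. The plan is to obtain a constrained minimizer below $u$, show it must coincide with $u$, and derive the desired one-sided minimality from this.

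First I would apply the direct method to produce a function $w$ minimizing $\EnG(\cdot; B_1)$ in the admissible class
$$\{v:\hem v=u \hem\text{on }\partial B_1, \text{ and }\hem 0\le v\le u \hem\text{in }B_1\}.$$
Coercivity is automatic since $\gamma>0$ makes $\EnG\ge 0$, and the constraint $v\le u$ gives an $L^\infty$ bound; lower semicontinuity is standard. Defining $\tilde w:=\min\{u,w\}=w$, I would verify that $\tilde w\in\SMGa(B_1)$. Indeed, in the open set $\{w<u\}$ the constraint is inactive and $w$ is an unconstrained minimizer, hence a solution; at points where $w=u$ the supersolution property is inherited from $u\in\SGa$. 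It then suffices to show $\tilde w=u$ in $B_1$.

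Suppose toward a contradiction that $\{\tilde w<u\}\cap B_1\ne\emptyset$. I would now slide the lower leaves $\Phi_t$ against $\tilde w$. By property (5) of Definition \ref{DefLowerFoliation}, for small $t>0$ the leaves $\Phi_t$ are arbitrarily close to $u$ on compact subsets of $\PosS$, so $\Phi_t>\tilde w$ at some interior point; by property (6), for $t$ large enough $\Phi_t\equiv 0$ on $\overline{B_1}$, so $\Phi_t\le\tilde w$ everywhere. Set
$$t_0:=\inf\{t>0:\hem \Phi_t\le\tilde w \text{ in }\overline{B_1}\}.$$
The continuity in property (3), together with the Hausdorff continuity of positive sets in property (4), guarantees $\Phi_{t_0}\le\tilde w$ in $\overline{B_1}$ with equality at some point $x_0\in\overline{\{\Phi_{t_0}>0\}}\cap\overline{B_1}$. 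Since $\tilde w=u$ on $\partial B_1$ and $\Phi_{t_0}<u$ strictly on $\overline{\{\Phi_{t_0}>0\}}$ by property (1), the touching cannot occur on $\partial B_1$, so $x_0$ is interior.

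Finally, I would invoke the comparison principle: the subsolution $\Phi_{t_0}\in\SPGa$ touches the supersolution $\tilde w\in\SMGa$ from below at the interior point $x_0$, which is the symmetric statement of the comparison result from \cite{DS1} used in the proof of Proposition \ref{PropMinimalityFromAbove}. This covers both cases $\Phi_{t_0}(x_0)>0$ (interior strong maximum principle in the common positivity set) and $\Phi_{t_0}(x_0)=0$ (free boundary comparison via the condition $|\nabla\cdot|^2/(\cdot)^\gamma=2$). The resulting contradiction forces $\tilde w=u$, whence $\EnG(u;B_1)=\EnG(w;B_1)\le\EnG(v;B_1)$ for all admissible $v\le u$. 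The main delicate point is Step 3: ensuring that at the critical parameter $t_0$ the touching indeed occurs at an interior point and that it is compatible with the viscosity sub/supersolution framework along the free boundary; this is where properties (3), (4), and (6) of the lower foliation are essential.
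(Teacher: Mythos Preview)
Your proposal is correct and follows precisely the symmetric version of the paper's proof of Proposition~\ref{PropMinimalityFromAbove}: minimize below $u$ by the direct method, observe the minimizer is a supersolution, and slide the subsolution leaves $\Phi_t$ up to obtain an interior touching point, which contradicts the comparison principle from \cite{DS1}. The paper itself does not spell out this proof, merely stating that it is analogous to the upper case; your write-up supplies exactly those details, including the correct use of properties (1), (3), (4), (5), (6) of Definition~\ref{DefLowerFoliation} to guarantee the touching point lies in $\overline{\{\Phi_{t_0}>0\}}\cap B_1$.
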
 

Combining an upper foliation and a lower foliation, we have a foliation.
\begin{defi}
\label{DefFoliation}
Suppose that $u\in\SGa(\R^d)$. 

A \textit{foliation }of $u$  is a family of functions $\{\Psi_t\}_{t\in(0,+\infty)}\cup\{\Phi_t\}_{t\in(0,+\infty)}$, where $\{\Psi_t\}_{t\in(0,+\infty)}$ is an upper foliation and $\{\Phi_t\}_{t\in(0,+\infty)}$ is a lower foliation. 
\end{defi}

As a consequence of Proposition \ref{PropMinimalityFromAbove} and Proposition \ref{PropMinimalityFromBelow}, we have
\begin{thm}
\label{ThmMinimality}
Suppose that $u\in\SGa(\R^d)$ has a foliation and satisfies
$
\En_\gamma(u; B_1)<+\infty.
$

Then $u\in\MGa(B_1)$.
\end{thm}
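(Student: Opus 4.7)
The plan is to deduce Theorem \ref{ThmMinimality} from the two one-sided results already established, namely Propositions \ref{PropMinimalityFromAbove} and \ref{PropMinimalityFromBelow}, via a standard max/min decomposition of a general competitor. The foliation by hypothesis provides both an upper foliation $\{\Psi_t\}$ and a lower foliation $\{\Phi_t\}$, so the two one-sided propositions apply and give $u\in\MPGa(B_1)\cap\MMGa(B_1)$. What remains is to upgrade from one-sided to two-sided minimality.

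Fix a competitor $v\ge 0$ with $v=u$ on $\partial B_1$, and define
\begin{equation*}
v^\vee:=\max(v,u), \qquad v^\wedge:=\min(v,u).
\end{equation*}
Both agree with $u$ on $\partial B_1$, with $v^\vee\ge u$ and $v^\wedge\le u$ in $B_1$. Applying the one-sided minimality from above to $v^\vee$ and the one-sided minimality from below to $v^\wedge$, I get
\begin{equation*}
\EnG(u;B_1)\le \EnG(v^\vee;B_1), \qquad \EnG(u;B_1)\le \EnG(v^\wedge;B_1).
\end{equation*}

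The core computation is the pointwise identity
\begin{equation*}
\EnG(v^\vee;B_1)+\EnG(v^\wedge;B_1)=\EnG(v;B_1)+\EnG(u;B_1).
\end{equation*}
This holds because, at each $x\in B_1$, the unordered pair $\{v^\vee(x),v^\wedge(x)\}$ coincides with $\{v(x),u(x)\}$, so the potential density $w\mapsto w^\gamma\chi_{\{w>0\}}$ trivially satisfies the additivity. For the Dirichlet part, on the open set $\{v>u\}$ one has $\nabla v^\vee=\nabla v$ and $\nabla v^\wedge=\nabla u$ a.e., while on $\{v<u\}$ the roles swap; on $\{v=u\}\cap H^1$ the gradients of $v$ and $u$ agree a.e., so $|\nabla v^\vee|^2+|\nabla v^\wedge|^2=|\nabla v|^2+|\nabla u|^2$ a.e. in $B_1$. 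Summing the two one-sided inequalities and using this identity yields $\EnG(u;B_1)\le\EnG(v;B_1)$, which is the desired two-sided minimality.

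The main potential obstacle is the additivity identity: it depends on the specific form of the energy (Dirichlet plus an obstacle-type potential evaluated pointwise in $u$, not in $\nabla u$) and on the standard $H^1$ chain rule for max/min, neither of which presents genuine difficulty here but which should be stated explicitly to justify the pointwise swap of gradients on $\{v=u\}$. Since the finiteness hypothesis $\EnG(u;B_1)<+\infty$ is exactly what allows us to subtract $\EnG(u;B_1)$ from both sides without ambiguity, the argument then closes cleanly.
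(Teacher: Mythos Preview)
Your proof is correct and follows the paper's approach: the paper states the theorem as an immediate consequence of Propositions \ref{PropMinimalityFromAbove} and \ref{PropMinimalityFromBelow} without further argument, relying on the remark that ``the argument is classical.'' You have supplied exactly the classical step the paper leaves implicit, namely the max/min decomposition $v^\vee=\max(v,u)$, $v^\wedge=\min(v,u)$ together with the pointwise additivity $\EnG(v^\vee)+\EnG(v^\wedge)=\EnG(v)+\EnG(u)$, which is the standard way to pass from $\MPGa\cap\MMGa$ to $\MGa$.
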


\subsection{Solutions to the obstacle problem}
For a domain $\Omega\subset\R^d$, the \textit{classical obstacle problem} studies solutions to the following
\begin{equation}
\label{EqnClassicalObstacleProblem}
\Delta u=\chi_{\PosS},\text{ and }\hem u\ge0 \text{ in }\Omega.
\end{equation} 
For our problem \eqref{EqnAP}, this corresponds to the case when $\gamma=1$ and the scaling parameter $\beta=2$ (see \eqref{EqnScalingParameter}).

We are particularly interested in radial solutions. For a solution of the form $P=P(|x|)$, we have
$$
P''+\frac{d-1}{r}P'=\chi_{\{P>0\}} \text{ on }(0,+\infty), \hem\text{ and }P\ge0 \text{ on }[0,+\infty).
$$
For $\lambda>0$, let $P_\lambda$ denote the solution whose contact set is
$$
\{P_\lambda=0\}=\overline{B}_\lambda,
$$ 
then $P_\lambda$ is given by
\begin{equation}
\label{EqnPLambda}
P_\lambda=\begin{cases}
\frac{1}{2d}|x|^2-\frac{\lambda^2}{2(d-2)}+\frac{\lambda^d}{d(d-2)}|x|^{-(d-2)} &\text{ for }|x|\ge\lambda,\\
0 &\text{ for }|x|<\lambda.
\end{cases}
\end{equation}

\section{Stability  of the radial cone}
\label{SectionStabilityOfURad}
In $\R^d$, the radial cone for the Alt-Phillips problem is 
\begin{equation}
\label{EqnURad}
\URad(x):=c_{\mathrm{rad}}|x|^\beta,
\end{equation} 
with the scaling parameter $\beta$ from \eqref{EqnScalingParameter}. The coefficient satisfies
$$
c_{\mathrm{rad}}^{2-\gamma}=\frac{\gamma}{\beta(d+\beta-2)},
$$
so that $\URad\in\SGa(\R^d)$  as in Definition \ref{DefSolution}.
\vem

In this section, we address the stability of $\URad$,  in terms of the second-order expansion of the energy\footnote{Our expansion  relies on the positivity of $\URad$  in  $\R^d\backslash\{0\}$. For the stability condition on general solutions, see the work by Karakhanyan and Sanz-Perela \cite{KSP}.} in \eqref{EqnAP}. 

To be precise, we say that $\URad$ is \textit{stable} if we have
\begin{equation}
\label{EqnStabURad}
\lim_{t\to0}[\EnG(\URad+t\varphi)-\EnG(\URad)]/t^2\ge0
\end{equation}
 for all $\varphi\in C^\infty_c(\R^d\backslash\{0\})$.
 
Since $\URad\in\SGa(\R^d)$, for $\varphi\in C^\infty_c(\R^d\backslash\{0\})$ and $t$ small, we have 
 \begin{equation*}
\EnG(\URad+t\varphi)-\EnG(\URad)=\frac{t^2}{2}[\int|\nabla\varphi|^2+\gamma(\gamma-1)\URad^{\gamma-2}\varphi^2]+\BO(t^3).
\end{equation*}
 As a result, the radial cone $\URad$ is stable if and only if, for all $\varphi\in C^\infty_c(\R^d\backslash\{0\})$,   
\begin{equation}
\label{EqnStableInEnergyRadial}
 \int\varphi\cdot \LURad\varphi=-\int|\nabla\varphi|^2+\gamma(\gamma-1)\URad^{\gamma-2}\varphi^2\le0
\end{equation} 
  where $\LURad$ is the \textit{linearized operator around} $\URad$:
 \begin{equation}
\label{EqnLinearizedOperatorAroundURad}
L_{\URad}\varphi:=\Delta \varphi-\gamma(\gamma-1)\URad^{\gamma-2}\varphi=\Delta \varphi-(\gamma-1)\beta(d+\beta-2)\varphi/|x|^2.
\end{equation}

The \textit{linearized equation} reads
\begin{equation}
\label{EqnLinearizedEquationRadial}
\LURad\varphi=0\hem \text{ in }\R^d\backslash\{0\}.
\end{equation}
Restricting to radial functions $\varphi=\varphi(r)$, this equation reduces to an ODE
\begin{equation}
\label{EqnRadialODE}
\varphi''+\frac{d-1}{r}\varphi'=(\gamma-1)\beta(d+\beta-2)\frac{\varphi}{r^2}\hem \text{ on }(0,+\infty).
\end{equation}   
For functions of the form $\varphi(r)=r^\alpha$, this imposes a condition on the exponent $\alpha$:
\begin{equation}
\label{EqnThePolynomial}
\alpha^2+(d-2)\alpha+(1-\gamma)\beta(d+\beta-2)=0.
\end{equation} 

 \vem
 
Following Jerison-Savin \cite{JS},  the stability of $\URad$ depends on the discriminant of \eqref{EqnThePolynomial}, namely, 
\begin{equation}
\label{EqnDiscriminant}
\Delta(d,\gamma):=(d-2)^2-4\beta(d+\beta-2)(1-\gamma).
\end{equation}
With this,  the \textbf{main result} in this section is
\begin{prop}
\label{PropStabOfURad}Suppose that $d\ge 3$ and $\gamma\in(0,2)$.

If $\Delta(d,\gamma)<0$, then the radial cone $\URad$ is unstable.

If $\Delta(d,\gamma)\ge0$, then  $\URad$ is stable. 
\end{prop}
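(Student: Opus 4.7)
The plan is to observe that the stability condition \eqref{EqnStableInEnergyRadial} is precisely a weighted Hardy inequality, whose validity is governed by the sharp Hardy constant $(d-2)^2/4$. Writing $c := (1-\gamma)\beta(d+\beta-2)$, the identity for $\LURad$ in \eqref{EqnLinearizedOperatorAroundURad} shows that \eqref{EqnStableInEnergyRadial} is equivalent to
\begin{equation*}
\int_{\R^d} |\nabla\varphi|^2 \ge c \int_{\R^d} \varphi^2/|x|^2 \quad \text{for all } \varphi\in C^\infty_c(\R^d\setminus\{0\}).
\end{equation*}
Since $\Delta(d,\gamma)=(d-2)^2-4c$, proving the proposition reduces to showing that this inequality holds precisely when $c\le (d-2)^2/4$.

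For the stability half ($\Delta(d,\gamma)\ge 0$), I would simply invoke the classical sharp Hardy inequality, which for $d\ge 3$ asserts $\int |\nabla\varphi|^2 \ge \bigl((d-2)/2\bigr)^2 \int \varphi^2/|x|^2$ on $C^\infty_c(\R^d)$. Since $c\le(d-2)^2/4$ in this regime, the required inequality follows; note that when $\gamma\ge 1$ we have $c\le 0$ and stability is trivial.

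For the instability half ($\Delta(d,\gamma)<0$), the indicial equation \eqref{EqnThePolynomial} has complex roots $-(d-2)/2\pm i\mu$ with $\mu := \sqrt{-\Delta(d,\gamma)}/2>0$, which suggests the Emden--Fowler ansatz $\varphi(x) = |x|^{-(d-2)/2} f(\log |x|)$ with $f\in C^\infty_c(\R)$. Substituting and using the change of variable $s = \log|x|$, together with the vanishing of the cross term $\int f f'\,ds$ by compact support of $f$, a direct calculation gives
\begin{equation*}
\int |\nabla\varphi|^2 - c \int \varphi^2/|x|^2 \;=\; \omega_{d-1}\int_{\R}\bigl[(f')^2 - \mu^2 f^2\bigr]\,ds.
\end{equation*}
This one-dimensional quadratic form is unbounded below: taking $f_L(s) := f_0((s-s_0)/L)$ for a fixed nontrivial bump $f_0$, a large scale $L$, and a translation $s_0$ chosen so that $f_L$ is supported in some fixed interval $(a,b)\subset(0,\infty)$, one obtains $\int (f_L')^2 = L^{-1}\int (f_0')^2$ while $\int f_L^2 = L\int f_0^2$. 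Hence for $L$ large the form becomes strictly negative, and the corresponding $\varphi$, supported in the annulus $\{e^a\le|x|\le e^b\}\subset\R^d\setminus\{0\}$, violates \eqref{EqnStableInEnergyRadial}.

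The only step requiring genuine attention is the Emden--Fowler reduction above, where one must verify that the cross term vanishes and that the resulting coefficient is exactly $\mu^2 = c-(d-2)^2/4$; apart from this bookkeeping, both directions follow from the Hardy inequality. I do not foresee any substantial obstacle, since the delicate geometric information about $\URad$ has been compressed into the single constant $c$ via the scaling relation $\beta(\gamma-2)=-2$.
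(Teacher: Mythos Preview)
Your proposal is correct and takes a genuinely different route from the paper. You recognize that \eqref{EqnStableInEnergyRadial} is exactly the weighted Hardy inequality with constant $c=(1-\gamma)\beta(d+\beta-2)$, and treat both directions via the sharp Hardy constant $(d-2)^2/4$: citing the classical inequality for stability, and using the Emden--Fowler substitution plus scaling for instability. The paper instead argues each half by different PDE tools. For instability it takes the oscillatory radial solution $r^{-(d-2)/2}\cos(\tfrac12\sqrt{|\Delta|}\log r)$ of the linearized equation, shifts it by a constant to produce a nonnegative compactly supported test function, and checks the sign of $\int\tilde\varphi\,\LURad\tilde\varphi$ directly. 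For stability it develops a maximum principle for $\LURad$ (Lemma~\ref{LemMaximumPrincipleRadial} and Corollary~\ref{CorBoundedSubsolutions}) and then argues by contradiction: a minimizer of the Rayleigh quotient would be a nonnegative subsolution vanishing on the boundary of an annulus, which the maximum principle forces to vanish. Your approach is shorter and more self-contained for this proposition; the paper's approach, though longer, builds the maximum-principle machinery that is reused in Section~\ref{SectionMinimalityOfURad} to construct the upper foliation and prove one-sided minimality (see Proposition~\ref{PropIVPRadial}, where Corollary~\ref{CorBoundedSubsolutions} is the key input).

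One small slip: you say the translation $s_0$ is chosen so that $f_L$ is supported in a \emph{fixed} interval $(a,b)$, but the support of $f_L$ has length proportional to $L$ and cannot fit in a fixed interval as $L\to\infty$. This is harmless, since any compactly supported $f$ on $\R$ already gives $\varphi\in C^\infty_c(\R^d\setminus\{0\})$; simply drop the ``fixed interval'' remark.
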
 


\vem

We prove the first part of Proposition \ref{PropStabOfURad}:
\begin{proof}[Proof of instability when $\Delta<0$] Under this assumption, the linearized equation \eqref{EqnRadialODE} has a solution
$$
\varphi(r)=r^{-\frac{d-2}{2}}\cos(\frac12\sqrt{|\Delta|}\ln r).
$$
Its oscillatory nature  gives two radii $0<r_1<r_2$ such that 
$$
\varphi=-1\hem \text{ at }r_1 \text{ and }r_2, \text{ and }\hem \varphi>-1 \hem\text{ on }(r_1,r_2).
$$
Define a test function $\tilde\varphi$ as
$$
\tilde\varphi(x):=\varphi(|x|)+1>0 \text{ if }r_1<|x|<r_2, \text{ and }\hem\tilde\varphi=0 \text{ otherwise}.
$$
Then $\tilde\varphi$ satisfies
\begin{equation*}
\LURad\tilde\varphi=\gamma(1-\gamma)\URad^{\gamma-2} \text{ in }B_{r_2}\backslash \overline{B_{r_1}},
\end{equation*} 
Since $\gamma\in(0,1)$ when $\Delta(d,\gamma)<0$, this implies
$$
\tilde\varphi\cdot\LURad(\tilde\varphi)=\gamma(1-\gamma)\URad^{\gamma-2}\tilde\varphi\ge 0 \textit{ in }\R^d
$$
with strict inequality in $B_{r_2}\backslash\overline{B_{r_1}}$.

This gives the instability of  $\URad$ according to \eqref{EqnStableInEnergyRadial}.
\end{proof}

\vem

We now turn to the case when the discriminant in \eqref{EqnDiscriminant} satisfies
\begin{equation}
\label{EqnPositiveDiscriminant}
\Delta(d,\gamma)\ge 0,
\end{equation}
and show the second part of Proposition \ref{PropStabOfURad}.  With Proposition \ref{PropSolutionsAreMinimizers}, it suffices to consider $\gamma\in(0,1)$.

With \eqref{EqnPositiveDiscriminant}, the polynomial \eqref{EqnThePolynomial} has a real root
\begin{equation}
\label{EqnTheConstantAlpha}
\alpha=-\frac{d-2}{2}+\frac{\sqrt{\Delta}}{2}.
\end{equation}
As a result, the linearized equation \eqref{EqnRadialODE} has a positive solution 
\begin{equation}
\label{EqnAPositiveSolutionRadial}
g(r):=r^{\alpha}=r^{-\frac{d-2}{2}+\frac{\sqrt{\Delta}}{2}}.
\end{equation}

This leads to the maximum principle of the linearized operator $\LURad$ in   \eqref{EqnLinearizedOperatorAroundURad}:
\begin{lem}
\label{LemMaximumPrincipleRadial}
Suppose that $d\ge3$ and $\gamma\in(0,1)$ satisfy 
$
\Delta(d,\gamma)\ge0.
$

Let $w$ be a subsolution to the linearized equation \eqref{EqnLinearizedEquationRadial} in $B_{2}\backslash\overline{B_1}$, namely,
$$
\LURad(w)\ge0 \hem \text{ in }B_2\backslash\overline{B_1}.
$$
If $w\le 0$ on $\partial B_2\cup\partial B_1$, then 
$$
w\le 0 \text{ in }B_2\backslash \overline{B_1}.
$$
\end{lem}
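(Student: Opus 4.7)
The plan is to factor out the explicit positive solution $g(r) = r^\alpha$ of the linearized equation, where $\alpha$ is the root in \eqref{EqnTheConstantAlpha}, and thereby reduce $\LURad$ to a second-order operator with no zeroth-order term, for which the classical weak maximum principle applies directly.

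First I would verify that $g(r) := r^\alpha$ is a strictly positive classical solution of $\LURad g = 0$ on the annulus $B_2 \setminus \overline{B_1}$: this is exactly the radial ODE \eqref{EqnRadialODE}, and the root $\alpha$ in \eqref{EqnTheConstantAlpha} exists precisely because of the hypothesis $\Delta(d,\gamma) \ge 0$. This is where the assumption is used.

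Next I would make the substitution $w = g v$. A direct computation gives
\begin{equation*}
\LURad w \;=\; g \Delta v + 2 \nabla g \cdot \nabla v + v \, \LURad g \;=\; g \Delta v + 2 \nabla g \cdot \nabla v,
\end{equation*}
where the last equality uses $\LURad g = 0$. Hence the subsolution hypothesis $\LURad w \ge 0$ together with $g > 0$ implies
\begin{equation*}
\Delta v + 2 \frac{\nabla g}{g} \cdot \nabla v \;\ge\; 0 \quad \text{in } B_2 \setminus \overline{B_1}.
\end{equation*}
This is a linear elliptic operator in $v$ with bounded smooth coefficients (on the annulus $g$ is smooth and bounded away from $0$) and, crucially, no zeroth-order term.

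Finally I would apply the classical weak maximum principle to $v$: the boundary hypothesis $w \le 0$ on $\partial B_1 \cup \partial B_2$ together with $g > 0$ gives $v \le 0$ on the boundary, hence $v \le 0$ in the annulus, and therefore $w = g v \le 0$ there as well. There is no real obstacle here beyond recognizing that $\gamma \in (0,1)$ makes the zeroth-order coefficient of $\LURad$ of the "bad" sign (so the naive maximum principle fails), which is exactly what the positive-solution factoring trick is designed to bypass; the hypothesis $\Delta(d,\gamma) \ge 0$ is what supplies that positive solution.
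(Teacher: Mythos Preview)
Your proof is correct and rests on the same key idea as the paper: exploit the positive solution $g(r)=r^\alpha$ (available precisely when $\Delta(d,\gamma)\ge 0$) to neutralize the zeroth-order term of $\LURad$, whose sign is wrong for a direct maximum principle since $\gamma\in(0,1)$.

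The execution differs slightly. You perform the substitution $w=gv$ up front, obtain the drift operator $\Delta v+2\tfrac{\nabla g}{g}\cdot\nabla v\ge 0$, and invoke the weak maximum principle directly. The paper instead argues by contradiction: it assumes $G:=w/g$ attains a positive interior maximum $M$, sets $W:=w-Mg$, checks that $\Delta W\ge \gamma(\gamma-1)\URad^{\gamma-2}W\ge 0$ (using $\gamma<1$ and $W\le 0$), and then applies the strong maximum principle to force $W\equiv 0$, contradicting the boundary data. Your route is cleaner and avoids the contradiction step; the paper's version makes the role of the sign condition $\gamma<1$ slightly more visible. Both are standard instances of the ground-state transformation.
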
 
\begin{proof}
Suppose not, then the ratio $G:=w/g$ has a positive maximum at some point $p$ in $B_2\backslash\overline{B_1}$. Here $g$ is the function from \eqref{EqnAPositiveSolutionRadial}. That is,
$$
0<M:=\max_{\overline{B_2\backslash B_1}}G=G(p) \text{ for some }p\in B_2\backslash\overline{B_1}.
$$

\vem

Define $W:=w-Mg$, then we have
$$
W\le 0\hem \text{ in }B_2\backslash \overline{B_1}, \text{ and }W(p)=0.
$$
Meanwhile, we have
$$
\Delta W=\Delta w-M\Delta g\ge \gamma(\gamma-1)\URad^{\gamma-2}(w-Mg)\ge0\hem \text{ in }B_2\backslash\overline{B_1}.
$$
For the last inequality, we used $\gamma<1$.

The strong maximum principle of sub-harmonic functions forces
$$
w-Mg=W\equiv0 \text{ in }B_2\backslash B_1.
$$
This contradicts $w\le 0$ on $\partial B_2\cup\partial B_1.$
\end{proof} 

The following corollary is a useful tool to rule out bounded subsolutions in the punctured disk.
\begin{cor}
\label{CorBoundedSubsolutions}
Suppose that $d\ge3$ and $\gamma\in(0,1)$ satisfy 
$
\Delta(d,\gamma)\ge0.
$

Let $w$ be a subsolution to the linearized equation \eqref{EqnLinearizedEquationRadial} in $B_{1}\backslash\{0\}$, namely,
$$
\LURad(w)\ge0 \hem \text{ in }B_1\backslash\{0\}.
$$
If $w$ satisfies
$$w\le 0\text{ on }\partial B_1, \text{ and }\sup_{B_1\backslash\{0\}}w<+\infty,$$
then 
$$
w\le 0 \text{ in }B_1\backslash\{0\}.
$$
\end{cor}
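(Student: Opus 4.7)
The plan is to reduce to Lemma \ref{LemMaximumPrincipleRadial} by inserting an $\varepsilon$-multiple of a positive barrier that blows up at the origin, and then let $\varepsilon\to 0$.

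First I would observe that the positive solution $g(r)=r^\alpha$ from \eqref{EqnAPositiveSolutionRadial} diverges at the origin. Indeed, since $\gamma\in(0,1)$, the discriminant satisfies $\Delta(d,\gamma)<(d-2)^2$, so $\sqrt{\Delta}<d-2$ and hence the exponent $\alpha=-\tfrac{d-2}{2}+\tfrac{\sqrt{\Delta}}{2}$ is strictly negative. Therefore $g(r)\to+\infty$ as $r\to 0^+$, while $g$ remains a positive classical solution of $L_{\URad}g=0$ in $B_1\setminus\{0\}$.

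Next I would note that the maximum principle in Lemma \ref{LemMaximumPrincipleRadial} works verbatim on any annulus $B_1\setminus\overline{B_\rho}$, $0<\rho<1$: the proof only uses the positive radial solution $g$ to run the ratio argument $G=w/g$ and the strong maximum principle for subharmonic functions, neither of which depends on the particular radii. So I will freely apply that lemma on $B_1\setminus\overline{B_\rho}$.

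Now for the core argument, fix $\varepsilon>0$ and consider $w_\varepsilon:=w-\varepsilon g$. Using $L_{\URad}g=0$, I get $L_{\URad}w_\varepsilon=L_{\URad}w\ge 0$ in $B_1\setminus\{0\}$. On $\partial B_1$, since $g>0$ and $w\le 0$, we have $w_\varepsilon\le 0$. Because $w$ is bounded above by some $M$ and $g(r)\to+\infty$ as $r\to 0$, there exists $\rho=\rho(\varepsilon)>0$ such that $\varepsilon g\ge M+1$ on $\partial B_\rho$, hence $w_\varepsilon\le -1<0$ on $\partial B_\rho$. Applying the annular maximum principle to $w_\varepsilon$ on $B_1\setminus\overline{B_\rho}$ gives $w_\varepsilon\le 0$ there, i.e.\ $w\le \varepsilon g$ on $B_1\setminus\overline{B_\rho}$. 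Letting $\varepsilon\to 0$ (so that $\rho\to 0$) and then evaluating at an arbitrary fixed $x\in B_1\setminus\{0\}$, we conclude $w(x)\le 0$.

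There is no real obstacle here: the only point requiring a remark is the sign of $\alpha$, which is what makes the barrier $g$ effective at the puncture. Everything else is a standard removable-singularity/Perron-type argument, enabled by the one-sided bound $\sup w<+\infty$ together with the existence of the blow-up solution $g$ provided by the condition $\Delta(d,\gamma)\ge 0$.
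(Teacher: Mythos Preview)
Your argument is correct and essentially identical to the paper's: both use the barrier $\varepsilon g$ with $g(r)=r^\alpha$, exploit that $\alpha<0$ (since $\gamma\in(0,1)$ forces $\Delta<(d-2)^2$) to get $g\to+\infty$ at the origin, apply the annular maximum principle of Lemma~\ref{LemMaximumPrincipleRadial} on $B_1\setminus\overline{B_{\rho(\varepsilon)}}$, and send $\varepsilon\to 0$. The paper merely writes the inner radius explicitly as $r_\varepsilon=(\sup w/\varepsilon)^{1/\alpha}$ rather than defining $\rho(\varepsilon)$ by the inequality $\varepsilon g\ge M+1$, but this is the same choice.
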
 
\begin{proof}
Under our assumption $\gamma\in(0,1)$, the discriminant $\Delta(d,\gamma)$ in \eqref{EqnDiscriminant}
satisfies $\Delta(d,\gamma)<(d-2)^2$. As a consequence, the exponent $\alpha$ in \eqref{EqnTheConstantAlpha} is strictly negative. 

Given any $\eps>0$, this gives
$$
w-\eps g\le 0 \text{ on }\partial B_1\cup\partial B_{r_\eps},
$$
where $g$ is the radial solution in \eqref{EqnAPositiveSolutionRadial}, and $r_\eps$ is given by
$$
r_\eps:=(\sup w/\eps)^{\frac{1}{\alpha}}.
$$

As a result, we can apply Lemma \ref{LemMaximumPrincipleRadial} to $w-\eps g$ in $B_1\backslash\overline{B_{r_\eps}}$ to conclude 
$$
w\le \eps g \text{ in }B_1\backslash\overline{B_{r_\eps}}.
$$
Note that $r_\eps\to0$ as $\eps\to 0$, we get the desired conclusion. 
\end{proof}

With these preparations, we give the proof of the stability of $\URad$ if $\Delta\ge0$:
\begin{proof}[Proof of stability when $\Delta\ge0$]
Suppose not, then \eqref{EqnStableInEnergyRadial} gives a function $\varphi$, with support in $\overline{B_R\backslash B_r}$ for some $R>r>0$ such that 
\begin{equation}
\label{EqnContradictionToInStability}
\int|\nabla\varphi|^2+\gamma(\gamma-1)\URad^{\gamma-2}\varphi^2<0.
\end{equation}

Define a functional 
$$
F(v):=\int|\nabla v|^2+\gamma(\gamma-1)\URad^{\gamma-2}v^2
$$
over  the class 
$$
K:=\{v\in H_0^1(B_R\backslash\overline{B_r}): \|v\|_{\mathcal{L}^2}\le 1\}.
$$ 
The direct method gives a minimizer, $w$, in $K$. 

Without loss of generality, we can assume
\begin{equation}
\label{EqnPositivityOfAnImaginedW}
w\ge0 \text{ in }B_R\backslash B_r.
\end{equation}
With \eqref{EqnContradictionToInStability}, we have 
\begin{equation}
\label{EqnNegativeEnergyForW}
F(w)\le F(\varphi/\|\varphi\|_{\mathcal{L}^2})<0.
\end{equation} 
As a result, the Euler-Lagrange equation for $w$ implies
$$
\LURad(w)\ge0 \text{ in }B_R\backslash\overline{B_r}.
$$

Since $w$ is supported in $\overline{B_R\backslash B_r}$, Lemma \ref{LemMaximumPrincipleRadial} implies
$$
w\le 0\text{ in }B_R\backslash B_r.
$$
Together with \eqref{EqnPositivityOfAnImaginedW}, we have $w=0$ in $B_R\backslash B_r$, contradicting \eqref{EqnNegativeEnergyForW}.
\end{proof}

\section{Minimality of the radial cone}
\label{SectionMinimalityOfURad}
In this section, we address the minimality of the radial cone $\URad$ in \eqref{EqnURad}. With classes of minimizers and one-sided minimizers $\MGa$ and $\MPGa$ in  Definition \ref{DefMinimality}, the \textbf{main result} of this section reads
\begin{prop}
\label{PropMinimalityOfURad}
Suppose that $d\ge3$ and $\gamma\in(0,2)$. 

\begin{enumerate}
\item{If $\Delta(d,\gamma)\ge 0$, then  $\URad\in\MPGa(\R^d)$. }
\item{If $\gamma\in(1-\frac{(d-2)^2}{64d^2},2)$, then $\URad\in\MGa(\R^d)$. }
\end{enumerate}
\end{prop}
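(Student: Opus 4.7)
The plan is to invoke the foliation framework of Subsection~\ref{SubsectionFoliationsAndMinimality}. Part (1) will follow by constructing an upper foliation of $\URad$ and applying Proposition~\ref{PropMinimalityFromAbove}; part (2) will require, in addition, a lower foliation, after which Theorem~\ref{ThmMinimality} delivers the conclusion. Since $\URad$ is rotationally invariant, the natural candidates for both families come from radial solutions of the one-dimensional ODE $U''+\frac{d-1}{r}U'=\gamma U^{\gamma-1}$, and the scaling \eqref{EqnRescaling} will furnish the one-parameter families.

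For the upper foliation I would take $U$ to be the unique smooth positive radial solution with $U(0)=1$, $U'(0)=0$, and set $\Psi_t(x):=tU(|x|/t^{1/\beta})$ for $t>0$. Existence of $U$ is routine via $(r^{d-1}U')'=\gamma r^{d-1}U^{\gamma-1}$, and the asymptotic $U(r)/r^\beta\to c_{\mathrm{rad}}$ as $r\to\infty$ follows from a blow-down argument: any subsequential limit of $R^{-\beta}U(R\cdot)$ is a radial solution vanishing at the origin, which is forced to be $\URad$. Properties (3)--(5) of Definition~\ref{DefUpperFoliation} are then immediate from the scaling. The key point is property (1), the strict inequality $U>\URad$ on $(0,\infty)$. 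I would prove this by a ratio argument inspired by Lemma~\ref{LemMaximumPrincipleRadial} and Corollary~\ref{CorBoundedSubsolutions}: consider $W:=(U-\URad)/g$ with $g(r)=r^\alpha$ the positive Jacobi field from \eqref{EqnAPositiveSolutionRadial}, show that $W$ is bounded at $0$ and $\infty$ and positive near $0$, then use the sign of the equation satisfied by $U-\URad$ together with the hypothesis $\Delta(d,\gamma)\ge 0$ to exclude a non-positive interior minimum. Proposition~\ref{PropMinimalityFromAbove} then yields $\URad\in\MPGa(\R^d)$.

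For part (2) I would additionally construct a lower foliation $\{\Phi_\lambda\}_{\lambda>0}$, where $\Phi_\lambda$ is the radial solution with contact set exactly $\overline{B_\lambda}$, obtained by ODE shooting from $r=\lambda$ with the free boundary asymptotic $\Phi_\lambda(r)\sim c_\gamma(r-\lambda)^\beta$ (analogous to the obstacle formula \eqref{EqnPLambda}). The scaling $\Phi_\lambda(r)=\lambda^\beta\Phi_1(r/\lambda)$ and the asymptotic $\Phi_\lambda(r)/\URad(r)\to 1$ as $r\to\infty$ deliver properties (3)--(6) of Definition~\ref{DefLowerFoliation}. The essential property~(1), namely $\Phi_\lambda<\URad$ on $\{\Phi_\lambda>0\}$, is the quantitative heart of part (2): after reducing to $\lambda=1$ by scaling, I would expand $\URad-\Phi_1$ at infinity in the basis of the two Jacobi fields associated to the roots of \eqref{EqnThePolynomial}, match with the free boundary data at $r=1$, and then run the maximum principle of Lemma~\ref{LemMaximumPrincipleRadial} on the resulting difference. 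The explicit hypothesis $1-\gamma<(d-2)^2/(64d^2)$ will enter as the quantitative threshold needed to guarantee that the matching produces a sufficiently positive coefficient in front of the dominant decaying mode, so the comparison is not destroyed in the transition region between $r=1$ and infinity. Combined with the upper foliation, Theorem~\ref{ThmMinimality} yields $\URad\in\MGa(\R^d)$.

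The main obstacle throughout is ruling out crossings between $\URad$ and the auxiliary radial solutions $U$ and $\Phi_1$. At the linearized level, non-crossing is essentially the stability content of Section~\ref{SectionStabilityOfURad}, but transferring stability to the nonlinear statement requires either a Hopf-type lemma based on the positive Jacobi field $g$ or a direct comparison in the logarithmic variable $t=\ln r$, where the radial ODE becomes autonomous with $\URad$ as an equilibrium. The lower foliation case is noticeably more delicate because the free boundary at $r=\lambda$ provides only matching conditions rather than Cauchy data, so uniqueness of ODE solutions is not available; it is here that the quantitative smallness of $1-\gamma$ becomes essential.
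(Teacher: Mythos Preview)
Your outline for part~(1) matches the paper's approach closely: the upper leaves are rescalings of the radial solution with $U(0)=1$, $U'(0)=0$, and the strict inequality $U>\URad$ is obtained by the linearized maximum principle of Corollary~\ref{CorBoundedSubsolutions} under the hypothesis $\Delta(d,\gamma)\ge 0$. The only remark is that the paper proves $0<U-\URad\le 1$ directly by an ODE monotonicity argument rather than via a blow-down, which sidesteps the asymptotic analysis you propose.

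For part~(2) your route diverges from the paper, and there is a genuine gap. The paper does \emph{not} use radial solutions for the lower leaves; instead it builds explicit \emph{subsolutions} by gluing two pieces: the perturbation $U=\URad-|x|^{-(d-2)/2}$ in the region $\{\URad-|x|^{-(d-2)/2}>\eta\,\URad\}$, and the profile $V(x)=A[(|x|-r_\eta)_+]^\beta$ near the origin, with matching values on the interface. The bound $\gamma>1-\tfrac{(d-2)^2}{64d^2}$ arises from a concrete calculation: with the choice $\eta=(2d)^{-3/2}$, it is precisely what makes $E(U)\ge 0$ in the outer region and what forces the radial derivatives to satisfy $\partial_r V\le \partial_r U$ at the gluing sphere, so that the glued function has a convex corner and remains a subsolution.

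Your alternative, shooting a radial solution $\Phi_1$ from the free boundary at $r=1$, faces two difficulties you have not addressed. First, neither global existence on $(1,\infty)$ nor the asymptotic $\Phi_1/\URad\to 1$ is automatic; the phase-plane analysis for $\gamma<1$ is nontrivial and you have not indicated how to rule out blow-up, decay to zero, or crossing. Second, your sketch for the comparison $\Phi_1<\URad$ (``expand at infinity in Jacobi fields, match at $r=1$, run the maximum principle'') mixes linear and nonlinear information in a way that does not produce a proof: the difference $\URad-\Phi_1$ does not satisfy the linearized equation, so a Jacobi-field expansion is only asymptotic, and Lemma~\ref{LemMaximumPrincipleRadial} applies to linear sub/supersolutions, not to the nonlinear difference on an unbounded domain. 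Most importantly, nothing in your outline explains why the specific threshold $\tfrac{(d-2)^2}{64d^2}$ would appear; in the paper it is an artifact of the gluing computation, not of any intrinsic property of radial solutions. Using subsolutions rather than solutions is exactly what gives the paper the freedom to make this computation close.
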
 
Recall the discriminant $\Delta(d,\gamma)$ from \eqref{EqnDiscriminant}.  
\begin{rem}
For $\gamma\in[1,2)$, the minimality of $\URad$ follows from Proposition \ref{PropSolutionsAreMinimizers}. In this section, we consider only the range $$\gamma\in(0,1).$$

The lower bound for $\gamma$ in Statement (2) is sufficient for our purpose (See Lemma \ref{LemGluedSubsolutionRadial}). We do not know the sharp range of $\gamma$ for which the minimality of $u$ holds.
\end{rem}

With Theorem \ref{ThmMinimality}, we need a foliation around $\URad$.  In Subsection \ref{SubsectionUpperFoliationRadial}, we construct an upper foliation. In Subsection \ref{SubsectionLowerFoliationURad},  we build a lower foliation. At the beginning of each subsection, we give a \textbf{brief explanation of the strategy}. 
\subsection{Upper foliation of the radial cone}
\label{SubsectionUpperFoliationRadial}
In this subsection, we construct an upper foliation as in Definition \ref{DefUpperFoliation} for the radial cone $\URad$ in \eqref{EqnURad}. 
 Proposition \ref{PropMinimalityFromAbove} then implies the one-sided minimality of $\URad$  in Statement (1) of Proposition \ref{PropMinimalityOfURad}.
 
In particular, we need one supersolution to the Alt-Phillips problem (see Definition \ref{DefSolution}) that lies above $\URad$. In the radial setting, this problem reduces to the ODE in \eqref{EqnODEForUpperLeafRadial}. 

To show that its solution stays larger than $\URad$ (see \eqref{EqnDIsNonPositive}), we need the maximum principle  in Lemma \ref{LemMaximumPrincipleRadial}. This requires the assumption $\Delta(d,\gamma)\ge0$ in Statement (1) of Proposition \ref{PropMinimalityOfURad}. This is the same range of $\gamma$ for the stability of $\URad$ (see Proposition \ref{PropStabOfURad}).

\vem
Now we start the construction of the upper foliation. 
Leaves in this upper foliation arise as rescalings of   the solution to the following initial value problem:
\begin{equation}
\label{EqnODEForUpperLeafRadial}
v''+\frac{d-1}{r}v'=\gamma v^{\gamma-1} \text{ on }(0,+\infty),\hem\text{ and }
v(0)=1, \hem v'(0)=0.
\end{equation} 
Identifying $v$ with the function $x\in\R^d\mapsto v(|x|)$, 
we see that
\begin{equation}
\label{EqnPDEFromODERadial}
\Delta v=\gamma v^{\gamma-1} \text{ in }\R^d.
\end{equation}

The analysis of this  problem is summarized in the following:
\begin{prop}
\label{PropIVPRadial}
For $d\ge 3$ and $\gamma\in(0,1)$ with $\Delta(d,\gamma)\ge0$, there is a unique solution $v$ to \eqref{EqnODEForUpperLeafRadial} on $[0,+\infty)$. 

Moreover, this solution satisfies
$$
0<v-\URad\le 1,  \text{ and }v\ge 1 \text{ on }[0,+\infty).
$$
\end{prop}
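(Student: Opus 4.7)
The plan is to establish the proposition in three steps: local-then-global existence, the strict lower bound $v > \URad$ (which is where the hypothesis $\Delta(d,\gamma)\ge 0$ enters through Corollary \ref{CorBoundedSubsolutions}), and the upper bound $v-\URad\le 1$.

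\textbf{Step 1 (Existence and $v\ge 1$).} The right-hand side $\gamma v^{\gamma-1}$ is smooth near $v=1$, so rewriting \eqref{EqnODEForUpperLeafRadial} as the integral equation
$$v(r)=1+\int_0^r s^{1-d}\int_0^s t^{d-1}\gamma v(t)^{\gamma-1}\,dt\,ds$$
and applying Picard iteration yields a unique smooth local solution on some $[0,\rho)$. On this interval, $(r^{d-1}v')'=\gamma r^{d-1}v^{\gamma-1}>0$ combined with $v'(0)=0$ forces $v'>0$ on $(0,\rho)$, hence $v\ge v(0)=1$ and $v^{\gamma-1}\le 1$. Integrating the inequality $(r^{d-1}v')'\le \gamma r^{d-1}$ twice gives the quantitative bound $v(r)\le 1+r^2/(2d)$, which rules out finite-time blow-up and extends $v$ uniquely to all of $[0,+\infty)$ by standard continuation.

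\textbf{Step 2 (Strict lower bound $v>\URad$).} Set $w:=v-\URad$, so that $w(0)=1>0$. Because $f(t)=t^{\gamma-1}$ is convex on $(0,+\infty)$ for $\gamma\in(0,1)$, the tangent-line inequality $f(v)\ge f(\URad)+f'(\URad)(v-\URad)$ translates, after multiplying by $\gamma$ and subtracting, into
$$\LURad w=\Delta w-\gamma(\gamma-1)\URad^{\gamma-2}w\ge 0\quad\text{on }\R^d\setminus\{0\},$$
with $\LURad$ as in \eqref{EqnLinearizedOperatorAroundURad}. Suppose toward a contradiction that $r_0:=\inf\{r>0:w(r)=0\}$ is finite. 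Then $w>0$ on $[0,r_0)$, $w(r_0)=0$, and $w$ is bounded on $\overline{B_{r_0}}$ by continuity at the origin. Since $\LURad$ is scale invariant, the rescaled version of Corollary \ref{CorBoundedSubsolutions} applies to $w$ on $B_{r_0}\setminus\{0\}$ and forces $w\le 0$ there, contradicting $w>0$ on $(0,r_0)$. Hence $w>0$ on all of $[0,+\infty)$.

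\textbf{Step 3 (Upper bound $w\le 1$ and main obstacle).} With $v>\URad>0$ on $(0,+\infty)$ and $\gamma-1<0$, monotonicity of $t\mapsto t^{\gamma-1}$ gives $v^{\gamma-1}-\URad^{\gamma-1}<0$. Subtracting the radial equations for $v$ and $\URad$ yields
$$(r^{d-1}w')'=\gamma r^{d-1}(v^{\gamma-1}-\URad^{\gamma-1})<0\quad\text{on }(0,+\infty).$$
Since $\beta>1$, both $v'(r)$ and $\URad'(r)=c_{\mathrm{rad}}\beta r^{\beta-1}$ vanish as $r\to 0^+$, so $r^{d-1}w'(r)\to 0$. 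Integrating the previous inequality from $0$ therefore gives $r^{d-1}w'(r)<0$, i.e.\ $w$ is strictly decreasing on $(0,+\infty)$; combined with $w(0)=1$ this yields $w\le 1$ throughout. The genuine difficulty in this plan is Step 2: without the discriminant hypothesis $\Delta(d,\gamma)\ge 0$ the operator $\LURad$ admits only oscillatory radial solutions and has no global positive barrier $g=r^\alpha$ from \eqref{EqnAPositiveSolutionRadial}, so the maximum principle of Corollary \ref{CorBoundedSubsolutions} fails and nothing prevents $v$ from crossing $\URad$.
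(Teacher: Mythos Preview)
Your proof is correct and follows essentially the same route as the paper: the contradiction argument for $v>\URad$ via the convexity inequality and Corollary~\ref{CorBoundedSubsolutions}, and the monotonicity of $r^{d-1}w'$ for the upper bound, are identical. The only variation is in Step~1, where you handle the singular point $r=0$ directly through the integral equation and Picard iteration, whereas the paper regularizes the coefficient to $\frac{d-1}{r+\eps}$ and passes to the limit; both are standard and your version is arguably cleaner here.
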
 

\begin{proof} 
For small $\eps>0$, we start with the  perturbed problem 
\begin{equation}
\label{EqnPerturbedIVPRadial}
v_\eps''+\frac{d-1}{r+\eps}v_\eps'=\gamma v_\eps^{\gamma-1} \text{ for }r>0,\hem\text{ and }
v_\eps(0)=1, \hem v_\eps'(0)=0.
\end{equation}
There is a smooth solution on $(0,R_\eps)$, with $R_\eps$ being finite only when
$$
\lim_{r\to R_\eps^-}v_\eps(r)=0.
$$

The perturbed equation is equivalent to 
$$
(r+\eps)^{1-d}[(r+\eps)^{d-1}v_\eps']'=\gamma v_\eps^{\gamma-1}.
$$
With $v_\eps(0)=1$, the right-hand side is initially positive. Hence $(r+\eps)^{d-1}v_\eps'$ is strictly increasing. With $v_\eps'(0)=0$, we have 
\begin{equation}
\label{EqnPerturbedSolutionMonotone}
v_\eps'>0\hem \text{ and }v_\eps>1 \text{ for }r>0.
\end{equation}
In particular, we have
$$
R_\eps=+\infty.
$$

With \eqref{EqnPerturbedSolutionMonotone},  it follows from  \eqref{EqnPerturbedIVPRadial}  that
$
v_\eps''<\gamma \text{ on }(0,+\infty).
$
 As a consequence, we have
$$
0<v_\eps'(r)<\gamma r, \text{ and }\hem 1<v_\eps(r)<1+\frac{1}{2}\gamma r^2 \hem\text{ for }r>0.
$$
This gives enough compactness of the family $\{v_\eps\}$. We can extract a subsequential limit, $v$, as $\eps\to 0$. This limit $v$ solves \eqref{EqnODEForUpperLeafRadial}, together with 
$$
v'\ge0, \text{ and }\hem v\ge 1 \text{ on }[0,+\infty).
$$

\vem

It remains to establish the bounds on the difference
$
D:=v-\URad.
$

We first show that $D>0$ on $(0,+\infty)$. 

Suppose not, with $D(0)>0$, we find $R<+\infty$ such that 
\begin{equation}
\label{EqnAssumptionOnDRadial}
D>0 \text{ on }[0,R),\text{ and } D(R)=0.
\end{equation}
Using the equations for $v$ and $\URad$ (see \eqref{EqnPDEFromODERadial} and Definition \ref{DefSolution}),  we have
$$
\Delta D=\gamma(v^{\gamma-1}-\URad^{\gamma-1})\ge\gamma(\gamma-1)\URad^{\gamma-2}(v-\URad) \hem\text{ in }B_R\backslash\{0\}.
$$
For the inequality, we used the convexity of $t\mapsto t^{\gamma-1}$ and our assumption that $v>\URad$ in $B_{R}$.

As a result, we have
$$
\LURad(D)\ge 0\text{ in }B_R\backslash\{0\}, \text{ and } D=0 \text{ on }\partial B_R,
$$
where $\LURad$ is the linearized operator  in \eqref{EqnLinearizedOperatorAroundURad}. Corollary \ref{CorBoundedSubsolutions} gives
\begin{equation}
\label{EqnDIsNonPositive}
D\le 0 \text{ in }B_R\backslash\{0\},
\end{equation}
contradicting our assumption \eqref{EqnAssumptionOnDRadial}. This is the only place where we need the assumption $\Delta(d,\gamma)\ge 0$. 

Therefore, we have
$$
v-\URad=D>0 \text{ on }[0,+\infty).
$$

\vem

We now show the upper bound $D\le 1$ on $[0,+\infty)$. 

Since $v$ and $\URad$ both solve the ODE in \eqref{EqnODEForUpperLeafRadial}, with $D=v-u>0$ and $\gamma\in(0,1)$,  we have 
$$
r^{1-d}[r^{d-1}D']'=D''+\frac{d-1}{r}D'=\gamma(v^{\gamma-1}-\URad^{\gamma-1})<0 \text{ on }(0,+\infty).
$$
With $D'(0)=0$, we have
$
D'< 0 \text{ on }(0,+\infty),
$
which implies
$$
v-\URad=D<D(0)=1 \text{ on }(0,+\infty).
$$
\end{proof} 

With this, we prove the first statement in Proposition \ref{PropMinimalityOfURad}:
\begin{proof}[Proof of one-sided minimality when $\Delta\ge0$]
Following the notation in Definition \ref{DefUpperFoliation}, we define
$$
\Psi_1(x):=v(|x|)
$$
with $v$  from Proposition \ref{PropIVPRadial}. With the notations from Definition \ref{DefSolution}, we have
$$
\Psi_1\in\SGa(\R^d).
$$
Properties of $v$ implies
$$
0<\Psi_1-\URad\le 1, \text{ and }\Psi_1\ge1 \text{ in }\R^d.
$$

For $t>0$, define
$$
\Psi_t(x):=t^\beta\Psi_1(x/t)
$$
with the scaling parameter $\beta$  from \eqref{EqnScalingParameter}. From this definition, the continuity of $(t,x)\mapsto\Psi_t(x)$ follows. Proposition \ref{PropSymmetryOfEquation} gives $\Psi_t\in\SGa(\R^d)$. 

Moreover, we have
$$
(\Psi_t-\URad)(x)=t^\beta[\Psi_1(x/t)-\URad(x/t)]\in(0,t^\beta)
$$
and
$$
\Psi_t\ge t^\beta \text{ in }\R^d.
$$

With these, we see that $\{\Psi_t\}$ gives an upper foliation of $\URad$ as in Definition \ref{DefUpperFoliation}. 
The conclusion follows from Proposition \ref{PropMinimalityFromAbove}.
\end{proof}

\subsection{Lower foliation of the radial cone}
\label{SubsectionLowerFoliationURad}
We turn to the minimality of the radial cone $\URad$ in  Statement (2) of Proposition \ref{PropMinimalityOfURad}. In Subsection \ref{SubsectionUpperFoliationRadial},  we have proved its one-sided minimality from above. It remains to prove the minimality from below.  With Proposition \ref{PropMinimalityFromBelow}, we need to construct a lower foliation of $\URad$ as in Definition \ref{DefLowerFoliation}.

The leaves in this lower foliation arise as  rescalings of the single leaf  given in \eqref{EqnLowerLeafRadialV2}. 
To construct this leaf, we  perturb $\URad$ by a supersolution to the linearized operator $\LURad$ in \eqref{EqnLinearizedOperatorAroundURad}. One particular supersolution is the function $v$  in \eqref{EquationVLinearizedEquationSuper}. Due to the singularity in $\LURad$ near the zero set of $\URad$, this perturbation $U:=\URad-v$ (see \eqref{EqnTheProfileAwayFromTheFreeBoundaryV2}) only gives meaningful information when $\URad$ and $v$ are comparable, that is, in the region $\OEta$ as in  \eqref{EqnOEtaRadial}. In this region, the desired subsolution property of $U$ follows from Lemma \ref{LemOuterRegionRadial}. 

In the complement of $\OEta$, we build the leaf by the explicit profile $V$ in \eqref{EqnTheProfileNearTheFreeBoundaryV2}. Its desired subsolution property is given in Lemma \ref{LemInnerRegionRadial}.

Gluing together $U$ and $V$ (see \eqref{EqnTheProfileAwayFromTheFreeBoundaryV2} and \eqref{EqnTheProfileNearTheFreeBoundaryV2}), we get a single leaf in the lower foliation in \eqref{EqnLowerLeafRadialV2}. To get the subsolution property along the interface of gluing, we need the derivatives of $U$ and $V$ to be ordered as in \eqref{EqnOrderingOfDerivativesOfUAndV}. This puts the constraint on $\gamma$ in Statement (2) of Proposition \ref{PropMinimalityOfURad}.

 \vem

To simplify our notations,  for a positive function $\varphi$, we denote the error in solving the Alt-Phillips problem (Definition \ref{DefSolution}) as
\begin{equation}
\label{EqnErrorInEquationRadial}
E(\varphi):=\Delta\varphi-\gamma\varphi^{\gamma-1}.
\end{equation}

\vem
For a small dimensional constant $\eta>0$ to be chosen, the \textit{region away from the free boundary} is defined as 
\begin{equation}
\label{EqnOEtaRadial}
\Omega_{\eta}:=\{\URad-v>\eta\URad\},
\end{equation}
where  $\URad$ denotes the radial cone in \eqref{EqnURad} and  
\begin{equation}
\label{EquationVLinearizedEquationSuper}
v(x):=|x|^{-\frac{d-2}{2}}.
\end{equation}
The \textit{profile away from the free boundary} is taken as 
\begin{equation}
\label{EqnTheProfileAwayFromTheFreeBoundaryV2}
U:=\URad-v.
\end{equation}
  
This function satisfies
\begin{lem}
\label{LemOuterRegionRadial}
For $\gamma\in(0,1)$ and $\eta>0$, we have
\begin{equation}
\label{EqnSomethingElementary}
E(U)\ge|x|^{-\frac{d}{2}-1}\cdot\{\frac{(d-2)^2}{4}-2d\eta^{\gamma-2}(1-\gamma)\}  \text{ in }\OEta.
\end{equation}
\end{lem}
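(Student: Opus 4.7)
The plan is to compute $E(U)$ directly, identifying the favorable term coming from $\Delta v$ and controlling the unfavorable nonlinear piece using the mean value theorem together with the defining inequality of $\OEta$.

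First, since $\URad\in\SGa(\R^d)$ satisfies $\Delta\URad=\gamma\URad^{\gamma-1}$, and the exponent in $v(x)=|x|^{-(d-2)/2}$ is engineered so that, using $\Delta|x|^a=a(a+d-2)|x|^{a-2}$, one gets $\Delta v=-\tfrac{(d-2)^2}{4}|x|^{-d/2-1}$, we may expand
$$E(U)=\Delta\URad-\Delta v-\gamma U^{\gamma-1}=\frac{(d-2)^2}{4}|x|^{-d/2-1}+\gamma\bigl[\URad^{\gamma-1}-U^{\gamma-1}\bigr].$$
Since $\gamma-1<0$, the bracketed term is negative; this is where we lose and need the quantitative size of $U$ provided by $\OEta$.

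To bound this term, apply the mean value theorem to $t\mapsto t^{\gamma-1}$: there is $\xi\in(U,\URad)$ with $\URad^{\gamma-1}-U^{\gamma-1}=-(1-\gamma)\xi^{\gamma-2}v$. Since $\gamma-2<0$ and $\xi\ge U$, we have $\xi^{\gamma-2}\le U^{\gamma-2}$, and the defining inequality $U>\eta\URad$ in $\OEta$ then yields $U^{\gamma-2}\le\eta^{\gamma-2}\URad^{\gamma-2}$. Combining,
$$\gamma\bigl[\URad^{\gamma-1}-U^{\gamma-1}\bigr]\ge-\gamma(1-\gamma)\eta^{\gamma-2}\URad^{\gamma-2}v\quad\text{in }\OEta.$$

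It remains to convert $\URad^{\gamma-2}v$ into a pure power of $|x|$. The scaling identity $\beta(\gamma-2)=-2$ gives $\URad^{\gamma-2}=\CRad^{\gamma-2}|x|^{-2}$, so $\URad^{\gamma-2}v=\CRad^{\gamma-2}|x|^{-d/2-1}$. The normalization $\CRad^{2-\gamma}=\gamma/[\beta(d+\beta-2)]$ simplifies $\gamma\CRad^{\gamma-2}=\beta(d+\beta-2)$, and since $\gamma\in(0,1)$ forces $\beta\in(1,2)$, we have $\beta(d+\beta-2)\le 2\cdot d=2d$. Putting the pieces together,
$$E(U)\ge\Bigl[\frac{(d-2)^2}{4}-(1-\gamma)\eta^{\gamma-2}\beta(d+\beta-2)\Bigr]|x|^{-d/2-1}\ge\Bigl[\frac{(d-2)^2}{4}-2d(1-\gamma)\eta^{\gamma-2}\Bigr]|x|^{-d/2-1},$$
which is the claimed bound.

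The calculation is essentially mechanical; the only step that requires attention is the direction of the mean value estimate, which must express the increment through $U^{\gamma-2}$ rather than $\URad^{\gamma-2}$ so that the lower bound $U>\eta\URad$ from $\OEta$ is usable. This is precisely why the lemma is restricted to $\OEta$: closer to the free boundary $U$ becomes arbitrarily small and the nonlinear term cannot be absorbed by the harmonic gain $\tfrac{(d-2)^2}{4}$.
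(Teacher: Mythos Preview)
Your proof is correct and follows essentially the same route as the paper's: both expand $E(U)=-\Delta v-\gamma[U^{\gamma-1}-\URad^{\gamma-1}]$, compute $-\Delta v=\tfrac{(d-2)^2}{4}|x|^{-d/2-1}$, and then bound the nonlinear increment using $U>\eta\URad$ together with the explicit identity $\gamma\URad^{\gamma-2}=\beta(d+\beta-2)|x|^{-2}$ and the crude bound $\beta(d+\beta-2)\le 2d$. The only cosmetic difference is that the paper phrases the increment estimate via convexity of $t\mapsto t^{\gamma-1}$ (tangent line at $U$), whereas you invoke the mean value theorem and the monotonicity of $t\mapsto t^{\gamma-2}$; these are the same inequality.
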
 

\begin{proof}
With $\URad\in\SGa(\R^d)$, the convexity of the function $t\mapsto t^{\gamma-1}$ and the definition of $\OEta$ in \eqref{EqnOEtaRadial}, we have
\begin{align*}
E(U)&=-\Delta v-\gamma[U^{\gamma-1}-\URad^{\gamma-1}]\\
&\ge -\Delta v+\gamma(\gamma-1)\eta^{\gamma-2}\URad^{\gamma-2}v\hem \text{ in }\Omega_\eta.
\end{align*} 
Using the particular forms of $\URad$, $\LURad$ and $v$, we conclude
\begin{align*}
E(U)&\ge |x|^{-\frac{d-2}{2}-2}\cdot\{\frac{(d-2)^2}{4}-(1-\gamma)\beta(d+\beta-2)\eta^{\gamma-2})\}\\
&\ge |x|^{-\frac{d-2}{2}-2}\cdot\{\frac{(d-2)^2}{4}-2d\eta^{\gamma-2}(1-\gamma)\} \hem \text{ in }\Omega_\eta.
\end{align*}
since the scaling parameter $\beta$ from \eqref{EqnScalingParameter} satisfies
$
\beta<2 \text{ for $\gamma\in(0,1)$.}
$
\end{proof} 

In particular, once $\eta>0$ is chosen, the profile $U$ gives  a subsolution in $\OEta$ when $\gamma$ is close to $1$.

\vem

We now turn to the \textit{region near the origin}. 
By definition, we have
\begin{equation}
\label{EqnRegionNearTheOriginV2}
\R^d\backslash\OEta=\overline{B_{R_\eta}} \text{ for some }0<R_\eta<+\infty,
\end{equation}
where $\OEta$ is given in \eqref{EqnOEtaRadial}.

In $B_{R_\eta}$, we build the profile of the form
\begin{equation}
\label{EqnTheProfileNearTheFreeBoundaryV2}
V(x):=A[(|x|-r_\eta)_+]^\beta,
\end{equation}
where the coefficient $A$ satisfies $A^{2-\gamma}=2/\beta^2$, and $r_\eta\in(0,R_\eta)$ is chosen such that 
\begin{equation}
\label{EqnMatchingOfValueRadial}
V=\eta\URad=U \text{ on }\partial B_{R_\eta}.
\end{equation}
Or equivalently, we have
\begin{equation}
\label{EqnMatchingOfValueRadialII}
A(R_\eta-r_\eta)^\beta=\eta c_{\mathrm{rad}}R_\eta^\beta,
\end{equation} 
where $c_{\mathrm{rad}}$ is the coefficient in \eqref{EqnURad}.

This function $V$ gives a subsolution as in Definition \ref{DefSolution}:
\begin{lem}
\label{LemInnerRegionRadial}
For $\gamma\in(0,1)$ and $0<\eta\le\frac14$, we have
$$
V\in\SPGa(B_{R_\eta}),
\text{ and }
V\le\URad \text{ in }B_{R_\eta}.
$$
\end{lem}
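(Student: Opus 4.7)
The claim splits into three verifications: the free boundary condition along $\partial\{V>0\} = \partial B_{r_\eta}$, the interior PDE inequality in $B_{R_\eta}\setminus\overline{B_{r_\eta}}$, and the upper bound $V \leq \URad$. The plan is to reduce each to a direct computation resting on the single scaling relation $\beta(2-\gamma) = 2$ from \eqref{EqnScalingParameter} together with the normalizations of $A$ and $r_\eta$.

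For the free boundary condition, I would substitute $V(x) = A(|x|-r_\eta)_+^\beta$ directly into $|\nabla V|^2/V^\gamma$. The exponent of $(|x|-r_\eta)$ in the resulting expression is $2(\beta-1)-\beta\gamma = 2\beta - 2 - \beta\gamma$, which is exactly zero by $\beta(2-\gamma)=2$. What remains is $A^{2-\gamma}\beta^2$, equal to $2$ by the normalization $A^{2-\gamma}=2/\beta^2$. Hence $|\nabla V|^2/V^\gamma \equiv 2$ along $\partial\{V>0\}$ with equality, which is enough for the subsolution free boundary condition.

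For the interior inequality I would use the radial Laplacian $\Delta V = V''(r)+\frac{d-1}{r}V'(r)$ at $r=|x|>r_\eta$. A direct computation gives $V''(r)=A\beta(\beta-1)(r-r_\eta)^{\beta-2}$ and $\gamma V^{\gamma-1}=\gamma A^{\gamma-1}(r-r_\eta)^{\beta(\gamma-1)}$. The identity $\beta(\gamma-1)=\beta-2$ aligns the two exponents, and the further identity $\beta-1=\gamma\beta/2$ (again equivalent to $\beta(2-\gamma)=2$) shows that $V''(r)=\gamma V^{\gamma-1}$ exactly. The remaining term $\frac{d-1}{r}V'(r)>0$ then produces $\Delta V > \gamma V^{\gamma-1}$ strictly in $\{V>0\}$, establishing the subsolution property.

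For $V\le\URad$, I would write the ratio
$$
V(r)/\URad(r) = (A/c_{\mathrm{rad}})(1-r_\eta/r)^\beta
$$
on the annular region $r_\eta<r\le R_\eta$, and note that $r\mapsto 1-r_\eta/r$ is increasing. Hence the ratio is maximized at $r=R_\eta$, where the matching condition \eqref{EqnMatchingOfValueRadialII} forces it to equal $\eta \leq 1/4 < 1$. On $\overline{B_{r_\eta}}$, $V=0\le\URad$ is trivial. The one consistency check is that $r_\eta>0$, i.e., $A>\eta c_{\mathrm{rad}}$; this follows from $A>c_{\mathrm{rad}}$, which is read off by comparing $A^{2-\gamma}=2/\beta^2$ with $c_{\mathrm{rad}}^{2-\gamma}=\gamma/[\beta(d+\beta-2)]$ for $d\ge 2$ and $\gamma<1$. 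Overall there is no serious obstacle: everything reduces to algebraic identities built on $\beta(2-\gamma)=2$ together with the explicit matching condition for $r_\eta$.
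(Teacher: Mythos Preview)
Your proof is correct. The verifications of the free boundary condition and the interior inequality are essentially identical to the paper's argument, resting on the same algebraic identities $\beta(\gamma-1)=\beta-2$ and $A\beta(\beta-1)=\gamma A^{\gamma-1}$.

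For the ordering $V\le\URad$, however, you take a genuinely different and cleaner route. You observe that the ratio $V(r)/\URad(r)=(A/c_{\mathrm{rad}})(1-r_\eta/r)^\beta$ is increasing in $r$, hence attains its maximum at $R_\eta$, where the matching condition forces it to equal $\eta$. This yields the pointwise bound $V\le\eta\,\URad$ on the whole ball with no further work. The paper instead bounds $V$ above by the constant $V(R_\eta)=\eta\,\URad(R_\eta)$, bounds $\URad$ below via homogeneity and the estimate $r_\eta/R_\eta\ge 1-\eta^{1/\beta}$, and must then check the numerical inequality $(1-\eta^{1/\beta})^\beta>\eta$ using $\beta<2$ and $\eta\le\frac14$. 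Your monotonicity-of-the-ratio argument bypasses that numerical step entirely and delivers a sharper pointwise conclusion; the paper's approach, on the other hand, gives an explicit quantitative lower bound on the gap $\URad-V$ in terms of $\URad(R_\eta)$, which could be useful if one needed a uniform separation constant.
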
 
\begin{proof}
For our choice of $A$, we have, in $\{V>0\}$, 
$$
\Delta V=A\beta(\beta-1)[|x|-r_\eta]^{\beta-2}+\frac{d-1}{|x|}A\beta[|x|-r_\eta]^{\beta-1}\ge A\beta(\beta-1)[|x|-r_\eta]^{\beta-2}=\gamma V^{\gamma-1}.
$$
A direct computation gives $|\nabla V|^2/V^\gamma=2$ along $\partial B_{r_\eta}=\partial\{V>0\}.$ 

As a result, we have $V\in\SPGa(B_{R_\eta}).$
\vem

It remains to show the ordering between $V$ and $\URad$. 

By its monotonicity in the radial variable, we have
\begin{equation*}
V\le V(R_\eta)=\eta\cdot\URad(R_\eta) \text{ in }B_{R_\eta}.
\end{equation*}
Meanwhile, the homogeneity of $\URad$ gives
$$
\URad(r)\ge\URad(R_\eta)\cdot(\frac{r_\eta}{R_\eta})^\beta \text{ for }r_\eta\le r\le R_\eta.
$$
Condition \eqref{EqnMatchingOfValueRadialII} gives
$$
\frac{r_\eta}{R_\eta}=1-(\frac{\CRad\eta}{A})^{\frac{1}{\beta}}\ge 1-\eta^\frac{1}{\beta}
$$
since $\CRad<A$.

Combining all these, we have
\begin{equation}
\label{EqnByItsMonotonicityInTheRadialVariable}
\URad-V\ge\URad(R_\eta)\cdot[(1-\eta^\frac{1}{\beta})^\beta-\eta].
\end{equation}
With $\gamma\in(0,1)$, we have $\beta<2$ (see \eqref{EqnScalingParameter}). Under our assumption on $\eta$, we have
$
\eta^\frac{1}{\beta}<\eta^\frac{1}{2}\le\frac12.
$
As the result, the right-hand side of \eqref{EqnByItsMonotonicityInTheRadialVariable} is positive, and the desired ordering follows. 
\end{proof} 
\vem

We define an \textit{lower leaf} as
\begin{equation}
\label{EqnLowerLeafRadialV2}
\Phi_1(x):=\begin{cases}
V(x) &\text{ for }x\in\overline{B_{R_\eta}},\\
U(x) &\text{ for }x\in\OEta,
\end{cases}
\end{equation} 
where $\OEta$ and $R_\eta$ are given in \eqref{EqnOEtaRadial} and in  \eqref{EqnRegionNearTheOriginV2} respectively. The functions $U$ and $V$ are given in \eqref{EqnTheProfileAwayFromTheFreeBoundaryV2} and \eqref{EqnTheProfileNearTheFreeBoundaryV2} respectively. 

This function gives a subsolution to the Alt-Phillips problem when $\gamma$ is close to $1$:
\begin{lem}
\label{LemGluedSubsolutionRadial}
For $d\ge3$ and $\gamma\in(1-\frac{(d-2)^2}{64d^2},1)$, if we pick $\eta=(2d)^{-\frac32}$ in the construction of $\Phi_1$, then we have
$$
\Phi_1\in\SPGa(\R^d)
$$ 
and
$$
0\le\Phi_1\le\URad \text{ in }\R^d.
$$
\end{lem}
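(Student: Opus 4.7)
The strategy is to assemble the two regional estimates from Lemmas \ref{LemOuterRegionRadial} and \ref{LemInnerRegionRadial}, and then handle separately the viscosity subsolution test across the interface $\partial B_{R_\eta}$, where $\Phi_1$ is only Lipschitz.

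First I would dispatch the pointwise bounds. On $B_{R_\eta}$ we have $0\le\Phi_1=V\le\URad$ directly from Lemma \ref{LemInnerRegionRadial}, while on $\OEta$ the identity $\Phi_1=U=\URad-v$ together with $v>0$ and the defining inequality \eqref{EqnOEtaRadial} gives $0<\eta\URad<\Phi_1<\URad$.

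Next I would verify the subsolution property on the open pieces. On $B_{R_\eta}$ Lemma \ref{LemInnerRegionRadial} already asserts $V\in\SPGa(B_{R_\eta})$, which simultaneously covers the interior equation in $\{V>0\}$ and the free boundary condition along $\partial B_{r_\eta}$. On $\OEta$, Lemma \ref{LemOuterRegionRadial} reduces matters to verifying
\[
\frac{(d-2)^2}{4}\ge 2d\,\eta^{\gamma-2}(1-\gamma).
\]
With $\eta=(2d)^{-3/2}$ one has $\eta^{\gamma-2}=(2d)^{3(2-\gamma)/2}$; the hypothesis $1-\gamma<(d-2)^2/(64d^2)$ keeps $2-\gamma$ only slightly above $1$, and a short elementary computation then yields the required inequality. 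This is the point at which the quantitative bound on $\gamma$ is consumed.

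The main obstacle is the interface $\partial B_{R_\eta}$. The key observation is that if a $C^2$ function $\phi$ touches $\Phi_1$ from above at a point $x_0\in\partial B_{R_\eta}$, then comparing the radial derivatives from inside and outside forces $U'(R_\eta^+)\le\partial_r\phi(x_0)\le V'(R_\eta^-)$. Consequently, as soon as the strict inequality $V'(R_\eta^-)<U'(R_\eta^+)$ holds, no such test function exists and the viscosity subsolution condition at $x_0$ is vacuous. Using the matching condition \eqref{EqnMatchingOfValueRadialII} one computes
\[
V'(R_\eta^-)=\beta\,A^{1/\beta}\,R_\eta^{\beta-1}(\eta\CRad)^{(\beta-1)/\beta},\qquad
U'(R_\eta^+)=\CRad R_\eta^{\beta-1}\Bigl(\beta+(1-\eta)\tfrac{d-2}{2}\Bigr).
\]
In the limit $\gamma\to 1$ (so $\beta\to 2$, $A\to 1/2$, $\CRad\to 1/(2d)$), the ratio $V'(R_\eta^-)/U'(R_\eta^+)$ reduces to an explicit expression in $d$ and $\eta$ that is strictly less than $1$ for every $d\ge 3$ once $\eta=(2d)^{-3/2}$. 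By continuity in $\gamma$, the same strict inequality persists throughout the range specified by the lemma. Combining the three regional checks with the pointwise bounds then yields $\Phi_1\in\SPGa(\R^d)$ with $0\le\Phi_1\le\URad$.
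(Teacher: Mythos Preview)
Your overall architecture matches the paper's exactly: regional subsolution checks via Lemmas \ref{LemOuterRegionRadial} and \ref{LemInnerRegionRadial}, ordering $0\le\Phi_1\le\URad$ from the definitions, and a derivative comparison across $\partial B_{R_\eta}$. Your viscosity formulation (``no $C^2$ test function can touch from above if $V'(R_\eta^-)<U'(R_\eta^+)$'') is equivalent to the paper's ``convex angle'' statement, and your explicit formulas for $V'(R_\eta^-)$ and $U'(R_\eta^+)$ are correct.

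The genuine gap is in the interface step. You verify the strict inequality $V'(R_\eta^-)<U'(R_\eta^+)$ only in the limit $\gamma\to 1$ and then invoke continuity to claim it ``persists throughout the range specified by the lemma.'' Continuity in $\gamma$ gives persistence in \emph{some} open neighborhood of $1$, but not in the explicit interval $(1-\tfrac{(d-2)^2}{64d^2},1)$; nothing you have written ties the two together. The paper does not argue by continuity: it bounds the ratio directly by
\[
\frac{\partial_r V}{\partial_r U}(R_\eta)\le \Bigl[\frac{d+\beta-2}{\beta-1}\Bigr]^{1/2}\eta^{1-1/\beta}\le (2d)^{1/2}\eta^{1/3}=1,
\]
where the second inequality uses that the stated range forces $\gamma>2/3$, hence $\beta>3/2$, so that $1-1/\beta\ge 1/3$ and $(d+\beta-2)/(\beta-1)\le 2d$. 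This is precisely the place where the choice $\eta=(2d)^{-3/2}$ and the quantitative lower bound on $\gamma$ interact, and your limiting argument bypasses it. To close the gap you need to replace the continuity appeal by a uniform estimate of this type.

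A smaller point: your ``short elementary computation'' for the outer-region inequality $\tfrac{(d-2)^2}{4}\ge 2d\,\eta^{\gamma-2}(1-\gamma)$ is also not written out; the paper is equally terse here, so this is not a discrepancy with the paper, but you should be aware that this step, too, uses the specific numerical interplay between $\eta=(2d)^{-3/2}$ and the bound on $1-\gamma$.
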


\begin{proof}
For our choice of $\eta$ and $\gamma$,  Lemma \ref{LemOuterRegionRadial} implies
$
\Phi_1\in\SPGa(\OEta).
$
By Lemma \ref{LemInnerRegionRadial}, the same holds in $B_{R_\eta}$. Below we verify the subsolution property along $\partial\OEta=\partial B_{R_\eta}$.

With the definition of $\OEta$ and the homogeneities of the functions, we have
$$
\PR V(R_\eta)=\frac{\beta}{R_\eta-r_\eta}V(R_\eta)=\frac{\beta}{R_\eta-r_\eta}\eta\cdot\URad(R_\eta)
$$
and 
$$
\PR U(R_\eta)=\frac{\beta}{R_\eta}\URad(R_\eta)+\frac{d-2}{2R_\eta}v(R_\eta)=[\frac{\beta}{R_\eta}+\frac{d-2}{2R_\eta}(1-\eta)]\cdot\URad(R_\eta).
$$
Together with \eqref{EqnMatchingOfValueRadialII}, we have
$$
\frac{\PR V}{\PR U}(R_\eta)=\frac{\beta}{\beta+(1-\eta)(d-2)/2}\cdot\eta\frac{R_\eta}{R_\eta-r_\eta}\le[\frac{A}{c_{\mathrm{rad}}}]^\frac{1}{\beta}\eta^{1-\frac{1}{\beta}}=[\frac{d+\beta-2}{\beta-1}]^\frac{1}{2}\eta^{1-\frac{1}{\beta}}.
$$
With our assumption on $\gamma$, the scaling parameter $\beta$ lies in $(1,\frac32)$. This allows us to continue as
\begin{equation}
\label{EqnOrderingOfDerivativesOfUAndV}
\frac{\PR V}{\PR U}(R_\eta)\le (2d)^{\frac12}\eta^{\frac13}=1
\end{equation}
by our choice of $\eta$.

As a result, along $\partial B_{R_\eta}$, the function $\Phi_1$ makes a convex angle. This implies the desired subsolution property. 

In summary, we have established
$$
\Phi_1\in\SPGa(\R^d).
$$

\vem

The ordering between $\Phi_1$ and $\URad$ holds in $\OEta$ by definition of $U$ in \eqref{EqnTheProfileAwayFromTheFreeBoundaryV2}. The ordering in the remaining region, $B_{R_\eta}$, follows from the ordering between $V$ and $\URad$ in Lemma \ref{LemInnerRegionRadial}.
\end{proof}

With this, we give the proof of the second statement in Proposition \ref{PropMinimalityOfURad}:
\begin{proof}[Proof of minimality of $\URad$ when $\gamma>1-\frac{(d-2)^2}{64d^2}$]
With Proposition \ref{PropSolutionsAreMinimizers}, it suffices to consider the range $\gamma\in(1-\frac{(d-2)^2}{64d^2},1)$. 

For exponents within this range, it is elementary to verify that the discriminant in \eqref{EqnDiscriminant} satisfies 
$
\Delta(d,\gamma)\ge0.
$
In particular, we have already established the one-sided minimality from above in Subsection \ref{SubsectionUpperFoliationRadial} (See Statement (1) in Proposition \ref{PropMinimalityOfURad}). It remains to show the one-sided minimality from below, which amounts to the construction of a lower foliation of $\URad$ (See Proposition \ref{PropMinimalityFromBelow} and Theorem \ref{ThmMinimality}).

\vem

With the profile $\Phi_1$  in Lemma \ref{LemGluedSubsolutionRadial}, we define, for $t>0$, 
$$
\Phi_t(x):=t^\beta\Phi_1(x/t) \text{ in }\R^d.
$$
We show that $\{\Phi_t\}$ give is a lower foliation of $\URad$ as in Definition \ref{DefLowerFoliation}.

It follows from Lemma \ref{LemGluedSubsolutionRadial} and Proposition \ref{PropSymmetryOfEquation} that $\Phi_t\in\SPGa(\R^d)$. The ordering between $\Phi_1$ and $\URad$ implies
$$
\Phi_t(x)-\URad(x)=t^\beta[\Phi_1(x/t)-\URad(x/t)]\le 0 \text{ in }\R^d.
$$

The continuity of $(t,x)\mapsto\Phi_t(x)$ follows from definition. The positive sets of these functions satisfy 
\begin{equation*}
\overline{\{\Phi_t>0\}}=t\cdot\overline{\{\Phi_1>0\}}=\R^d\backslash B_{tr_\eta}.
\end{equation*}
As a result,  the map $t\mapsto\overline{\{\Phi_t>0\}}$ is locally uniformly continuous.  Moreover, given any compact set $K\subset\R^d$, we can find $t_K$ such that $K\subset B_{t_Kr_\eta}$. For $t>t_K$, we have $\Phi_t=0$ on $K$.

For each $0<r<R<+\infty$, by the definition of $\Phi_1$, we have $\Phi_1=U$ in $t^{-1}\cdot(B_R\backslash B_r)$ for $t$ small. This implies that 
$$
\Phi_t(x)=\URad(x)-t^\beta v(x/t)=\URad(x)-t^{\frac{d-2}{2}+\beta}|x|^{-\frac{d-2}{2}}.
$$
As a result, for $t\to0$, we have 
$$
\Phi_t\to\URad \text{ locally uniformly in }\{\URad>0\}.
$$
Therefore, the family $\{\Phi_t\}_{t\in(0,+\infty)}$ is a lower foliation of $\URad.$ 
\end{proof}


\section{Construction of the axially symmetric cone}
\label{SectionCAS}
Starting from this section, we turn our attention to cones with axial symmetry in the $\gamma$-Alt-Phillips problem \eqref{EqnAP}. According to Savin-Yu \cite{SY}, they represent the most typical behavior of minimizing cones as $\gamma$ tends to $1$. 

The radial cone $\URad$ from \eqref{EqnURad}, when trivially extended to higher dimensions, gives examples of axially symmetric cones.  The contact sets of these cones have zero Lebesgue measure. When $\gamma$ is less than $1$ but close to $1$,  we will  construct singular minimizing cones whose contact sets have positive measure, as in Theorem \ref{ThmMainAS}. To our knowledge, this is the first free boundary problem that allows both these two behaviors. 

In this section, we construct these cones as solutions to the Euler-Lagrange equation in Definition \ref{DefSolution}.  In Section \ref{SectionLowerFoliationAS} and Section \ref{SectionUFAS}, we show that these cones are minimizers as in Definition \ref{DefMinimality}.

\vem

Starting from this section, we work \textit{exclusively in $\R^{d+1}$ with $d\ge3$}. We decompose the space as
\begin{equation}
\label{DefSpaceDecomposition}
\R^{d+1}=\{(x,y):\hem x\in\R^d,\hem y\in\R\}.
\end{equation}
From this section, we always assume 
$$
\gamma\in(0,1) \text{ is close to }1.
$$
The \textbf{main result} of this section is
\begin{prop}
\label{PropConstructionOfUAsFirstMention}
For $d\ge3$, there is a  dimensional constant $\gamma_d^3\in(0,1)$ such that for 
$
\gamma_d^3<\gamma<1,
$
there is an axially symmetric,  $\beta$-homogeneous function 
$$
\UAS\in\SGa(\R^{d+1})
$$  
with
$$
|\{\UAS=0\}\cap B_1|\sim_d(1-\gamma)^{\frac{d}{d-2}}.
$$
\end{prop}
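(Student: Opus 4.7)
The plan is to construct $\UAS$ as the solution of a boundary-value problem for an ODE obtained from the axially symmetric, $\beta$-homogeneous ansatz. Using spherical coordinates $(\rho,\theta)$ with $\theta$ the angle from the $y$-axis in the decomposition \eqref{DefSpaceDecomposition}, I write $\UAS(x,y)=\rho^\beta \psi(\theta)$. The condition $\UAS\in\SGa(\R^{d+1})$ then reduces to the scalar ODE
\begin{equation*}
\psi''(\theta)+(d-1)\cot\theta\,\psi'(\theta)+\beta(\beta+d-1)\psi(\theta)=\gamma\psi(\theta)^{\gamma-1}
\end{equation*}
on an interval $(\theta_0,\pi/2)$, together with $\psi(\theta_0)=0$, the free-boundary asymptotic $\psi(\theta)\sim c_0(\theta-\theta_0)^\beta$ (with $c_0^{2-\gamma}=2/\beta^2$), and the equatorial symmetry $\psi'(\pi/2)=0$. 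The contact cone in $\R^{d+1}$ has volume $\sim_d \theta_0^d$, so the goal reduces to producing a solution with $\theta_0 \sim_d (1-\gamma)^{1/(d-2)}$.

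I construct $\psi$ by matched asymptotic expansions in two overlapping regions. On the outer region away from $\theta_0$, the natural reference profile is $\psi_{\mathrm{out}}(\theta):=\CRad \sin^\beta\theta$, the angular factor of the trivially extended radial cone; it already solves the ODE on $(0,\pi/2]$ and satisfies $\psi_{\mathrm{out}}'(\pi/2)=0$, but vanishes only at $\theta=0$. Writing $\psi=\psi_{\mathrm{out}}+\phi$, the correction $\phi$ solves a perturbation of the linear operator $\mathcal{L}\phi := \phi'' + (d-1)\cot\theta\,\phi'+ \beta(\beta+d-1)\phi$, namely $\mathcal{L}\phi - \gamma(\gamma-1)\psi_{\mathrm{out}}^{\gamma-2}\phi = O(\phi^2)$. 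On the inner region near $\theta_0$, I rescale $\theta=\theta_0 s$, $\psi = \theta_0^\beta \chi(s)$; the identity $\beta(2-\gamma)=2$ transforms the ODE to
\begin{equation*}
\chi''(s) + \tfrac{d-1}{s}\chi'(s) + \beta(\beta+d-1)\theta_0^2\,\chi(s) = \gamma \chi(s)^{\gamma-1},
\end{equation*}
which to leading order in $\theta_0$ is the radial $\gamma$-Alt-Phillips equation in $\R^d$; in the further limit $\gamma\to 1$ it reduces to the radial obstacle problem with the explicit solution $P_1$ from \eqref{EqnPLambda}.

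The matching in the overlap $\theta_0 \ll \theta \ll 1$ is what determines $\theta_0$. The large-$s$ expansion of $P_1$ yields, in outer variables, $\psi_{\mathrm{inner}}(\theta) \approx \CRad\theta^\beta + C_1\theta_0^\beta + C_2\theta_0^{\beta+d-2}\theta^{-(d-2)}$ at leading order in $1-\gamma$, whose leading term automatically matches $\psi_{\mathrm{out}}(\theta) \approx \CRad\theta^\beta$. The subleading terms must match the near-origin asymptotics of $\phi$, whose two indicial exponents at $\theta=0$ are (asymptotically) $\alpha_+ = 0$ and $\alpha_- = -(d-2)$, as computed in \eqref{EqnThePolynomial} and used throughout Section \ref{SectionStabilityOfURad}. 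The crucial observation is that at $\gamma=1$ the linear operator $\mathcal{L}$ admits the explicit kernel element $\phi_0 := \cos^2\theta - \tfrac{1}{d}\sin^2\theta$, which satisfies $\phi_0'(\pi/2)=0$ and has vanishing coefficient of $\theta^{-(d-2)}$ at the origin. For $\gamma<1$, the $(\gamma-1)\psi_{\mathrm{out}}^{\gamma-2}$-perturbation of $\mathcal{L}$ introduces a $\theta^{-(d-2)}$ component of size $\sim(1-\gamma)$ into the analogous Neumann solution; matching this with the inner tail $C_2 \theta_0^{\beta+d-2}\theta^{-(d-2)}$ forces the balance $1-\gamma \sim \theta_0^{d-2}$, i.e., $\theta_0 \sim_d (1-\gamma)^{1/(d-2)}$.

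The main obstacle is carrying out this matching rigorously. One must solve the inner problem as a nonlinear perturbation of $P_1$ with quantitative control of the small parameters $\theta_0^2$ and $1-\gamma$, and separately resolve the outer linearized problem for the full operator $\mathcal{L} - \gamma(\gamma-1)\psi_{\mathrm{out}}^{\gamma-2}$, computing to leading order in $1-\gamma$ the ratio of the $\theta^0$ and $\theta^{-(d-2)}$ components at the origin of its Neumann solution. An implicit function theorem, exploiting the non-degeneracy of the $(1-\gamma)$-dependence (the fact that the perturbation at $\gamma=1$ is nontrivial in the direction that produces a $\theta^{-(d-2)}$ singularity), then selects a unique $\theta_0$ of the asserted size, and a fixed-point argument in appropriately weighted function spaces produces the corresponding $\psi$, hence $\UAS$.
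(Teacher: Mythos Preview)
Your plan is essentially the same strategy the paper follows: build the trace on the sphere by a two-region ODE construction, with the inner model near the pole being the radial obstacle solution $P_\lambda$ from \eqref{EqnPLambda} and the outer model near the equator being the trivially extended radial cone, and determine the opening $\theta_0\sim_d(1-\gamma)^{1/(d-2)}$ by matching. The mechanism you identify (the Neumann kernel element at $\gamma=1$ having no $\theta^{-(d-2)}$ component, and the $(1-\gamma)$-perturbation producing one that must match the inner tail $\theta_0^{\beta+d-2}\theta^{-(d-2)}$) is exactly what drives the paper's computation of $\alpha_d\kappa$ in Corollary~\ref{CorFinerExpansionOfAlpha}.

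The execution differs in two ways worth noting. First, the paper applies the substitution \eqref{EqnTransformedSolution} so that the outer reference becomes the $\gamma$-independent quadratic $p=\frac{1}{2d}|x|^2$; this makes the linear analysis around $p$ cleaner than working directly with $\psi_{\mathrm{out}}=\CRad\sin^\beta\theta$, which carries $\gamma$-dependence in both $\beta$ and $\CRad$. Second, rather than matched asymptotics in an overlap region followed by a fixed-point argument in weighted spaces, the paper solves an initial-value problem from the pole and a terminal-value problem from the equator, derives explicit two-term expansions of $(u,u')$ for each (Lemmas~\ref{CorFinerExpansion} and~\ref{LemExpansionNearTheSouthPole}), and then matches $C^1$ data at a single intermediate angle $\theta_m$ via a two-dimensional inverse function theorem on the explicit map $F(M,L)$ in the proof of Lemma~\ref{LemGlueAS}. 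Your scheme would also work, but the paper's version avoids setting up function spaces and reduces everything to finite-dimensional computations.
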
 
Recall the space of solutions $\SGa$ from Definition \ref{DefSolution}. Recall the similarity notation $\sim_d$ from Remark \ref{RemSimAndO}.

\vem

We briefly explain the \textbf{strategy of the construction}. 

In Subsection \ref{SubsectionEquationForAxiallySymmetricFunctions}, we perform a transformation of our solution (see \eqref{EqnTransformedSolution}). In the remaining part of this work, we work with the transformed solution. In particular, instead of Proposition \ref{PropConstructionOfUAsFirstMention}, we show the equivalent version for the transformed solution in Proposition \ref{PropConstructionOfUAs}. With axial symmetry, this amounts to constructing a solution to an ordinary differential equation \eqref{EqnNonlinearODE}.

The transformation in \eqref{EqnTransformedSolution} is designed so that the quadratic polynomial $p$ in \eqref{EqnTheP} is a solution to our problem. In Subsection \ref{SubsectionEqnNearEquator}, we construct a family of solution  as perturbations of $p$ (see \eqref{EqnNearEquator}).  We get precise expansions of this family of solutions by studying $L_p$ the linearized operator around $p$ (see \eqref{EqnLP}).

Unfortunately, the operator $L_p$ becomes singular near the south pole. Near this point, we construct a family of solutions in Subsection \ref{SubsectionEqnNearSouthPole} (see \eqref{EqnEqnNearSouthPole}), modeled on the radial solution to the obstacle problem in \eqref{EqnPLambda}.

Finally in Subsection \ref{SubsectionGluingOfTheSolutionsUAS}, we show that we can glue the solutions constructed in the previous two subsections to form a solution on the entire sphere. Here we rely on the precise expansions of the solutions in Lemma \ref{CorFinerExpansion} and Lemma \ref{LemExpansionNearTheSouthPole}. 

Technical tools behind these expansions are postponed to Appendix \ref{AppendixODE}.

\subsection{Transformed equation for axially symmetric functions}
\label{SubsectionEquationForAxiallySymmetricFunctions}

To take advantage of the smallness of $(1-\gamma)$, we apply the  transformation
\begin{equation}
\label{EqnTransformedSolution}
v:=\frac{\beta(d+\beta-2)}{2d\gamma}u^{\frac{2}{\beta}},
\end{equation} 
where $\beta$ is the scaling parameter from \eqref{EqnScalingParameter}.

When $u$ is $\beta$-homogeneous, the transformed function $v$ is $2$-homogeneous.

It is elementary to verify
\begin{prop}
\label{PropEquivalenceOfEquations}
For $\gamma\in(0,1)$,  let $u$ and $v$ be non-negative functions related by \eqref{EqnTransformedSolution}. 

Then $u\in\SGa(\Omega)$  if and only if $v$ satisfies
\begin{equation}
\label{EqnTransformedEquation}
\Delta v+\frac{\beta-2}{2}\frac{|\nabla v|^2}{v}=(1+\frac{\beta-2}{d})\chi_{\{v>0\}} \hem\text{ in }\Omega.
\end{equation}

Similar properties hold for subsolutions and supersolutions.
\end{prop}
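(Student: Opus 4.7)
The proof is a direct chain-rule computation. The transformation $v = cu^{2/\beta}$ with $c = \frac{\beta(d+\beta-2)}{2d\gamma}$ is strictly monotone on $\{u \geq 0\}$, so $\{v>0\} = \{u>0\}$ and the free boundaries coincide. The verification splits into (i) equivalence of the interior PDEs on this common positive set and (ii) matching of the free boundary conditions.

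In $\{u > 0\}$, the identity $2/\beta = 2-\gamma$ and the chain rule give
$$\nabla v = c(2-\gamma) u^{1-\gamma}\nabla u, \qquad \Delta v = c(2-\gamma) u^{1-\gamma}\Delta u + c(2-\gamma)(1-\gamma) u^{-\gamma}|\nabla u|^2,$$
together with $|\nabla v|^2/v = c(2-\gamma)^2 u^{-\gamma}|\nabla u|^2$. The algebraic identity $\tfrac{\beta-2}{2} = \tfrac{\gamma-1}{2-\gamma}$, immediate from $\beta = 2/(2-\gamma)$, makes the term $\tfrac{\beta-2}{2}\cdot |\nabla v|^2/v$ exactly cancel the $|\nabla u|^2$-term in $\Delta v$. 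What remains on the left-hand side of \eqref{EqnTransformedEquation} is $c(2-\gamma) u^{1-\gamma}\Delta u$, and $\beta(2-\gamma) = 2$ gives $c(2-\gamma)\gamma = (d+\beta-2)/d = 1 + (\beta-2)/d$. Hence $\Delta u = \gamma u^{\gamma-1}$ in $\{u>0\}$ is equivalent to \eqref{EqnTransformedEquation} there.

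Since $c(2-\gamma) > 0$, the same identity chain with $\geq$ or $\leq$ replacing equality transfers the one-sided statements: $u \in \mathcal{S}^+_\gamma$ iff the transformed inequality holds for $v$, and analogously for $\mathcal{S}^-_\gamma$. For the free boundary condition, reading $|\nabla v|^2/v = c(2-\gamma)^2 u^{-\gamma}|\nabla u|^2$ as a one-sided limit from $\{u>0\}$ shows that $|\nabla u|^2/u^\gamma = 2$ on $\partial\{u>0\}$ translates to $|\nabla v|^2/v = 2c(2-\gamma)^2$ along $\partial\{v>0\}$, which is the free boundary condition for \eqref{EqnTransformedEquation} in the viscosity sense of \cite{DS1}. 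There is no substantive obstacle; the only points to watch are the sign of $\beta - 2$ (negative for $\gamma \in (0,1)$) and the fact that the constant $c$ is engineered so that the gradient-term cancellation and the correct right-hand side $1 + (\beta-2)/d$ emerge simultaneously.
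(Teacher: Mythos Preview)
Your proof is correct and matches the paper's approach: the paper simply declares the proposition ``elementary to verify'' and gives no proof, so your chain-rule computation is exactly the intended argument. The cancellation via $\tfrac{\beta-2}{2} = \tfrac{\gamma-1}{2-\gamma}$ and the constant identity $c(2-\gamma)\gamma = (d+\beta-2)/d$ are the two algebraic points, and you have both.
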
 

\begin{rem}
\label{RemAbusingNotations}
Recall the space of solutions $\SGa$, the space of subsolutions $\SPGa$ and the space of supersolutions $\SMGa$ from Definition \ref{DefSolution}. Below we use the same notations for the transformed equation \eqref{EqnTransformedEquation}. 
\end{rem} 

 The specific form of the transformation \eqref{EqnTransformedSolution} is chosen so that the following quadratic function is always a solution, that is, 
 \begin{equation}
\label{EqnTheP}
p:=\frac{1}{2d}|x|^2=\frac{1}{2d}r^2\sin^2(\theta)\in\SGa(\R^{d+1})\hem \text{ for all }\gamma\in(0,1).
\end{equation}
We note that $p$ satisfies
\begin{equation}
\label{EqnHForP}
|\nabla p|^2/p\equiv2/d.
\end{equation} 

Working with the transformed variables, Proposition \ref{PropConstructionOfUAsFirstMention} is equivalent to the following, which is the focus for the remainder of this section. 
\begin{prop}
\label{PropConstructionOfUAs}
For $d\ge3$, there is a  dimensional constant $\gamma_d^3\in(0,1)$ such that for 
$
\gamma_d^3<\gamma<1,
$
there is an axially symmetric,  $2$-homogeneous function 
$$
\UAS\in\SGa(\R^{d+1})
$$  with
$$
|\{\UAS=0\}\cap B_1|\sim_d(1-\gamma)^{\frac{d}{d-2}}.
$$
\end{prop}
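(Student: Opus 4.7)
The plan is to reduce Proposition \ref{PropConstructionOfUAs} to a matching problem for a nonlinear ODE on the unit sphere. Writing the unknown $2$-homogeneous axially symmetric function as $\UAS(r,\theta)=r^{2}\phi(\theta)$, where $r=|(x,y)|$ and $\theta\in[0,\pi]$ is the polar angle measured from the negative $y$-axis, equation \eqref{EqnTransformedEquation} becomes a second-order ODE for $\phi$ on $(0,\pi)$ whose linear part is $\phi''+(d-1)\cot\theta\,\phi'$. When $\gamma=1$ this ODE is solved on $[0,\pi]$ by the smooth positive profile $\phi_{p}(\theta)=\frac{1}{2d}\sin^{2}\theta$ coming from the polynomial $p$ in \eqref{EqnTheP}. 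For $\gamma<1$ close to $1$, the goal is to construct a solution $\phi$ that vanishes identically on a geodesic cap $[0,\lambda]$ around the south pole, satisfies the transformed free-boundary condition at $\theta=\lambda$, and remains a small perturbation of $\phi_{p}$ on $[\lambda,\pi]$. The corresponding contact set is the cone over this cap, whose intersection with $B_{1}\subset\R^{d+1}$ has $(d+1)$-volume $\sim_{d}\lambda^{d}$, so the target density $(1-\gamma)^{d/(d-2)}$ forces $\lambda\sim_{d}(1-\gamma)^{1/(d-2)}$, and this is the scaling the matching must produce.

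The construction proceeds in two regions separated by a suitable small intermediate angle $\theta_{\ast}$. In the \emph{equator region} $[\theta_{\ast},\pi]$, I would build a one-parameter family $\{\phi_{\mu}^{\mathrm{eq}}\}$ of positive solutions as perturbations of $\phi_{p}$, obtained by inverting the linearized operator $\LP$ subject to smoothness at the regular-singular endpoint $\theta=\pi$. That endpoint condition consumes one of the two degrees of freedom and leaves one free parameter $\mu$, corresponding to the coefficient of the singular mode $\theta^{-(d-2)}$ of $\LP$ as $\theta\to 0^{+}$. The sharp two-term expansion of $\phi_{\mu}^{\mathrm{eq}}$ and its derivative at $\theta_{\ast}$ supplied by Lemma \ref{CorFinerExpansion} then feeds into the matching. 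In the \emph{south-pole region} $[0,\theta_{\ast}]$, I would construct a family $\{\phi_{\lambda}^{\mathrm{sp}}\}$ of solutions that vanish on $[0,\lambda]$, meet the free-boundary condition at $\theta=\lambda$, and solve the ODE on $(\lambda,\theta_{\ast}]$. The local model is the explicit radial obstacle profile $P_{\lambda}$ of \eqref{EqnPLambda}: near $\theta=0$ the operator $\phi''+(d-1)\cot\theta\,\phi'$ degenerates to the Euclidean radial Laplacian on $\R^{d}$, while the quasilinear term in \eqref{EqnTransformedEquation} and the deviation of the right-hand side from $\chi_{\{v>0\}}$ are both $\BO(1-\gamma)$. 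Lemma \ref{LemExpansionNearTheSouthPole} then supplies the matching expansion at $\theta_{\ast}$, whose leading part is the obstacle expression $\frac{1}{2d}\theta_{\ast}^{2}-\frac{\lambda^{2}}{2(d-2)}+\frac{\lambda^{d}}{d(d-2)}\theta_{\ast}^{-(d-2)}$ plus its derivative analog.

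The gluing step in Subsection \ref{SubsectionGluingOfTheSolutionsUAS} enforces continuity of $\phi$ and $\phi'$ at $\theta_{\ast}$. This is a $2\times 2$ nonlinear system in $(\mu,\lambda)$, and after substituting the expansions it becomes a small perturbation of the linear system that matches $\phi_{p}+\mu\cdot(\text{singular mode})$ to $P_{\lambda}$ at $\theta_{\ast}$. The Jacobian of that linear system in the variables $(\mu,\lambda^{d})$ is nondegenerate, because the $\lambda^{d}\theta^{-(d-2)}$ tail of $P_{\lambda}$ couples directly to the singular mode of $\LP$. An implicit function theorem argument with $(1-\gamma)$ as the small parameter then produces $(\mu_{\ast},\lambda_{\ast})$ with $\lambda_{\ast}\sim_{d}(1-\gamma)^{1/(d-2)}$, and the resulting $C^{1,1}$ profile $\phi$ on $[0,\pi]$ yields, via Proposition \ref{PropEquivalenceOfEquations}, an axially symmetric $2$-homogeneous $\UAS\in\SGa(\R^{d+1})$ with the required contact-set density.

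The main obstacle is the matching step. The amplitude of the singular mode is $\lambda^{d}$, while $\lambda$ itself only decays like a fractional power of $(1-\gamma)$, so all nonlinear corrections (from the quasilinear term in \eqref{EqnTransformedEquation}, from $\beta-2\neq 0$, and from the higher Taylor expansion of $\cot\theta$ at $0$) must be controlled at least two orders beyond leading behavior, uniformly in $\lambda\to 0$. The expansions in Lemmas \ref{CorFinerExpansion} and \ref{LemExpansionNearTheSouthPole} are precisely what is needed to make the implicit function step robust; once they are in place, the matching itself is routine and the asserted bound $|\{\UAS=0\}\cap B_{1}|\sim_{d}(1-\gamma)^{d/(d-2)}$ follows directly from $\lambda_{\ast}\sim_{d}(1-\gamma)^{1/(d-2)}$.
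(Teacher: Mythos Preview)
Your overall architecture---a family of equator-region solutions perturbing $\phi_p$, a family of south-pole solutions modeled on $P_\lambda$, and a $C^1$ match at an intermediate angle via the implicit function theorem---is exactly the paper's strategy. The gap is in your identification of the free parameter $\mu$ and, consequently, in the structure of the leading-order matching system.

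Imposing smoothness at $\theta=\pi$ on solutions of the homogeneous equation $L_p v=0$ selects the \emph{regular} mode $V_0$ (which is even about $\pi/2$ and satisfies $V_0(0)=-d+O(\kappa)$), not the singular mode $\varphi\sim\theta^{-(d-2)}$. Thus the one-parameter family of smooth-at-$\pi$ solutions of the full ODE is, to leading order in the perturbation amplitude, $\bar p+\mu V_0$, which near $\theta=0$ contributes a \emph{constant} shift $-d\mu$, not a $\theta^{-(d-2)}$ singularity. Your proposed leading system ``match $\phi_p+\mu\cdot(\text{singular mode})$ to $P_\lambda$'' is degenerate: the derivative equation forces $\mu=\frac{\lambda^d}{d(d-2)}$, and substituting into the value equation gives $\frac{\lambda^2}{2(d-2)}=0$, hence $\lambda=0$.

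The missing mechanism is that the singular mode in the equator family is generated at order $\kappa\mu$, not $\mu$, through the forcing $-\kappa v/\sin^2\theta$ in \eqref{EqnEqnForV} acting on the regular mode $V_0$ (this is precisely the content of Corollary~\ref{CorFinerExpansionOfAlpha}, where the singular coefficient is $\alpha_d\kappa$ rather than a quantity of order $1$). The correct leading-order matching is therefore
\[
d\mu\;\sim\;\frac{\lambda^2}{2(d-2)}\quad(\text{value}),\qquad \alpha_d\kappa\mu\;\sim\;\lambda^d\quad(\text{derivative}),
\]
which together force $\lambda^{d-2}\sim\kappa$, i.e.\ $\lambda\sim\kappa^{1/(d-2)}\sim(1-\gamma)^{1/(d-2)}$. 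This is why the paper parametrizes the equator family by its value at $\pi/2$ (equation \eqref{EqnNearEquator}) and why the map $F$ in Lemma~\ref{LemGlueAS} has the form $F(M,L)=(-dM+\frac{L^2}{2(d-2)},\,-\frac{\alpha_d}{d}M+\frac{L^d}{d})$, with $M=\mu\kappa^{-2/(d-2)}$ and $L=\lambda\kappa^{-1/(d-2)}$.
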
 
See Remark \ref{RemAbusingNotations} for the class $\SGa$.

\vem

Recall our decomposition of $\R^{d+1}$ in \eqref{DefSpaceDecomposition}.
For functions with axial symmetry around the $y$-axis, we use the coordinate system 
$
\{(r,\theta): \hem r\ge 0,\hem\theta\in[0,\pi]\}
$ with
\begin{equation}
\label{EqnCoordinate}
|x|=r\sin(\theta),\hem y=-r\cos(\theta).
\end{equation}
In this system, the south pole on the unit sphere has coordinate $(1,0)$, and the equator corresponds to $(1,\pi/2).$

\vem

For a $2$-homogeneous axially symmetric function  in $\R^{d+1}$, namely, a function of the form
$$
u(r,\theta)=r^2\bar{u}(\theta),
$$
the transformed equation \eqref{EqnTransformedEquation} reduces to an ODE:
$$
\bar{u}''+(d-1)\cot(\theta) \BU'+2(d+\beta-1)\BU+\frac{\beta-2}{2}\frac{(\BU')^2}{\BU}=(1+\frac{\beta-2}{d})\chi_{\{\BU>0\}} \text{ on }(0,\pi).
$$

For brevity, we introduce a parameter
\begin{equation}
\label{EqnKappa}
\kappa:=4(1-\beta/2)=\frac{4(1-\gamma)}{2-\gamma}\in(0,2).
\end{equation} 
 In particular, when $\gamma$ is close to $1$, $\kappa$ is close to $0$.

With this parameter, the previous ODE takes the form
\begin{equation}
\label{EqnNonlinearODE}
\BU''+(d-1)\cot(\theta)\BU'+2(d+1)\BU=\frac{\kappa}{4}\frac{(\BU')^2}{\BU}+\kappa\BU+(1-\frac{\kappa}{2d})\chi_{\{\BU>0\}} \text{ on }(0,\pi).
\end{equation} 
\subsection{Equation near the equator}
\label{SubsectionEqnNearEquator}

We begin our construction of a solution $u$ to \eqref{EqnNonlinearODE}. In this section, we focus on the region near the equator on the unit sphere $\partial B_1$, that is, $\theta=\frac{\pi}{2}$ in our system \eqref{EqnCoordinate}. 

To be precise, with a small parameter $\mu>0$ to be chosen, the small parameter $\kappa$ in \eqref{EqnKappa}, and the profile $p$ from \eqref{EqnTheP}, we study solutions to the following
\begin{equation}
\label{EqnNearEquator}
\begin{cases}
u''+(d-1)\cot(\theta)u'+2(d+1)u=\frac{\kappa}{4}\frac{(u')^2}{u}+\kappa u+(1-\frac{\kappa}{2d})\chi_{\{u>0\}} &\text{ on }(0,\pi/2),\\
u'=0, \text{ and }u=p+\mu &\text{ at }\pi/2.
\end{cases}
\end{equation}

The normalized solution 
\begin{equation}
\label{EqnNormalizedSolution}
v:=\frac{u-p}{\mu}
\end{equation} 
satisfies the terminal condition 
\begin{equation}
\label{EqnTerminalCondition}
v'(\pi/2)=0, \text{ and }v(\pi/2)=1.
\end{equation} 
Recall that $p$  is a solution to \eqref{EqnNonlinearODE},
the ODE for $v$ is
\begin{equation}
\label{EqnEqnForV}
\LP v=-\frac{\kappa}{\sin^2(\theta)}v+f \text{ on }\PosS\cap(0,\pi/2),
\end{equation} 
where the linear operator $\LP$ is defined as
\begin{equation}
\label{EqnLP}
\LP v:=v''+(d-1-\kappa)\cot(\theta)v'+2(d+1-\kappa)v,
\end{equation} 
and the nonlinearity $f$ is given by
\begin{equation}
\label{EqnTheF}
f=\frac{\kappa}{4\mu}[\frac{(u')^2}{u}-\frac{(p')^2}{p}-\frac{2p'}{p}(u-p)'+(\frac{p'}{p})^2(u-p)].
\end{equation}

We note that the following function 
$$
v_0=\frac{1}{2d}\sin^2(\theta)-\frac{1-\kappa/d}{2(d+1-\kappa)}
$$
solves the homogeneous  equation 
$
\LP v_0=0 \text{ on }(0,\pi/2).
$
If we normalize
\begin{equation}
\label{EqnV0}
V_0:=\frac{v_0}{v_0(\pi/2)},
\end{equation} 
then it is elementary to verify the following properties:
\begin{lem}
\label{LemPropertiesOfV0}
The function $V_0$ in \eqref{EqnV0} satisfies
\begin{equation*}
\label{EqnV0SolvesEquation}
\LP V_0=0 \text{ on }(0,\pi/2),\hem
V_0'(\pi/2)=0, \text{ and } V_0(\pi/2)=1.
\end{equation*} 

When $d\ge 3$ and $0<\kappa<\frac14$, we have
$$
0\le V_0'\le 6d,\hem  |V_0|\le 2d \text{ on }[0,\pi/2]
$$
and
$$
V_0\le-1/4 \text{ on }[0,\pi/4].
$$
Moreover, we have
$$
|V_0-[(d+1)\sin^2(\theta)-d]|\le \BO(\kappa) \text{ and }|V_0'|\le \BO(\theta) \text{ on }[0,\pi/2].
$$
\end{lem}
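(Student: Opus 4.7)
The plan is to reduce everything to one closed-form expression by normalizing $v_0$ explicitly, and then read off each of the stated properties from it. First I would compute $v_0(\pi/2)$ by putting the two defining terms over a common denominator, which collapses to $v_0(\pi/2) = \frac{1}{2d(d+1-\kappa)}$; dividing through produces the single formula
\begin{equation*}
V_0(\theta) = (d+1-\kappa)\sin^2(\theta) - (d-\kappa).
\end{equation*}
From this the terminal data $V_0(\pi/2) = 1$ and $V_0'(\pi/2) = 0$ are immediate, and the identity $\LP V_0 \equiv 0$ reduces to a one-line cancellation: substitute $V_0'' = 2(d+1-\kappa)\cos(2\theta)$ and $V_0' = (d+1-\kappa)\sin(2\theta)$ into \eqref{EqnLP}, use $\cot(\theta)\sin(2\theta) = 2\cos^2(\theta)$, and collect separately the constant and $\sin^2(\theta)$ coefficients; both vanish identically.

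Next, for the pointwise bounds I would exploit that $V_0'(\theta) = (d+1-\kappa)\sin(2\theta) \ge 0$ on $[0,\pi/2]$, so $V_0$ is monotone increasing from $V_0(0) = -(d-\kappa)$ to $V_0(\pi/2) = 1$. Under $d\ge 3$ and $\kappa<1/4$ this immediately yields $|V_0| \le d-\kappa \le 2d$ and $0 \le V_0' \le d+1-\kappa \le 6d$ on the whole interval. Evaluating the closed form at $\pi/4$ gives $V_0(\pi/4) = -(d-1-\kappa)/2 \le -7/8$, and monotonicity extends the bound $V_0 \le -1/4$ to the full interval $[0,\pi/4]$.

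Finally, the small-$\kappa$ and small-$\theta$ expansions are purely algebraic: a one-line subtraction shows
\begin{equation*}
V_0(\theta) - [(d+1)\sin^2(\theta) - d] = \kappa\cos^2(\theta),
\end{equation*}
which is bounded by $\kappa = \BO(\kappa)$, while $|V_0'(\theta)| = (d+1-\kappa)|\sin(2\theta)| \le 2(d+1)\theta = \BO(\theta)$ on $[0,\pi/2]$. I do not foresee any genuine obstacle here — the entire content of the lemma is that $v_0$ is the \emph{exact} closed-form homogeneous solution of $\LP$ designed to meet the terminal data at $\pi/2$, and once the normalization constant is identified each claim reduces to a single elementary identity or trigonometric inequality.
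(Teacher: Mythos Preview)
Your proof is correct and is exactly the elementary verification the paper has in mind; the paper itself omits the proof, simply stating that the properties are elementary to verify. Your key observation --- computing $v_0(\pi/2) = \tfrac{1}{2d(d+1-\kappa)}$ to obtain the closed form $V_0(\theta) = (d+1-\kappa)\sin^2(\theta) - (d-\kappa)$ --- reduces every claim to a direct check, and each of your checks is valid.
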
 
Recall the big-O notation from Remark \ref{RemSimAndO}. 

We give a first estimate on  $u$ in \eqref{EqnNearEquator} and $v$ in \eqref{EqnNormalizedSolution}:
\begin{lem}
\label{LemAnalysisOnV}
For $d\ge3$, there is a large dimensional constant $A_d$  such that   
\begin{equation}
\label{EqnLemAnalysisOnV}
u>0,\hem  |v|< 4d,\text{ and }\hem |v'|< 12d\cdot\theta^{-1}\hem \text{ on }[A_d(\mu^{\frac12}+\kappa^{\frac{1}{d-2}}),\pi/2]
\end{equation}
for dimensionally small $\mu$ and $\kappa$.
\end{lem}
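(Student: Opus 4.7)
The plan is to run a continuity/bootstrap argument starting from $\theta=\pi/2$. Let $\theta_*$ denote the infimum of those $\theta_0\in(0,\pi/2]$ for which \eqref{EqnLemAnalysisOnV} holds on $[\theta_0,\pi/2]$. The terminal conditions $v(\pi/2)=1$ and $v'(\pi/2)=0$ make the three inequalities strict on a left neighbourhood of $\pi/2$, so it is enough to establish a strict quantitative improvement on $[\theta_*,\pi/2]$ whenever $\theta_*\ge A_d(\mu^{1/2}+\kappa^{1/(d-2)})$ for a large dimensional $A_d$; continuity then forces $\theta_*\le A_d(\mu^{1/2}+\kappa^{1/(d-2)})$. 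Throughout the bootstrap region $u>0$, so \eqref{EqnEqnForV} is valid. Setting $q=u-p=\mu v$ and recognising the bracket in \eqref{EqnTheF} as the second-order Taylor remainder of $(u,u')\mapsto (u')^2/u$ about $(p,p')$ gives the clean identity
\begin{equation*}
f \;=\; \frac{\kappa\mu}{4}\,\frac{(p\,v' - p'\,v)^2}{p^2\,u}.
\end{equation*}
Using $p=\tfrac{1}{2d}\sin^2\theta$, $p'=\tfrac{1}{d}\sin\theta\cos\theta$, the bootstrap bounds on $v$ and $v'$, together with the positivity estimate $u\ge\tfrac{1}{4d}\sin^2\theta$ (which follows from $|v|\le 4d$ once $\theta\ge C_d\mu^{1/2}$, and itself already provides the strict improvement of the first inequality in \eqref{EqnLemAnalysisOnV}), one obtains the pointwise bound $|f|\le C_d\kappa\mu/\sin^4\theta$.

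The next step is the decomposition $v=V_0+w$ with $V_0$ from Lemma \ref{LemPropertiesOfV0}. Since $V_0$ matches the terminal data of $v$ exactly, the correction satisfies $w(\pi/2)=w'(\pi/2)=0$ and $\LP w = h$, where $h := -\kappa(V_0+w)/\sin^2\theta + f$. To invert $\LP$, a second linearly independent solution $G$ of $\LP G=0$ is produced by reduction of order from $V_0$; the ODE tools of Appendix \ref{AppendixODE} yield the sharp asymptotic $G(\theta)\sim c_d\,\theta^{-(d-2)}$ as $\theta\to 0^+$. Abel's identity fixes the Wronskian as $W = c_d\sin^{-(d-1-\kappa)}\theta$, and variation of parameters with zero terminal data represents
\begin{equation*}
w(\theta) \;=\; V_0(\theta)\!\int_\theta^{\pi/2}\!\frac{G(s)\,h(s)}{W(s)}\,ds \;-\; G(\theta)\!\int_\theta^{\pi/2}\!\frac{V_0(s)\,h(s)}{W(s)}\,ds.
\end{equation*}
Using $|V_0+w|\le 6d$ and the bound on $f$, a direct integration yields $|w(\theta)|\le C_d\bigl(\kappa\theta^{-(d-2)}+\kappa\mu\theta^{-2}\bigr)$, and differentiating the representation gives $|w'(\theta)|\le C_d\bigl(\kappa\theta^{-(d-1)}+\kappa\mu\theta^{-3}\bigr)$. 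For $\theta\ge A_d(\mu^{1/2}+\kappa^{1/(d-2)})$ with $A_d$ large and $\mu,\kappa$ dimensionally small, both quantities are smaller than any fixed dimensional constant; combining with $|V_0|\le 2d$ and $|V_0'|\le 6d$ from Lemma \ref{LemPropertiesOfV0} delivers the required strict improvements of $|v|\le 4d$ and $|v'|\le 12d/\theta$, closing the bootstrap.

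The main technical obstacle is the sharp control of the singular homogeneous solution $G$ near $\theta=0$. A crude bound such as $|G(\theta)|\le C_d\theta^{-(d-2)+\delta}$ with $\delta>0$ would only yield the weaker threshold $\kappa^{1/(d-1)}$ or worse. Matching the leading order $\theta^{-(d-2)}$ of $G$ against the perturbation $\kappa/\sin^2\theta$ is precisely what produces the sharp exponent $1/(d-2)$ in the statement, and is the point at which the ODE machinery of Appendix \ref{AppendixODE} is essential.
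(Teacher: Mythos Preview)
Your proposal is correct and follows essentially the same route as the paper: a continuation argument from $\theta=\pi/2$, the decomposition $v=V_0+w$, and variation of parameters against the fundamental pair $(V_0,\varphi)$ to bound $w$ and $w'$, with the threshold $\kappa^{1/(d-2)}$ emerging from $\kappa\,\theta^{-(d-2)}$. The one difference worth noting is your exact identity $f=\tfrac{\kappa\mu}{4}\,(pv'-p'v)^2/(p^2u)$, which is a cleaner (and sign-definite) replacement for the paper's use of the abstract second-order remainder bound in Appendix~\ref{AppendixLinearExpansion}; otherwise the arguments coincide.
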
 
Recall the small parameters $\mu$ from \eqref{EqnNearEquator} and $\kappa$ from \eqref{EqnKappa}.

\begin{proof}
\textit{Step 1: Preparations.}

 The bounds  hold in a neighborhood of $\pi/2$.  Suppose that $\theta_0$ is the first time that \eqref{EqnLemAnalysisOnV} fails,\footnote{If the bounds never fail, we take $\theta_0=0$.} that is, 
\begin{equation}
\label{EqnExitingTime}
u>0, \hem |v|< 4d,\text{ and } |v'|< 12d\cdot\theta^{-1} \text{ on }(\theta_0,\pi/2]
\end{equation} 
and one of these  fails at $\theta_0$. 

We need to bound $\theta_0$ by $A_d(\mu^{\frac{1}{2}}+\kappa^{\frac{1}{d-2}})$ for some large $A_d$ to be chosen. Suppose that this fails, then 
\begin{equation}
\label{EqnExitingTimeTooShort}
\theta_0>A_d(\mu^{\frac{1}{2}}+\kappa^{\frac{1}{d-2}}).
\end{equation}

Define the difference between $v$ and the normalized solution $V_0$ in \eqref{EqnV0}
\begin{equation}
\label{EqnDifferenceVV0}
w:=v-V_0.
\end{equation} 
With \eqref{EqnTerminalCondition}, \eqref{EqnEqnForV}, Lemma \ref{LemPropertiesOfV0}, this function satisfies
\begin{equation}
\label{EqnODEInPreparations}
\LP w=\rho \text{ on }(\theta_0,\pi/2),  \text{ and }w'(\pi/2)=w(\pi/2)=0,
\end{equation} 
where the right-hand side function $\rho$ is given by
\begin{equation}
\label{EqnRhoForDifference}
\rho=-\frac{\kappa}{\sin^2(\theta)}v+f.
\end{equation}
This equation has the form of  \eqref{EqnAppendixODE} in Appendix \ref{AppendixODE1}.

\vem

\textit{Step 2: Size of the right-hand side.}

Note that the nonlinearity $f$ in \eqref{EqnTheF} is of the form 
\begin{equation}
\label{EqnTheFHasTheForm}
f=\frac{\kappa}{4\mu}[h(X+Z)-h(X)-\nabla h(X)\cdot Z],
\end{equation}
where $h:\R^2\to\R$ is defined as $h(X)=X_1^2/X_2$ with $X=(p',p)$ and $Z=(\mu v',\mu v)$.
With Lemma \ref{LemAppendixLinearExpansion} and \eqref{EqnHForP},  we have
\begin{equation}
\label{EqnTheFHasTheFormTheForm}
|h(X+Z)-h(X)-\nabla h(X)\cdot Z|\le (\frac{\mu v}{p})^2[\frac{p}{p+\mu v}+(\frac{p}{p+\mu v})^2]+\frac{(\mu v')^2}{p}[1+\frac{p}{p+\mu v}].
\end{equation}

With \eqref{EqnExitingTime} and \eqref{EqnExitingTimeTooShort}, we have, on $(\theta_0,\pi/2),$
$$
(\frac{\mu v}{p})^2+ \frac{(\mu v')^2}{p}\le\BO(\mu^2\theta^{-4}),
$$
and
\begin{equation}
\label{EqnLastInequality}
\frac{p}{p+\mu v}=\frac{1}{1+\mu v/p}\le \frac{1}{1-C_dA_d^{-2}}\le 2\hem \text{ if $A_d$ is large}.
\end{equation}

Putting these back into \eqref{EqnTheFHasTheForm} and \eqref{EqnTheFHasTheFormTheForm}, we conclude
\begin{equation}
\label{EqnExpressionOfF}
|f|\le C_dA_d^{-2}\kappa\mu\theta^{-4}\le C_dA_d^{-2}\kappa\theta^{-2} \text{ on }(\theta_0,\frac{\pi}{2}],
\end{equation}
and the function in \eqref{EqnRhoForDifference} satisfies
\begin{equation}
\label{EqnStar}
|\rho|\le C_d A_d^{-2}\kappa\theta^{-2}  \text{ on }(\theta_0,\frac{\pi}{2}].
\end{equation}

\vem

\textit{Step 3: Estimate on $\theta_0$.}

With \eqref{EqnODEInPreparations}, we can apply Lemma \ref{LemVariationOfParameters} to $w$ in \eqref{EqnDifferenceVV0} and obtain
\begin{equation}
\label{EqnTheFormOfW}
w(\theta)=\varphi(\theta) \alpha(\theta)+V_0(\theta) \beta(\theta),
\end{equation}
where 
\begin{equation}
\label{EqnAlphaAndBeta}
\alpha(\theta)=-\int_{\theta}^{\pi/2}\rho(\tau)V_0(\tau)\sin^{d-1-\kappa}(\tau)d\tau,
\text{ and }
\beta(\theta)=\int_{\theta}^{\pi/2}\rho(\tau)\varphi(\tau)\sin^{d-1-\kappa}(\tau)d\tau.
\end{equation}
 Lemma \ref{LemPropertiesOfV0},  Lemma \ref{LemPropertiesOfPhi} and \eqref{EqnStar} imply
\begin{equation}
\label{EqnBoundsOnAlphaBeta}
|\alpha|\le C_dA_d^{-2}\kappa, \text{ and }|\beta|\le C_dA_d^{-2}\kappa(1+|\log(\theta)|) \text{ on }(\theta_0,\pi/2].
\end{equation}
Putting these back into \eqref{EqnTheFormOfW}, we conclude
\begin{equation}
\label{EqnEstimateOnW}
|w|\le C_dA_d^{-2}\kappa\theta^{-d+2+\kappa}\le C_dA_d^{-2}  \text{ on }(\theta_0,\pi/2].
\end{equation}
For the last inequality, we used \eqref{EqnExitingTimeTooShort}.

\vem

With $v=w+V_0$ (see \eqref{EqnDifferenceVV0}) and the bound on $V_0$ in Lemma \ref{LemPropertiesOfV0}, we have
$$
|v|\le |V_0|+|w|\le 2d+C_dA_d^{-2}<3d \text{ on }(\theta_0,\pi/2]
$$
if  $A_d$ is large. Thus the bound on $|v|$ in \eqref{EqnExitingTime}  holds in a neighborhood of $\theta_0$.

By \eqref{EqnNormalizedSolution} and  \eqref{EqnExitingTimeTooShort}, we have 
$$
u\ge p-\mu|v|\ge c_d\theta_0^2-3d\mu\ge (c_dA_d^2-3d)\mu>0  \text{ on }(\theta_0,\pi/2]
$$
if $A_d$ is large.  
Thus the bound on $u$ in \eqref{EqnExitingTime} holds in a neighborhood of $\theta_0$.

With similar ideas for \eqref{EqnEstimateOnW}, we get 
$$
|w'|\le C_dA_d^{-2}\kappa\theta^{-d+1+\kappa} \text{ on }(\theta_0,\pi/2].
$$
Together with the bound on $V_0'$ in Lemma \ref{LemPropertiesOfV0}, this implies
$$
|v'|\le |V_0'|+|w'|\le 6d+C_dA_d^{-2}\theta^{-1}<10d\theta^{-1}  \text{ on }(\theta_0,\pi/2]
$$
if $A_d$ is large.  
Thus the bound on $|v'|$ in \eqref{EqnExitingTime} holds in a neighborhood of $\theta_0$.

\vem

In summary, if $A_d$ is large, then  \eqref{EqnExitingTime} holds in a neighborhood of $\theta_0$, contradicting our definition of $\theta_0$ in \textit{Step 1}. As a result, we conclude
$$
\theta_0\le A_d(\mu^{1/2}+\kappa^{\frac{1}{d-2}}).
$$
As a result, the bounds in \eqref{EqnLemAnalysisOnV} stay true on $[A_d(\mu^{1/2}+\kappa^{\frac{1}{d-2}}),\pi/2]$.
\end{proof} 

As a quick corollary, we have the following expansion for the coefficient $\alpha$ in \eqref{EqnAlphaAndBeta}:
\begin{cor}
\label{CorFinerExpansionOfAlpha}
Under the same assumptions in Lemma \ref{LemAnalysisOnV}, the function $\alpha$ in \eqref{EqnAlphaAndBeta} has the following expansion on  $[A_d(\mu^{\frac12}+\kappa^{\frac{1}{d-2}}),\pi/2],$
$$
\alpha(\theta)=\alpha_d\kappa+\kappa[\BO(\theta^{1-\kappa}+\mu(1+\theta^{d-4-\kappa}))]+\kappa^2 \BO(1+|\log(\theta)|).
$$
Here $A_d$ is the dimensional constant in Lemma \ref{LemAnalysisOnV}, and $\alpha_d$ is defined as
\begin{equation}
\label{EqnDefOfAlphaD}
\alpha_d:=\int_0^{\pi/2}[(d+1)\sin^2(\tau)-d]^2\sin^{d-3}(\tau)d\tau.
\end{equation}
\end{cor}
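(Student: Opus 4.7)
The plan is to insert $\rho=-\kappa v/\sin^2(\tau)+f$ from \eqref{EqnRhoForDifference} into the definition \eqref{EqnAlphaAndBeta} of $\alpha$ and split
\[
\alpha(\theta)=\kappa\int_\theta^{\pi/2} vV_0\sin^{d-3-\kappa}(\tau)\,d\tau-\int_\theta^{\pi/2}fV_0\sin^{d-1-\kappa}(\tau)\,d\tau=:\alpha^{(1)}(\theta)+\alpha^{(2)}(\theta).
\]
The constant $\alpha_d\kappa$ will be produced entirely by $\alpha^{(1)}$ after replacing both $v$ and $V_0$ by the polynomial $q(\tau):=(d+1)\sin^2\tau-d$ and extending the integration to $[0,\pi/2]$; everything else is pushed into error terms.

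For $\alpha^{(1)}$, I would write $v=V_0+w$ as in \eqref{EqnDifferenceVV0} and use $V_0=q+\BO(\kappa)$ from Lemma~\ref{LemPropertiesOfV0}, so that $vV_0=q^2+q\cdot\BO(\kappa)+\BO(\kappa^2)+wV_0$. The $q^2$ term produces the main contribution: applying $\sin^{-\kappa}(\tau)-1=\BO(\kappa|\log\tau|)$ near $\tau=0$, which is integrable against $\tau^{d-3}$ for $d\ge 3$, gives $\kappa\int_0^{\pi/2}q^2\sin^{d-3-\kappa}\,d\tau=\alpha_d\kappa+\kappa^2\BO(1)$. Shrinking the integration from $[0,\pi/2]$ down to $[\theta,\pi/2]$ costs $\kappa\int_0^\theta \BO(\tau^{d-3-\kappa})\,d\tau=\kappa\BO(\theta^{d-2-\kappa})\le\kappa\BO(\theta^{1-\kappa})$ since $d\ge 3$. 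The $q\cdot\BO(\kappa)$ term contributes at most $\kappa^2\BO\bigl(\int_\theta^{\pi/2}\sin^{d-3-\kappa}\,d\tau\bigr)=\kappa^2\BO(1)$; the $\BO(\kappa^2)$ term is even smaller. Finally, invoking the estimate $|w|\le C_dA_d^{-2}\kappa\tau^{-d+2+\kappa}$ from \eqref{EqnEstimateOnW} together with $|V_0|\le 2d$, the $wV_0$ contribution is bounded by $\kappa^2\BO\bigl(\int_\theta^{\pi/2}\tau^{-1}\,d\tau\bigr)=\kappa^2\BO(|\log\theta|)$. Collecting,
\[
\alpha^{(1)}(\theta)=\alpha_d\kappa+\kappa\BO(\theta^{1-\kappa})+\kappa^2\BO(1+|\log\theta|).
\]

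For $\alpha^{(2)}$, I would use the stronger pointwise bound $|f|\le C_dA_d^{-2}\kappa\mu\tau^{-4}$ that appears as the first inequality in \eqref{EqnExpressionOfF}, together with $|V_0|\le 2d$, to get $|fV_0\sin^{d-1-\kappa}|\le\BO(\kappa\mu)\tau^{d-5-\kappa}$ on $[\theta,\pi/2]$. A direct computation of $\int_\theta^{\pi/2}\tau^{d-5-\kappa}\,d\tau$ yields $\BO(1)$ when $d\ge 5$ (exponent $\ge 1-\kappa>-1$) and $\BO(\theta^{d-4-\kappa})$ when $d\in\{3,4\}$, so in all cases $|\alpha^{(2)}(\theta)|\le\kappa\mu\BO(1+\theta^{d-4-\kappa})$. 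Summing $\alpha^{(1)}$ and $\alpha^{(2)}$ gives the claimed expansion.

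The main obstacle is purely bookkeeping: with three small parameters $(\kappa,\mu,\theta)$ in play, the task is to route each approximation error into the correct bucket of the expansion without resorting to artificially coarse bounds. All residual integrals reduce to elementary monomial or $\log$ integrals on $[\theta,\pi/2]$, and no cancellation is required because $\alpha_d$ is an honest polynomial contribution and every discrepancy carries an explicit small factor.
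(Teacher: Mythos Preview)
Your proposal is correct and follows essentially the same route as the paper. The paper splits $\rho=\rho_1+\rho_2+f$ with $\rho_1=-\kappa V_0/\sin^2\theta$ and $\rho_2=-\kappa w/\sin^2\theta$, giving three pieces $\alpha_1,\alpha_2,\alpha_f$; you instead first separate the $-\kappa v/\sin^2\theta$ and $f$ contributions and then invoke $v=V_0+w$ inside $\alpha^{(1)}$, but the resulting integrals and the estimates you apply to them (the bound \eqref{EqnEstimateOnW} on $w$, the approximation $V_0=q+\BO(\kappa)$, $\sin^{-\kappa}-1=\BO(\kappa|\log\tau|)$, and the sharper bound $|f|\le C\kappa\mu\tau^{-4}$ from \eqref{EqnExpressionOfF}) are exactly those used in the paper.
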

\begin{proof}
To estimate $\alpha$ in \eqref{EqnAlphaAndBeta}, we decompose the function $\rho$ from \eqref{EqnRhoForDifference} as $\rho=\rho_1+\rho_2+f$, where
$$
\rho_1:=-\frac{\kappa}{\sin^2(\theta)}V_0(\theta),
\text{ and } \rho_2=-\frac{\kappa}{\sin^2(\theta)}w
$$
by \eqref{EqnDifferenceVV0}.
This gives the corresponding decomposition of $\alpha$ as 
 $$\alpha=\alpha_1+\alpha_2+\alpha_f,$$
 each piece satisfying the relation as in \eqref{EqnAlphaAndBeta}.

With Lemma \ref{LemPropertiesOfV0} and  \eqref{EqnEstimateOnW}, we have
$$
|\alpha_2|=\kappa^2\BO(1+|\log(\theta)|) \text{ on }[A_d(\mu^{\frac12}+\kappa^{\frac{1}{d-2}}),\pi/2].
$$ 
With the bound on $f$ from \eqref{EqnExpressionOfF}, we have
$$
|\alpha_f|=\kappa\mu \BO(1+\theta^{d-4-\kappa}) \text{ on }[A_d(\mu^{\frac12}+\kappa^{\frac{1}{d-2}}),\pi/2].
$$

For the term $\alpha_1$, we have, on the same interval, 
\begin{align*}
\alpha_1(\theta)&=\kappa\{\int_0^{\pi/2}V_0^2(\tau)\sin^{d-3-\kappa}(\tau)d\tau-\int^{\theta}_0V_0^2(\tau)\sin^{d-3-\kappa}(\tau)d\tau\}\\
&=\kappa\int_0^{\pi/2}V_0^2(\tau)\sin^{d-3-\kappa}(\tau)d\tau+\kappa \BO(\theta^{d-2-\kappa}).
\end{align*}
With $|\sin^{-\kappa}(\tau)-1|=\kappa \BO(\tau^{-\kappa}|\log(\tau)|)$  and Lemma \ref{LemPropertiesOfV0}, we continue as
\begin{align*}
\alpha_1(\theta)=\alpha_d\kappa+\kappa^2 \BO(1)+\kappa \BO(\theta^{d-2-\kappa}),
\end{align*}
where 
$\alpha_d$ is from \eqref{EqnDefOfAlphaD}.

Summarize,  the coefficient $\alpha$ satisfies,   on  $[A_d(\mu^{\frac12}+\kappa^{\frac{1}{d-2}}),\pi/2],$
$$
\alpha(\theta)=\alpha_d\kappa+\kappa[\BO(\theta^{1-\kappa}+\mu(1+\theta^{d-4-\kappa}))]+\kappa^2 \BO(1+|\log(\theta)|).
$$
\end{proof}

We get finer expansions of the solution to \eqref{EqnNearEquator}:
\begin{lem}
\label{CorFinerExpansion}
For $d\ge3$ and $\mu,\kappa$ dimensionally small, the solution $u$ to \eqref{EqnNearEquator} satisfies, on $[A_d(\mu^{\frac12}+\kappa^{\frac{1}{d-2}}),\pi/2]$,
\begin{equation*}
u(\theta)=\frac{1}{2d}\sin^2(\theta)-d\mu+\mu \BO(\kappa\theta^{-d+2}+\theta^2)
\end{equation*} 
and 
\begin{equation*}
u'(\theta)=\frac{1}{d}\sin(\theta)\cos(\theta)-\frac{\alpha_d}{d}\kappa\mu\sin^{-d+1}(\theta)+\mu \BO(\theta)+\kappa\mu \BO(\theta^{-3/2}+\mu\theta^{-d+1}+\mu\theta^{-7/2}).
\end{equation*}

Here $A_d$ is the constant from Lemma \ref{LemAnalysisOnV}, and $\alpha_d$ is defined in Corollary \ref{CorFinerExpansionOfAlpha}.
\end{lem}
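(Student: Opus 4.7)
The plan is to substitute into $u = p + \mu v$ and $v = V_0 + w$ the refined information we have on each component, and then carefully combine the resulting error terms. Specifically, write
\begin{equation*}
u(\theta) = \tfrac{1}{2d}\sin^2(\theta) + \mu V_0(\theta) + \mu w(\theta),
\qquad
u'(\theta) = \tfrac{1}{d}\sin(\theta)\cos(\theta) + \mu V_0'(\theta) + \mu w'(\theta).
\end{equation*}
For the $V_0$ piece, Lemma \ref{LemPropertiesOfV0} gives
$V_0(\theta) = (d+1)\sin^2(\theta) - d + \BO(\kappa)$ and $|V_0'(\theta)| \le \BO(\theta)$ on $[0,\pi/2]$,
so $\mu V_0 = -d\mu + \mu\BO(\theta^2 + \kappa)$, and the $\BO(\kappa)$ will be absorbed into $\mu\BO(\kappa\theta^{-d+2})$ on the interval of interest since $\theta^{-d+2}\ge 1$ when $d\ge 3$.

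For the $w$ piece, use the variation-of-parameters representation \eqref{EqnTheFormOfW}, namely $w = \varphi\alpha + V_0\beta$, together with the standard companion formula $w' = \varphi'\alpha + V_0'\beta$ (a consequence of the Wronskian constraint $\varphi\alpha' + V_0\beta' = 0$ built into \eqref{EqnAlphaAndBeta}). The four inputs are: Corollary \ref{CorFinerExpansionOfAlpha} for the leading expansion $\alpha(\theta) = \alpha_d\kappa + \kappa\BO(\theta^{1-\kappa}+\mu(1+\theta^{d-4-\kappa})) + \kappa^2\BO(1+|\log\theta|)$; the bound $|\beta| \le C_d A_d^{-2}\kappa(1+|\log\theta|)$ from \eqref{EqnBoundsOnAlphaBeta}; the asymptotics of $\varphi$ and $\varphi'$ near $\theta=0$ from Lemma \ref{LemPropertiesOfPhi}, which give $\varphi(\theta) = \BO(\sin^{-d+2+\kappa}(\theta))$ and $\varphi'(\theta) = -\tfrac{1}{d}\sin^{-d+1}(\theta)+\BO(\sin^{-d+3}(\theta))$ up to a normalization that should produce precisely the constant $-\alpha_d/d$ in the statement; and the bounds $|V_0|\le 2d$, $|V_0'|\le\BO(\theta)$ from Lemma \ref{LemPropertiesOfV0}.

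Combining these for $u$: the dominant contribution to $\mu w$ is $\mu\varphi\alpha_d\kappa = \kappa\mu\BO(\theta^{-d+2})$, and the remaining pieces of $\mu\varphi\alpha$ together with $\mu V_0\beta$ are absorbed into $\mu\BO(\kappa\theta^{-d+2})$ after using $\mu^{1/2}\le \BO(\theta)$ (which is \eqref{EqnExitingTimeTooShort} read in reverse on the interval $[A_d(\mu^{1/2}+\kappa^{1/(d-2)}),\pi/2]$) and $\kappa\theta^{-d+2}\ge\kappa^{1+\frac{d-2}{d-2}}=\kappa^2$; adding the $\mu V_0+d\mu = \mu\BO(\theta^2)$ contribution finishes the expansion of $u$. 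For $u'$, the term $\mu\varphi'(\theta)\alpha_d\kappa$ produces the stated $-\tfrac{\alpha_d}{d}\kappa\mu\sin^{-d+1}(\theta)$ main correction, the $\mu V_0'$ term gives $\mu\BO(\theta)$, and the remaining products $\mu\varphi'\cdot(\alpha-\alpha_d\kappa)$ and $\mu V_0'\beta$ are bounded termwise using the expansion of $\alpha$ in Corollary \ref{CorFinerExpansionOfAlpha} and the bound on $\beta$; these produce the error terms $\kappa\mu\BO(\theta^{-3/2} + \mu\theta^{-d+1}+\mu\theta^{-7/2})$ after elementary manipulations (e.g.\ $\sin^{-d+1}(\theta)\cdot\theta^{1-\kappa} = \BO(\theta^{-d+2})$, absorbed into the $\mu\BO(\theta)\cdot$—type leftover, and so on).

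The routine part is bookkeeping; the one nonroutine ingredient is the sharp expansion of $\varphi$ and $\varphi'$ near the pole with the right leading constant, since it is this constant that must match $\alpha_d/d$ in the main term of $u'$. I expect this to be the main obstacle, and I would isolate it as a lemma in Appendix \ref{AppendixODE} parallel to Lemma \ref{LemPropertiesOfV0}, obtained from a Frobenius analysis of the homogeneous equation $\LP\varphi = 0$ with the normalization implicit in Lemma \ref{LemVariationOfParameters} (Wronskian $W(\varphi,V_0)\sin^{d-1-\kappa}(\theta)\equiv 1$, which forces $\varphi(\theta) \sim \tfrac{1}{d-2}\sin^{-d+2+\kappa}(\theta)$ up to a sign and hence $\varphi'(\theta) \sim -\tfrac{1}{d}\sin^{-d+1}(\theta)$ after differentiation modulo the factor produced by the normalization of $V_0$ at $\pi/2$). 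Once that expansion is in hand, the rest of the proof is substitution and term-by-term estimation.
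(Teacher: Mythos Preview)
Your proposal is correct and follows essentially the same approach as the paper: decompose $u=p+\mu V_0+\mu w$, use the variation-of-parameters identity $w'=\varphi'\alpha+V_0'\beta$ together with Corollary~\ref{CorFinerExpansionOfAlpha} and the expansion of $\varphi'$ from Lemma~\ref{LemPropertiesOfPhi} to extract the leading correction $-\tfrac{\alpha_d}{d}\kappa\mu\sin^{-d+1}(\theta)$, and absorb the remaining products termwise. Two small remarks: for the expansion of $u$ itself the paper simply invokes the bound $|w|\le C_d\kappa\theta^{-d+2+\kappa}$ already established in \eqref{EqnEstimateOnW} rather than re-expanding $w=\varphi\alpha+V_0\beta$, which is slightly cleaner; and the sharp expansion $\varphi'(\theta)=-\tfrac1d\sin^{-d+1}(\theta)(1+\kappa\BO(|\log\theta|)+\BO(\theta^2))$ that you correctly anticipate needing is exactly the content of Lemma~\ref{LemPropertiesOfPhi}, proved there via Liouville's formula as you outline.
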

Recall the small parameters $\mu$ from \eqref{EqnNearEquator} and $\kappa$ from \eqref{EqnKappa}.

\begin{proof}
In terms of the normalized solution $v$ in   \eqref{EqnNormalizedSolution}, the desired expansion is equivalent to the following on $[A_d(\mu^{\frac12}+\kappa^{\frac{1}{d-1}}),\pi/2]$
\begin{equation}
\label{EqnExpansionForV}
v(\theta)=-d+\BO(\kappa\theta^{-d+2}+\theta^2)
\end{equation} 
and
\begin{equation}
\label{EqnExpansionForV'}
v'(\theta)=-\frac{\alpha_d}{d}\kappa\sin^{-d+1}(\theta)+\BO(\theta)+\kappa \BO(\theta^{-3/2}+\mu\theta^{-d+1}+\mu\theta^{-7/2}).
\end{equation} 

\vem

\textit{Step 1: The expansion of $v$.}

With \eqref{EqnLemAnalysisOnV}, the difference $w=v-V_0$ in \eqref{EqnDifferenceVV0}  satisfies \eqref{EqnEstimateOnW}. As a result, we have
\begin{align*}
|v(\theta)-V_0(0)|&\le|v-V_0|(\theta)+|V_0(\theta)-V_0(0)|\\
&\le C_d\kappa\theta^{-d+2}+C_d\theta^2 \hem\text{ on }[A_d(\mu^{1/2}+\kappa^{\frac{1}{d-1}}),\pi/2].
\end{align*}
The expansion of $V_0(0)$ in  Lemma \ref{LemPropertiesOfV0} gives \eqref{EqnExpansionForV}.

\vem

\textit{Step 2: Expansion of $v'$.}

Recall that $w$ in \eqref{EqnDifferenceVV0} solves \eqref{EqnODEInPreparations}. Lemma \ref{LemVariationOfParameters} gives
\begin{equation}
\label{EqnAgainVariationOfParameters}
w'(\theta)=\varphi'(\theta)\alpha(\theta)+V_0'(\theta)\beta(\theta),
\end{equation}
where the coefficients $\alpha$ and $\beta$ are given in \eqref{EqnAlphaAndBeta}.

With the expansion of $\alpha$ from Corollary \ref{CorFinerExpansionOfAlpha},  Lemma \ref{LemPropertiesOfPhi} gives,  for $\kappa>0$ small and $\theta\ge A_d(\mu^{\frac12}+\kappa^{\frac{1}{d-2}})$, 
\begin{align*}
\varphi'(\theta)\alpha(\theta)=-\frac{\alpha_d}{d}\kappa\sin^{-d+1}(\theta)+\kappa \BO(\theta^{-3/2}+\mu\theta^{-d+1}+\mu\theta^{-7/2}).
\end{align*}
Meanwhile,  Lemma \ref{LemPropertiesOfV0} and  \eqref{EqnBoundsOnAlphaBeta} imply
$$
V_0'\beta=\kappa \BO(\theta+\theta|\log(\theta)|).
$$

Combining these with \eqref{EqnAgainVariationOfParameters}, we get
$$
w'=-\frac{\alpha_d}{d}\kappa\sin^{-d+1}(\theta)+\kappa \BO(\theta^{-3/2}+\mu\theta^{-d+1}+\mu\theta^{-7/2}).
$$
With $v=w+V_0$ and Lemma \ref{LemPropertiesOfV0}, we have
$$
v'(\theta)=-\frac{\alpha_d}{d}\kappa\sin^{-d+1}(\theta)+\BO(\theta)+\kappa \BO(\theta^{-3/2}+\mu\theta^{-d+1}+\mu\theta^{-7/2}).
$$
This is the desired expansion on $v'$ as in \eqref{EqnExpansionForV'}.
\end{proof}

\subsection{Equation near the south pole}
\label{SubsectionEqnNearSouthPole}
We turn to the region near the south pole, that is, $\theta=0$ in our system \eqref{EqnCoordinate}. Here our family of solutions to \eqref{EqnNonlinearODE} is parametrized by the size of the contact set, $\lambda>0$, to be chosen in the next subsection. 

To be precise, we study solutions to 
\begin{equation}
\label{EqnEqnNearSouthPole}
\begin{cases}
u''+(d-1)\cot(\theta)u'+2(d+1)u=\frac{\kappa}{4}\frac{(u')^2}{u}+\kappa u+(1-\frac{\kappa}{2d}) &\text{ on }(\lambda,\pi/2),\\
u'=0, \text{ and }u=0 &\text{ at }\lambda.
\end{cases}
\end{equation} 
For the derivation of the ODE, see \eqref{EqnNonlinearODE}. Recall the small parameter $\kappa$ from \eqref{EqnKappa}.

The analysis of this problem is summarized into the following
\begin{lem}
\label{LemEqnNearSouthPole}
For $d\ge3$ and $\kappa,\lambda$ dimensionally small, there is a solution to \eqref{EqnEqnNearSouthPole} on $[\lambda,B_d]$ satisfying
$$
u(\theta)\sim_{d}(\theta-\lambda)^2, \hem u'(\theta)\sim_d(\theta-\lambda), \text{ and }|u''|=\BO(1).
$$
Here $B_d>0$ is a dimensional constant. 
\end{lem}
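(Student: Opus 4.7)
The strategy is to construct $u$ as a perturbation of the radial obstacle-problem solution $P_\lambda$ from \eqref{EqnPLambda}. When $\kappa = 0$ and the lower-order term $2(d+1)u$ is dropped, \eqref{EqnEqnNearSouthPole} is essentially the radial obstacle problem in $\R^d$ (using $\cot(\theta) \sim 1/\theta$ near the south pole), whose solution $P_\lambda$ has the quadratic expansion $\tfrac{1}{2}(\theta-\lambda)^2 + \BO((\theta-\lambda)^3)$ near its free boundary. This is exactly the behavior we target for $u$.

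The first step is to identify the correct matching condition at $\theta = \lambda$. Any $C^2$ solution with $u(\lambda) = u'(\lambda) = 0$ must admit an expansion $u(\theta) = a(\theta-\lambda)^2 + \BO((\theta-\lambda)^3)$, for which $u'(\theta) \sim 2a(\theta-\lambda)$ and $(u')^2/u \to 4a$ as $\theta \to \lambda$. Substituting into the ODE and evaluating at $\lambda$ forces
\begin{equation*}
2a = \kappa a + \bigl(1 - \tfrac{\kappa}{2d}\bigr), \qquad \text{i.e.,} \qquad a = \frac{1 - \kappa/(2d)}{2-\kappa},
\end{equation*}
which is close to $\tfrac{1}{2}$ for $\kappa$ small. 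In particular, the nominally singular quantity $\frac{\kappa}{4}(u')^2/u$ has a finite limit at the free boundary, provided the solution respects this ansatz.

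With the matching at hand, I would construct $u$ by a regularization/compactness argument. For each small $\epsilon > 0$, the ODE is regular on $\{u > 0\}$, so standard ODE theory produces a unique local solution $u_\epsilon$ to the initial value problem on $[\lambda + \epsilon, \cdot)$ with Taylor-consistent data $u_\epsilon(\lambda+\epsilon) = a\epsilon^2$, $u_\epsilon'(\lambda+\epsilon) = 2a\epsilon$. The heart of the argument is to extract $\epsilon$-uniform a priori bounds
\begin{equation*}
u_\epsilon(\theta) \sim_d (\theta - \lambda)^2, \qquad u_\epsilon'(\theta) \sim_d (\theta - \lambda),
\end{equation*}
on a fixed dimensional interval $[\lambda + \epsilon, B_d]$. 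These would be obtained by comparison with explicit quadratic barriers $c_\pm (\theta-\lambda)^2$: the smallness of $\kappa$, $\lambda$, and $\theta - \lambda$ keeps the terms $(d-1)\cot(\theta)u'$, $2(d+1)u$, $\kappa u$, and $\frac{\kappa}{4}(u')^2/u$ small relative to the forcing $1 - \kappa/(2d)$ and the dominant $u''$. The bound $|u''| = \BO(1)$ then follows by reinserting these estimates into the ODE. Passing to a subsequential limit $\epsilon \to 0$ produces the desired $u$ on $[\lambda, B_d]$.

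The main obstacle is the nominally $0/0$ character of $(u')^2/u$ at the free boundary $\theta = \lambda$, which rules out any off-the-shelf well-posedness result and makes the passage $\epsilon \to 0$ delicate. This degeneracy is tamed precisely by the compatibility condition fixing $a$ together with the quadratic barriers that enforce strict comparability of $u$ and $u'$ with $(\theta-\lambda)^2$ and $(\theta-\lambda)$ in a dimensional neighborhood of the free boundary.
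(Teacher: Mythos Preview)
Your outline is in the right spirit---regularize, obtain uniform quadratic bounds, pass to a limit---but the execution differs from the paper's in two substantive ways, and one of your steps is under-specified.

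First, the regularization. You shift the initial time to $\lambda+\epsilon$ and impose Taylor-consistent Cauchy data there. The paper instead keeps the honest initial data $u_\eps(\lambda)=u_\eps'(\lambda)=0$ and regularizes the nonlinearity itself, replacing $(u')^2/u$ by $(u')^2/(u+\eps)$. This has the advantage that the perturbed ODE is globally regular and the initial conditions are exact, so there is no error at $\lambda$ to propagate.

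Second, and more importantly, the a~priori bound. You propose quadratic barriers $c_\pm(\theta-\lambda)^2$, but your justification (``smallness of $\kappa$, $\lambda$, and $\theta-\lambda$'') is not quite right: the interval $[\lambda,B_d]$ has dimensional length, so $\theta-\lambda$ is not uniformly small, and the term $(d-1)\cot(\theta)u'$ contributes a factor $(\theta-\lambda)/\theta$ that ranges over $[0,1)$ rather than being negligible. A barrier argument can still be pushed through, but it requires more care than you indicate. The paper bypasses this entirely with a one-line maximum-principle trick: at the maximum of the ratio $(u_\eps')^2/(u_\eps+\eps)$ on $[\lambda,t_0]$ one has $2u_\eps'' \ge (u_\eps')^2/(u_\eps+\eps)$, and substituting the ODE immediately bounds the ratio by $4$. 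With this in hand, the right-hand side of $\sin^{1-d}(\theta)[\sin^{d-1}(\theta)u_\eps']' \sim_d 1$ integrates to give $u_\eps'\sim_d(\theta-\lambda)$ and $u_\eps\sim_d(\theta-\lambda)^2$ directly, without any barrier construction.

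Finally, your opening reference to $P_\lambda$ is motivational but plays no role in the existence argument you sketch; in the paper $P_\lambda$ enters only in the subsequent lemma for the finer expansion.
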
 
Recall the notations $O$ and $\sim_d$ in Remark \ref{RemSimAndO}.  The small parameter $\lambda$ is from \eqref{EqnEqnNearSouthPole}. The small parameter $\kappa$ is from \eqref{EqnKappa}.

\begin{proof}
\textit{Step 1: Preparations.}

We study the perturbed system for $\eps>0$
\begin{equation}
\label{EqnEqnNearSouthPolePerturbed}
\begin{cases}
u_{\eps}''+(d-1)\cot(\theta)u_{\eps}'+2(d+1)u_{\eps}=\frac{\kappa}{4}\frac{(u_{\eps}')^2}{u_{\eps}+\eps}+\kappa u_{\eps}+(1-\frac{\kappa}{2d}) &\text{ on }(\lambda,\pi/2),\\
u_{\eps}'=0, \text{ and }u_{\eps}=0 &\text{ at }\lambda.
\end{cases}
\end{equation}

The ODE is equivalent to 
\begin{equation}
\label{EqnEquivalentPerturbedSystem}
\sin^{1-d}(\theta)[\sin^{d-1}(\theta)u_{\eps}']'=1-2(d+1)u_{\eps}+\frac{\kappa}{4}\frac{(u_{\eps}')^2}{u_{\eps}+\eps}+\kappa u_{\eps}-\frac{\kappa}{2d}.
\end{equation} 
Initially, the right-hand side is positive. Hence $u_{\eps}'$ and $u_{\eps}$ turn positive. 
For small $\kappa$, we have
\begin{equation}
\label{EqnAConsequenceOfEquivalentFOrm}
\sin^{1-d}(\theta)[\sin^{d-1}(\theta)u_{\eps}']'\ge 1-2(d+1)u_{\eps}-\frac{\kappa}{2d}>\frac34-2(d+1)u_{\eps}.
\end{equation}
This implies $u_{\eps}'>0$ as long as $u_{\eps}$ is small. In particular, the solution stays positive.

Let $t_0>\lambda$ denote the first instance\footnote{If  $u_{\eps}$ always stays below $\frac{1}{8(d+1)}$, we take $t_0$ to be $\pi/2$.} when $u_{\eps}$ reaches $\frac{1}{8(d+1)}$, that is,
\begin{equation}
\label{EqnExitingTimeForEqnNearSouthPole}
0\le u_{\eps}\le\frac{1}{8(d+1)} \text{ on }[\lambda,t_0], \text{ and } u_{\eps}(t_0)=\frac{1}{8(d+1)}.
\end{equation} 
Then it follows from \eqref{EqnAConsequenceOfEquivalentFOrm} that 
\begin{equation*}
\label{EqnUEps>0}
u_{\eps}'>0 \text{ on }(\lambda,t_0].
\end{equation*} 
\vem

\textit{Step 2: Estimates on $[\lambda,t_0]$.}

We start with an estimate on $\frac{(u_{\eps}')^2}{u_{\eps}+\eps}$. 

Note that this quantity vanishes at $\theta=\lambda$. Thus at its maximum point $\bar{t}$ on $[\lambda,t_0]$, we have
$
\DTheta|_{\theta=\bar{t}}\frac{(u_{\eps}')^2}{u_{\eps}+\eps}\ge0.
$
With the non-negativity of $u_{\eps}'$ and $u_{\eps}$, this gives
$$
[2u_{\eps}''-\frac{(u_{\eps}')^2}{u_{\eps}+\eps}]|_{\theta=\bar{t}}\ge0.
$$
Using the ODE in \eqref{EqnEqnNearSouthPolePerturbed} and the bound on $u_\eps$ in  \eqref{EqnExitingTimeForEqnNearSouthPole}, we get 
$$
2(1+\frac{\kappa}{8(d+1)})\ge(1-\kappa/2)\frac{(u_{\eps}')^2}{u_{\eps}+\eps}\hem \text{ at }\bar{t}.
$$
For $\kappa$ dimensionally small, this gives
\begin{equation}
\label{EqnBoundOnRatioTemp}
\frac{(u_{\eps}')^2}{u_{\eps}+\eps}\le 4 \text{ on }[\lambda,t_0].
\end{equation} 

\vem

Combined with \eqref{EqnEquivalentPerturbedSystem}  and \eqref{EqnExitingTimeForEqnNearSouthPole}, we have
$$
\sin^{1-d}(\theta)[\sin^{d-1}(\theta)u_{\eps}']'\sim_d 1 \text{ on }[\lambda,t_0].
$$
Using $u_{\eps}'(\lambda)=0$, we can integrate this relation to conclude
\begin{equation}
\label{EqnU'Sim}
u_{\eps}'\sim_d(\theta-\lambda) \text{ on }[\lambda,t_0].
\end{equation} 
With $u_{\eps}(\lambda)=0$, this implies
\begin{equation}
\label{EqnUSim}
u_{\eps}\sim_d(\theta-\lambda)^2 \text{ on }[\lambda,t_0].
\end{equation}
With \eqref{EqnU'Sim}, we have $\cot(\theta)u_{\eps}'=\BO(1)$. Putting this back into  \eqref{EqnEqnNearSouthPolePerturbed}, we conclude
$$
|u_{\eps}''|=\BO(1)  \text{ on }[\lambda,t_0].
$$

In summary, we have established all desired estimates for $u_\eps$ on $[\lambda,t_0]$.

\vem

\textit{Step 3: The conclusion.}

With \eqref{EqnExitingTimeForEqnNearSouthPole} and   \eqref{EqnUSim}, we see that
$$
(t_0-\lambda)\sim_d 1
$$
for dimensionally small $\lambda$. As a result, the estimates in the previous step hold on $[\lambda,B_d]$ for a dimensional constant $B_d$.

These estimates give enough compactness on the family $\{u_\eps\}$. We extract the limit $u$ of a subsequence as $\eps\to 0$. This limit satisfies estimates in the lemma. 
\end{proof}

The model for  the solution $u$ in Lemma \ref{LemEqnNearSouthPole} is the radial solution to the obstacle problem $P_\lambda$ in \eqref{EqnPLambda}. To be precise, given $\lambda,$ we set
$
\tilde{\lambda}=\sin(\lambda)
$ 
and define
\begin{equation}
\label{EqnModelPLambda}
p_\lambda(\theta):=P_{\tilde{\lambda}}(\sin(\theta)).
\end{equation}
This gives $|\tilde{\lambda}-\lambda|=\BO(\lambda^2)$. Together with the expansion of $P_\lambda$ in  \eqref{EqnPLambda},  we have
\begin{equation}
\label{EqnModelPLambdaApprox}
p_\lambda(\theta)=\frac{1}{2d}\sin^2(\theta)-\frac{\lambda^2}{2(d-2)}+\frac{\lambda^d}{d(d-2)}\sin^{-d+2}(\theta)+\BO(\lambda^3) \hem\text{ for }\theta\ge\lambda.
\end{equation} 

\begin{lem}
\label{LemExpansionNearTheSouthPole}
Under the same assumptions as in Lemma \ref{LemEqnNearSouthPole}, the solution $u$ to \eqref{EqnEqnNearSouthPole} satisfies, on $[\lambda,B_d]$, 
$$
u(\theta)=\frac{1}{2d}\sin^2(\theta)-\frac{\lambda^2}{2(d-2)}+\frac{\lambda^d}{d(d-2)}\sin^{-d+2}(\theta)+\BO(\lambda^3)+(\theta-\lambda)\theta \BO(\kappa+\theta^2)
$$
and
$$
u'(\theta)=\frac{1}{d}\sin(\theta)\cos(\theta)-\frac{\lambda^d}{d}\sin^{-d+1}(\theta)\cos(\theta)+\BO(\lambda^2)+(\theta-\lambda) \BO(\kappa+\theta^2).
$$
Here $B_d$ is the dimensional constant in Lemma \ref{LemEqnNearSouthPole}.
\end{lem}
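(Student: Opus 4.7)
The plan is to treat $u$ as a perturbation of the model $p_\lambda$ from \eqref{EqnModelPLambda}, which is built so that its Cauchy data at the free boundary matches that of $u$: since $P_{\tilde\lambda}(\tilde\lambda) = P_{\tilde\lambda}'(\tilde\lambda) = 0$ by \eqref{EqnPLambda}, one has $p_\lambda(\lambda) = p_\lambda'(\lambda) = 0$, and thus the difference $w := u - p_\lambda$ satisfies the zero Cauchy conditions $w(\lambda) = w'(\lambda) = 0$. Throughout I would work with the linear operator $Lf := f'' + (d-1)\cot(\theta) f' + 2(d+1)f$, which is the linear part of the ODE in \eqref{EqnEqnNearSouthPole}.

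The first step is to identify the right-hand side of the ODE for $w$. From \eqref{EqnEqnNearSouthPole},
\[
Lu = 1 - \tfrac{\kappa}{2d} + \tfrac{\kappa}{4}\tfrac{(u')^2}{u} + \kappa u \quad \text{on } (\lambda, B_d].
\]
For $p_\lambda$, a short direct computation using the chain rule $p_\lambda'(\theta) = P_{\tilde\lambda}'(\sin\theta)\cos\theta$ together with the planar obstacle ODE $P_{\tilde\lambda}''(r) + \tfrac{d-1}{r}P_{\tilde\lambda}'(r) = 1$ (for $r > \tilde\lambda$) yields
\[
Lp_\lambda = 1 + \tfrac{3\tilde\lambda^d}{d-2}\sin^{-(d-2)}(\theta) - \tfrac{(d+1)\tilde\lambda^2}{d-2}.
\]
Subtracting, I would use Lemma \ref{LemEqnNearSouthPole}, which gives $u \sim_d (\theta-\lambda)^2$ and hence $(u')^2/u \sim_d 1$, to bound the right-hand side $\rho := Lw$ by $|\rho(\theta)| \le \BO(\kappa + \lambda^2) \le \BO(\kappa + \theta^2)$ on $[\lambda, B_d]$ (using $\lambda \le \theta$).

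Next I would integrate $Lw = \rho$ using the self-adjoint form $Lf = \sin^{-(d-1)}(\theta)(\sin^{d-1}(\theta)f')' + 2(d+1)f$. Integrating from $\lambda$ with $w'(\lambda) = 0$ gives
\[
\sin^{d-1}(\theta) w'(\theta) = \int_\lambda^\theta \sin^{d-1}(\tau)[\rho(\tau) - 2(d+1)w(\tau)]\,d\tau.
\]
A Gronwall argument on the bounded interval $[\lambda, B_d]$ absorbs the $w$-term, and combining the bound on $|\rho|$ with the elementary estimate $\int_\lambda^\theta \tau^{d-1}(\kappa + \tau^2)\,d\tau \le \BO((\theta-\lambda)\theta^{d-1}(\kappa + \theta^2))$ yields $|w'(\theta)| \le \BO((\theta-\lambda)(\kappa + \theta^2))$; a further integration from $\lambda$ produces $|w(\theta)| \le \BO((\theta-\lambda)\theta(\kappa + \theta^2))$. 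Writing $u = p_\lambda + w$ and substituting the expansion \eqref{EqnModelPLambdaApprox} of $p_\lambda$ (which absorbs the replacement of $\tilde\lambda$ by $\lambda$ into a $\BO(\lambda^3)$ error) produces the first displayed formula; differentiating $p_\lambda$ explicitly and combining with the estimate on $w'$ yields the second.

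The main obstacle I anticipate is the nonlinear term $\tfrac{\kappa}{4}\tfrac{(u')^2}{u}$, which is a priori singular at $\theta = \lambda$ where both $u$ and $u'$ vanish. The sharp two-sided estimates of Lemma \ref{LemEqnNearSouthPole}---asserting that $u$ vanishes exactly quadratically and $u'$ linearly---are precisely what is needed to show that $(u')^2/u$ is uniformly bounded at the free boundary, so that this nonlinear contribution to $\rho$ enters at size $\BO(\kappa)$ rather than blowing up. With that in hand the remaining analysis is routine ODE bookkeeping, though care is needed to track the powers of $(\theta-\lambda)$ and $\theta$ separately to recover the precise error form claimed in the lemma.
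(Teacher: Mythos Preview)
Your proposal is correct and follows the same strategy as the paper: compare $u$ to the model $p_\lambda$, use the matching Cauchy data at $\theta=\lambda$ to set $w=u-p_\lambda$ with $w(\lambda)=w'(\lambda)=0$, bound the right-hand side via Lemma~\ref{LemEqnNearSouthPole}, and integrate. The only real difference is bookkeeping: the paper works with the simpler operator $Lw=w''+(d-1)\cot(\theta)w'$ (i.e.\ the one appearing in Appendix~\ref{AppendixODE}) and places the full term $-2(d+1)u$ directly into $\rho$, so that the a~priori estimate $u\sim_d(\theta-\lambda)^2\le\BO(\theta^2)$ from Lemma~\ref{LemEqnNearSouthPole} immediately gives $|\rho|\le\BO(\kappa+\theta^2)$ with no feedback through $w$. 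This allows them to invoke the explicit variation-of-parameters formula (Lemma~\ref{LemVariationOfParameters2}) and read off the bounds on $w,w'$ directly. Your route, keeping $2(d+1)w$ in the operator and then absorbing it by Gronwall, reaches the same conclusion but is slightly less direct; note that a bare Gronwall on the first-order system would see the singular coefficient $\cot\theta\sim\theta^{-1}$ and lose control for $\theta$ far from $\lambda$, so in practice you would still want to feed in the a~priori bound $|w|\le|u|+|p_\lambda|\le\BO(\theta^2)$, at which point your argument and the paper's coincide.
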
 
\begin{proof}
Since $P_{\tilde{\lambda}}$ solves 
\eqref{EqnClassicalObstacleProblem}, the profile in \eqref{EqnModelPLambda} solves
$$
\begin{cases}
p_\lambda''+(d-1)\cot(\theta)p_\lambda'=\cos^2(\theta)-P_{\tilde{\lambda}}'(\sin(\theta))\sin(\theta) &\text{ on }(\lambda,\pi/2),\\
p_\lambda'=p_\lambda=0 &\text{ at }\lambda.
\end{cases}
$$
In particular, the difference
\begin{equation}
\label{EqnDifferenceUPLambda}
w:=u-p_\lambda
\end{equation}
solves \eqref{EqnAppendixODE2} on $(\lambda,\pi/2)$ with right-hand side 
$$
\rho:=1-2(d+1)u+\frac{\kappa}{4}\frac{(u')^2}{u}+\kappa u-\frac{\kappa}{2d}-\cos^2(\theta)+P'_{\tilde{\lambda}}(\sin(\theta))\sin(\theta).
$$
With Lemma \ref{LemEqnNearSouthPole}, we have
\begin{equation}
\label{EqnBigOForRho}
|\rho|\le\BO(\kappa+\theta^2) \text{ on }[\lambda,B_d].
\end{equation}

\vem

Lemma \ref{LemVariationOfParameters2} gives
$$
w(\theta)=\varphi(\theta) \alpha(\theta)+\beta(\theta), \text{ and }w'(\theta)=\varphi'(\theta)\alpha(\theta),
$$
where 
$$
\alpha(\theta)=-\int_{\lambda}^{\theta}\rho(\tau)\sin^{d-1}(\tau)d\tau,
\text{ and }
\beta(\theta)=\int_{\lambda}^{\theta}\rho(\tau)\varphi(\tau)\sin^{d-1}(\tau)d\tau.
$$

With \eqref{EqnBigOForRho} and \eqref{EqnFundamentalSolutionODE2}, these can be estimate as
$$
|\alpha|(\theta)=(\theta-\lambda)\theta^{d-1}\BO(\kappa+\theta^2), \text{ and }|\beta|(\theta)=(\theta-\lambda)\BO(\theta),
$$
which gives
$$
|w(\theta)|=(\theta-\lambda)\theta \BO(\kappa+\theta^2), \text{ and }|w'|(\theta)=(\theta-\lambda)\BO(\kappa+\theta^2) \text{ on }[\lambda,B_d].
$$

The conclusion follows from \eqref{EqnModelPLambdaApprox} and  \eqref{EqnDifferenceUPLambda}.
\end{proof} 

\subsection{Gluing of the solutions}
\label{SubsectionGluingOfTheSolutionsUAS}
In Subsection \ref{SubsectionEqnNearEquator}, we constructed a family of solutions near the equator. To be precise, we have a family of solutions to \eqref{EqnNearEquator} on $[A_d(\mu^{\frac12}+\kappa^{\frac{1}{d-1}}),\pi/2]$ (see Lemma \ref{CorFinerExpansion}). This family is parametrized by $\mu$ in the terminal condition of \eqref{EqnNearEquator}. 

In Subsection \ref{SubsectionEqnNearSouthPole}, we constructed a family of solutions near the south pole, namely, solutions to \eqref{EqnEqnNearSouthPole} on $[\lambda,B_d]$ (see Lemma \ref{LemExpansionNearTheSouthPole}). This family is parametrized by  $\lambda$, the size of the contact set in \eqref{EqnEqnNearSouthPole}.

To get a solution on the entire sphere, we need to glue these two families. That is, by adjusting the parameters $\mu$ and $\lambda$, we show that the two solutions have matching values and derivatives at a point in their common domain. Here we rely heavily on the expansions of the solutions and their derivatives in Lemma \ref{CorFinerExpansion} and Lemma \ref{LemExpansionNearTheSouthPole}.

To be precise, we have
\begin{lem}
\label{LemGlueAS}
For $d\ge3$ and $\kappa>0$ dimensionally small, there is a solution $u\ge 0$ to \eqref{EqnNonlinearODE} on $(0,\pi/2)$ satisfying $u'(\pi/2)=0$.

Moreover, there are constants $\theta_0<\theta_m$ with 
$$
\theta_0\sim_d\kappa^{\frac{1}{d-2}},\text{ and }\hem\theta_m/\theta_0=\mathcal{T}_d
$$
for a large dimensional constant $\mathcal{T}_d$,
such that 
$$
u(\theta)\sim_d[(\theta-\theta_0)_+]^2,  \hem  0\le u'(\theta)\le \BO((\theta-\theta_0)_+) \text{ on }[0,\pi/2],
$$
and
$$
u'(\theta)\sim_d (\theta-\theta_0)_+ \text{ on }[0,\theta_m].
$$
\end{lem}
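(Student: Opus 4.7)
The plan is to obtain $u$ by concatenating a solution $u_\lambda$ from the south-pole family of Lemma \ref{LemExpansionNearTheSouthPole} with a solution $u_\mu$ from the equator family of Lemma \ref{CorFinerExpansion}, after tuning the parameters $\lambda$ and $\mu$ so that the two pieces agree with matching derivatives at a common point. Since $u_\lambda(\lambda) = u_\lambda'(\lambda) = 0$, extending $u_\lambda$ by zero onto $[0,\lambda]$ produces a $C^{1,1}$ function that still solves \eqref{EqnNonlinearODE}, because the term $\chi_{\{u>0\}}$ switches off on the contact set; the parameter $\theta_0$ in the statement will then be this $\lambda$.

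I would fix a large dimensional constant $\mathcal{T}_d$ and choose the matching point as $\theta_m := \mathcal{T}_d \lambda$. For $\lambda \sim_d \kappa^{1/(d-2)}$ (the critical scale identified below) and $\mu \sim_d \lambda^2$, a direct inspection shows that $\theta_m$ lies in the common domain of validity of both expansions as soon as $\mathcal{T}_d$ is taken sufficiently large relative to $A_d$. The first step is value matching: the common leading term $\tfrac{1}{2d}\sin^2(\theta_m)$ cancels in the equation $u_\mu(\theta_m) = u_\lambda(\theta_m)$, which becomes, at leading order,
\begin{equation*}
-d\mu = -\tfrac{\lambda^2}{2(d-2)} + \tfrac{\lambda^d}{d(d-2)}\sin^{-d+2}(\theta_m) + (\text{errors}).
\end{equation*}
Since the left-hand side is monotone in $\mu$ with slope $-d$, an implicit-function argument produces a unique $\mu(\lambda) = \tfrac{\lambda^2}{2d(d-2)} + \BO(\lambda^3)$ for each $\lambda$ in a dyadic interval around $\kappa^{1/(d-2)}$, with the $\lambda^d\sin^{-d+2}(\theta_m) \sim \mathcal{T}_d^{-(d-2)}\lambda^2$ correction made negligible by taking $\mathcal{T}_d$ large.

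The second step is derivative matching. Setting $F(\lambda) := u_\lambda'(\theta_m) - u_{\mu(\lambda)}'(\theta_m)$, after cancelling the common leading term $\tfrac{1}{d}\sin(\theta_m)\cos(\theta_m)$ in the two expansions of $u'$, one obtains
\begin{equation*}
F(\lambda) = \sin^{-d+1}(\theta_m)\left[\tfrac{\alpha_d}{d}\kappa\mu(\lambda) - \tfrac{\lambda^d}{d}\cos(\theta_m)\right] + (\text{errors}).
\end{equation*}
Substituting $\mu(\lambda) \approx \tfrac{\lambda^2}{2d(d-2)}$, the bracket vanishes precisely when $\lambda^{d-2}$ equals a specific positive multiple of $\kappa$, is strictly positive for $\lambda$ slightly below this critical value, and strictly negative slightly above. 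A scaling check shows that every error contribution in $F$ is of the form $\kappa^{a/(d-2)}$ with $a > 1$ at the critical scale, hence strictly dominated by the leading size $\kappa^{1/(d-2)}$ once $\gamma$ is sufficiently close to $1$. An IVT argument then yields a zero $\lambda_* \sim_d \kappa^{1/(d-2)}$ of $F$, and setting $u \equiv 0$ on $[0,\lambda_*]$, $u := u_{\lambda_*}$ on $[\lambda_*, \theta_m]$, $u := u_{\mu(\lambda_*)}$ on $[\theta_m, \pi/2]$ produces the desired $C^1$ solution; the quantitative bounds asserted in the lemma then follow directly from Lemmas \ref{LemEqnNearSouthPole} and \ref{CorFinerExpansion}.

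The main obstacle is the error-term accounting in the derivative-matching step: at the critical scale, several secondary terms in the expansions of $u_\mu'$ and $u_\lambda'$ become of comparable order after multiplication by the large factor $\sin^{-d+1}(\theta_m) \sim \kappa^{-(d-1)/(d-2)}$, and one must verify quantitatively that each of them remains strictly below the Wronskian-type leading contribution that drives the sign change in $F$. This is exactly what forces the smallness threshold $\gamma_d^3$ and the choice of the large constant $\mathcal{T}_d$.
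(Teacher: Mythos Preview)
Your overall strategy matches the paper's: glue $u_\lambda$ to $u_\mu$ at a point $\theta_m\sim\kappa^{1/(d-2)}$ by matching value and derivative. The paper organizes the matching differently: it fixes $\theta_m=T\kappa^{1/(d-2)}$ independently of $\lambda$, writes $\mu=M\kappa^{2/(d-2)}$, $\lambda=L\kappa^{1/(d-2)}$, and shows that the two scaled matching conditions converge (as $T\to\infty$, $\kappa\to 0$) to the map $F(M,L)=\bigl(-dM+\tfrac{L^2}{2(d-2)},\,-\tfrac{\alpha_d}{d}M+\tfrac{L^d}{d}\bigr)$, which has a unique positive zero with invertible differential; one application of the inverse function theorem then gives the matching pair directly. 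Your sequential scheme---solve the value equation for $\mu(\lambda)$ via an implicit-function argument, then apply IVT to the resulting one-variable derivative equation---is equivalent and correct, but it forces you to track how the error in $\mu(\lambda)$ feeds into the derivative step (exactly the ``main obstacle'' you flag). The paper's two-dimensional packaging absorbs this bookkeeping into a single invertibility check.

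One point you gloss over: the claim $u'\ge 0$ on $[\theta_m,\pi/2]$ does not follow directly from Lemmas~\ref{LemEqnNearSouthPole} and~\ref{CorFinerExpansion}, since Lemma~\ref{LemAnalysisOnV} only controls $|v'|$ and near $\pi/2$ the leading contribution $p'=\tfrac{1}{d}\sin\theta\cos\theta$ vanishes. The paper supplies a short separate argument: on a fixed interval $(\theta_d,\pi/2]$ one uses a lower bound on $u$ together with the ODE to show $(\sin^{d-1}\theta\,u')'\le 0$, hence $u'\ge 0$ from the terminal condition; on $[\theta_m,\theta_d]$ one has $u'\ge c_d\theta-C_d\mu\theta^{-1}\ge 0$ because $\theta_m^2\gg\mu$ by the choice $T\gg M$. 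You should include this step.
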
 
Recall the small parameter $\kappa$ from \eqref{EqnKappa}. In the statement of this lemma, the angle $\theta_0$ corresponds to the opening of the contact set of $u$. The angle $\theta_m$ corresponds to the point where we glue the solutions from Subsection \ref{SubsectionEqnNearEquator} and Subsection \ref{SubsectionEqnNearSouthPole}.
\begin{proof}
\textit{Step 1: Preparations.}

Denote by $u_e$ the solution to \eqref{EqnNearEquator} with parameter $\mu$. Denote by $u_o$ the solution to \eqref{EqnEqnNearSouthPole} with parameter  $\lambda$. 

For some constants $M,L$ and $T$ to be chosen,  bounded away from $0$ and infinity, we take 
$$\mu=M\kappa^{\frac{2}{d-2}}, \hem \lambda=L\kappa^{\frac{1}{d-2}}, \text{ and } \theta_m=T\kappa^{\frac{1}{d-2}}.$$
With a proper choice of $M, L$ and $T$, we will show that 
$$
u_e(\theta_m)=u_o(\theta_m), \text{ and }u_e'(\theta_m)=u'_o(\theta_m).
$$

\vem

\textit{Step 2: Matching data.}

With Lemma \ref{CorFinerExpansion} and Lemma \ref{LemExpansionNearTheSouthPole}, we have
\begin{equation}
\label{EqnMatchingData1}
\frac{u_e-u_o}{\kappa^{\frac{2}{d-2}}}(\theta_m)=-dM+\frac{L^2}{2(d-2)}+\BO(\frac{M+L^d}{T})+\BO_{d,M,L,T}(\kappa^{\frac{1}{d-2}}),
\end{equation}
and
\begin{equation}
\label{EqnMatchingData2}
\frac{(u_e'-u_o')(\theta_m)}{\kappa^{\frac{d}{d-2}}\sin^{-d+1}(\theta_m)}=-\frac{\alpha_d}{d}M+\frac{L^d}{d}+\BO_{d,M,L,T}(\kappa^{\frac{d-5/2}{d-2}}+\kappa^{\frac{1}{d-2}}).
\end{equation}

Define a map $F:\R^2\to\R^2$ as
$$
F(M,L):=(-dM+\frac{L^2}{2(d-2)},-\frac{\alpha_d}{d}M+\frac{L^d}{d}),
$$
then there is a unique pair of positive numbers  $(M_0,L_0)$ solving 
$$
F(M_0,L_0)=0.
$$
Moreover, the differential of $F$ at this point, $DF(M_0,L_0)$, is invertible. 

By the inverse function theorem, we get $0<\delta_F,\delta_F'<1$ such that $F$ is invertible from a $\delta_F$-neighborhood of $(M_0,L_0)$ to a $\delta_F'$-neighborhood of $0$. By fixing $T\gg (M_0+L_0^d)$ and then $\kappa$ small, we see the error  in \eqref{EqnMatchingData1} and \eqref{EqnMatchingData2} can be made arbitrarily small, uniformly in this neighborhood of $(M_0,L_0)$. 

As a result, for this choice of $T$ and small $\kappa>0$, we can find $(M,L)$ such that 
$$
u_e(\theta_m)=u_o(\theta_m), \text{ and }u_e'(\theta_m)=u_o'(\theta_m).
$$

\vem

\textit{Step 3: Properties of the solution.}

Define the function $u$ as 
$$
u:=u_o \text{ on }[0,\theta_m], \text{ and }u:=u_e \text{ on }(\theta_m,\pi/2].
$$
Then $u$ solves \eqref{EqnNonlinearODE} on $(0,\pi/2)$ since $u_e$ and $u_o$ both solve this equation in their domains of definition. Recall that $u_e'(\pi/2)=0$, the same holds for $u'$.

Define $\theta_0=\lambda=L\kappa^{\frac{1}{d-2}}$, then $\{u=0\}=\{u_o=0\}=[0,\theta_0]$.

Estimates on $[0,\theta_m]$ have been established in Lemma \ref{LemEqnNearSouthPole}. Below we focus on the estimates on $(\theta_m,\pi/2]$.
\vem

From Lemma \ref{LemAnalysisOnV}, we have
$$
\frac{|u'(\theta)|}{|\theta-\theta_0|}=\BO(\frac{\theta}{\theta-\theta_0})+\mu \BO(\theta^{-1}(\theta-\theta_0)^{-1}).
$$
Note that on $(\theta_m,\pi/2)$, we have $|\theta_0/\theta|\le L/T<\frac14$ since $T\gg L$ as in the previous step.  We have
$$
|u'|=\BO(\theta-\theta_0) \text{ on }[\theta_m,\pi/2].
$$
The estimate on $u$ follows by integrating this relation. 

\vem

It remains to show that $u'=u_e'\ge0$ on $(\theta_m,\pi/2]$.

With $p(\pi/2)=\frac{1}{2d}$, we find a dimensional constant $\theta_d\in(\pi/4,\pi/2)$ such that 
$$
p>\frac{1}{2d}-\frac{1}{100d^2} \text{ on }(\theta_d,\pi/2].
$$
With Lemma \ref{LemAnalysisOnV}, we get
$$
u\ge\frac{1}{2d}-\frac{1}{100d^2}-4d\mu>\frac{1}{2d}-\frac{1}{10d^2} \text{ on }(\theta_d,\pi/2]
$$
if $\kappa$ is dimensionally small. 

Putting this into \eqref{EqnNearEquator}, we have
$$
\sin^{1-d}(\theta)[\sin^{d-1}(\theta)u']'=u''+(d-1)\cot(\theta)u'=1-2(d+1)u+\BO(\kappa)\le 0 \text{ on }(\theta_d,\pi/2).
$$
That is, the function $\sin^{d-1}(\theta)u'$ is decreasing on this interval. Combined with the terminal condition in \eqref{EqnNearEquator}, we see that
$$
u'\ge0 \text{ on }(\theta_d,\pi/2].
$$

On $[\theta_m,\theta_d]$, Lemma \ref{LemAnalysisOnV} implies
$$
u'\ge\frac{1}{d}\sin(\theta)\cos(\theta)-\mu|v'|(\theta)\ge c_d(\theta-\mu\theta^{-1})\ge \frac{c_d}{\theta}(\theta_m^2-\mu).
$$
Recall $\theta_m=T\kappa^{\frac{1}{d-2}}$ and $\mu=M\kappa^{\frac{2}{d-2}}$ with $T\gg M$, the right-hand side is non-negative. 
\end{proof} 

With this, we give the proof of the main result of this section.

\begin{proof}[Proof of Proposition \ref{PropConstructionOfUAs}]
By choosing the constant $\gamma_d^3$ close to $1$, we make the parameter $\kappa$ from \eqref{EqnKappa} small so that Lemma \ref{LemGlueAS} holds.

Let $u$ be the solution in Lemma \ref{LemGlueAS}. Note that the vanishing condition on $u'(\pi/2)$ allows us to evenly reflect $u$ to $(\pi/2,\pi]$ and get a solution on $(0,\pi)$.

Using the coordinates in \eqref{EqnCoordinate}, we define $\UAS$ as
$$
\UAS(r,\theta)=r^2u(\theta).
$$
Since $u$ solves \eqref{EqnNonlinearODE}, this $2$-homogeneous function $\UAS$ solves \eqref{EqnTransformedEquation}. As a consequence of Proposition \ref{PropEquivalenceOfEquations}, we have $\UAS\in\SGa(\R^d)$.

By the homogeneity of $\UAS$, the estimate on $|\{\UAS=0\}\cap B_1|$ follows from the estimate on $|\{u=0\}|$ in Lemma \ref{LemGlueAS}.
\end{proof}

\section{Lower foliation of the axially symmetric cone}
\label{SectionLowerFoliationAS}
In the previous section, we constructed an axially symmetric cone to our problem, namely,  the profile $\UAS$ in Proposition \ref{PropConstructionOfUAs}, as a solution to the Euler-Lagrange equation \eqref{EqnTransformedEquation}. 

Starting from this section, we address its minimality. Based on Theorem \ref{ThmMinimality}, this amounts to the construction of a foliation of $\UAS$.
In this section, we focus on the lower foliation (see Definition \ref{DefLowerFoliation}). In Section \ref{SectionUFAS}, we construct the upper foliation. 

To be precise, the \textbf{main result} of this section is

\begin{prop}
\label{PropLowerFoliationAS}
For $d\ge3$, there is a dimensional constant $\gamma_d^3\in(0,1)$ such that for 
$$
\gamma_d^3<\gamma<1,
$$
there is a lower foliation $\{\Phi_t\}_{t\in(0,+\infty)}$ of the cone $\UAS$ in Proposition \ref{PropConstructionOfUAs}.
\end{prop}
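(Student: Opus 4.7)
The plan is to parallel Subsection \ref{SubsectionLowerFoliationURad}, working in the transformed variables \eqref{EqnTransformedSolution} under which $\UAS$ becomes a $2$-homogeneous solution $\tilde\UAS$ of \eqref{EqnTransformedEquation}. By Proposition \ref{PropMinimalityFromBelow} it suffices to produce a lower foliation of $\tilde\UAS$, which transfers back to $\UAS$ through Proposition \ref{PropEquivalenceOfEquations}. Once a single fundamental leaf $\tilde\Phi_1\in\SPGa(\R^{d+1})$ with $0\le\tilde\Phi_1\le\tilde\UAS$ is constructed, the family
\[
\tilde\Phi_t(X):=t^2\,\tilde\Phi_1(X/t)
\]
inherits the subsolution property from Proposition \ref{PropSymmetryOfEquation}, and the five conditions of Definition \ref{DefLowerFoliation} follow formally from scaling, \emph{provided} $\{\tilde\Phi_1=0\}$ strictly contains $\{\tilde\UAS=0\}$ together with a bounded neighborhood of the origin, so that the rescaled contact sets exhaust any compact set as $t\to\infty$.

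To construct $\tilde\Phi_1$, I would split $\R^{d+1}$ into two regions following the template of Lemmas \ref{LemOuterRegionRadial}--\ref{LemGluedSubsolutionRadial}. On an \emph{outer} region $\OEta:=\{\tilde\UAS-W>\eta\tilde\UAS\}$, with $\eta>0$ a small dimensional parameter, set
\[
\tilde\Phi_1:=\tilde\UAS-W,
\]
where $W>0$ is an axially symmetric supersolution of the linearization of \eqref{EqnTransformedEquation} at $\tilde\UAS$, decaying at infinity. In the radial case this role was played by the explicit function $|x|^{-(d-2)/2}$; here I would take $W$ of separated form $W(r,\theta)=r^{-s}\,g(\theta)$ with $s>0$, where $g$ is a strictly positive solution on $[\theta_0,\pi-\theta_0]$ of the linearized angular ODE around the solution $u$ of Lemma \ref{LemGlueAS}, constructed by perturbing off of $\varphi$ and $V_0$ via the variation-of-parameters machinery of Appendix \ref{AppendixODE}. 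The ensuing computation, which parallels Lemma \ref{LemOuterRegionRadial}, exploits $\beta<2$ (equivalently $\kappa>0$) to make the nonlinear correction from $\tfrac{\beta-2}{2}|\nabla v|^2/v$ favorable, and yields the subsolution inequality for \eqref{EqnTransformedEquation} throughout $\OEta$.

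On the complementary \emph{inner} region, which is a bounded neighborhood of the origin containing the vertex of the double contact cone, I would replace $\tilde\Phi_1$ by an explicit spherically symmetric obstacle-type profile modeled on $P_\lambda$ from \eqref{EqnPLambda}, with a small ball obstacle $\overline{B_{r_\eta}}$ centered at $0$ and with $r_\eta$ chosen so that the profile matches $\tilde\UAS-W$ in $C^0$ along $\partial\OEta$. Analogous to Lemma \ref{LemInnerRegionRadial}, this profile is an exact subsolution of \eqref{EqnTransformedEquation}, and its contact set is the full ball $\overline{B_{r_\eta}}$, the key feature that forces $\{\tilde\Phi_t=0\}$ to exhaust $\R^{d+1}$ as $t\to\infty$. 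Gluing along $\partial\OEta$ preserves the subsolution property provided the outward normal derivative of the inner profile does not exceed that of the outer profile — so that $\tilde\Phi_1$ forms a convex corner from above. This inequality, the analogue of \eqref{EqnOrderingOfDerivativesOfUAndV}, is verified by feeding the precise asymptotics from Lemma \ref{CorFinerExpansion} and Lemma \ref{LemExpansionNearTheSouthPole} into the derivative matching, and is the source of the quantitative restriction $\gamma>\gamma_d^3$.

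The main obstacle, as in Subsection \ref{SubsectionLowerFoliationURad}, is the construction and positivity of the outer supersolution $W$. In the radial case $W$ was an explicit power of $|x|$; here $W$ must simultaneously respect axial symmetry, supersolve a linearized operator around the non-polynomial $\tilde\UAS$, and remain strictly positive on the full angular interval $[\theta_0,\pi-\theta_0]$. The smallness of $\kappa$ keeps the linearized angular operator close to $L_p$ of \eqref{EqnLP}, but establishing the positivity and decay of $g$ across both hemispheres — rather than on each side of $\pi/2$ separately — requires a maximum-principle argument for the linearized spherical equation in the spirit of Lemma \ref{LemMaximumPrincipleRadial}. Once $W$ is in hand, verifying the remaining axioms of Definition \ref{DefLowerFoliation} (continuity of $(t,X)\mapsto\tilde\Phi_t(X)$, Hausdorff continuity of the contact sets, local uniform convergence to $\tilde\UAS$ on $\{\tilde\UAS>0\}$, and the exhaustion property) reduces to routine scaling.
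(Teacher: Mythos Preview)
Your outer-region construction is on the right track and matches the paper: a separated supersolution $W=r^{-s}g(\theta)$ to the linearized operator, with the leaf $\tilde\UAS-W$ in the region $\{\tilde\UAS-W>\eta\,\tilde\UAS\}$. The paper takes $s=\tfrac12$ and builds $g=\BV$ piecewise in \eqref{EqnBV}, rather than via variation of parameters, but the idea is the same.

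The genuine gap is your treatment of the complementary region. You assert it is ``a bounded neighborhood of the origin'' and propose to fill it with a spherically symmetric $P_\lambda$-type profile whose contact set is a ball $\overline{B_{r_\eta}}$. This is geometrically wrong. Writing $\tilde\UAS=r^2\BU(\theta)$ and $W=r^{-1/2}\BV(\theta)$, the complement of $\OEta$ is $\{r^{5/2}<C\,\BV(\theta)/\BU(\theta)\}$; since $\BU\sim(\theta-\theta_0)^2$ vanishes at the free boundary while $\BV$ vanishes to a lower order, the right-hand side blows up as $\theta\to\theta_0^+$, so this region is an \emph{unbounded} tube wrapping around the contact cone $\{\theta\le\theta_0\}$, not a ball. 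Worse, any lower leaf must satisfy $\tilde\Phi_1\le\tilde\UAS$, hence must vanish on the entire cone $\{\tilde\UAS=0\}=\{\theta\le\theta_0\}\cup\{\theta\ge\pi-\theta_0\}$, which extends to infinity; a profile with contact set $\overline{B_{r_\eta}}$ is positive on this cone outside $B_{r_\eta}$, violating the ordering.

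The paper's resolution is to extend not radially but in \emph{horizontal slices}: for each height $y$, the trace of the complementary region is a disk $\{|x|<L(y)\}$, and one fills it with the explicit one-variable profile $V(x,y)=[(|x|-L(y)+\sqrt{U(L(y),y)})_+]^2$ of \eqref{EqnHorizontalFillingLower}. This respects the cylindrical geometry of the contact cone, automatically vanishes on $\{\tilde\UAS=0\}$, and the subsolution property, derivative ordering along $\Gamma$, and comparison $V\le\tilde\UAS$ are checked slice-by-slice in Lemmas \ref{LemEquationInOLower}--\ref{LemOrderingLower}. The analogue of the radial case you are reaching for simply does not exist here because the free boundary of $\UAS$ is a cone, not a point.
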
 
Recall we work in $\R^{d+1}$, decomposed as in \eqref{DefSpaceDecomposition}. 
Like in Section \ref{SectionCAS}, we use the transformed equation \eqref{EqnTransformedEquation}. According to Proposition \ref{PropEquivalenceOfEquations}, this is equivalent to the original Euler-Lagrange equation in Definition \ref{DefSolution}.

\vem

We briefly explain the \textbf{strategy of the construction}. 

The leaves in this lower foliation arise as rescalings of a single leaf in \eqref{EqnSingleLowerLeaf}. To build this leaf, the main ingredient is a positive supersolution to the linearized operator around the cone (see \eqref{EqnLU}). 
The study of this linearized operator is done in Subsection \ref{SubsectionLEAU} with the positive supersolution, $v$, given in \eqref{EqnV} and \eqref{EqnBV}.

We perturb our cone $\UAS$ by the positive supersolution $v$, leading to the profile $U$ in \eqref{EqnLowerLeafFar}. The properties of $U$ are studied in Subsection \ref{SubsectionLowerAwayFromFreeBoundary}. If we stay away from the free boundary, namely, in the region $\Omega$ from \eqref{EqnAwayFromFBLower}, the linearized equation approximates  well the full equation, and the profile $U$ is indeed a subsolution (see Lemma \ref{LemLowerLeafFar}). 

Unfortunately, this is no longer true once we leave the domain $\Omega$ in \eqref{EqnAwayFromFBLower}. To go beyond $\Omega$, we extend $U$ in each horizontal slice by an explicit quadratic profile $V$ in \eqref{EqnHorizontalFillingLower}. In Subsection \ref{SubsectionLowerNearFreeBoundary}, we show that this profile $V$ has the desired properties (see Lemma \ref{LemEquationInOLower}, Lemma \ref{LemAngleAlongGlue} and Lemma \ref{LemOrderingLower}). This is the most technical portion of this section. Here we rely on information of the trace of $U$ on $\partial\Omega$ (see Lemma \ref{LemTechnical1} and Lemma \ref{LemTechnical2}).

With all these preparations, we construct the lower foliation in Subsection \ref{SubsectionLowerFoliationOfTheASCone} and give the proof of Proposition \ref{PropLowerFoliationAS}.

\subsection{Linearized operator around the cone}
\label{SubsectionLEAU}

We work with $\gamma\in(0,1)$ close to $1$. The space  $\R^{d+1}$  is decomposed as in \eqref{DefSpaceDecomposition}, equipped with the coordinates $(r,\theta)$ in \eqref{EqnCoordinate}.

For brevity, we \textit{use $u$ to denote the  cone $\UAS$} from Proposition \ref{PropConstructionOfUAs}. Its \textit{trace on the sphere} is denoted by $\BU$, that is,
\begin{equation}
\label{EqnTraceOfUAS}
u(r,\theta):=\UAS(r,\theta)=:r^2\BU(\theta).
\end{equation}

Basic properties of $\BU$ are listed in Lemma \ref{LemGlueAS}. In particular,  we have
\begin{equation}
\label{EqnPositiveSets}
\{\BU>0\}=(\theta_0,\pi-\theta_0), \text{ and } \PosS=\{r>0,\hem \theta\in(\theta_0,\pi-\theta_0)\},
\end{equation}
where $\theta_0$ is the opening of the contact set in Lemma \ref{LemGlueAS}. It  satisfies
$$
\theta_0\sim_d\kappa^{\frac{1}{d-2}},
$$
where $\kappa$ is the small parameter in \eqref{EqnKappa}.

As part of the main result from Section \ref{SectionCAS}, this cone $u$ solves the transformed Euler-Lagrange equation \eqref{EqnTransformedEquation}. The \textit{linearized operator around }$u$ is given by
\begin{equation}
\label{EqnLU}
\LU(v):=\Delta v+(\frac{\beta}{2}-1)[\frac{2\nabla u}{u}\cdot\nabla v-\frac{|\nabla u|^2}{u^2}v].
\end{equation} 
For a function of the form $v(r,\theta)=f(r)\BV(\theta)$, this operator splits into two pieces
\begin{equation}
\label{EqnSplittingLU}
\LU(v)=\frac{f}{r^2}\cdot\LBU(\BV)+[f''+\frac{d-\kappa}{r}f'+\kappa\frac{f}{r^2}]\cdot\BV,
\end{equation} 
where $\kappa$ is from \eqref{EqnKappa}, and $\LBU$ is the \textit{linearized operator around} $\BU$, defined as
\begin{equation}
\label{EqnLBU}
\LBU(\BV):=\BV''+[(d-1)\cot(\theta)-\frac{\kappa}{2}\frac{\BU'}{\BU}]\BV'+\frac{\kappa}{4}(\frac{\BU'}{\BU})^2\BV.
\end{equation} 

\vem


We give a \textit{positive supersolution} $v$ to the linearized operator $\LU$  in $ \PosS=\{r>0,\hem \theta\in(\theta_0,\pi-\theta_0)\}$ (see \eqref{EqnPositiveSets}). This supersolution is of the form 
\begin{equation}
\label{EqnV}
v:=r^{-\frac12}\BV(\theta)
\end{equation} 
in our coordinate system \eqref{EqnCoordinate}.

The function $\BV$ is constructed piecewisely on $(\theta_0,\theta_1)$, $(\theta_1,\pi-\theta_1)$ and $(\pi-\theta_1,\pi-\theta_0)$, where
\begin{equation}
\label{EqnTheta1}
\theta_1:=(1+\kappa^\frac18)\theta_0.
\end{equation} 
We define $\BV$ as
\begin{equation}
\label{EqnBV}
\BV:=
\begin{cases}
\sigma(\theta-\theta_0)^\alpha &\text{ on }(\theta_0,\theta_1],\\
\sin^{-\frac12}(\theta) &\text{  on }(\theta_1,\pi-\theta_1],\\
\sigma(\pi-\theta_0-\theta)^\alpha &\text{ on }(\pi-\theta_1,\pi-\theta_0),
\end{cases}
\end{equation} 
where $\alpha>0$ is a small parameter to be chosen (see Remark \ref{RemParametersFixing}), and $\sigma$ satisfies $$\sigma(\theta_1-\theta_0)^\alpha=\sin^{-\frac12}(\theta_1).$$

\begin{rem}
\label{RemProblemAtTheta1}
The function $\BV$ is Lipschitz in compact subsets of $(\theta_0,\pi-\theta_0)$.
Once a  uniform estimate  on $\BV'$ has been established on $(\theta_0,\pi-\theta_0)\backslash\{\theta_1,\pi-\theta_1\}$, it is understood  in the weak sense over the entire interval. 

The second derivative $\BV''$, however, involves a singular measure at $\theta_1$ and $\pi-\theta_1$.  It is crucial that this measure is negative. 
\end{rem} 

\begin{rem}
\label{RemParametersFixing}
We need  several small parameters, in particular, the parameter $\alpha$ in the definition of $\BV$ (see \eqref{EqnBV}), and the small parameter $\kappa$ in \eqref{EqnKappa}. 

The parameter $\alpha$ is chosen in Lemma \ref{LemCommutator} to get the lower bound for $(\BU'/\BU-\BV'/\BV)$ (see \eqref{EqnFirstEquationInProof}).  Given this choice of $\alpha$, the parameter $\kappa$ is chosen small in the remaining part of this section. The link between $\kappa$ and $\gamma$ in \eqref{EqnKappa} gives $\gamma_d^3$ in Proposition \ref{PropLowerFoliationAS}.
\end{rem} 

We begin with some basic estimates on $\BV$ in \eqref{EqnBV}:

\begin{lem}
\label{LemCommutator}
For $d\ge3$ and dimensionally small $\alpha$ and $\kappa$,  we have 
$$
|\BV'/\BV|\le\BO((\theta-\theta_0)^{-1}\cos(\theta)),\hem (\frac{\BU'}{\BU}-\frac{\BV'}{\BV})\sim_d(\theta-\theta_0)^{-1}\cos(\theta) \text{ on }(\theta_0,\pi/2],
$$
and
$$
 |(\BV'/\BV)'|\le\BO((\theta-\theta_0)^{-2})  \text{ on }(\theta_0,\theta_1)\cup(\theta_1,\pi/2].
 $$
\end{lem}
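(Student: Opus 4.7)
The plan is to prove all three estimates by direct computation, splitting the interval $(\theta_0, \pi/2]$ along the two pieces that appear in the definition of $\BV$ in \eqref{EqnBV}, namely $(\theta_0,\theta_1]$ where $\BV = \sigma(\theta-\theta_0)^\alpha$, and $(\theta_1,\pi/2]$ where $\BV = \sin^{-1/2}(\theta)$.  For the gap $\BU'/\BU - \BV'/\BV$ I will plug in the expansions from Lemma \ref{LemGlueAS}, Lemma \ref{LemExpansionNearTheSouthPole} and Lemma \ref{CorFinerExpansion} to analyze $\BU'/\BU$ in the different regimes.

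First I compute the $\BV$-pieces.  On $(\theta_0,\theta_1]$, $\BV'/\BV = \alpha/(\theta-\theta_0)$ and $(\BV'/\BV)' = -\alpha/(\theta-\theta_0)^2$.  Since $\theta_1 = (1+\kappa^{1/8})\theta_0 \sim_d \kappa^{1/(d-2)}$ is dimensionally small, $\cos(\theta) \sim 1$ here, so both bounds are immediate.  On $(\theta_1,\pi/2]$, $\BV'/\BV = -\tfrac{1}{2}\cot(\theta)$ and $(\BV'/\BV)' = \tfrac{1}{2}\csc^2(\theta)$; the estimates reduce to $\sin(\theta) \ge c(\theta-\theta_0)$, which follows from the elementary $\sin(\theta) \ge 2\theta/\pi \ge 2(\theta-\theta_0)/\pi$ on $(0,\pi/2]$.

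For the comparison with $\BU'/\BU$, I split once more using the parameter $\theta_m = T\kappa^{1/(d-2)}$ from Lemma \ref{LemGlueAS}.  On $(\theta_0,\theta_m]$, that lemma gives $\BU \sim_d (\theta-\theta_0)^2$ and $\BU' \sim_d (\theta-\theta_0)$, so $\BU'/\BU \sim_d (\theta-\theta_0)^{-1}$.  Since $\BV'/\BV$ is either $\alpha/(\theta-\theta_0)$ with $\alpha$ small, or negative (for $\theta \in (\theta_1,\theta_m]$), the difference is trapped between $(1-\alpha)\BU'/\BU$ and $\BU'/\BU + |\BV'/\BV|$, both comparable to $(\theta-\theta_0)^{-1}$.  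Together with $\cos(\theta) \sim 1$ in this small-$\theta$ region, this gives the desired $\sim_d (\theta-\theta_0)^{-1}\cos(\theta)$.  On $(\theta_m,\pi/2]$, Lemma \ref{CorFinerExpansion} yields $\BU'/\BU \approx 2\cot(\theta)$ up to lower-order errors controlled by $\kappa$ and $\mu$, so $\BU'/\BU - \BV'/\BV \approx \tfrac{5}{2}\cot(\theta)$.  Because $\theta_0 = L\kappa^{1/(d-2)}$ with $L \ll T$, one has $\theta - \theta_0 \sim \theta \sim \sin(\theta)$ on $(\theta_m,\pi/2]$, so $\cot(\theta) \sim (\theta-\theta_0)^{-1}\cos(\theta)$.

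The main technical obstacle is to ensure the two regimes match at $\theta = \theta_m$ and that the positive lower bound on $\BU'/\BU - \BV'/\BV$ survives across the sign change of $\BV'/\BV$ at $\theta_1$.  Near $\theta_0$ the crucial point is that $\BU'/\BU \approx 2/(\theta-\theta_0)$ strictly dominates $\BV'/\BV = \alpha/(\theta-\theta_0)$, which forces the smallness condition on $\alpha$ (fixed in this lemma, as indicated in Remark \ref{RemParametersFixing}); in the equatorial regime, the error terms in the expansions of Lemma \ref{CorFinerExpansion} are controlled using that $\kappa$ is dimensionally small, yielding the required smallness of $\kappa$.
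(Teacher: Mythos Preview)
Your decomposition and the treatment on $(\theta_0,\theta_m]$ match the paper's proof exactly: split at $\theta_1$ and $\theta_m$, use Lemma~\ref{LemGlueAS} to get $\BU'/\BU\sim_d(\theta-\theta_0)^{-1}$, and absorb $\BV'/\BV=\alpha(\theta-\theta_0)^{-1}$ by taking $\alpha$ small. (One small imprecision: your lower bound ``$(1-\alpha)\BU'/\BU$'' presumes $\BU'/\BU\ge (\theta-\theta_0)^{-1}$, whereas Lemma~\ref{LemGlueAS} only gives $\BU'/\BU\ge c_d(\theta-\theta_0)^{-1}$; the paper writes the bound as $(c_d-\alpha)(\theta-\theta_0)^{-1}$, which is the correct form.)

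The real difference is on $(\theta_m,\pi/2]$. You invoke Lemma~\ref{CorFinerExpansion} to argue $\BU'/\BU\approx 2\cot\theta$ and hence $\BU'/\BU-\BV'/\BV\approx\tfrac{5}{2}\cot\theta$. The paper instead uses only the crude information $\BU'\ge 0$ from Lemma~\ref{LemGlueAS}, giving in one line
\[
\frac{\BU'}{\BU}-\frac{\BV'}{\BV}\ \ge\ -\frac{\BV'}{\BV}=\tfrac12\cot\theta\ \ge\ c_d(\theta-\theta_0)^{-1}\cos\theta.
\]
Your route has a gap near $\theta=\pi/2$: the expansion of $u'$ in Lemma~\ref{CorFinerExpansion} carries an error $\mu\,\BO(\theta)$, which at $\theta\approx\pi/2$ is of order $\mu$ and does \emph{not} vanish with $\cos\theta$. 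Thus ``$\BU'/\BU\approx 2\cot\theta$ up to errors controlled by $\kappa,\mu$'' is not a relative estimate once $\cos\theta\lesssim\mu$, and the $\sim_d$ conclusion does not follow from that expansion alone in this thin region. The paper's argument sidesteps this entirely; if you want to keep your approach, you need to supplement it near $\pi/2$ with the observations $\BU'\ge 0$ (for the lower bound) and $\BU'(\pi/2)=0$ together with $|\BU''|\le C$ from the ODE (for the upper bound $\BU'\le C\cos\theta$).
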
 
The big-O notation is from Remark \ref{RemSimAndO}. The angle $\theta_0$ is from Lemma \ref{LemGlueAS}. The angle $\theta_1$ is from the definition of $\BV$ in \eqref{EqnTheta1} and  \eqref{EqnBV}. 

\begin{rem}
\label{RemHalfIntervalAndViscosity}
We often state results for the interval $(0,\pi/2]$. By even symmetry, similar properties hold on the entire $(0,\pi)$.
\end{rem} 

\begin{proof}
It is elementary to verify the bounds on $|\BV'/\BV|$ and $|(\BV'/\BV)'|$. The upper bound for $(\frac{\BU'}{\BU}-\frac{\BV'}{\BV})$ follows from the definition of $\BV$ in \eqref{EqnBV} and Lemma \ref{LemGlueAS}. Below we focus on the lower bound for $(\frac{\BU'}{\BU}-\frac{\BV'}{\BV})$.

\vem

We first address the lower bound on $(\theta_0,\theta_m]$, where $\theta_0$ and $\theta_m$  are the angles in Lemma \ref{LemGlueAS}, satisfying
\begin{equation}
\label{EqnTheta0And1}
\theta_0\sim_d\kappa^{\frac{1}{d-2}} \text{ and }\theta_m/\kappa^{\frac{1}{d-2}}\gg 1.
\end{equation}

On this interval, Lemma \ref{LemGlueAS} gives a dimensional constant $c_d>0$ such that 
$$
\BU'(\theta)/\BU(\theta)\ge c_d(\theta-\theta_0)^{-1} \text{ on }(\theta_0,\theta_m].
$$

With \eqref{EqnTheta1} and \eqref{EqnTheta0And1}, we see that $\theta_1<\theta_m$.
The definition of $\BV$ in \eqref{EqnBV} gives
$
\BV'(\theta)/\BV(\theta)=\alpha(\theta-\theta_0)^{-1}
$
on $(\theta_0,\theta_1)$.
Thus
\begin{equation}
\label{EqnFirstEquationInProof}
(\frac{\BU'}{\BU}-\frac{\BV'}{\BV})(\theta)\ge(c_d-\alpha)(\theta-\theta_0)^{-1}\ge \frac12 c_d(\theta-\theta_0)^{-1} \text{ on }(\theta_0,\theta_1)
\end{equation}
if $\alpha$ is  small. 

On $(\theta_1,\theta_m]$ we have $\BV'\le 0$ (see \eqref{EqnBV}). This implies
 \begin{equation}\label{EqnSecondEquationInProof}
 (\frac{\BU'}{\BU}-\frac{\BV'}{\BV})(\theta)\ge(\frac{\BU'}{\BU})(\theta)\ge c_d(\theta-\theta_0)^{-1}.
 \end{equation}

 On $(\theta_m,\pi/2)$, Lemma \ref{LemGlueAS} gives $\BU'\ge 0$. Using \eqref{EqnBV}, we have
 \begin{equation}\label{EqnThirdEquationInProof}
 (\frac{\BU'}{\BU}-\frac{\BV'}{\BV})\ge-\frac{\BV'}{\BV}=\frac12\sin^{-1}(\theta)\cos(\theta)\ge c_d(\theta-\theta_0)^{-1}\cos(\theta).
 \end{equation}
 For the last inequality, we used $\theta_m=\mathcal{T}_d\theta_0$ with $\mathcal{T}_d\gg1$ as in Lemma \ref{LemGlueAS}.

Combining \eqref{EqnFirstEquationInProof}, \eqref{EqnSecondEquationInProof} and \eqref{EqnThirdEquationInProof}, we get the lower bound on $(\theta_0,\theta_1)\cup(\theta_1,\pi/2]$. In the weak sense, the bound holds over the entire interval. See Remark \ref{RemProblemAtTheta1}.
\end{proof} 

The supersolution property of $v$  is given below. To absorb errors from the nonlinearity, we add an extra term to the linearized operator.
\begin{lem}
\label{LemSupersolutionLU}
Under the same assumptions as in Lemma \ref{LemCommutator}, we have
$$
\LU(v)+\kappa^{\frac12}v/u\le 0 \text{ in }\PosS.
$$
\end{lem}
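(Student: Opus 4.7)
The plan is to use the splitting formula \eqref{EqnSplittingLU} with $f(r)=r^{-1/2}$. A direct computation gives
$$
f''+\tfrac{d-\kappa}{r}f'+\kappa\tfrac{f}{r^2}\;=\;r^{-5/2}\bigl[\tfrac34-\tfrac{d-\kappa}{2}+\kappa\bigr]\;=\;-c_d\,r^{-5/2},
$$
with $c_d:=\tfrac{2d-3}{4}-\tfrac{3\kappa}{2}\ge \tfrac34+\BO(\kappa)>0$ for $d\ge 3$. Since $v/u=r^{-5/2}\BV/\BU$, the claim reduces to the scalar ODE inequality
$$
\LBU(\BV)\;-\;c_d\,\BV\;+\;\kappa^{1/2}\BV/\BU\;\le\;0\quad\text{on }(\theta_0,\pi-\theta_0). \qquad (\dagger)
$$
By even symmetry about $\pi/2$ (Remark \ref{RemHalfIntervalAndViscosity}), I would verify $(\dagger)$ on $(\theta_0,\pi/2]$, separately on the two pieces defining $\BV$ in \eqref{EqnBV}, and in the distributional sense at the junction $\theta_1$.

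On the middle piece $(\theta_1,\pi/2]$, $\BV=\sin^{-1/2}\theta$ gives $\BV'/\BV=-\tfrac12\cot\theta$ and $\BV''/\BV=\tfrac34\cot^2\theta+\tfrac12$, so \eqref{EqnLBU} yields
$$
\LBU(\BV)/\BV\;=\;-\tfrac{2d-5}{4}\cot^2\theta+\tfrac12+\tfrac{\kappa}{4}\cot\theta\cdot\tfrac{\BU'}{\BU}+\tfrac{\kappa}{4}\bigl(\tfrac{\BU'}{\BU}\bigr)^2.
$$
For $\theta\in[\pi/4,\pi/2]$ all terms but $\tfrac12$ are $\BO(1)+\BO(\kappa)$ and $\BV/\BU=\BO(1)$ by Lemma \ref{LemGlueAS}, so $(\dagger)$ follows from $c_d\ge\tfrac34$. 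The delicate zone is $(\theta_1,\pi/4]$, where $\theta\sim\theta_0\sim\kappa^{1/(d-2)}$ and the leading term is of order $-\kappa^{-2/(d-2)}$; Lemma \ref{LemCommutator} and Lemma \ref{LemGlueAS} bound $\kappa(\BU'/\BU)^2$ and $\kappa^{1/2}\BV/\BU$ by $\BO(\kappa/(\theta-\theta_0)^2)$ and $\BO(\kappa^{1/2}/(\theta-\theta_0)^2)$ respectively. Using $\theta-\theta_0\ge\theta_1-\theta_0=\kappa^{1/8}\theta_0$ from \eqref{EqnTheta1}, these are $\BO(\kappa^{3/4-2/(d-2)})$ and $\BO(\kappa^{1/4-2/(d-2)})$, both dominated by the leading $\sim\kappa^{-2/(d-2)}$ since $3/4,\,1/4>0$. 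The exponent $1/8$ in \eqref{EqnTheta1} is calibrated precisely to win this comparison uniformly in $d\ge 3$.

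On the inner piece $(\theta_0,\theta_1]$, $\BV=\sigma(\theta-\theta_0)^\alpha$ gives $\BV''/\BV=-\alpha(1-\alpha)(\theta-\theta_0)^{-2}$, while $\BU\sim(\theta-\theta_0)^2$ and $\BU'/\BU\sim(\theta-\theta_0)^{-1}$ by Lemma \ref{LemGlueAS}. Substituting into \eqref{EqnLBU}, the cross term $(d-1)\cot\theta\cdot\BV'/\BV$ is $\le \BO(\alpha\kappa^{1/8}(\theta-\theta_0)^{-2})$ because $(\theta-\theta_0)/\theta_0\le\kappa^{1/8}$, and the remaining two terms are at most $\BO(\kappa(\theta-\theta_0)^{-2})$. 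With $\alpha$ the fixed dimensional constant from Lemma \ref{LemCommutator} and $\kappa^{1/2}/\BU\le\BO(\kappa^{1/2}(\theta-\theta_0)^{-2})$, taking $\kappa$ small makes the leading $-\alpha(1-\alpha)(\theta-\theta_0)^{-2}$ dominate, giving $(\dagger)$. Finally at $\theta_1$, the left and right slopes of $\BV$ are $\sigma\alpha(\theta_1-\theta_0)^{\alpha-1}>0$ and $-\tfrac12\sin^{-3/2}(\theta_1)\cos(\theta_1)<0$ respectively, so $\BV''$ carries a negative Dirac mass at $\theta_1$ (and symmetrically at $\pi-\theta_1$); this makes $(\dagger)$ automatic at the junctions in the distributional sense, as anticipated in Remark \ref{RemProblemAtTheta1}. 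The main obstacle is the middle-piece analysis near $\theta_1$, where $\cot^2\theta$, $(\BU'/\BU)^2$ and $\BV/\BU$ all blow up simultaneously and one must verify the precise $\kappa$-exponent bookkeeping that justifies the $\kappa^{1/8}$ buffer in \eqref{EqnTheta1}.
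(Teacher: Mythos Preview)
Your overall strategy---splitting via \eqref{EqnSplittingLU}, handling the two pieces of $\BV$ separately, and invoking the negative Dirac mass at $\theta_1$---matches the paper's proof. The inner piece $(\theta_0,\theta_1]$ and the junction are handled correctly.

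The gap is in your middle-piece analysis. You write that on $(\theta_1,\pi/4]$ one has $\theta\sim\theta_0$; this is false, since $\theta$ ranges all the way up to the dimensional constant $\pi/4$. You then bound the positive terms $\kappa(\BU'/\BU)^2$ and $\kappa^{1/2}/\BU$ by their \emph{suprema} over the interval (using the worst case $\theta-\theta_0=\kappa^{1/8}\theta_0$) and compare these with the \emph{supremum} of the negative term $-\tfrac{2d-5}{4}\cot^2\theta$. A comparison of suprema does not yield the pointwise inequality $(\dagger)$. Concretely, at $\theta=\pi/8$ the negative term is only a dimensional constant, while your global bound on $\kappa(\BU'/\BU)^2$ is $\kappa^{3/4-2/(d-2)}$, which for $d=3$ equals $\kappa^{-5/4}\gg 1$; your argument as written gives no conclusion there.

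The missing idea is to compare pointwise by converting $(\theta-\theta_0)^{-1}$ into $\sin^{-1}\theta$: on $(\theta_1,\pi/2)$ one has $(\theta-\theta_0)^{-1}\le C\kappa^{-1/8}\sin^{-1}\theta$ (split into $\theta\ge 2\theta_0$ and $\theta_1\le\theta\le 2\theta_0$). This gives
\[
\kappa\Bigl(\frac{\BU'}{\BU}\Bigr)^2+\frac{\kappa^{1/2}}{\BU}\;\le\;\BO(\kappa^{1/4})\bigl(\sin^{-2}\theta\cos^2\theta+1\bigr),
\]
which is absorbed pointwise by $-\tfrac{2d-5}{4}\sin^{-2}\theta\cos^2\theta$ and $-c_d$. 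This is precisely what the paper does; once you insert this step, your argument goes through.
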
 
Here $v$ is the function in \eqref{EqnV}, and the operator $\LU$ is given in \eqref{EqnLU}. 

\begin{proof}
From \eqref{EqnPositiveSets}, the positive set $\PosS$ is given by
$$ \PosS=\{r>0,\hem \theta\in(\theta_0,\pi-\theta_0)\}.$$
Due to the piecewise nature of $\BV$ in \eqref{EqnBV}, we divide the proof into several regions, firstly on on $(\theta_1,\pi/2)$, then on $(\theta_0,\theta_1)$, and lastly at $\theta_1$.

\vem

\textit{Step 1: Equation for $v$ in $\{r>0,\theta\in(\theta_1,\pi/2)\}$.}

On $(\theta_1,\pi/2)$, the definition of $\BV$ in \eqref{EqnBV} gives
\begin{equation}
\label{Eqn712}
\LBU(\BV)/\BV=[-\frac{d-1}{2}+\frac34]\sin^{-2}(\theta)\cos^2(\theta)+\frac12+\frac{\kappa}{4}[\frac{\BU'}{\BU}\sin^{-1}(\theta)\cos(\theta)+(\frac{\BU'}{\BU})^2],
\end{equation} 
where $\LBU$ is the linearized operator around $\BU$ from \eqref{EqnLBU}.

With  Lemma \ref{LemGlueAS} and \eqref{EqnTheta1}, we have
\begin{align*}
\frac{\BU'}{\BU}\sin^{-1}(\theta)\cos(\theta)&=\sin^{-2}(\theta)\cos^2(\theta)\BO(\kappa^{-\frac{1}{8}})+\BO(\kappa^{-\frac{1}{8}}) \hem\text{ on }(\theta_1,\pi/2),
\end{align*}
and
$$
(\frac{\BU'}{\BU})^2=\sin^{-2}(\theta)\cos^2(\theta)\BO(\kappa^{-\frac{1}{4}})+\BO(\kappa^{-\frac{1}{4}}) \hem\text{ on }(\theta_1,\pi/2).
$$

Putting these into \eqref{Eqn712}, we have
$$
\LBU(\BV)/\BV\le[-\frac14+\BO(\kappa^{1-\frac{1}{4}})]\sin^{-2}(\theta)\cos^2(\theta)+\frac12+\BO(\kappa^{1-\frac{1}{4}}) \text{ on }(\theta_1,\pi/2).
$$

With Lemma \ref{LemGlueAS}, we have
$$
\BU^{-1}=\sin^{-2}(\theta)\cos^2(\theta)\BO(\kappa^{-\frac{1}{4}})+\BO(\kappa^{-\frac{1}{4}}) \text{ on }(\theta_1,\pi/2).
$$
As a result, we have
\begin{align}
\label{TheFirstIntervalTrace}
\LBU(\BV)/\BV+\kappa^{\frac12}/\BU&\le [-\frac14+\BO(\kappa^{\frac12-\frac{1}{4}})]\sin^{-2}(\theta)\cos^2(\theta)+\frac12+\BO(\kappa^{\frac12-\frac{1}{4}})\\   
&\le \frac12+\BO(\kappa^{\frac{1}{4}})  \text{ on }(\theta_1,\pi/2). \nonumber
\end{align} 

With \eqref{EqnSplittingLU}, \eqref{EqnV} and \eqref{TheFirstIntervalTrace}, we have,  in $\{r>0,\hem\theta\in(\theta_1,\pi/2)\}$,
$$
\LU(v)+\kappa^{\frac12}\frac{v}{u}\le r^{-\frac52}\BV\cdot\{-\frac14+\BO(\kappa^{\frac14})\}\le 0,
$$
for $\kappa$ small.

\vem

\textit{Step 2: Equation for $v$ in $\{r>0,\hem\theta\in(\theta_0,\theta_1)\}$.}

With similar ideas as in Step 1, we have
$$
\LBU(\BV)/\BV+\kappa^{\frac12}/\BU\le(\theta-\theta_0)^{-2}[\alpha(\alpha-1+\BO(\kappa^{\frac14}))+\BO(\kappa^{\frac12})] \text{ on }(\theta_0,\theta_1).
$$
As a result, we have, in $\{r>0,\hem\theta\in(\theta_0,\theta_1)\}$,
$$
\LU(v)+\kappa^{\frac12}\frac{v}{u}\le r^{-\frac52}\BV\cdot\{(\theta-\theta_0)^{-2}[-\frac14\alpha+\BO(\kappa^{\frac14})]-\frac34+\kappa/2\}\le 0
$$
 if $\kappa$ is small.

\vem

\textit{Step 3: The conclusion.}

We have established the desired inequality in $(\theta_0,\theta_1)\cup(\theta_1,\pi/2)$. At $\theta=\theta_1$, the trace $\BV$ forms a concave corner. The desired inequality holds in the viscosity sense. 

Similar results can be extended to $(\pi/2,\pi-\theta_0)$ by symmetry as in Remark \ref{RemHalfIntervalAndViscosity}. 
\end{proof}

\subsection{Region away from the free boundary}
\label{SubsectionLowerAwayFromFreeBoundary}
We begin the construction of a leaf in the lower foliation. In this subsection, we focus on the \textit{region away from the free boundary}, defined as
\begin{equation}
\label{EqnAwayFromFBLower}
\Omega:=\{u-v\ge\kappa^{\frac{1}{8}}u\}.
\end{equation} 
The functions $u$ and $v$ are from \eqref{EqnTraceOfUAS} and \eqref{EqnV}.
In this region $\Omega$, the leaf is taken as
\begin{equation}
\label{EqnLowerLeafFar}
U:=u-v.
\end{equation}

For a non-negative function $\varphi$, we define its error in solving \eqref{EqnTransformedEquation} as $E(\varphi)$, that is, 
\begin{equation}
\label{EqnTheError}
E(\varphi):=\Delta\varphi+(\frac{\beta}{2}-1)\frac{|\nabla\varphi|^2}{\varphi}-\frac{d+\beta-2}{d}\chi_{\{\varphi>0\}}.
\end{equation} 

The profile in \eqref{EqnLowerLeafFar} has the desired subsolution property in $\Omega$:
\begin{lem}
\label{LemLowerLeafFar}
For $d\ge3$ and $\alpha,\kappa$ dimensionally small, we have
$$
E(U)\ge 0 \text{ in }\Omega.
$$
\end{lem}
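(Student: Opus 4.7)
The plan is to identify $E(U)$ with $-\LU(v)$ up to an explicit quadratic error of definite sign, and then to absorb that error using Lemma \ref{LemSupersolutionLU} together with the geometric estimates from Lemmas \ref{LemCommutator} and \ref{LemGlueAS}. In $\Omega$ we have $U, u > 0$, so the indicator terms in \eqref{EqnTheError} cancel and $E(U) = E(U) - E(u)$.

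Set $s := v/u$, so that $U = u(1-s)$ and $\nabla U = (1-s)\nabla u - u\nabla s$. A direct expansion of $|\nabla U|^2/U$ produces the \emph{exact} identity
\begin{equation*}
\frac{|\nabla U|^2}{U} - \frac{|\nabla u|^2}{u} = \frac{v|\nabla u|^2}{u^2} - \frac{2\nabla u \cdot \nabla v}{u} + \frac{u|\nabla s|^2}{1-s}.
\end{equation*}
The first two terms on the right combine with $-\Delta v$ to produce precisely $-\LU(v)$ (cf.\ \eqref{EqnLU}). Using $\beta/2-1 = -\kappa/4$ from \eqref{EqnKappa}, this yields
\begin{equation*}
E(U) = -\LU(v) - \frac{\kappa}{4}\cdot\frac{u|\nabla s|^2}{1-s}.
\end{equation*}
By Lemma \ref{LemSupersolutionLU}, the first term satisfies $-\LU(v) \ge \kappa^{1/2} v/u$, while the definition of $\Omega$ gives $1-s \ge \kappa^{1/8}$. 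Hence the lemma reduces to the pointwise bound $u|\nabla s|^2 \le C_d\,\kappa^{-3/8}\, s$ in $\Omega$.

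For this bound, work in the coordinates \eqref{EqnCoordinate}: from $s = r^{-5/2}\BV/\BU$ one computes $\partial_r s = -\tfrac{5}{2r}s$ and $\partial_\theta s = s(\BV'/\BV - \BU'/\BU)$, so
\begin{equation*}
\frac{u|\nabla s|^2}{s} = r^{-5/2}\BV\cdot\Bigl[\tfrac{25}{4} + \bigl(\tfrac{\BU'}{\BU} - \tfrac{\BV'}{\BV}\bigr)^{2}\Bigr].
\end{equation*}
The constraint $s \le 1$ built into $\Omega$ forces $r^{-5/2}\BV \le \BU$, so this quantity is dominated by $\BU\bigl[\tfrac{25}{4} + (\BU'/\BU - \BV'/\BV)^2\bigr]$. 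Lemma \ref{LemCommutator} controls the angular bracket by $\BO((\theta-\theta_0)^{-2})$, and Lemma \ref{LemGlueAS} gives $\BU \sim_d (\theta-\theta_0)^2$ on $[\theta_0,\theta_m]$ with $\BU$ bounded and $(\theta-\theta_0)$ bounded below on $[\theta_m,\pi/2]$. Together these yield a uniform dimensional bound on $u|\nabla s|^2/s$, well below $\kappa^{-3/8}$ once $\kappa$ is dimensionally small.

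The heart of the argument is recognizing that the nonlinear bracket $|\nabla U|^2/U - |\nabla u|^2/u$ admits the exact algebraic identity above, so that the correction to the linearization is precisely $u|\nabla s|^2/(1-s)$, with the unfavorable sign $-\kappa/4$. The main obstacle is then ensuring that the positive margin $\kappa^{1/2}v/u$ supplied by Lemma \ref{LemSupersolutionLU} survives division by $1 - s$; this is exactly why $\Omega$ is defined with threshold $\kappa^{1/8}$, since any smaller threshold would not leave room to dominate the remainder.
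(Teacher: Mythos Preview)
Your proof is correct and follows the same overall strategy as the paper: write $E(U)$ as $-\LU(v)$ plus a quadratic remainder in $v$, then absorb that remainder into the $\kappa^{1/2}v/u$ margin provided by Lemma~\ref{LemSupersolutionLU}. The difference is in how the remainder is handled. The paper writes the nonlinearity as $h(X+Z)-h(X)-\nabla h(X)\cdot Z$ with $h(X',x_{d+2})=|X'|^2/x_{d+2}$ and invokes the general Taylor-type bound of Lemma~\ref{LemAppendixLinearExpansion}, obtaining an estimate of the form $(v/u)\,\BO(\kappa^{-1/4})$ in $\Omega$. You instead compute the remainder \emph{exactly} as $u|\nabla s|^2/(1-s)$ with $s=v/u$, and then show directly that $u|\nabla s|^2/s = r^{-5/2}\BV\bigl[\tfrac{25}{4}+(\BU'/\BU-\BV'/\BV)^2\bigr]$ is dimensionally bounded using $r^{-5/2}\BV\le\BU$ in $\Omega$ together with Lemmas~\ref{LemGlueAS} and~\ref{LemCommutator}. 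Your route is a bit more self-contained here (no appeal to Appendix~\ref{AppendixLinearExpansion}) and makes transparent that the remainder has a definite sign; the paper's route has the advantage that Lemma~\ref{LemAppendixLinearExpansion} is reusable elsewhere (e.g.\ in Section~\ref{SectionUFAS}). One small remark: your case split at $\theta_m$ in the last paragraph is unnecessary, since $\BU\sim_d(\theta-\theta_0)^2$ and $(\BU'/\BU-\BV'/\BV)^2\le\BO((\theta-\theta_0)^{-2})$ already combine to a uniform bound on all of $(\theta_0,\pi/2]$.
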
 
Recall the parameter $\alpha$ in the definition of $\BV$ \eqref{EqnBV}. This parameter has been fixed in Lemma \ref{LemCommutator}. The parameter $\kappa$ is from \eqref{EqnKappa}.
\begin{proof}
By definition of $U$ in  \eqref{EqnLowerLeafFar}, we have
\begin{equation}
\label{EqnWeHaveEU}
E(U)=-\LU(v)+(\frac{\beta}{2}-1)[h(X+Z)-h(X)-\nabla h(X)\cdot Z],
\end{equation}
where $h:\R^{d+2}\to\R$ is given by $h(X',x_{d+2}):=|X'|^2/x_{d+2}$ when $x_{d+2}>0$, and $X=(\nabla u,u)$, $Z=-(\nabla v,v)$.

 The homogeneity of $u$ in \eqref{EqnTraceOfUAS} and Lemma \ref{LemGlueAS} lead to
$$
|\nabla u|^2/u=4\BU+(\BU')^2/\BU\le\BO(1). 
$$
Lemma \ref{LemAppendixLinearExpansion} gives
\begin{equation}
\label{EqnThePreviousEstimate}
|h(X+Z)-h(X)-\nabla h(X)\cdot Z|\le C_d(\frac{v}{u})^2[\frac{u}{u-v}+(\frac{u}{u-v})^2]+\frac{|\nabla v|^2}{u}(1+\frac{u}{u-v}).
\end{equation}

With \eqref{EqnAwayFromFBLower}, the first term is  bounded by $\frac{v}{u}\cdot \BO(\kappa^{-\frac{1}{4}})$ in $\Omega$.

For the second term, we use the homogeneity of $v$ and Lemma \ref{LemCommutator} to get
$$
\frac{|\nabla v|^2}{v^2}=\frac14r^{-2}+r^{-2}(\frac{\BV'}{\BV})^2\le r^{-2}\BO((\theta-\theta_0)^{-2}).
$$
Together with Lemma \ref{LemGlueAS}, this implies
$$
\frac{|\nabla v|^2/u}{(v/u)^2}=\frac{|\nabla v|^2}{v^2}\cdot u\le\BU\BO((\theta-\theta_0)^{-2})\le\BO(1).
$$
Thus the second term in \eqref{EqnThePreviousEstimate} is bounded by the first term. 

Consequently, we have
$$
|h(X+Z)-h(X)-\nabla h(X)\cdot Z|\le \frac{v}{u}\BO(\kappa^{-\frac{1}{4}}) \text{ in }\Omega.
$$
Putting this into \eqref{EqnWeHaveEU}, we get
$$
E(U)\ge-\LU(v)-\frac{v}{u}\cdot \BO(\kappa^{\frac{3}{4}})\ge-[\LU(v)+\kappa^{1/2}\frac{v}{u}]\ge 0 \text{ in }\Omega
$$
if $\kappa$ is small. The last inequality follows from Lemma \ref{LemSupersolutionLU}.
\end{proof}

\vem

In Subsection \ref{SubsectionLowerNearFreeBoundary}, we extend the profile $U$ in \eqref{EqnLowerLeafFar} to the complement of $\Omega$ in \eqref{EqnAwayFromFBLower}. To get  a subsolution, we need information about the trace of $U$ on the boundary of $\Omega$,  denoted as
\begin{equation}
\label{EqnGammaLower}
\Gamma:=\partial\Omega=\{U=\kappa^{\frac{1}{8}}u\}.
\end{equation} 
On $\Gamma$, we have $(1-\kappa^\frac{1}{8})u=v$, that is,
$$
(1-\kappa^\frac{1}{8})r^2\BU(\theta)=r^{-1/2}\BV(\theta)
$$
using \eqref{EqnTraceOfUAS} and \eqref{EqnBV}.

As a result, the boundary $\Gamma$ is described by a graph $\theta\in(\theta_0,\pi-\theta_0)\mapsto R(\theta)$, satisfying the relation
\begin{equation}
\label{EqnRadialGraphLower}
R^{5/2}(\theta)=\frac{1}{1-\kappa^\frac{1}{8}}\BV(\theta)/\BU(\theta).
\end{equation} 
Equivalently, the boundary $\Gamma$ can  be described by its distance to the $y$-axis at each height $y$, namely,   $y\in(-\infty,+\infty)\mapsto L(y)$, satisfying
\begin{equation}
\label{EqnYGraphLower}
y=-R(\theta)\cos(\theta), \text{ and }L(y)=R(\theta)\sin(\theta).
\end{equation}

The change of variable between these two descriptions  satisfies the following bounds. Due to its technical nature, the proof is postponed to Appendix \ref{AppendixQuantitiesOnGamma1}.
\begin{lem}
\label{LemTechnical1}
The quantities in \eqref{EqnRadialGraphLower} and \eqref{EqnYGraphLower} satisfy 
$$
0\le\frac{d\theta}{dy}\le \BO(1/R), \hem |\frac{dL}{dy}|\le \BO(1) \text{ on }(\theta_0,\pi/2) ,
$$
and
$$
 |\frac{d^2\theta}{dy^2}|\le (\frac{d\theta}{dy})^2\BO((\theta-\theta_0)^{-1}), \hem |\frac{d^2L}{dy^2}|\le R(\frac{d\theta}{dy})^2 \BO(\theta(\theta-\theta_0)^{-2}) \text{ on }(\theta_0,\theta_1)\cup(\theta_1,\pi/2).
$$

Moreover, for $\eta>0$ dimensionally small, we have
$$
|\frac{dL}{dy}|\ge c_d\eta^{-1}R\frac{d\theta}{dy}\cdot\theta(\theta-\theta_0)^{-1} \text{ on }(\theta_0,(1+\eta)\theta_0)
$$
for a dimensional constant $c_d>0$.
\end{lem}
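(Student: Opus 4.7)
The plan is to differentiate the defining relation \eqref{EqnRadialGraphLower} logarithmically, obtaining
\[
\frac{R'(\theta)}{R(\theta)}=\frac{2}{5}\Bigl(\frac{\bar V'}{\bar V}-\frac{\bar U'}{\bar U}\Bigr),
\]
whose sign and size are controlled by Lemma \ref{LemCommutator}: writing $B:=-R'/R$, we have $0\le B\sim_d(\theta-\theta_0)^{-1}\cos\theta$. Substituting into the parametrization \eqref{EqnYGraphLower} then yields
\[
\frac{dy}{d\theta}=R(\sin\theta+B\cos\theta),\qquad \frac{dL}{d\theta}=R(\cos\theta-B\sin\theta).
\]
For the first two inequalities the key is a dimensional lower bound $dy/d\theta\ge c_d R$. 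I would split $(\theta_0,\pi/2)$ at $\pi/4$: on the left half $\cos\theta\ge 1/\sqrt 2$ and $B\ge c_d(\theta-\theta_0)^{-1}\ge c_d$, so $B\cos\theta\ge c_d$; on the right half $\sin\theta\ge 1/\sqrt 2$ directly suffices. Positivity of $d\theta/dy=1/(dy/d\theta)$ and the upper bound $d\theta/dy\le\BO(1/R)$ follow at once. The bound $|dL/dy|\le\BO(1)$ then reduces to the elementary inequality $|\cos\theta-B\sin\theta|/(\sin\theta+B\cos\theta)\le\BO(1)$, verified by the same split at $\pi/4$.

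For the second-order bounds I plan to compute
\[
\frac{d^2\theta}{dy^2}=-\Bigl(\frac{dy}{d\theta}\Bigr)^{-3}\frac{d^2y}{d\theta^2}
\]
and its analog for $L$, with the new terms involving $(R'/R)'=\tfrac{2}{5}[(\bar V'/\bar V)'-(\bar U'/\bar U)']$. The first derivative is $\BO((\theta-\theta_0)^{-2})$ by Lemma \ref{LemCommutator}; the second admits a parallel bound obtained by differentiating the expansions of Lemma \ref{LemExpansionNearTheSouthPole} (near $\theta_0$) and Lemma \ref{CorFinerExpansion} (away from $\theta_0$). Substituting these into $d^2y/d\theta^2$ and $d^2L/d\theta^2$ and dividing by the previously established size of $(dy/d\theta)^3$ delivers the stated dimensional bounds, with the extra trigonometric factors tracked through the split.

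The main obstacle is the sharp lower bound on $|dL/dy|$ in the narrow strip $(\theta_0,(1+\eta)\theta_0)$. Here $B$ is very large, of order $(\eta\theta_0)^{-1}$, so both numerator and denominator of $dL/dy$ are dominated by their $B$-terms and one must exploit the leading cancellation. The plan is to use $B\sin\theta\ge c_d/\eta\gg 1\ge\cos\theta$ throughout the strip, which yields $|\cos\theta-B\sin\theta|\ge\tfrac12 B\sin\theta$, while $R(d\theta/dy)\sim(\theta-\theta_0)/\cos^2\theta$ from the analysis above. Combining these estimates, writing
\[
|dL/dy|\ge c_d\,\frac{B\sin\theta}{\sin\theta+B\cos\theta}\ge c_d\,B\cdot\frac{\sin\theta}{1+B\cos\theta/\sin\theta}\cdot\frac{1}{B\cos\theta},
\]
and extracting the factor $\eta^{-1}$ from the inequality $(\theta-\theta_0)^{-1}\ge(\eta\theta_0)^{-1}$ that holds throughout the strip, together with $\sin\theta\sim\theta\sim\theta_0$, then gives the advertised lower bound.
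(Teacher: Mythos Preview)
Your strategy matches the paper's proof in Appendix~C: logarithmic differentiation of \eqref{EqnRadialGraphLower} gives $R'/R = -B$ with $B \sim_d (\theta-\theta_0)^{-1}\cos\theta$ via Lemma~\ref{LemCommutator}, then $dy/d\theta = R(\sin\theta + B\cos\theta)$ and $dL/d\theta = R(\cos\theta - B\sin\theta)$. Your split at $\pi/4$ for the lower bound $dy/d\theta \ge c_d R$ is equivalent to the paper's one-line inequality $\tfrac25 B\cos\theta + \sin\theta \ge c_d\cos^2\theta + \sin\theta \ge c_d$. One small correction: for the second-order bounds you cannot get $|(\BU'/\BU)'| = \BO((\theta-\theta_0)^{-2})$ by differentiating the expansions of Lemmas~\ref{LemExpansionNearTheSouthPole} and~\ref{CorFinerExpansion}, since their $\BO$-remainders are not known to be differentiable. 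The paper instead reads $|\BU''| = \BO(1)$ directly from the ODE \eqref{EqnNonlinearODE} together with the bounds on $\BU,\BU'$ in Lemma~\ref{LemGlueAS}, whence $(\BU'/\BU)' = \BU''/\BU - (\BU'/\BU)^2 = \BO((\theta-\theta_0)^{-2})$.

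The final step has a genuine gap. The second inequality in your display,
\[
c_d\,\frac{B\sin\theta}{\sin\theta+B\cos\theta}\ \ge\ c_d\,B\cdot\frac{\sin\theta}{1+B\cos\theta/\sin\theta}\cdot\frac{1}{B\cos\theta},
\]
discards a factor $B\cos\theta/\sin\theta \sim (\theta_0(\theta-\theta_0))^{-1}$, after which no rearrangement or ``extraction of $\eta^{-1}$'' can recover the target. The clean route (and the paper's) is to stop at the identity $|dL/dy| = R\,\tfrac{d\theta}{dy}\cdot|\cos\theta - B\sin\theta|$ and use what you already have: $|\cos\theta - B\sin\theta| \ge \tfrac12 B\sin\theta \ge c_d\,\theta(\theta-\theta_0)^{-1}$ on the strip, giving $|dL/dy| \ge c_d\,R\,\tfrac{d\theta}{dy}\cdot\theta(\theta-\theta_0)^{-1}$. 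This is exactly what is used downstream in \eqref{SomethingWeStillHave}. The additional factor $\eta^{-1}$ in the lemma's stated bound appears to be a typo: on the strip $|dL/dy| \sim \theta$ while $\eta^{-1}R\,\tfrac{d\theta}{dy}\cdot\theta(\theta-\theta_0)^{-1} \sim \eta^{-1}\theta$, so that stronger inequality cannot hold for $\eta$ small.
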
 
Recall $\theta_0$ from \eqref{EqnPositiveSets} and $\theta_1$ from \eqref{EqnTheta1}.

Along $\Gamma$, our profile $U$ in \eqref{EqnLowerLeafFar} satisfies the following lemma. The proof is postponed to Appendix \ref{AppendixQuantitiesOnGamma1}.
\begin{lem}
\label{LemTechnical2}
The function $y\mapsto\sqrt{U}(L(y),y)$ satisfies
$$
|\frac{d\sqrt{U}}{dy}|\le R\frac{d\theta}{dy}\BO(\kappa^\frac{1}{16}) \text{ on }(\theta_0,\pi/2), 
$$
and
$$
|\frac{d^2\sqrt{U}}{dy^2}|\le R(\frac{d\theta}{dy})^2\BO(\kappa^\frac{1}{16}(\theta-\theta_0)^{-1}) \text{ on }(\theta_0,\theta_1)\cup(\theta_1,\pi/2].
$$
\end{lem}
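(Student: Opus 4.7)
The plan is to exploit that on $\Gamma$ the profile simplifies to $U=\kappa^{1/8}u$ by \eqref{EqnGammaLower}, and $u$ is $2$-homogeneous by \eqref{EqnTraceOfUAS}. Therefore
\[
\sqrt{U}(L(y),y) \;=\; \kappa^{1/16}\,R(\theta(y))\sqrt{\bar u(\theta(y))},
\]
which already supplies the desired $\kappa^{1/16}$-smallness. The task reduces to bounding the first two $\theta$-derivatives of $R\sqrt{\bar u}$ along $\Gamma$ and then combining with the chain-rule estimates of Lemma \ref{LemTechnical1}.

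For the first derivative, differentiating \eqref{EqnRadialGraphLower} yields
\[
\frac{R'}{R} \;=\; \frac{2}{5}\!\left(\frac{\bar v'}{\bar v} - \frac{\bar u'}{\bar u}\right),
\qquad\text{hence}\qquad
\frac{d}{d\theta}(R\sqrt{\bar u}) \;=\; R\sqrt{\bar u}\cdot A, \quad A \;:=\; \frac{2}{5}\frac{\bar v'}{\bar v} + \frac{1}{10}\frac{\bar u'}{\bar u}.
\]
By Lemma \ref{LemCommutator}, $|\bar v'/\bar v|=\BO((\theta-\theta_0)^{-1}\cos(\theta))$; by Lemma \ref{LemGlueAS}, $\sqrt{\bar u}=\BO(\theta-\theta_0)$ and $|\bar u'/\bar u|=\BO((\theta-\theta_0)^{-1})$ on $\{\bar u>0\}\cap(0,\pi/2]$. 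Multiplying gives $\sqrt{\bar u}\,|A|=\BO(1)$, so $|d(R\sqrt{\bar u})/d\theta|=\BO(R)$, and the first inequality of the lemma follows from $d/dy=(d\theta/dy)(d/d\theta)$.

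For the second derivative,
\[
\frac{d^2}{dy^2}(R\sqrt{\bar u}) \;=\; \frac{d^2\theta}{dy^2}\,R\sqrt{\bar u}\,A + \Bigl(\frac{d\theta}{dy}\Bigr)^{\!2}\,R\sqrt{\bar u}\,(A^2+A').
\]
The first summand is handled by the bound $|d^2\theta/dy^2|\le(d\theta/dy)^2\BO((\theta-\theta_0)^{-1})$ from Lemma \ref{LemTechnical1} together with the first-derivative step. For the second, $\sqrt{\bar u}\,A^2=\BO((\theta-\theta_0)^{-1})$ directly, and
\[
A' \;=\; \tfrac{2}{5}\bigl(\bar v'/\bar v\bigr)' + \tfrac{1}{10}\!\left[\bar u''/\bar u - (\bar u'/\bar u)^2\right],
\]
whose $(\bar v'/\bar v)'$ piece is $\BO((\theta-\theta_0)^{-2})$ by Lemma \ref{LemCommutator} off $\theta_1$. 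For $\bar u''$, on $(\theta_0,\theta_m]$ Lemma \ref{LemEqnNearSouthPole} gives $|\bar u''|=\BO(1)$, while on $(\theta_m,\pi/2]$ one extracts $|\bar u''|=\BO(1)$ from the ODE \eqref{EqnNonlinearODE} using the expansion $\bar u'(\theta)=\tfrac{1}{d}\sin(\theta)\cos(\theta)+(\text{lower order})$ of Lemma \ref{CorFinerExpansion}, which turns the only a priori singular term $(d-1)\cot(\theta)\bar u'$ into $\BO(\cos^2(\theta))=\BO(1)$. Combined with $\bar u\sim(\theta-\theta_0)^2$, this gives $\sqrt{\bar u}\,A'=\BO((\theta-\theta_0)^{-1})$, and the second estimate follows.

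The main obstacle is precisely this control of $\bar u''$ past the gluing angle $\theta_m$, where Lemma \ref{LemGlueAS} provides no direct $C^2$ information; the finer expansion of Lemma \ref{CorFinerExpansion} is exactly what is needed to tame the $\cot(\theta)$-factor in the ODE. The exclusion of $\theta=\theta_1$ from the second-derivative bound is compatible with this argument since $\bar v''$ carries a negative singular measure there (Remark \ref{RemProblemAtTheta1}), which enters $A'$ through $(\bar v'/\bar v)'$ and is exactly where Lemma \ref{LemCommutator}'s pointwise bound breaks down.
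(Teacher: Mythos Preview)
Your proof is correct and follows essentially the same route as the paper: write $\sqrt{U}=\kappa^{1/16}R\sqrt{\bar u}$ on $\Gamma$, differentiate using $R'/R=\tfrac{2}{5}(\bar v'/\bar v-\bar u'/\bar u)$, and combine with Lemma~\ref{LemTechnical1}. One small remark: you flag the bound $|\bar u''|=\BO(1)$ on $(\theta_m,\pi/2]$ as ``the main obstacle'' and invoke the finer expansion of Lemma~\ref{CorFinerExpansion} to tame $(d-1)\cot(\theta)\bar u'$, but this is unnecessary. Lemma~\ref{LemGlueAS} already gives $0\le\bar u'\le\BO(\theta-\theta_0)$ on all of $(\theta_0,\pi/2]$, so $|\cot(\theta)\bar u'|\le C(\theta-\theta_0)/\theta\le C$; likewise $(\bar u')^2/\bar u=\BO(1)$ and $\bar u=\BO(1)$, so every term of the ODE \eqref{EqnNonlinearODE} except $\bar u''$ is $\BO(1)$, and $|\bar u''|=\BO(1)$ follows immediately. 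The paper simply cites Lemma~\ref{LemGlueAS} at this step with this implicit reasoning.
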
 

\subsection{Region near the free boundary}
\label{SubsectionLowerNearFreeBoundary}
In this subsection, we extend the profile $U$ in \eqref{EqnLowerLeafFar} to the complement of $\Omega$ in \eqref{EqnAwayFromFBLower}. For each horizontal slice with fixed $y\in\R$, define
\begin{equation}
\label{EqnHorizontalFillingLower}
V(x,y):=[(|x|-L(y)+\sqrt{U(L(y),y)})_+]^2 \hem\text{ for }|x|\le L(y),
\end{equation} 
where $(L(y),y)$ is the description of $\Gamma=\partial\Omega$  in \eqref{EqnYGraphLower}. 

Note that by this definition, we have
\begin{equation}
\label{EqnContinuousFilling}
V=U \text{ on }\Gamma.
\end{equation}

We first verify the desired subsolution property of $V$ in the interior of $\R^{d+1}\backslash\Omega$, denoted as
\begin{equation}
\label{EqnInteriorOfComplementLower}
\CO:=\mathrm{Int}(\R^{d+1}\backslash\Omega)=\{(x,y):\hem y\in\R, \text{ and } |x|<L(y)\}.
\end{equation} 

With the notation in \eqref{EqnTheError}, we have
\begin{lem}
\label{LemEquationInOLower}
For $d\ge3$ and dimensionally small $\alpha$ and $\kappa$, we have
$$
E(V)\ge 0 \text{ in }\CO.
$$
\end{lem}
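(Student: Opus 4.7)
The strategy is to compute $E(V)$ directly using the explicit form of $V$. Set $\rho=|x|$ and $t := \rho - \lambda(y)$, where $\lambda(y) := L(y) - \sqrt{U(L(y),y)}$, so that $V = t^2$ on its positivity set. Direct differentiation gives
\begin{equation*}
\Delta V = 2 + \frac{2(d-1)t}{\rho} + 2(\lambda')^2 - 2t\lambda'', \qquad \frac{|\nabla V|^2}{V} = 4\bigl(1+(\lambda')^2\bigr),
\end{equation*}
and substitution into \eqref{EqnTheError} yields
\begin{equation*}
E(V) = \Bigl[1 + (\beta-2)\bigl(2 - \tfrac{1}{d}\bigr)\Bigr] + \frac{2(d-1)t}{\rho} + 2(\beta-1)(\lambda')^2 - 2t\lambda''.
\end{equation*}
Since $\beta-2 = -\kappa/2$, the bracketed constant is at least $1/2$ for dimensionally small $\kappa$, and the two quadratic terms are nonnegative. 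Hence it suffices to control the single negative contribution $-2t\lambda''$.

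From Lemmas \ref{LemTechnical1} and \ref{LemTechnical2}, I would write $\lambda = L - \sqrt{U}$ and use
\begin{equation*}
|\lambda''| \le |L''| + |(\sqrt U)''| \le \BO\bigl(R(d\theta/dy)^2 \cdot \theta(\theta-\theta_0)^{-2}\bigr)
\end{equation*}
on $(\theta_0,\theta_1)\cup(\theta_1,\pi/2)$; the $(\sqrt U)''$ piece carries an extra $\kappa^{1/16}$ factor and is strictly lower order since $\theta(\theta-\theta_0)^{-1}\ge 1 \ge \kappa^{1/16}$. Combined with the bound $t \le L(y)-\lambda(y) = \sqrt{U(L(y),y)} = \kappa^{1/16}R\sqrt{\BU(\theta)}$ and the estimate $\sqrt{\BU(\theta)}\sim_d (\theta-\theta_0)$ from Lemma \ref{LemGlueAS}, this produces
\begin{equation*}
2t|\lambda''| \le \BO(\kappa^{1/16})\cdot R^2(d\theta/dy)^2 \cdot \frac{\theta}{\theta-\theta_0},
\end{equation*}
where $R^2(d\theta/dy)^2 \le \BO(1)$ by the upper bound on $d\theta/dy$ in Lemma \ref{LemTechnical1}.

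I would then split into two sub-cases. For $\theta \ge 2\theta_0$, one has $\theta/(\theta-\theta_0) \le 2$, so $2t|\lambda''| \le \BO(\kappa^{1/16})$, absorbed by the $1/2$ constant term. For $\theta_0 < \theta < 2\theta_0$, the final inequality of Lemma \ref{LemTechnical1} (applied with $\eta = 1$) combined with the upper bound on $|(\sqrt U)'|$ from Lemma \ref{LemTechnical2} gives $|\lambda'| \ge \tfrac{1}{2}|L'| \ge c_d R(d\theta/dy)\theta(\theta-\theta_0)^{-1}$, hence $(\lambda')^2 \ge c_d R^2(d\theta/dy)^2 \theta^2(\theta-\theta_0)^{-2}$; then
\begin{equation*}
\frac{2t|\lambda''|}{2(\beta-1)(\lambda')^2} \le \BO(\kappa^{1/16}) \cdot \frac{\theta-\theta_0}{\theta} \le \BO(\kappa^{1/16}) \ll 1,
\end{equation*}
and $-2t\lambda''$ is absorbed into $2(\beta-1)(\lambda')^2$.

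Finally, at $\theta = \theta_1$ and $\theta = \pi-\theta_1$, Remark \ref{RemProblemAtTheta1} says $\BV$ makes a concave corner, which via the relation \eqref{EqnRadialGraphLower} translates into a nonpositive singular part of $L''$, hence of $\lambda''$; the term $-2t\lambda''$ therefore contributes nonnegatively at these corners in the viscosity sense, preserving the subsolution property across the interfaces. The main obstacle of the proof is the singular growth of $\lambda''$ at the contact-set opening $\theta = \theta_0$, and its resolution crucially relies on Lemma \ref{LemTechnical1}'s lower bound on $|dL/dy|$, which is tailored to produce a matching $(\lambda')^2$ of precisely the size needed to dominate $t\lambda''$.
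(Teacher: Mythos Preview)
Your approach is essentially the paper's: expand $E(V)$, isolate the only dangerous term $-2t\lambda''$, and absorb it either into the constant $1/2$ (when $\theta/(\theta-\theta_0)$ is bounded) or into $2(\beta-1)(\lambda')^2$ (when $\theta$ is close to $\theta_0$). The paper organizes the same computation by grouping the last two pieces back into $\partial_{yy}V$ and showing $\partial_{yy}V\ge -\BO(\kappa^{1/16})$, but the mechanism is identical.

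Two points need tightening. First, you invoke the lower bound on $|dL/dy|$ from Lemma~\ref{LemTechnical1} with $\eta=1$, but that lemma is stated only for \emph{dimensionally small} $\eta$; in its proof the smallness is used to make $\tfrac25[\BV'/\BV-\BU'/\BU]\sin\theta$ dominate $\cos\theta$, and for $\eta=1$ this need not hold. The fix is simply to split at $(1+\eta)\theta_0$ with the small dimensional $\eta$ supplied by the lemma: on $[(1+\eta)\theta_0,\pi/2]$ one still has $\theta/(\theta-\theta_0)\le 1+\eta^{-1}=\BO(1)$, so your first case goes through unchanged, and on $(\theta_0,(1+\eta)\theta_0)$ the lower bound is now legitimately available. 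This is exactly the paper's split in Steps~3--4.

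Second, at the corner $\theta=\theta_1$ your ``hence'' (nonpositive singular part of $L''$ implies nonpositive singular part of $\lambda''$) skips a step: since $\lambda=L-\sqrt U$ and the singular part of $(\sqrt U)''$ is also nonpositive, the difference could a priori go either way. What makes it work is that the singular part of $R''$ multiplies $\sin\theta_1$ in $L''$ but only $\kappa^{1/16}\BU^{1/2}(\theta_1)$ in $(\sqrt U)''$, and Lemma~\ref{LemGlueAS} gives $\kappa^{1/16}\BU^{1/2}(\theta_1)\ll \sin\theta_1$; this is precisely the inequality \eqref{EqnSineDominatesAtTheta1} in the paper's Step~5. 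With that comparison inserted, your corner argument is complete.
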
 
Recall the parameter $\alpha$ in the definition of $\BV$ \eqref{EqnBV}. The parameter $\kappa$ is from \eqref{EqnKappa}.

\begin{proof}
\textit{Step 1: Preparations.}

By definition, we have $|\nabla V|=0$ on $\partial\{V>0\}$. As a result, it suffices to verify the inequality in $$
\tilde{\CO}:=\{V>0\}\cap\CO.
$$

Inside $\tilde\CO$, the definition of $V$ in \eqref{EqnHorizontalFillingLower} gives
\begin{equation}
\label{EqnXDerivatives1}
\frac{\partial}{\partial|x|}V=2(|x|-L+\sqrt{U})\ge0, \text{ and }\frac{\partial^2}{\partial|x|^2}V=2.
\end{equation}
As a result, we have
$$
\Delta V=2+\frac{d-1}{|x|}\frac{\partial}{\partial|x|}V+\frac{\partial^2}{\partial y^2}V\ge2+\frac{\partial^2}{\partial y^2}V \text{ in }\tilde\CO.
$$
This implies
\begin{equation}
\label{EqnTwoNonConstantTerms}
E(V)> 1+\frac{\partial^2}{\partial y^2}V +(\frac{\beta}{2}-1)|\nabla V|^2/V \text{ in }\tilde\CO,
\end{equation}
where we used $\beta<2$ for $\gamma<1$ (see \eqref{EqnScalingParameter}).

Below we estimate the two non-constant terms on the right-hand side. 

\vem

\textit{Step 2: Estimate on $|\nabla V|^2/V$.}

Inside $\tilde\CO$, we see from \eqref{EqnXDerivatives1} that 
$
(\frac{\partial}{\partial|x|}V)^2/V=4.
$

For the derivative in the $y$-variable, we have
\begin{equation}
\label{EqnDDY1}
\frac{\partial}{\partial y}V=2\sqrt{V}\frac{d}{dy}(-L+\sqrt{U}).
\end{equation} 
With Lemma \ref{LemTechnical1} and Lemma \ref{LemTechnical2}, we see that $\frac{d}{dy}(-L+\sqrt{U})$ is  bounded by a dimensional constant. As a result, we have
$$
(\frac{\partial}{\partial y}V)^2/V\le\BO(1).
$$

Summarize, we have
\begin{equation}
\label{EqnStep1OnRatio}
|\nabla V|^2/V\le\BO(1) \text{ in }\tilde\CO.
\end{equation} 

\vem

\textit{Step 3: Estimate of $\frac{\partial^2}{\partial y^2}V$ on $[(1+\eta)\theta_0,\pi/2]$.}

For a small $\eta>0$ as in Lemma \ref{LemTechnical1}, in this step we give the estimate on $\frac{\partial^2}{\partial y^2}V$ on $[(1+\eta)\theta_0,\pi/2]$.

Direct computation gives
\begin{equation}
\label{DDYV}
\frac{\partial^2}{\partial y^2}V=2[\frac{d}{dy}(-L+\sqrt{U})]^2+2\sqrt{V}\frac{d^2}{dy^2}(-L+\sqrt{U}) \text{ in }\tilde\CO.
\end{equation} 

With $V\le U(L(y),y)=\kappa^\frac{1}{8}R^2\BU$ in $\tilde\CO$ (see \eqref{EqnGammaLower}), Lemma \ref{LemGlueAS} and Lemma \ref{LemTechnical1} imply, on $(\theta_0,\theta_1)\cup(\theta_1,\pi/2)$,
$$
\sqrt{V}|\frac{d^2}{dy^2}L|\le\sqrt{U}|\frac{d^2}{dy^2}L|=\kappa^{1/16}R\BU^{1/2}\cdot R(\frac{d\theta}{dy})^2\BO(\theta(\theta-\theta_0)^{-2})\le\kappa^{1/16}\BO(\theta(\theta-\theta_0)^{-1}).
$$
In particular, on $[(1+\eta)\theta_0,\pi/2)$, we have
$$
\sqrt{V}|\frac{d^2}{dy^2}L|\le\BO(\kappa^{1/16}).
$$
Similarly, we have
$$
\sqrt{V}|\frac{d^2}{dy^2}\sqrt{U}|\le \BO(\kappa^\frac18) \text{ on }[(1+\eta)\theta_0,\pi/2). 
$$

Combining these with \eqref{DDYV}, we conclude
\begin{equation*}
\label{Step3}
\frac{\partial^2}{\partial y^2}V\ge-\BO(\kappa^{1/16}) \text{ on }[(1+\eta)\theta_0,\pi/2).
\end{equation*} 

\vem

\textit{Step 4: Estimate of $\frac{\partial^2}{\partial y^2}V$ on $(\theta_0,\theta_1)\cup(\theta_1,(1+\eta)\theta).$}

Recall the angle $\theta_1$ from the definition of $\BV$ in \eqref{EqnTheta1} and \eqref{EqnBV}.

Similar as in the previous step, using Lemma \ref{LemTechnical1} and Lemma \ref{LemTechnical2}, we have
$$
\sqrt{V}|\frac{d^2}{dy^2}L+\frac{d^2}{dy^2}\sqrt{U}|\le\kappa^{1/16}R^2(\frac{d\theta}{dy})^2\BO(\theta(\theta-\theta_0)^{-1}) \text{ on }(\theta_0,\theta_1)\cup(\theta_1,(1+\eta)\theta_0).
$$

On the other hand, with Lemma \ref{LemTechnical1} and Lemma \ref{LemTechnical2}, on $(\theta_0,(1+\eta)\theta_0)$, we have
\begin{equation}
\label{SomethingWeStillHave}
|\frac{d}{dy}(L-\sqrt{U})|\ge|\frac{d}{dy}L|-|\frac{d}{dy}\sqrt{U}|\ge R\frac{d\theta}{dy}[c_d\eta^{-1}\theta(\theta-\theta_0)-\kappa^{1/16}]\ge R\frac{d\theta}{dy}\theta(\theta-\theta_0)^{-1}.
\end{equation}
for $\eta$ and $\kappa$ dimensionally small. 

Combining these with \eqref{DDYV}, we have
\begin{equation*}
\label{Step4}
\frac{\partial^2}{\partial y^2}V\ge2(R\frac{d\theta}{dy})^2\cdot\{\theta^2(\theta-\theta_0)^{-2}-\kappa^{1/16}\BO(\theta(\theta-\theta_0)^{-1})\}\ge0
\end{equation*}
on $(\theta_0,\theta_1)\cup(\theta_1,(1+\eta)\theta_0)$ if $\kappa$ is dimensionally small. 

\vem

\textit{Step 5: Estimate of $\frac{\partial^2}{\partial y^2}V$ at $\theta=\theta_1.$}

With similar ideas as in previous steps, we have
$$
|\frac{d}{d\theta}(-L+\sqrt{U})|\le R\BO(\theta(\theta-\theta_0)^{-1}),
$$
and
\begin{equation}
\label{Delicate}
\frac{d^2}{d\theta^2}(-L+\sqrt{U})\ge\frac25R(\frac{\BV'}{\BV}-\frac{\BU'}{\BU})'(-\sin(\theta)+\kappa^{1/16}\BU^{1/2})-R\BO(\theta(\theta-\theta_0)^{-2}).
\end{equation}

By  \eqref{EqnBV}, the term $(\frac{\BV'}{\BV}-\frac{\BU'}{\BU})'$ gives a negative measure at $\theta=\theta_1$. Meanwhile, Lemma \ref{LemGlueAS} gives
\begin{equation}
\label{EqnSineDominatesAtTheta1}
-\sin(\theta_1)+\kappa^{1/16}\BU^{1/2}(\theta_1)\le-c_d\theta_1<0.
\end{equation}
Thus the first term in \eqref{Delicate} is non-negative. As a result, we have
\begin{equation*}
\frac{d^2}{d\theta^2}(-L+\sqrt{U})\ge-R\BO(\theta(\theta-\theta_0)^{-2}).
\end{equation*}

Together with Lemma \ref{LemTechnical1}, we have
\begin{align*}
\frac{d^2}{dy^2}(-L+\sqrt{U})&=\frac{d^2}{d\theta^2}(-L+\sqrt{U})(\frac{d\theta}{dy})^2+\frac{d}{d\theta}(-L+\sqrt{U})\frac{d^2\theta}{dy^2}\\
&\ge-R(\frac{d\theta}{dy})^2\BO(\theta(\theta-\theta_0)^{-2}).
\end{align*}
This implies
$$
\sqrt{V}\frac{d^2}{dy^2}(-L+\sqrt{U})\ge-\kappa^{1/16}R^2(\frac{d\theta}{dy})^2\BO(\theta(\theta-\theta_0)^{-1}).
$$

On the other hand, at $\theta=\theta_1$, we still have \eqref{SomethingWeStillHave}. As a consequence of \eqref{DDYV}, we have
\begin{equation*}
\label{Step5}
\frac{\partial^2}{\partial y^2}V\ge R^2(\frac{d\theta}{dy})^2(c_d-\kappa^{1/16})\theta(\theta-\theta_0)^{-1}\ge0
\end{equation*} 
at $\theta=\theta_1$ if $\kappa$ is small. 

Combining this with Step 3 and Step 4, we have established
\begin{equation}
\label{EqnStep345}
\frac{\partial^2}{\partial y^2}V\ge-\BO(\kappa^{1/16}) \text{ in }\tilde\CO.
\end{equation}

\vem

\textit{Step 6: Conclusion.}

Putting \eqref{EqnStep1OnRatio} and \eqref{EqnStep345} into \eqref{EqnTwoNonConstantTerms}, we get
$$
E(V)\ge 1-\BO(\kappa^{1/16})\ge 0 \text{ in }\tilde\CO
$$
if $\kappa$ is small. 
\end{proof} 

Recall from \eqref{EqnTwoNonConstantTerms} that $V$ in \eqref{EqnHorizontalFillingLower} and $U$ in \eqref{EqnLowerLeafFar} meet continuously along the interface $\Gamma$ in \eqref{EqnGammaLower}. To get a subsolution when gluing them, it is crucial that the derivatives are ordered.

\begin{lem}
\label{LemAngleAlongGlue}
Under the same assumptions as in Lemma \ref{LemEquationInOLower}, we have
$$
\frac{\partial}{\partial|x|}V<\frac{\partial}{\partial|x|}U \text{ along }\Gamma.
$$
\end{lem}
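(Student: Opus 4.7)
My plan is a direct computation along $\Gamma$ that reduces the strict inequality to a scalar inequality provable from Lemma \ref{LemCommutator} and Lemma \ref{LemGlueAS}. Along $\Gamma$ we have $|x|=L(y)$ and, by the definition \eqref{EqnGammaLower}, $U=\kappa^{1/8}u$. The formula $\partial V/\partial|x|=2(|x|-L+\sqrt{U})$ from \eqref{EqnXDerivatives1} immediately yields
\[
\frac{\partial V}{\partial|x|}\Big|_{\Gamma}=2\sqrt{U|_{\Gamma}}=2\kappa^{1/16}\sqrt{u}.
\]
For $\partial U/\partial|x|$ I will convert $\partial/\partial|x|=\sin\theta\,\partial_r+r^{-1}\cos\theta\,\partial_\theta$ and use $u=r^2\BU(\theta)$, $v=r^{-1/2}\BV(\theta)$, together with the defining relation $v=(1-\kappa^{1/8})u$ on $\Gamma$, to obtain
\[
\frac{\partial U}{\partial|x|}\Big|_{\Gamma}=r\BU\Big[\bigl(2+\tfrac{1-\kappa^{1/8}}{2}\bigr)\sin\theta+\cos\theta\Big(\tfrac{\BU'}{\BU}-(1-\kappa^{1/8})\tfrac{\BV'}{\BV}\Big)\Big].
\]
Cancelling a common factor of $r$ and dividing by $\sqrt{\BU}$ (recall $\sqrt{u}=r\sqrt{\BU}$) reduces the desired inequality to
\[
\sqrt{\BU}\cdot B(\theta)>2\kappa^{1/16},\qquad B(\theta):=\bigl(2+\tfrac{1-\kappa^{1/8}}{2}\bigr)\sin\theta+\cos\theta\Big(\tfrac{\BU'}{\BU}-(1-\kappa^{1/8})\tfrac{\BV'}{\BV}\Big).
\]

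To bound $B$ from below I will split $(1-\kappa^{1/8})\BV'/\BV=\BV'/\BV-\kappa^{1/8}\BV'/\BV$ and apply Lemma \ref{LemCommutator}: the main difference $\BU'/\BU-\BV'/\BV$ is at least $c_d(\theta-\theta_0)^{-1}\cos\theta$, while the correction $\kappa^{1/8}|\BV'/\BV|$ is $\BO(\kappa^{1/8}(\theta-\theta_0)^{-1}\cos\theta)$. Hence for $\kappa$ dimensionally small,
\[
B(\theta)\ge 2\sin\theta+\tfrac{c_d}{2}(\theta-\theta_0)^{-1}\cos^2\theta.
\]
Combining with $\sqrt{\BU}\sim_d(\theta-\theta_0)$ on $[\theta_0,\pi/2]$ from Lemma \ref{LemGlueAS} gives $\sqrt{\BU}\,B(\theta)\gtrsim_d (\theta-\theta_0)\sin\theta+\cos^2\theta$. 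A case split at $\theta=\pi/4$—the $\cos^2\theta$ term handles $\theta\le\pi/4$, while $(\theta-\theta_0)\sin\theta$ is bounded below by a dimensional constant once $\theta\ge\pi/4$ and $\theta_0<\pi/8$—shows $\sqrt{\BU}\,B(\theta)\ge c_d''>0$ for some dimensional constant, which strictly exceeds $2\kappa^{1/16}$ once $\kappa$ is dimensionally small. Even symmetry across $\theta=\pi/2$ (Remark \ref{RemHalfIntervalAndViscosity}) extends the conclusion to the upper half $(\pi/2,\pi-\theta_0)$.

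The only delicate point in this plan is ensuring that the $-\kappa^{1/8}\BV'/\BV$ correction does not destroy the strict positivity of $\BU'/\BU-\BV'/\BV$. This is precisely why Lemma \ref{LemCommutator} is formulated with a matched lower bound on the difference and upper bound on $|\BV'/\BV|$. I do not expect to need the finer trace estimates from Lemma \ref{LemTechnical1} or Lemma \ref{LemTechnical2} for this step.
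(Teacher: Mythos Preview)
Your proposal is correct and follows essentially the same approach as the paper's proof: compute $\partial V/\partial|x||_\Gamma=2\kappa^{1/16}\sqrt{u}$ from \eqref{EqnXDerivatives1}, expand $\partial U/\partial|x|$ in the $(r,\theta)$ coordinates using $v=(1-\kappa^{1/8})u$ on $\Gamma$, and then invoke Lemma~\ref{LemCommutator} (lower bound on $\BU'/\BU-\BV'/\BV$) and Lemma~\ref{LemGlueAS} (size of $\BU$) to show a dimensional lower bound beats the $\kappa^{1/16}$ upper bound. The only cosmetic difference is that the paper absorbs the coefficient $(1-\kappa^{1/8})$ by lowering $\BU'$ to $(1-\kappa^{1/8})\BU'$ (valid since $\BU'\ge0$), while you split off $\kappa^{1/8}\BV'/\BV$ and control it via the upper bound on $|\BV'/\BV|$; both routes are equivalent.
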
 
\begin{proof}
With the definition of $U$ in \eqref{EqnLowerLeafFar}, direction computation with the coordinate system \eqref{EqnCoordinate} gives
$$
\frac{\partial}{\partial|x|}U=(2r\BU+\frac12 r^{-\frac32}\BV)\sin(\theta)+r^{-1}(r^2\BU'-r^{-1/2}\BV')\cos(\theta).
$$
Along $\Gamma$, the relation \eqref{EqnRadialGraphLower} implies
\begin{equation*}
\frac{\partial}{\partial|x|}U|_\Gamma\ge 2R\BU\sin(\theta)+R^{-1}(1-\kappa^\frac18)u(\frac{\BU'}{\BU}-\frac{\BV'}{\BV})\cos(\theta).
\end{equation*} 
With Lemma \ref{LemGlueAS} and Lemma \ref{LemCommutator}, we have
$$
\frac{\partial}{\partial|x|}U|_\Gamma\ge c_dR[\theta(\theta-\theta_0)^2+(\theta-\theta_0)\cos(\theta)].
$$

Meanwhile, from \eqref{EqnXDerivatives1} and Lemma \ref{LemGlueAS}, we get
$$
\frac{\partial}{\partial|x|}V|_\Gamma=2\sqrt{U}(R(\theta),\theta)=2\kappa^{1/16}R\BU^{1/2}\le\kappa^{1/16}R\BO(\theta-\theta_0).
$$

The desired comparison follows from the previous two estimates when $\kappa$ is small. 
\end{proof} 

We also have the desired ordering between  the solution $u$ in \eqref{EqnTraceOfUAS} and the extension $V$ in \eqref{EqnHorizontalFillingLower}.
\begin{lem}
\label{LemOrderingLower}
Under the same assumptions as in Lemma \ref{LemEquationInOLower}, we have
$$
V\le u \text{ in }\CO.
$$
\end{lem}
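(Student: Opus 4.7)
The plan is to verify $V\le u$ slicewise, exploiting the slicewise monotonicity of both $V$ and $u$ in $s:=|x|$. On a horizontal slice $\{y=y_0\}$ inside $\CO$, $V(s,y_0)=[(s-a(y_0))_+]^2$ with $a(y_0):=L(y_0)-\sqrt{U(L(y_0),y_0)}$ is monotone non-decreasing in $s$, and so is $u(s,y_0)=r^2\BU(\theta)$: indeed $r=\sqrt{s^2+y_0^2}$ and $\theta=\arctan(s/|y_0|)$ are both increasing in $s$, while $\BU$ is increasing on $[0,\pi/2]$ by Lemma \ref{LemGlueAS}.

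The first step is to establish the slicewise contact-set containment $a(y_0)\ge|y_0|\tan\theta_0$, ensuring that $V=0$ wherever $u=0$ in $\CO$. Using $L(y_0)=|y_0|\tan\theta_{y_0}$ from \eqref{EqnYGraphLower} together with $\sqrt{U(L(y_0),y_0)}=\kappa^{1/16}\sqrt{u(L(y_0),y_0)}$ from \eqref{EqnGammaLower}, the containment reduces to an inequality of the form
\[
\kappa^{1/16}\,\BU^{1/2}(\theta_{y_0})/\sin\theta_{y_0}\;\le\;1-\tan\theta_0/\tan\theta_{y_0},
\]
which follows from the bound $\BU^{1/2}(\theta)\le\BO(\theta-\theta_0)$ of Lemma \ref{LemGlueAS} by elementary trigonometry (both sides being linear in $\theta_{y_0}-\theta_0$ when $\theta_{y_0}$ is close to $\theta_0$, with the left carrying the small factor $\kappa^{1/16}$, and the right being of unit order when $\theta_{y_0}$ is away from $\theta_0$), once $\kappa$ is dimensionally small.

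Combining this containment with the monotonicity concludes the proof. On $\{s\le a(y_0)\}$, $V=0\le u$ trivially. On $\{a(y_0)\le s\le L(y_0)\}$ (the positive set of $V$ in the slice), monotonicity of $V$ gives $V(s,y_0)\le V(L(y_0),y_0)=\kappa^{1/8}u(L(y_0),y_0)$, and monotonicity of $u$ gives $u(s,y_0)\ge u(a(y_0),y_0)$. It thus suffices to show $u(a(y_0),y_0)\ge\kappa^{1/8}u(L(y_0),y_0)$. A direct computation of this ratio, using the relation $a(y_0)/L(y_0)=1-\kappa^{1/16}\BU^{1/2}(\theta_{y_0})/\sin\theta_{y_0}$ to compare both $r^2$ and $\BU(\theta)$ at the endpoints via $\BU(\theta)\sim_d(\theta-\theta_0)^2$ from Lemma \ref{LemGlueAS}, yields $u(a(y_0),y_0)/u(L(y_0),y_0)\ge 1-\BO(\kappa^{1/16})$, which dominates $\kappa^{1/8}$ for $\kappa$ dimensionally small.

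The main obstacle is the contact-set containment: without it, $V$ could be positive where $u$ vanishes near the contact cone, causing $u(a(y_0),y_0)$ to degenerate to $0$ and destroying the monotonicity-based comparison. The smallness factor $\kappa^{1/16}$ beating the linear rate $(\theta-\theta_0)$ of $\BU^{1/2}$ at the free boundary is precisely what makes this step work.
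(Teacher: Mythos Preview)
Your proof is correct and follows essentially the same slicewise strategy as the paper: both bound $V$ from above by $V(L,y)=\kappa^{1/8}u(L,y)$ on the positive set of $V$, and bound $u$ from below by a quantity close to $u(L,y)$ using the estimates on $\BU$ and $\BU'$ in Lemma~\ref{LemGlueAS}. The paper integrates the bound $\partial u/\partial|x|\le\BO(\sqrt{u(L,\bar y)})$ over the interval $[L-\sqrt{U},L]$ to get $u\ge(1-\BO(\kappa^{1/16}))u(L,\bar y)$ directly, whereas you separate out the contact-set containment $a(y_0)\ge|y_0|\tan\theta_0$ and then compare endpoint values via monotonicity; these are the same computation organized differently. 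One minor remark: your claimed ratio $u(a,y_0)/u(L,y_0)\ge 1-\BO(\kappa^{1/16})$ requires the derivative bound $\BU'\le\BO(\theta-\theta_0)$ from Lemma~\ref{LemGlueAS} and not merely $\BU\sim_d(\theta-\theta_0)^2$ (the latter only gives a dimensional lower bound on the ratio), but since you only need the ratio to dominate $\kappa^{1/8}$, either route suffices.
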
 
Recall the domain $\CO$ from \eqref{EqnInteriorOfComplementLower}.
\begin{proof}
We verify this ordering for a horizontal slice with a fixed $y$-coorditate, say $\bar{y}$. Suppose that the point $(L(\bar{y}),\bar{y})$ on the boundary $\Gamma$ corresponds to $(R(\bar{\theta}),\bar{\theta})$. See \eqref{EqnRadialGraphLower} and \eqref{EqnYGraphLower}. Without loss of generality, we assume $\bar{\theta}\in(\theta_0,\pi/2]$.

Under this assumption,  points in $\{(x,\bar{y}): |x|\le L(\bar{y})\}$ satisfy 
$$
r\le R(\bar{\theta}), \text{ and }\theta\le\bar{\theta}
$$
in the coordinate system \eqref{EqnCoordinate}.
As a result, Lemma \ref{LemGlueAS} gives
\begin{equation}\label{OscilationOfSolution}
\frac{\partial u}{\partial|x|}=2r\BU\sin(\theta)+r\BU'\cos(\theta)\le\BO(R(\bar{\theta})(\bar{\theta}-\theta_0))\le\BO(u^{1/2}(L(\bar{y}),\bar{y}))
\end{equation}
in $\{(x,\bar{y}): |x|\le L(\bar{y})\}$.
\vem

In this slice  $\{(x,\bar{y}): |x|\le L(\bar{y})\}$, the extension $V$ vanishes for $|x|\le L(\bar{y})-\sqrt{U}(L(\bar{y}),\bar{y})$. As a result, it suffices to establish the ordering for $L(\bar y)-\sqrt{U}(L(\bar y),\bar y)\le|x|\le L(\bar y)$. Within this range, we use \eqref{OscilationOfSolution} to get
$$
u\ge u(L(\bar{y}),\bar{y})-\sqrt{U}(L(\bar{y}),\bar{y})\BO(u^{1/2}(L(\bar{y}),\bar{y}))\ge u(L(\bar{y}),\bar{y})-\kappa^{1/16}\BO(u(L(\bar{y}),\bar{y})).
$$
Note that we used $U=\kappa^{\frac18}u$ along $\Gamma$ (see \eqref{EqnGammaLower}).

That is, in $\{(x,\bar{y}):L(\bar{y})-\sqrt{U}(L(\bar{y}),\bar{y})\le |x|\le L(\bar{y})\}$ we have
$$
u\ge (1-C_d\kappa^{1/16})u(L(\bar{y}),\bar{y}).
$$

On the other hand, in the same region, we have
$$
V\le U(L(\bar{y}),\bar{y})=\kappa^{\frac18}u(L(\bar{y}),\bar{y}).
$$
The desired ordering holds if $\kappa$ is small. 
\end{proof}

\subsection{Lower foliation of the axially symmetric cone}
\label{SubsectionLowerFoliationOfTheASCone}
By gluing the profiles $U$ from \eqref{EqnLowerLeafFar} and $V$ from \eqref{EqnHorizontalFillingLower}, we get one leaf in the lower foliation, that is,
\begin{equation}
\label{EqnSingleLowerLeaf}
\Phi_1:=\begin{cases}
U &\text{ in }\Omega,\\
V &\text{ in }\R^{d+1}\backslash\Omega.
\end{cases}
\end{equation} 

This leaf has the desired properties to generate a lower foliation.
\begin{lem}
\label{LemGenerateLowerFoliation}
For $d\ge3$ and dimensionally small $\alpha$ and $\kappa$, we have
$$\Phi_1\in\SPGa(\R^{d+1}), \hem \Phi_1\le u \text{ in }\R^{d+1},$$
and 
$$
\{\Phi_1=0\}\supset B_{\eta} \text{ for some }\eta>0.
$$
\end{lem}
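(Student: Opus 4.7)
My plan is to verify the three conclusions in turn. For the subsolution property, $\Phi_1 = U$ in the open set $\Omega$ and Lemma \ref{LemLowerLeafFar} gives $E(U) \ge 0$; in the open set $\CO = \mathrm{Int}(\R^{d+1}\setminus\Omega)$, $\Phi_1 = V$ and Lemma \ref{LemEquationInOLower} gives $E(V) \ge 0$ on $\{V>0\}\cap\CO$. Across the interface $\Gamma$ the two pieces match continuously by \eqref{EqnContinuousFilling}, and the strict radial-derivative comparison $\partial_{|x|} V < \partial_{|x|} U$ from Lemma \ref{LemAngleAlongGlue} shows that $\Phi_1$, viewed as a function of $|x|$, forms a convex kink along $\Gamma$; since no smooth function can touch such a graph from above, the viscosity subsolution property passes through $\Gamma$. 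At the interior free boundary $\partial\{V>0\}\subset\CO$, the explicit quadratic profile $V = [(|x|-L+\sqrt{U})_+]^2$ gives $|\nabla V|^2/V \equiv 4$ on the positive side, well above the value $2(d+\beta-2)/(d(\beta-1)) \to 2$ characterizing exact solutions of the transformed equation as $\beta\to 2^-$; hence the free-boundary condition for subsolutions is also met for $\gamma$ close to $1$.

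The bound $\Phi_1 \le u$ is immediate: in $\Omega$ we have $\Phi_1 = u - v$ with $v > 0$, and in $\CO$ this is exactly Lemma \ref{LemOrderingLower}. For the ball in the zero set, observe that near the origin $v = r^{-1/2}\BV(\theta) \to +\infty$ while $u = r^2\BU(\theta) \to 0$, so $u - v \to -\infty$; thus the origin sits deep inside $\CO$, where $\Phi_1 = V$ vanishes on $\{|x| \le L(y) - \sqrt{U(L(y),y)}\}$. At $y = 0$ (so $\theta = \pi/2$), relation \eqref{EqnRadialGraphLower} together with $\BU(\pi/2) \sim_d \BV(\pi/2) \sim_d 1$ yields $L(0) \sim_d 1$, while $U(L(0),0) = \kappa^{1/8}u(L(0),0) = \BO(\kappa^{1/8})$ gives $\sqrt{U(L(0),0)} = \BO(\kappa^{1/16})$. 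Hence $L(0) - \sqrt{U(L(0),0)} \ge 1/2$ for $\kappa$ small, and continuity of $L$ and $U$ in $y$ guarantees that $\{V=0\}$ contains a ball $B_\eta$ around the origin.

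The main obstacle is the transmission of the viscosity subsolution property across $\Gamma$; it hinges entirely on the strict derivative comparison in Lemma \ref{LemAngleAlongGlue}, which is precisely what makes the kink convex rather than concave. The remaining verifications reduce to direct consequences of the quantitative bounds on $\BU$, $\BV$, $L(y)$ and $U|_\Gamma$ already collected in the preceding subsections.
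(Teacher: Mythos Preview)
Your argument is correct and matches the paper's: the subsolution property from Lemmas~\ref{LemLowerLeafFar}, \ref{LemEquationInOLower}, \ref{LemAngleAlongGlue}; the ordering from the definition of $U$ and Lemma~\ref{LemOrderingLower}; and the ball in the zero set from the size of $L-\sqrt{U}$ near the equatorial slice (the paper bounds this uniformly over all slices $\theta\in(\theta_0,\pi/2]$ and also invokes $\{\theta\le\theta_0\}\subset\{u=0\}$, but your continuity argument at $y=0$ is equally valid and a little more direct). Two harmless imprecisions in your supplementary free-boundary check: $|\nabla V|^2/V\ge 4$ rather than $\equiv 4$ because of the $y$-derivative contribution, and your threshold formula for exact solutions of the transformed equation is not quite right, though the needed inequality $4>2+o(1)$ as $\beta\to 2^-$ still holds.
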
 
Recall the space $\SPGa$ from Definition \ref{DefSolution}. See also Remark \ref{RemAbusingNotations}. 

Recall the parameter $\alpha$ in the definition of $\BV$ \eqref{EqnBV}. This parameter has been fixed in Lemma \ref{LemCommutator}. The parameter $\kappa$ is from \eqref{EqnKappa}.
\begin{proof}
The subsolution property follows from Lemma \ref{LemLowerLeafFar}, Lemma \ref{LemEquationInOLower} and Lemma \ref{LemAngleAlongGlue}. The ordering between $\Phi_1$ and $u$ follows from the definition of $U$ in \eqref{EqnLowerLeafFar} and Lemma \ref{LemOrderingLower}.

For the slice at height $y$, with \eqref{EqnYGraphLower} and \eqref{EqnHorizontalFillingLower}, we have
$$
V(x,y)=0 \text{ for }|x|\le R\sin(\theta)-\kappa^{1/16}R\BU^{\frac12}(\theta).
$$
With Lemma \ref{LemGlueAS}, we have $R\sin(\theta)-\kappa^{1/16}R\BU^{\frac12}(\theta)\ge c_dR\theta.$ Meanwhile, with \eqref{EqnRadialGraphLower}, we see 
$$
R\sim_d(\BV/\BU)^{\frac25}\ge (\BV/\BU)^{\frac25}(\pi/2).
$$
As a result, for each $\theta>\theta_0$, $\{\Phi_1=0\}$ contains a ball of radius $c_d\theta_0.$

Since $V\le u$ in $\R^{d+1}$, the entire cone $\{\theta\le\theta_0\}$ is contained in $\{\Phi_1=0\}$. 

Combining these with the even symmetry of $\Phi_1$, we get the $\{\Phi_1=0\}\supset B_\eta$ for a positive $\eta.$ 
\end{proof} 

Starting with $\Phi_1$ in \eqref{EqnSingleLowerLeaf}, we define
$$
\Phi_t(x,y):=t^2\Phi_1(x/t,y/t)
$$
for $t>0$. 

Below we show that this gives a lower foliation, and hence give the proof of Proposition \ref{PropLowerFoliationAS}.

\begin{proof}[Proof of Proposition \ref{PropLowerFoliationAS}]
With Lemma \ref{LemGenerateLowerFoliation} and the scaling symmetry of \eqref{EqnTheError}, we have
$
\Phi_t\in\SPGa(\R^{d+1}) \text{ for all }t>0.
$
Continuity of $(x,y,t)\mapsto\Phi_t(x,y)$ and continuity of $t\mapsto\partial\{\Phi_t>0\}$ follow from definition. 

\vem

With Lemma \ref{LemGenerateLowerFoliation}, we have
$$
\Phi_t(x,y)=t^2\Phi_1(x/t,y/t)\le t^2 u(x/t,y/t)=u \text{ in }\R^{d+1}.
$$
Moreover, we have
$$
\{\Phi_t=0\}\supset B_{t\eta},
$$
which engulfs any compact set when $t\to+\infty.$

Note that at each $(x,y)\in\{u>0\}$, we have
$$
(1-\kappa^\frac18)u(x/t,y/t)-v(x/t,y/t)=(1-\kappa^\frac18)t^{-2}u(x,y)-t^{1/2}v(x,y)>0
$$
for $t>0$ small. As a result, if $K$ is a compact subset of $\PosS$, then we can find $t_K$ such that 
$$
t\cdot\Omega\supset K
$$if $0<t<t_K.$
Here $\Omega$ is the domain away from the free boundary in \eqref{EqnAwayFromFBLower}.

As a result, on $K$ for $t<t_K$ we have
$$
\Phi_t(x,y)=t^2U(x/t,y/t)=u(x,y)-t^{5/2}v(x,y)\to u(x,y)
$$
uniformly as $t\to0.$

\vem

In summary, the family $\{\Phi_t\}$ gives a lower foliation of $u$.
\end{proof}

\section{Upper foliation of the axially symmetric cone}
\label{SectionUFAS}

In this section, we construct an upper foliation of the axially symmetric cone $\UAS$ in Proposition \ref{PropConstructionOfUAs}.  Together with the lower foliation from Proposition \ref{PropLowerFoliationAS}, this gives the minimality of $\UAS$, following Theorem \ref{ThmMinimality}.

The \textbf{main result} in this section is:
\begin{prop}
\label{PropUpperFoliationAS}
For $d\ge3$, there is a dimensional constant $\gamma_d^3\in(0,1)$ such that for 
$$
\gamma_d^3<\gamma<1,
$$
there is an upper foliation $\{\Psi_t\}_{t\in(0,+\infty)}$ of the cone $\UAS$ from Proposition  \ref{PropConstructionOfUAs} in $\R^{d+1}$.
\end{prop}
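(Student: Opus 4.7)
The plan is to mirror the construction of the lower foliation in Section \ref{SectionLowerFoliationAS}, but to produce supersolutions by \emph{adding} a positive supersolution of the linearized operator $L_u$ to the cone $u=\UAS$, rather than subtracting one. A crucial simplification in the upper case comes from the sign of the nonlinear remainder: the map $h(X',x_{d+2}) := |X'|^2/x_{d+2}$ is convex on $\{x_{d+2}>0\}$, so
$$h(X+Z)-h(X)-\nabla h(X)\cdot Z \;\ge\; 0;$$
combined with $(\beta/2-1)<0$ for $\gamma<1$, this Taylor remainder enters the expansion of $E(u+v)$ with exactly the sign that favors a supersolution inequality.

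First I would reuse the supersolution $v=r^{-1/2}\bar V(\theta)$ of $L_u$ from Subsection \ref{SubsectionLEAU}, with a minor modification of \eqref{EqnBV} near $\theta_0$ and $\pi-\theta_0$ so that $v$ stays strictly positive up to $\partial\PosS$ (needed for the strict inequality $\Psi_t>u$ on $\overline{\PosS}$). Setting $U^\sharp:=u+v$ on the region $\Omega$ from \eqref{EqnAwayFromFBLower} and repeating the expansion of Lemma \ref{LemLowerLeafFar} with $Z=(\nabla v,v)$, one obtains
$$E(U^\sharp) \;=\; L_u(v) + \left(\frac{\beta}{2}-1\right)\left[h(X+Z)-h(X)-\nabla h(X)\cdot Z\right];$$
the first term is non-positive by Lemma \ref{LemSupersolutionLU}, and so is the second by the sign analysis above. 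Hence $E(U^\sharp)\le 0$ in $\Omega$, and in fact this part is easier than Lemma \ref{LemLowerLeafFar}, because the Taylor remainder now aids the inequality rather than having to be absorbed by a margin from $L_u(v)$.

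Next I would extend $\Psi_1$ across $\CO=\mathrm{Int}(\R^{d+1}\setminus\Omega)$ by an explicit positive profile $W$ built slice-by-slice, playing the role of \eqref{EqnHorizontalFillingLower} but upward-opening in $|x|$. A natural candidate is $W(x,y):=a(y)|x|^2+b(y)$, with $a(y), b(y)>0$ chosen so that $W$ matches $u+v$ along $\Gamma$ and the concave-corner condition
$$\partial_{|x|}W|_\Gamma \;\ge\; \partial_{|x|}(u+v)|_\Gamma$$
holds, which is the viscosity requirement for the glued function to be a supersolution across the interface. Positivity of $W$, the pointwise inequality $E(W)\le 0$ in $\CO$, and the corner condition would be verified using the trace estimates of Lemma \ref{LemTechnical1} and Lemma \ref{LemTechnical2}, in direct parallel with the proofs of Lemma \ref{LemEquationInOLower} and Lemma \ref{LemAngleAlongGlue}, with signs appropriately reversed.

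Finally I would set $\Psi_1:=U^\sharp$ in $\Omega$ and $\Psi_1:=W$ in $\R^{d+1}\setminus\Omega$, producing a strictly positive supersolution of \eqref{EqnTransformedEquation} that lies strictly above $u$ on $\overline{\PosS}$. Rescaling $\Psi_t(x,y):=t^2\Psi_1(x/t,y/t)$ gives supersolutions by Proposition \ref{PropSymmetryOfEquation}, and the remaining items of Definition \ref{DefUpperFoliation} follow routinely: $\Psi_t-u=t^{5/2}v\to 0$ locally uniformly on $\PosS$ as $t\to 0^+$, and $\Psi_t\to+\infty$ locally uniformly as $t\to+\infty$ since both $u+t^{5/2}v$ and $t^2W(\cdot/t)$ diverge. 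The hard part will be the construction of $W$---ensuring positivity, the inequality $E(W)\le 0$, and the concave-corner condition \emph{simultaneously}---most delicately near the axis of symmetry, where $L(y)$ is small, $R(\theta)$ blows up, and the trace of $u+v$ along $\Gamma$ degenerates to zero; this is the analog of the most technical portion of Subsection \ref{SubsectionLowerNearFreeBoundary}.
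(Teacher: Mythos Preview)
Your convexity observation is correct and in fact cleaner than the paper's own argument for the analogous step: since $h(X',t)=|X'|^2/t$ is convex on $\{t>0\}$, the Taylor remainder $h(X+Z)-h(X)-\nabla h(X)\cdot Z$ is nonnegative, and with $(\beta/2-1)<0$ this immediately gives $E(u+v)\le L_u(v)\le 0$ throughout $\{u>0\}$, with no need for the region $\Omega$ at this stage. The paper proves the same conclusion (Lemma~\ref{LemSupersolutionNonlinear}) by a case split $\{u\ge v\}$ versus $\{u<v\}$ that your argument bypasses.

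However, your extension $W(x,y)=a(y)|x|^2+b(y)$ with $a,b>0$ cannot work. Along the interface $\Gamma$ you inherit from Section~\ref{SectionLowerFoliationAS}, the trace $U^\sharp|_\Gamma=(2-\kappa^{1/8})R^2\bar u(\theta)$ behaves like $(\theta-\theta_0)^{(4\alpha+2)/5}\to 0$ as $\theta\to\theta_0^+$, while $L(y)=R\sin\theta\to\infty$. The matching condition $a(y)L(y)^2+b(y)=U^\sharp|_\Gamma$ then forces $b(y)\to-\infty$, contradicting $b>0$. Any positive profile filling $\CO$ must therefore be allowed to vanish at least on part of $\{u=0\}$; the paper uses a translated quadratic $\tfrac{1}{20}[(|x|-L+\sqrt{20}\sqrt{U})_+]^2$ for exactly this reason. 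Relatedly, your diagnosis of the hard part is off: $L(y)$ is bounded \emph{below} along $\Gamma$ (its minimum is attained at $\theta=\pi/2$), so there is no ``small $L$'' regime.

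More structurally, the paper shows that one cannot simply flip signs in Section~\ref{SectionLowerFoliationAS}. Remark~\ref{RemWhyDifferentTheta1} explains that both the location of the corner in $\bar V$ and the exponent must change (to $\theta_1=(1+2\delta)\theta_0$ and exponent $\delta$) in order to reverse the derivative ordering along the new interface (compare Lemma~\ref{LemAngleAlongGlue} with Lemma~\ref{LemEqnAlongGammaUpper}); this is not a ``minor modification''. Moreover, the paper does not use a single filling across all of $\CO$: it splits $\R^{d+1}$ into a region near infinity (where the translated-quadratic filling is used) and a region near the origin (where a separate profile $\UIn$, constant in $\theta$ for $\theta\le\theta_1$, is used), glues the two via a radial harmonic cutoff, and finally truncates by a constant to remove the singularity of $v$ at the origin. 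Your proposal does not anticipate this near-origin/near-infinity split, which is where the genuine new work lies.
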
 
Recall the definition of an upper foliation in Definition \ref{DefUpperFoliation}. As in Section \ref{SectionCAS} and Section \ref{SectionLowerFoliationAS}, we work with the transformed equation \eqref{EqnTransformedEquation}. See Proposition \ref{PropEquivalenceOfEquations} and Remark \ref{RemAbusingNotations}.

\vem
We briefly explain the \textbf{strategy of the construction}.

Similar to Section \ref{SectionLowerFoliationAS}, leaves in this upper foliation are rescalings of the single leaf in \eqref{EqnUpperLeaf}. Most of this section is devoted to the construction of this leaf. In contrast to the lower leaves, which stay below the cone $\UAS$, the upper leaves are larger than $\UAS$. This requires a topological change in the positive set near the origin. 

We first study the linearized equation in Subsection \ref{SubsectionLEACUpper}, and construct a positive supersolution $v$ in \eqref{EqnV2}. Compared with its counterpart in Subsection \ref{SubsectionLEAU}, here the construction is more involved for reasons we outline in Remark \ref{RemWhyDifferentTheta1}.

With this supersolution $v$, we begin the construction of a leaf in the upper foliation. 

In Subsection \ref{SubsectionRegionNearInfinityUpper}, we start with the region near infinity in \eqref{EqnRegionNearInfinity}. Here the strategy is similar to the construction in Section \ref{SectionLowerFoliationAS}. Away from the free boundary (see \eqref{EqnAwayFromFBUpper}), the profile $U$ is given by a perturbation of the cone by $v$ (see \eqref{EqnPerturbedSolution}). Its supersolution property is given in Lemma \ref{LemSupersolutionNonlinear}. Then we extend $U$ to the region near the free boundary in horizontal slices as $V$ in  \eqref{EqnHorizontalFillingUpper}. Compared to Subsection \ref{SubsectionLowerNearFreeBoundary}, 
 the preferred sign has been reversed in several key estimates, leading to new technical challenges (see Lemma \ref{LemNonlinearEquationInInterior}, Lemma \ref{LemEqnAlongGammaUpper}, and Lemma \ref{LemOuterRegionOrdering}). 

Gluing $U$ and $V$, we get the profile near infinity $\UOut$ in \eqref{EqnProfileNearInfinityUpper}.

In Subsection \ref{SubsectionRegionNearTheOriginUpper}, we turn to the region near the origin in \eqref{EqnRegionNearTheOrigin}. Here the topological change of the positive set happens. In this region, the profile, $\UIn$, is given in \eqref{EqnProfileNearOriginUpper}. 

Finally, in Subsection \ref{SubsectionUpperFoliationOfASC}, we glue $\UOut$ and $\UIn$ by a radial harmonic function to get the glued profile $W$ in \eqref{EqnW}, which inherits the supersolution property of its components (see Lemma \ref{LemW}). To get a bounded function, we perform a truncation, leading to the leaf $\Psi_1$ in \eqref{EqnUpperLeaf}. We then give the proof of Proposition \ref{PropUpperFoliationAS} and Theorem \ref{ThmMainAS}.

\vem

Recall that $\gamma\in(0,1)$ is close to one. We work in $\R^{d+1}$ with $d\ge3$ as in \eqref{DefSpaceDecomposition}, and use the coordinate system in  \eqref{EqnCoordinate}. The cone $\UAS$ is denoted by $u$ and its trace by $\BU$ as in  \eqref{EqnTraceOfUAS} and \eqref{EqnPositiveSets}.

\subsection{Linearized equation around the cone} 
\label{SubsectionLEACUpper}
We need a positive supersolution to the linearized operator $\LU$ in \eqref{EqnLU}. Similar to the situation in Subsection \ref{SubsectionLEAU}, this supersolution takes the form 
\begin{equation}
\label{EqnV2}
v(r,\theta):=r^{-1/2}\BV(\theta)
\end{equation}
in our coordinate system \eqref{EqnCoordinate}.

With the angle $\theta_0$ from Lemma \ref{LemGlueAS}, 
we take 
\begin{equation}
\label{EqnTheta12}
\theta_1:=(1+2\delta)\theta_0
\end{equation} 
for a small $\delta>0$ to be chosen, depending only on the dimension. 
The function  $\BV$ is defined as\footnote{We focus on the half-interval $(0,\pi/2]$. The situation in $[\pi/2,\pi)$ follows by even symmetry. See Remark \ref{RemHalfIntervalAndViscosity}.}
\begin{equation}
\label{EqnBV2}
\BV:=\begin{cases}
\sigma(\theta-\theta_0)^\delta &\text{ on }(\theta_0,\theta_1],\\
\sin^{-1/2}(\theta) &\text{ on }(\theta_1,\pi/2],
\end{cases}
\end{equation} 
where the coefficient $\sigma$ satisfies
$$
\sigma(\theta_1-\theta_0)^\delta=\sin^{-1/2}(\theta_1).
$$

\begin{rem}
\label{RemWhyDifferentTheta1}
In Subsection \ref{SubsectionLEAU}, the function  $\BV$ in \eqref{EqnBV} makes a concave angle at $(1+\kappa^\frac18)\theta_0$. At this point, the function  $\BU^{1/2}$ is dominated by $\sin(\theta)$ (see \eqref{EqnSineDominatesAtTheta1}), which leads to the preferable sign in \eqref{Delicate}. In this section, the opposite sign is preferred. As a result, we  need to delay the sharp angle in $\BV$ to $\theta_1$ in \eqref{EqnTheta12}. See, for instance, \eqref{EqnAvoidCorner} and \eqref{EqnAbsValueDDYV}.

The exponent in \eqref{EqnBV2} is also modified to $\delta$, the same parameter in \eqref{EqnTheta12}. This is to achieve a different ordering between the derivatives of the profiles (compare Lemma \ref{LemAngleAlongGlue} and Lemma \ref{LemEqnAlongGammaUpper}, and see in particular \eqref{EqnUsingTheExponent}).
\end{rem}

With the same proof as for Lemma \ref{LemCommutator}, we still have
\begin{lem}
\label{LemCommutatorUpper}
For  $d\ge3$ and dimensionally small $\delta$ and $\kappa$,  we have 
$$
|\BV'/\BV|\le\BO((\theta-\theta_0)^{-1}\cos(\theta)),\hem (\frac{\BU'}{\BU}-\frac{\BV'}{\BV})\sim_d(\theta-\theta_0)^{-1}\cos(\theta) \text{ on }(\theta_0,\pi/2].
$$
In particular, we have
$$
(\BU/\BV)'\ge 0  \text{ on }(\theta_0,\pi/2].
$$
Moreover, we have
$$
 |(\BV'/\BV)'|\le\BO((\theta-\theta_0)^{-2})  \text{ on }(\theta_0,\theta_1)\cup(\theta_1,\pi/2).
 $$
\end{lem}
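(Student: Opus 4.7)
The plan is to follow the proof of Lemma \ref{LemCommutator} almost verbatim, adjusted for the new $\BV$ from \eqref{EqnBV2}, and then to deduce the monotonicity of $\BU/\BV$ as an immediate consequence of the similarity estimate for $\BU'/\BU-\BV'/\BV$. The only substantive change is the replacement of the exponent $\alpha$ by $\delta$ and of $(1+\kappa^{1/8})\theta_0$ by $(1+2\delta)\theta_0$, which affects the quantitative but not the structural part of the argument.

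First I would compute $\BV'/\BV$ and $(\BV'/\BV)'$ directly from \eqref{EqnBV2}: on $(\theta_0,\theta_1]$ one has $\BV'/\BV=\delta/(\theta-\theta_0)$, while on $(\theta_1,\pi/2]$ one has $\BV'/\BV=-\tfrac12\cot(\theta)$. Both forms yield the stated bounds $|\BV'/\BV|\le\BO((\theta-\theta_0)^{-1}\cos(\theta))$ and $|(\BV'/\BV)'|\le\BO((\theta-\theta_0)^{-2})$ on $(\theta_0,\theta_1)\cup(\theta_1,\pi/2)$ after elementary manipulation (using $\cos(\theta)\sim 1$ near $\theta_0$ and $(\theta-\theta_0)\sim\theta\sim\sin(\theta)$ on $(\theta_1,\pi/2]$ once $\delta$ is small). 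The upper bound $\BU'/\BU-\BV'/\BV\le\BO((\theta-\theta_0)^{-1}\cos(\theta))$ then follows from Lemma \ref{LemGlueAS}, which controls $\BU'/\BU$ by $\BO((\theta-\theta_0)^{-1})$ near $\theta_0$ and by $\BO(\cos(\theta))$ once $\theta\ge\theta_m$.

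For the lower bound I would split $(\theta_0,\pi/2]$ into the three intervals $(\theta_0,\theta_1]$, $(\theta_1,\theta_m]$ and $(\theta_m,\pi/2]$, exactly as in \eqref{EqnFirstEquationInProof}--\eqref{EqnThirdEquationInProof}. Note $\theta_1=(1+2\delta)\theta_0<\theta_m=\mathcal{T}_d\theta_0$ because $\mathcal T_d\gg1$ and $\delta$ is small. On $(\theta_0,\theta_1]$ Lemma \ref{LemGlueAS} gives $\BU'/\BU\ge c_d(\theta-\theta_0)^{-1}$ and $\BV'/\BV=\delta(\theta-\theta_0)^{-1}$, so the difference is at least $(c_d-\delta)(\theta-\theta_0)^{-1}\ge \tfrac12 c_d(\theta-\theta_0)^{-1}$ once $\delta\le c_d/2$. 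On $(\theta_1,\theta_m]$ one has $\BV'\le0$ and $\BU'\ge0$, so the difference is already bounded below by $\BU'/\BU\gtrsim_d(\theta-\theta_0)^{-1}$. On $(\theta_m,\pi/2]$ the ratio $\BU'/\BU$ is non-negative and $-\BV'/\BV=\tfrac12\cot(\theta)\gtrsim_d(\theta-\theta_0)^{-1}\cos(\theta)$ since $\theta-\theta_0\sim\theta$ there. Combining the three intervals gives the lower bound globally on $(\theta_0,\pi/2]$.

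Finally, logarithmic differentiation yields
\[
(\BU/\BV)'=(\BU/\BV)\bigl(\BU'/\BU-\BV'/\BV\bigr).
\]
Since $\BU,\BV>0$ on $(\theta_0,\pi/2]$ and the similarity estimate just proved shows that $\BU'/\BU-\BV'/\BV\ge 0$ there (the bounding constant is a positive dimensional constant), we conclude $(\BU/\BV)'\ge0$ on the whole interval. The mildly delicate step is the lower bound on $(\theta_0,\theta_1]$, because one must choose $\delta$ small enough relative to the dimensional constant $c_d$ of Lemma \ref{LemGlueAS} so that the exponent $\delta$ does not swallow the $\BU$ contribution; once this is done, the rest is just bookkeeping of the ranges.
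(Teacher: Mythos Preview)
Your proposal is correct and matches the paper's approach exactly: the paper does not write out a separate proof for this lemma but simply says it follows ``with the same proof as for Lemma \ref{LemCommutator},'' which is precisely the verbatim adaptation (replacing $\alpha$ by $\delta$ and $(1+\kappa^{1/8})\theta_0$ by $(1+2\delta)\theta_0$) that you carry out, together with the logarithmic-differentiation step to extract $(\BU/\BV)'\ge 0$ from the already-established lower bound on $\BU'/\BU-\BV'/\BV$.
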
 
Recall the small parameter $\kappa$ from \eqref{EqnKappa}. The parameter $\delta$ appears in \eqref{EqnTheta12} and \eqref{EqnBV2}.

Similar to Lemma \ref{LemSupersolutionLU}, the function in \eqref{EqnV2} is a supersolution to the linearized operator $\LU$ in \eqref{EqnLU}:
\begin{lem}
\label{LemLinearizedOperatorUpper}
Under the same assumptions as in Lemma \ref{LemCommutatorUpper}, we have
$$
\LU(v)+\kappa^{\frac{1}{2}}v/u\le 0 \text{ in }\PosS.
$$ 
\end{lem}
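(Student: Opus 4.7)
The plan is to follow the template of Lemma \ref{LemSupersolutionLU}, adapting the piecewise computation to the new profile $\BV$ from \eqref{EqnBV2}. With $v=r^{-1/2}\BV(\theta)$ and $f(r)=r^{-1/2}$, the splitting \eqref{EqnSplittingLU} gives
\begin{equation*}
\LU(v)+\kappa^{1/2}\frac{v}{u}=r^{-5/2}\BV\Bigl\{\LBU(\BV)/\BV+\kappa^{1/2}/\BU+\bigl(\tfrac{3}{4}-\tfrac{d}{2}+\tfrac{3\kappa}{2}\bigr)\Bigr\}.
\end{equation*}
Since $d\ge 3$, the constant in the bracket is at most $-\tfrac{3}{4}+\BO(\kappa)$, so the task reduces to bounding $\LBU(\BV)/\BV+\kappa^{1/2}/\BU$ strictly below $\tfrac{3}{4}$ by a dimensional margin on each of $(\theta_0,\theta_1)$ and $(\theta_1,\pi/2)$, and to verifying that $\BV$ makes a concave corner at $\theta_1$.

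On the outer piece $(\theta_1,\pi/2)$, where $\BV(\theta)=\sin^{-1/2}(\theta)$, I would repeat Step~1 of the proof of Lemma \ref{LemSupersolutionLU}. The only difference is that now $\theta_1=(1+2\delta)\theta_0$ with $\delta$ a fixed small dimensional constant rather than a $\kappa$-dependent power; combined with $\theta-\theta_0\ge 2\delta\theta_0$, Lemma \ref{LemGlueAS} then delivers $\BU'/\BU=\BO_\delta((\theta-\theta_0)^{-1})$ and $\BU^{-1}\le\BO_\delta(\sin^{-2}(\theta)\cos^2(\theta)+1)$, with constants uniform in $\kappa$. The same bookkeeping as in \eqref{TheFirstIntervalTrace} yields $\LBU(\BV)/\BV+\kappa^{1/2}/\BU\le\tfrac{1}{2}+\BO_\delta(\kappa^{1/2})$, giving a total of $-\tfrac{1}{4}+\BO_\delta(\kappa^{1/2})\le 0$ for $\kappa$ small.

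On the inner piece $(\theta_0,\theta_1)$, where $\BV=\sigma(\theta-\theta_0)^\delta$, direct differentiation gives $\BV'/\BV=\delta(\theta-\theta_0)^{-1}$ and $\BV''/\BV=\delta(\delta-1)(\theta-\theta_0)^{-2}$, while Lemma \ref{LemGlueAS} provides $\BU'/\BU=\BO(1)(\theta-\theta_0)^{-1}$ and $\BU^{-1}=\BO((\theta-\theta_0)^{-2})$. The leading term of $\LBU(\BV)/\BV+\kappa^{1/2}/\BU$ is the negative $\delta(\delta-1)(\theta-\theta_0)^{-2}$; the drift, nonlinearity, and error can all be absorbed into $\BO(\delta^2+\kappa^{1/2})(\theta-\theta_0)^{-2}$ using the confinement $\theta_0\ge(\theta-\theta_0)/(2\delta)$ valid on this interval. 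For $\delta$ small and then $\kappa$ small (depending on $\delta$), the bracket is thus at most $-\tfrac{\delta}{4}(\theta-\theta_0)^{-2}$, which dominates the bounded $f$-constant since $(\theta-\theta_0)^{-2}\ge(2\delta\theta_0)^{-2}$ is large. At the kink $\theta_1$, a direct check shows $\BV'(\theta_1^-)/\BV(\theta_1)=1/(2\theta_0)>0>-\tfrac{1}{2}\cot(\theta_1)=\BV'(\theta_1^+)/\BV(\theta_1)$, so $\BV$ has a concave corner and the inequality holds in the viscosity sense; the extension to $(\pi/2,\pi-\theta_0)$ is by even symmetry as in Remark \ref{RemHalfIntervalAndViscosity}.

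The main obstacle is the bookkeeping on the inner interval, where one must juggle the two small scales $\theta_0^{-1}$ and $(\theta-\theta_0)^{-1}$ and ensure that the negative $\BV''/\BV$ contribution dominates the drift, the nonlinear term $\tfrac{\kappa}{4}(\BU'/\BU)^2$, and the singular error $\kappa^{1/2}/\BU$. The new feature relative to Lemma \ref{LemSupersolutionLU} is that both the exponent of $\BV$ on $(\theta_0,\theta_1)$ and the kink location are tied to the same dimensional parameter $\delta$ rather than $\kappa^{1/8}$; this choice is forced by the derivative-ordering condition in the upper-leaf construction (see Remark \ref{RemWhyDifferentTheta1}), and the proof must verify that the more robust $\BV$ is still compatible with the supersolution estimate.
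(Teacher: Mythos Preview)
Your proposal is correct and follows essentially the same approach as the paper's own proof: reduce via the splitting \eqref{EqnSplittingLU} to a bound on $\LBU(\BV)/\BV+\kappa^{1/2}/\BU$, then handle the two pieces of $\BV$ separately (obtaining $\le\frac12+\BO_{d,\delta}(\kappa^{1/2})$ on $(\theta_1,\pi/2)$ and a negative multiple of $(\theta-\theta_0)^{-2}$ on $(\theta_0,\theta_1)$), and close with the concave-corner observation at $\theta_1$. The paper's writeup is more compressed but the structure and estimates are identical; your identification of the order-of-smallness hierarchy (fix $\delta$ dimensionally small, then take $\kappa$ small depending on $\delta$) and your verification of the kink sign are both exactly what is needed.
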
 

\begin{proof}
Recall the linearized operator $\LBU$  from \eqref{EqnLBU}. With similar ideas as in the proof of Lemma \ref{LemSupersolutionLU}, we have
$$
\LBU(\BV)/\BV+\kappa^{\frac{1}{2}}\BU^{-1} \le(\theta-\theta_0)^{-2}\{\delta(\delta-1+\BO(\delta))+\BO(\kappa^{\frac{1}{2}})\} \text{ on }(\theta_0,\theta_1).
$$
For small $\delta$, the right-hand side is negative for small $\kappa>0$.

Similarly, on $(\theta_1,\pi/2)$ we have
$$
\LBU(\BV)/\BV+\kappa^{\frac{1}{2}}\BU^{-1} \le\frac12+[-\frac{d-1}{2}+\frac34+\BO_{d,\delta}(\kappa^{\frac{1}{2}})]\sin^{-2}(\theta)\cos^2(\theta).
$$
For small $\kappa$, the right-hand side is less than $\frac12$.

With the concave angle of $\BV$ at $\theta=\theta_1$, we have the desired supersolution property in the viscosity sense. Combining these, we get
$$
\LBU(\BV)/\BV+\kappa^{\frac{1}{2}}\BU^{-1} \le\frac12 \text{ on }(\theta_0,\pi/2).
$$

With the splitting of the operator $\LU$ in \eqref{EqnSplittingLU} and the definition of $v$ in \eqref{EqnV2}, we have
$$
\LU(v)+\kappa^{\frac{1}{2}}v/u\le r^{-5/2}\BV\cdot\{-\frac{d}{2}+\frac54+\frac32\kappa\}\le0
$$
for small $\kappa$.
\end{proof} 

Recall the transformed Euler-Lagrange equation in \eqref{EqnTransformedEquation}, a natural candidate for a supersolution is
\begin{equation}
\label{EqnPerturbedSolution}
U:=u+v,
\end{equation}
where $u$ denotes the cone in Proposition \ref{PropConstructionOfUAs}, and $v$ is the profile in \eqref{EqnV2}.

With the error $E$ defined in \eqref{EqnTheError}, we have
\begin{lem}
\label{LemSupersolutionNonlinear}
Under the same assumptions as in Lemma \ref{LemCommutatorUpper}, we have
$$
E(U)\le 0 \text{ in }\PosS.
$$
\end{lem}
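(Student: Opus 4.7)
The plan is to exploit the fact that, because $v>0$, adding $v$ to $u$ does not change the indicator function appearing in the energy, so the difference $E(u+v)-E(u)$ can be computed cleanly. Since $u$ solves the transformed Euler--Lagrange equation \eqref{EqnTransformedEquation} in $\PosS$, we have $E(u)=0$ there, and $\{u+v>0\}\supset\PosS$, so
\[
E(u+v)=\Delta v+\Bigl(\tfrac{\beta}{2}-1\Bigr)\!\left[\frac{|\nabla(u+v)|^2}{u+v}-\frac{|\nabla u|^2}{u}\right]\quad\text{in }\PosS.
\]

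The main step is an exact algebraic identity that splits the nonlinear piece into the linearization plus a manifestly signed remainder. I would verify by a direct calculation that, for $u,\,u+v>0$,
\[
\frac{|\nabla(u+v)|^2}{u+v}-\frac{|\nabla u|^2}{u}=\frac{2\nabla u\cdot\nabla v}{u}-\frac{|\nabla u|^2}{u^2}v+\frac{1}{u+v}\Bigl|\nabla v-\tfrac{v}{u}\nabla u\Bigr|^2.
\]
(The first two terms are the Fr\'echet derivative of $w\mapsto|\nabla w|^2/w$ at $u$ applied to $v$; the last term is the quadratic remainder, which one checks by expanding $|\nabla v-(v/u)\nabla u|^2$ and collecting.) Plugging this back and recalling the definition of $\LU$ in \eqref{EqnLU} yields the identity
\[
E(u+v)=\LU(v)+\Bigl(\tfrac{\beta}{2}-1\Bigr)\frac{1}{u+v}\Bigl|\nabla v-\tfrac{v}{u}\nabla u\Bigr|^2\qquad\text{in }\PosS.
\]

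From here the conclusion is immediate and requires no delicate analysis. For $\gamma<1$ we have $\beta<2$ (see \eqref{EqnScalingParameter}), so $\beta/2-1<0$ and the quadratic remainder contributes a non-positive term. Combined with Lemma \ref{LemLinearizedOperatorUpper}, which gives $\LU(v)\le -\kappa^{1/2}v/u\le 0$, we obtain $E(u+v)\le 0$ throughout $\PosS$, as required. The key conceptual point (and the only nontrivial step) is recognizing the quadratic-form structure of the remainder: it is a perfect square divided by $u+v$, and the sign of $\beta/2-1$ happens to be favorable in the regime $\gamma<1$, which is exactly the regime of interest.
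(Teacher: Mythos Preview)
Your argument is correct, and the algebraic identity
\[
\frac{|\nabla(u+v)|^2}{u+v}-\frac{|\nabla u|^2}{u}
=\frac{2\nabla u\cdot\nabla v}{u}-\frac{|\nabla u|^2}{u^2}\,v
+\frac{1}{u+v}\Bigl|\nabla v-\tfrac{v}{u}\nabla u\Bigr|^2
\]
is exactly right (it is the statement that the second-order Taylor remainder of $w\mapsto|\nabla w|^2/w$ is a perfect square divided by the new value). With $\beta/2-1<0$ this remainder has the favorable sign, so Lemma~\ref{LemLinearizedOperatorUpper} alone (in fact even without the extra $\kappa^{1/2}v/u$ slack) closes the argument.

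This is genuinely different from, and cleaner than, the paper's proof. The paper does not use the exact identity; instead it splits $\PosS$ into $\{u\ge v\}$ and $\{0<u<v\}$, bounds the absolute value (respectively, the negative part) of the Taylor remainder by $\BO(v/u)$ in each region via Lemma~\ref{LemAppendixLinearExpansion} and the structural estimates on $\BU,\BV$, and then absorbs this error with the $\kappa^{1/2}v/u$ buffer built into Lemma~\ref{LemLinearizedOperatorUpper}. Your route avoids all of that estimation for this particular lemma. The trade-off is that the paper's estimation framework is exactly what is needed in the companion lower-foliation computation (Lemma~\ref{LemLowerLeafFar}), where $U=u-v$ and the same perfect-square remainder now carries the \emph{wrong} sign; there one genuinely needs the quantitative buffer, and the authors simply reuse the same method here rather than noticing the sign shortcut available in the additive case.
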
 
\begin{proof}
We have
\begin{equation}
\label{EqnAnotherEstimateForNonlinear}
E(U)=\LU(v)+(\frac{\beta}{2}-1)[\frac{|\nabla(u+v)|^2}{u+v}-\frac{|\nabla u|^2}{u}-\frac{2\nabla u}{u}\cdot\nabla v+\frac{|\nabla u|^2}{u^2}v].
\end{equation}
We estimate the nonlinearity in two regions, namely,  
$$\Omega_1:=\{u\ge v\} \text{ and }\Omega_2:=\{0<u<v\}.$$

In $\Omega_1$,  the same idea in the proof of Lemma \ref{LemLowerLeafFar} gives
$$
|\frac{|\nabla(u+v)|^2}{u+v}-\frac{|\nabla u|^2}{u}-\frac{2\nabla u}{u}\cdot\nabla v+\frac{|\nabla u|^2}{u^2}v|\le \BO(v/u).
$$

In $\Omega_2$, we have $u<v$, leading to  $$|\nabla u|^2/u<|\nabla u|^2v/u^2.$$ As a result, the negative part of the nonlinearity satisfies
$$
[\frac{|\nabla(u+v)|^2}{u+v}-\frac{|\nabla u|^2}{u}-\frac{2\nabla u}{u}\cdot\nabla v+\frac{|\nabla u|^2}{u^2}v]_-\le [\frac{2\nabla u}{u}\cdot\nabla v]_+ \text{ in }\Omega_2.
$$
Since the radial derivatives of $u$ and $v$ have opposite signs, the right-hand side can be bounded by $|2\nabla_\tau u\cdot\nabla_\tau v/u|=|2r^{-5/2}\frac{\BU'\BV'}{\BU}|$. 

Note that $|2r^{-5/2}\frac{\BU'\BV'}{\BU}|\cdot \frac{u}{v}=2\BU'|\frac{\BV'}{\BV}|$. With the bound on $\BU'$ from Lemma \ref{LemGlueAS} and the bound on $\BV'/\BV$ in Lemma \ref{LemCommutatorUpper}, this quantity is bounded by a dimensional constant. As a result, we have
$$
[\frac{|\nabla(u+v)|^2}{u+v}-\frac{|\nabla u|^2}{u}-\frac{2\nabla u}{u}\cdot\nabla v+\frac{|\nabla u|^2}{u^2}v]_-\le\BO(v/u) \text{ in }\Omega_2.
$$

Combining these with $\frac{\beta}{2}-1<0$ for $\gamma<1$, the nonlinearity in \eqref{EqnAnotherEstimateForNonlinear} satisfies
$$
(\frac{\beta}{2}-1)[\frac{|\nabla(u+v)|^2}{u+v}-\frac{|\nabla u|^2}{u}-\frac{2\nabla u}{u}\cdot\nabla v+\frac{|\nabla u|^2}{u^2}v]\le \kappa\BO(v/u) \text{ in }\PosS.
$$
With Lemma \ref{LemLinearizedOperatorUpper}, we conclude
$$
E(U)\le\frac{v}{u}[-\kappa^{\frac{1}{2}}+\BO(\kappa)]\le0 \text{ in }\PosS
$$
for $\kappa$ small. 
\end{proof}

\subsection{Region near infinity}
\label{SubsectionRegionNearInfinityUpper}
We begin our construction of an upper leaf. In this subsection, we focus on the \textit{region near infinity}, namely,  
\begin{equation}
\label{EqnRegionNearInfinity}
\Omega_{\mathrm{out}}:=\R^{d+1}\backslash B_{R_{\mathrm{out}}},
\end{equation}
where $\ROut$ is a large constant,  chosen in \eqref{EqnROut}. This choice helps avoid complications related to the sharp angle in $\BV$. See Remark \ref{RemWhyDifferentTheta1} and \eqref{EqnAvoidCorner}.

In $\Omega_{\mathrm{out}}$, the construction follows the same strategy  in Section \ref{SectionLowerFoliationAS}. We use the profile $U$ from \eqref{EqnPerturbedSolution} in the region away from the free boundary, namely, the region in \eqref{EqnAwayFromFBUpper}. We then extend  it as $V$ in \eqref{EqnHorizontalFillingUpper} to the region near the free boundary. Compared with Section \ref{SectionLowerFoliationAS}, the main differences lie  in Lemma \ref{LemNonlinearEquationInInterior} and Lemma \ref{LemEqnAlongGammaUpper}, where the preferred sign has been reversed (see, in particular, \eqref{EqnAbsValueDDYV} and \eqref{EqnUsingTheExponent}).

\vem

To be precise, the \textit{region away from the free boundary} is defined as
\begin{equation}
\label{EqnAwayFromFBUpper}
\Omega:=\{u>\delta^{3/2}v\}\backslash \overline{B}_{\ROut}.
\end{equation} 
Recall the parameter $\delta$ from \eqref{EqnTheta12} and \eqref{EqnBV2}. 

\begin{rem}
The lower bound of $u/v$ in $\Omega$ is chosen to satisfy two competing requirements. 

On the one hand,  for our profile to stay above $u$, we need this bound to be small (see Lemma \ref{LemOuterRegionOrdering}). On the other hand, to have the desired supersolution property, we need $(u/v)$ to be much larger than $\delta^2$ (see, for instance,  Lemma \ref{LemNonlinearEquationInInterior} and Lemma \ref{LemEqnAlongGammaUpper}).
\end{rem}

In this region, the leaf is taken as $U$ in \eqref{EqnPerturbedSolution}. The desired supersolution property follows from Lemma \ref{LemSupersolutionNonlinear}.

\vem

To extend the profile $U$ beyond $\Omega$, we need information on its trace along the boundary of this region, namely,
\begin{equation}
\label{EqnGammaUpper}
\Gamma:=\partial\Omega\backslash \overline{B}_{\ROut}.
\end{equation} 
Along $\Gamma$, we have 
\begin{equation}
\label{EqnConditionAlongGammaUpper}
u=\delta^{3/2}v, \text{ and }U=(1+\delta^{-3/2})u.
\end{equation}
In particular, when $\delta$ is small, $U/u\gg 1$ on $\Gamma$. This is the reason behind the ordering in Lemma \ref{LemOuterRegionOrdering}.

Similar to Subsection \ref{SubsectionLowerAwayFromFreeBoundary}, using the coordinate in \eqref{EqnCoordinate}, this interface $\Gamma$ is described by a graph $\theta\mapsto R(\theta)$, satisfying the relation
\begin{equation}
\label{EqnRadialGraphUpper}
R^{5/2}(\theta)=\delta^{3/2}\frac{\BV}{\BU}(\theta).
\end{equation} 
If we choose $\ROut$ as
\begin{equation}
\label{EqnROut}
\ROut^{5/2}:=2\delta^{3/2}\frac{\BV}{\BU}((1+\delta)\theta_0),
\end{equation} 
then the monotonicity of $\BV/\BU$ from Lemma \ref{LemCommutatorUpper} implies
$$
\delta^{3/2}\frac{\BV}{\BU}(\theta)\le \delta^{3/2}\frac{\BV}{\BU}((1+\delta)\theta_0)<\ROut^{5/2}\hem \text{ for }\theta\in[(1+\delta)\theta_0,\pi/2].
$$
With \eqref{EqnRadialGraphUpper}, we see that 
\begin{equation}
\label{EqnAvoidCorner}
\Gamma\subset\{\theta\in(\theta_0,(1+\delta)\theta_0)\}.
\end{equation}
In particular, when doing analysis on $\Gamma$, we do not see the sharp angle in the graph of $\BV$ in \eqref{EqnBV2}.

We remark that our choice of $\ROut$ in \eqref{EqnROut} gives
\begin{equation}
\label{EqnROutSim}
\ROut^{5/2}\sim_d\delta^{-1/2}\theta_0^{-5/2}.
\end{equation}

Similar to Subsection \ref{SubsectionLowerAwayFromFreeBoundary}, the interface is also described by $y\mapsto L(y)$ where 
\begin{equation}
\label{EqnYGraphUpper}
y=-R(\theta)\cos(\theta), \text{ and }L(y)=R(\theta)\sin(\theta).
\end{equation} 

\vem

Similar to Lemma \ref{LemTechnical1}, the two descriptions in \eqref{EqnRadialGraphUpper} and \eqref{EqnYGraphUpper} satisfy the following bounds. With \eqref{EqnAvoidCorner}, we only need information on $(\theta_0,(1+\delta)\theta_0)$. For this range, the proof becomes  elementary and  is omitted.

\begin{lem}
\label{LemTechnical3}
The quantities in \eqref{EqnRadialGraphUpper} and \eqref{EqnYGraphUpper} satisfy, on $(\theta_0,(1+\delta)\theta_0)$,  
$$
0\le R\frac{d\theta}{dy}\le \BO((\theta-\theta_0)), \hem |\frac{dL}{dy}|\sim_d R\frac{d\theta}{dy}\cdot \theta(\theta-\theta_0)^{-1},
$$
and
$$
 |\frac{d^2\theta}{dy^2}|\le (\frac{d\theta}{dy})^2\BO((\theta-\theta_0)^{-1}), \hem |\frac{d^2L}{dy^2}|\le R(\frac{d\theta}{dy})^2 \BO(\theta(\theta-\theta_0)^{-2}).
$$
\end{lem}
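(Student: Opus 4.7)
My plan is to extract estimates on $R$ and its first two derivatives from the defining relation $R^{5/2}=\delta^{3/2}\BV/\BU$, then propagate them through the change of variables $\theta\mapsto y$ by the chain rule. The crucial simplification is that on $(\theta_0,(1+\delta)\theta_0)$ the interval is small, with $\theta\sim_d\theta_0\ll 1$, so $\cos\theta\approx 1$ and $\sin\theta\sim_d\theta$; moreover $(\theta-\theta_0)\le\delta\theta_0$ yields two quantitative inequalities, $\theta(\theta-\theta_0)\ll 1$ and $\theta/(\theta-\theta_0)\ge 1/\delta\gg 1$, that will determine which term dominates in each derivative expression.

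First I would take logarithms in $R^{5/2}=\delta^{3/2}\BV/\BU$ to obtain $R'/R=\tfrac{2}{5}(\BV'/\BV-\BU'/\BU)$. On our interval $\BV'/\BV=\delta(\theta-\theta_0)^{-1}$ by \eqref{EqnBV2}, while Lemma \ref{LemGlueAS} gives $\BU'/\BU\sim_d(\theta-\theta_0)^{-1}$, so Lemma \ref{LemCommutatorUpper} yields $R'/R\sim_d-(\theta-\theta_0)^{-1}$. Differentiating once more, $(R'/R)'=R''/R-(R'/R)^2$; the bound $|\BU''|\le\BO(1)$ from Lemma \ref{LemEqnNearSouthPole} together with the explicit form of $\BV'/\BV$ gives $|(R'/R)'|\le\BO((\theta-\theta_0)^{-2})$, hence $|R''|\le\BO(R(\theta-\theta_0)^{-2})$.

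Next I would differentiate $y(\theta)=-R\cos\theta$ and $L(\theta)=R\sin\theta$ to get
\[ y'=-R'\cos\theta+R\sin\theta,\quad y''=-R''\cos\theta+2R'\sin\theta+R\cos\theta, \]
\[ L'=R'\sin\theta+R\cos\theta,\quad L''=R''\sin\theta+2R'\cos\theta-R\sin\theta. \]
In each expression the dominant term is the one involving the highest derivative of $R$: $\theta(\theta-\theta_0)\ll 1$ makes $|R'\cos\theta|\gg R\sin\theta$, and $\theta/(\theta-\theta_0)\gg 1$ makes $|R'\sin\theta|\gg R\cos\theta$. Combined with $|R''|\le\BO(R(\theta-\theta_0)^{-2})$, this yields $y'\sim_d R(\theta-\theta_0)^{-1}>0$, $|L'|\sim_d R\theta(\theta-\theta_0)^{-1}$, $|y''|\le\BO(R(\theta-\theta_0)^{-2})$, and $|L''|\le\BO(R\theta(\theta-\theta_0)^{-2})$.

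The final step is the chain rule. From $d\theta/dy=1/y'(\theta)$ one reads off $R\,d\theta/dy\sim_d(\theta-\theta_0)$, which gives the first bound and the comparison $R\,d\theta/dy\sim(\theta-\theta_0)$ needed to rewrite the others. Then $d^2\theta/dy^2=-y''(\theta)/(y'(\theta))^3$ gives $|d^2\theta/dy^2|\le(d\theta/dy)^2\BO((\theta-\theta_0)^{-1})$. Finally, $dL/dy=L'(\theta)\cdot d\theta/dy\sim_d R(d\theta/dy)\cdot\theta(\theta-\theta_0)^{-1}$, and expanding $d^2L/dy^2=L''(\theta)(d\theta/dy)^2+L'(\theta)\,d^2\theta/dy^2$ shows that both summands are bounded by $R(d\theta/dy)^2\BO(\theta(\theta-\theta_0)^{-2})$. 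The only real obstacle is careful bookkeeping while verifying the dominance claims in the four derivative formulas, which reduces to the two elementary inequalities highlighted at the outset.
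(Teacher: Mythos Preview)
Your proposal is correct and follows essentially the same route as the paper's analogous computation for Lemma~\ref{LemTechnical1} in Appendix~\ref{AppendixQuantitiesOnGamma1}: obtain $R'/R=\tfrac{2}{5}(\BV'/\BV-\BU'/\BU)$ from \eqref{EqnRadialGraphUpper}, differentiate $y=-R\cos\theta$ and $L=R\sin\theta$, and push the estimates through the chain rule. The paper omits the proof of Lemma~\ref{LemTechnical3} as elementary precisely because the restriction to the short interval $(\theta_0,(1+\delta)\theta_0)$ makes the dominance bookkeeping trivial, and you have identified exactly why: the two inequalities $\theta(\theta-\theta_0)\ll 1$ and $\theta/(\theta-\theta_0)\ge 1/\delta\gg 1$ force the highest $R$-derivative term to dominate in each of $y',y'',L',L''$, which is a cleaner organization than the step-by-step treatment in Appendix~\ref{AppendixQuantitiesOnGamma1}.
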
 

Similar to Lemma \ref{LemTechnical2}, we have the following bound on  $U|_\Gamma$. In the range $(\theta_0,(1+\delta)\theta_0)$, the proof is elementary and thus omitted:
\begin{lem}
\label{LemTechnical4}
The function $y\mapsto\sqrt{U}(L(y),y)$ satisfies, on $(\theta_0,(1+\delta)\theta_0)$, 
$$
|\frac{d\sqrt{U}}{dy}|\le \delta^{-3/4}R\frac{d\theta}{dy}\BO(1),
\text{ and }
|\frac{d^2\sqrt{U}}{dy^2}|=\delta^{-3/4}R(\frac{d\theta}{dy})^2\BO((\theta-\theta_0)^{-1}).
$$
\end{lem}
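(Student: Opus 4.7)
The plan is to express $\sqrt{U}$ along $\Gamma$ as a function of $\theta$ alone, then convert to $y$-derivatives through the chain rule using the bounds in Lemma \ref{LemTechnical3}. Along $\Gamma$, the identity \eqref{EqnConditionAlongGammaUpper} together with $u = r^2\BU(\theta)$ gives
$$\sqrt{U}(L(y),y) = (1+\delta^{-3/2})^{1/2}\,R(\theta)\,\sqrt{\BU(\theta)},$$
and $(1+\delta^{-3/2})^{1/2} \sim_d \delta^{-3/4}$ for $\delta$ small. So the problem reduces to controlling the $\theta$-derivatives of $R\sqrt{\BU}$ on $(\theta_0,(1+\delta)\theta_0)$ and then composing with $d\theta/dy$ and $d^2\theta/dy^2$.

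The crucial observation, which is what makes the lemma clean, is that \eqref{EqnAvoidCorner} confines $\Gamma$ to $(\theta_0,(1+\delta)\theta_0)\subset(\theta_0,\theta_1)$. On this interval the piecewise definition of $\BV$ in \eqref{EqnBV2} reduces to the smooth branch $\BV = \sigma(\theta-\theta_0)^\delta$, so the bound $|(\BV'/\BV)'| \le \BO((\theta-\theta_0)^{-2})$ from Lemma \ref{LemCommutatorUpper} applies without any corner contribution and we can differentiate $R$ twice freely.

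The routine estimates go as follows. Differentiating \eqref{EqnRadialGraphUpper} gives $R'/R = \tfrac{2}{5}(\BV'/\BV - \BU'/\BU)$; Lemma \ref{LemCommutatorUpper} together with Lemma \ref{LemGlueAS} yield $|R'/R| \le \BO((\theta-\theta_0)^{-1})$, and differentiating once more produces $|R''/R| \le \BO((\theta-\theta_0)^{-2})$ after substituting the analogous bound for $(\BU'/\BU)'$ coming from $\BU'' = \BO(1)$ and $\BU \sim (\theta-\theta_0)^2$. The same information on $\BU$ gives, by direct computation, $(\sqrt{\BU})' = \BO(1)$ and $(\sqrt{\BU})'' = \BO((\theta-\theta_0)^{-1})$. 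Combining via the Leibniz rule with $\sqrt{\BU} \sim (\theta-\theta_0)$ then yields
$$|(R\sqrt{\BU})'(\theta)| \le R\cdot\BO(1), \qquad |(R\sqrt{\BU})''(\theta)| \le R\cdot\BO((\theta-\theta_0)^{-1}).$$

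The two desired bounds follow from the chain rule
$$\frac{d^2\sqrt{U}}{dy^2} = \frac{d^2\sqrt{U}}{d\theta^2}\left(\frac{d\theta}{dy}\right)^2 + \frac{d\sqrt{U}}{d\theta}\frac{d^2\theta}{dy^2},$$
combined with the estimate $|d^2\theta/dy^2| \le (d\theta/dy)^2\BO((\theta-\theta_0)^{-1})$ from Lemma \ref{LemTechnical3}. I do not anticipate any serious obstacle; the proof is essentially bookkeeping of powers of $(\theta-\theta_0)$, and the confinement \eqref{EqnAvoidCorner} removes any worry about the nonsmooth corner of $\BV$ at $\theta_1$, which is the only source of potential singular distributional terms in the second derivative.
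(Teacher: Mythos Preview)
Your proposal is correct and follows essentially the same approach the paper uses: the paper omits the proof of Lemma~\ref{LemTechnical4} as elementary, but the analogous Lemma~\ref{LemTechnical2} (proved in Appendix~\ref{AppendixQuantitiesOnGamma1}) is handled exactly as you describe, by writing $\sqrt{U}$ along $\Gamma$ as a constant times $R(\theta)\BU^{1/2}(\theta)$, differentiating in $\theta$ using $R'/R = \tfrac{2}{5}(\BV'/\BV - \BU'/\BU)$ together with the bounds from Lemmas~\ref{LemGlueAS} and~\ref{LemCommutatorUpper}, and then converting to $y$-derivatives via the chain rule and Lemma~\ref{LemTechnical3}. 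Your observation that \eqref{EqnAvoidCorner} keeps $\Gamma$ inside $(\theta_0,\theta_1)$, so that no corner of $\BV$ interferes, is precisely the reason the paper declares the computation elementary in this case.
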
 

\vem

With these preparations, we extend the profile $U$ beyond $\Omega$. For the horizontal slice at height $y$, the extended profile is defined  as
\begin{equation}
\label{EqnHorizontalFillingUpper}
V(x,y):=\frac{1}{20}[(|x|-L(y)+\sqrt{20}\sqrt{U}(L(y),y))_+]^2 \text{ for }|x|\le L(y).
\end{equation} 
Recall the map $y\mapsto L(y)$ in \eqref{EqnYGraphUpper}.

We first verify the supersolution property in the interior of the complement of $\Omega$, namely
\begin{align}
\label{EqnCOCO}
\CO:&=\Omega_{\mathrm{out}}\backslash\Omega\\
&=\{(x,y):|x|<L(y)\}\backslash \overline{B}_{\ROut}, \nonumber
\end{align}
where $\Omega_{\mathrm{out}}$ is given in \eqref{EqnRegionNearInfinity}, and $\Omega$ is from \eqref{EqnAwayFromFBUpper}.

\begin{lem}
\label{LemNonlinearEquationInInterior}
For $d\ge3$ and dimensionally small $\delta,\kappa$, we have
$$
\Delta V\le 1/5 \text{ in }\CO.
$$
\end{lem}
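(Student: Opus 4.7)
The plan is to reduce to the support of $V$ (outside the support $V\equiv 0$ and the inequality is trivial) and compute $\Delta V$ by hand using the axially symmetric Laplacian in $\R^{d+1}$. Writing $\phi(y):=-L(y)+\sqrt{20}\sqrt{U(L(y),y)}$ so that $\sqrt{20V}=|x|+\phi(y)$ in the support, a straightforward differentiation yields
\begin{equation*}
\Delta V=\tfrac{1}{10}+\frac{d-1}{10|x|}\bigl(|x|+\phi(y)\bigr)+\tfrac{1}{10}\bigl[(\phi'(y))^2+(|x|+\phi(y))\phi''(y)\bigr].
\end{equation*}
The prefactor $1/20$ in the definition \eqref{EqnHorizontalFillingUpper} of $V$ is calibrated precisely so that the leading piece equals $1/10$. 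The objective then reduces to showing that the three correction terms are each $\BO(\delta^{1/4}+\theta_0^2)$, hence can be absorbed into $1/10$ once $\delta$ and $\kappa$ are dimensionally small.

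For the zeroth- and first-order pieces I would use \eqref{EqnAvoidCorner}, which confines $\Gamma$ to the narrow angular range $\theta\in(\theta_0,(1+\delta)\theta_0)$. Combining $U=(1+\delta^{-3/2})u$ along $\Gamma$ with $\BU(\theta)\sim_d(\theta-\theta_0)^2$ and $\sin\theta\sim_d\theta_0$ from Lemma \ref{LemGlueAS} produces the key smallness $\sqrt{20U(L(y),y)}=\BO(\delta^{1/4})L(y)$. Since $0\le|x|+\phi\le\sqrt{20U(L,y)}$ and $|x|\ge L(y)-\sqrt{20U(L,y)}\ge L(y)/2$, the radial correction is controlled by $\BO(\delta^{1/4})$. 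The $(\phi')^2$ term is handled by Lemma \ref{LemTechnical3}, which gives $|L'|\le\BO(\theta_0)$ via $R\,d\theta/dy=\BO(\theta-\theta_0)$, together with Lemma \ref{LemTechnical4}, which gives $\sqrt{20}\,|(\sqrt{U})'|=\BO(\delta^{1/4}\theta_0)$, yielding $(\phi')^2=\BO(\theta_0^2)$.

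The main difficulty is the last term $(|x|+\phi)\phi''=\sqrt{20V}\,\phi''$, because the bounds for $|L''|$ and $|(\sqrt{U})''|$ in Lemmas \ref{LemTechnical3}--\ref{LemTechnical4} both contain the singular factors $(\theta-\theta_0)^{-2}$ and $(\theta-\theta_0)^{-1}$, which blow up at the free boundary. The remedy -- and the reason the constants in those two lemmas are tuned the way they are -- is that $\sqrt{V}$ vanishes to exactly the right order: $\sqrt{V}\le\sqrt{U(L,y)}\sim_d\delta^{-3/4}R(\theta-\theta_0)$, so the extra factor of $(\theta-\theta_0)$ cancels the worst singularity. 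Inserting the bound $(R\,d\theta/dy)^2=\BO((\theta-\theta_0)^2)$ then gives $\sqrt{20V}\,|\phi''|\le\BO(\delta^{1/4}\theta_0(\theta-\theta_0))\le\BO(\delta^{5/4}\theta_0^2)$. Summing the three contributions,
\begin{equation*}
\Delta V\le\tfrac{1}{10}+\BO(\delta^{1/4})+\BO(\theta_0^2)+\BO(\delta^{5/4}\theta_0^2)\le\tfrac{1}{5}
\end{equation*}
for $\delta$ and $\kappa$ dimensionally small, which completes the proof.
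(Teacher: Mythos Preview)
Your proposal is correct and follows essentially the same approach as the paper's proof: expand $\Delta V$ using the axially symmetric Laplacian, bound the radial correction via the smallness $\sqrt{U}/L=\BO(\delta^{1/4})$ coming from \eqref{EqnAvoidCorner} and Lemma~\ref{LemGlueAS}, and control the $\partial_y^2 V$ contribution by combining $\sqrt{V}\le\sqrt{U}\sim_d\delta^{-3/4}R(\theta-\theta_0)$ with the bounds $R\,d\theta/dy=\BO(\theta-\theta_0)$ from Lemmas~\ref{LemTechnical3}--\ref{LemTechnical4} so that the singular factors $(\theta-\theta_0)^{-1}$ and $(\theta-\theta_0)^{-2}$ cancel out. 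The paper packages the $\partial_y^2 V$ estimate slightly differently (it bounds $|\partial_y^2 V|$ directly by $\BO(\theta_0^2)$ rather than separating the $(\phi')^2$ and $\sqrt{V}\phi''$ pieces), but the underlying mechanism is identical.
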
 
Recall the small parameter $\kappa$ in \eqref{EqnKappa}, and the parameter $\delta$ appearing in \eqref{EqnTheta12}, \eqref{EqnBV2} and \eqref{EqnAwayFromFBUpper}.
\begin{proof}
By definition, we have $\nabla V=0$ along $\partial\{V>0\}$.  Thus it suffices to verify the inequality  in 
\begin{align*}
\tilde\CO:&=\CO\cap\{V>0\}\\
&\subset\{(x,y): L(y)-\sqrt{U}(L(y),y)<|x|<L(y)\}.
\end{align*}
Recall the map $y\mapsto L(y)$ in \eqref{EqnYGraphUpper}.

In $\tilde\CO$, direct computation gives
\begin{align}
\label{EqnDeltaV}
\Delta V&\le \frac{1}{10}+\frac{d-1}{10}\frac{|x|-L+\sqrt{20}\sqrt{U}}{|x|}+\frac{\partial^2}{\partial y^2}V\\
&\le\frac{1}{10}+\frac{d-1}{10}\frac{\sqrt{20}\sqrt{U}}{L-\sqrt{20}\sqrt{U}}+\frac{\partial^2}{\partial y^2}V. \nonumber
\end{align}

Recall from \eqref{EqnConditionAlongGammaUpper},  \eqref{EqnYGraphUpper} and \eqref{EqnAvoidCorner}, we have $\sqrt{U}=\sqrt{1+\delta^{-3/2}}R\BU^{1/2}$, $L=R\sin(\theta)$ and $\theta\in(\theta_0,(1+\delta)\theta_0)$. With the bound on $\BU$ from Lemma \ref{LemGlueAS}, these imply
$$
\sqrt{U}/L\le\delta^{-3/4}\BO(\frac{\theta-\theta_0}{\theta})\le\BO(\delta^{1/4}).
$$
Thus we can continue the estimate in \eqref{EqnDeltaV} as
\begin{equation}
\label{EqnDeltaV2}
\Delta V\le\frac{1}{10}+\frac{\partial^2}{\partial y^2}V+\BO(\delta^{1/4}) \hem\text{ in }\tilde\CO.
\end{equation} 

It remains to bound $\frac{\partial^2}{\partial y^2}V$.

With the definition of $V$ in \eqref{EqnHorizontalFillingUpper}, the chain rule gives
\begin{equation*}
|\frac{\partial^2}{\partial y^2}V|\le \frac{1}{10}|\frac{d}{dy}(-L+\sqrt{U})|^2+\frac{1}{10}|\sqrt{U}||\frac{d^2}{dy^2}(-L+\sqrt{U})|.
\end{equation*}
With Lemma \ref{LemTechnical3} and Lemma \ref{LemTechnical4}, both terms on the right-hand side can be bounded by $\BO(\theta_0^2)$. As a result, we have
\begin{equation}
\label{EqnAbsValueDDYV}
|\frac{\partial^2}{\partial y^2}V|\le\BO(\kappa^{\frac{2}{d-2}})
\end{equation}
with the bound on $\theta_0$ from Lemma \ref{LemGlueAS}.

Combining this with \eqref{EqnDeltaV2}, we get the desired conclusion for  small $\delta$ and $\kappa$. 
\end{proof} 

Similar to Lemma \ref{LemAngleAlongGlue}, it is crucial to have the following ordering between the derivatives of $U$ in \eqref{EqnPerturbedSolution} and $V$ in \eqref{EqnHorizontalFillingUpper}:
\begin{lem}
\label{LemEqnAlongGammaUpper}
Under the same assumptions as in Lemma \ref{LemNonlinearEquationInInterior}, we have
$$
\frac{\partial}{\partial |x|}V>\frac{\partial}{\partial |x|}U \text{ along }\Gamma.
$$
\end{lem}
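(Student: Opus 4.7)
The plan is to compute both derivatives along $\Gamma$ explicitly and compare their leading orders in $\delta$. Along $\Gamma$ we have $U=(1+\delta^{-3/2})u$ by \eqref{EqnConditionAlongGammaUpper}, so $\sqrt{U}$ is amplified by a factor of order $\delta^{-3/4}$ relative to $\sqrt{u}$; meanwhile, the design of the small exponent $\delta$ in \eqref{EqnBV2}, together with \eqref{EqnAvoidCorner}, ensures that the perturbation $v$ contributes to $\partial U/\partial|x|$ at most of order $\delta^{-1/2}$. Since $\delta^{-3/4}\gg\delta^{-1/2}$ for small $\delta$, these two exponents produce the desired strict inequality.

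Concretely, from the explicit formula \eqref{EqnHorizontalFillingUpper}, at $|x|=L(y)$ one reads off $\tfrac{\partial V}{\partial|x|} = \sqrt{U/5}$. Combining $U|_\Gamma=(1+\delta^{-3/2})u$ with $\bar U(\theta)\sim_d(\theta-\theta_0)^2$ from Lemma \ref{LemGlueAS} gives
\[
\tfrac{\partial V}{\partial|x|}\Big|_\Gamma \;\ge\; c_d\,\delta^{-3/4}\,R(\theta)\,(\theta-\theta_0).
\]
On the other side, computing $\partial_{|x|}U=\partial_{|x|}u+\partial_{|x|}v$ in the coordinates \eqref{EqnCoordinate} just as in Lemma \ref{LemAngleAlongGlue} (but with $v$ entering with the opposite sign since $U=u+v$), and then using $R^{5/2}\bar U=\delta^{3/2}\bar V$ from \eqref{EqnRadialGraphUpper} to eliminate $\bar V$ along $\Gamma$, I obtain
\[
\tfrac{\partial U}{\partial|x|}\Big|_\Gamma = R\Bigl[\bigl(2-\tfrac{1}{2}\delta^{-3/2}\bigr)\bar U\sin\theta + \bar U\cos\theta\Bigl(\tfrac{\bar U'}{\bar U} + \delta^{-3/2}\tfrac{\bar V'}{\bar V}\Bigr)\Bigr].
\]

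The negative coefficient $\bigl(2-\tfrac12\delta^{-3/2}\bigr)$ only helps, so it suffices to bound the positive contributions from above. By \eqref{EqnAvoidCorner}, $\Gamma$ lies inside $\theta\in(\theta_0,(1+\delta)\theta_0)\subset(\theta_0,\theta_1]$, where the explicit form $\bar V=\sigma(\theta-\theta_0)^\delta$ from \eqref{EqnBV2} gives $\bar V'/\bar V=\delta/(\theta-\theta_0)$, carrying the crucial small prefactor $\delta$. Combined with $\bar U'/\bar U=\BO((\theta-\theta_0)^{-1})$ from Lemma \ref{LemCommutatorUpper} and $\bar U\sim_d(\theta-\theta_0)^2$ from Lemma \ref{LemGlueAS}, the dominant positive term becomes
\[
R\,\delta^{-3/2}\,\bar U\cos\theta\,\cdot\,\delta(\theta-\theta_0)^{-1}\;=\;\BO\!\bigl(R\,\delta^{-1/2}(\theta-\theta_0)\bigr),
\]
and the remaining positive terms are strictly smaller in the regime $\theta-\theta_0\le\delta\theta_0$, $\theta\sim_d\theta_0$. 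Hence $\tfrac{\partial U}{\partial|x|}|_\Gamma \le C_d\,R\,\delta^{-1/2}(\theta-\theta_0)$, and the lemma reduces to $c_d\,\delta^{-3/4} > C_d\,\delta^{-1/2}$, i.e., $\delta^{-1/4}>C_d/c_d$, which holds once $\delta$ is dimensionally small.

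The main point requiring care is the precise bookkeeping of competing powers of $\delta$: the naive prefactor $\delta^{-3/2}$ in front of $\bar V'/\bar V$ gets reduced to $\delta^{-1/2}$ exactly because of the choice of exponent in \eqref{EqnBV2}, and this is then beaten by the $\delta^{-3/4}$ amplification on the $V$-side coming from \eqref{EqnConditionAlongGammaUpper}. The restriction \eqref{EqnAvoidCorner} (ensured by the large choice of $\ROut$ in \eqref{EqnROut}) is equally essential, since it keeps $\Gamma$ away from the concave corner of $\bar V$ at $\theta_1$, so no singular measure from $(\bar V'/\bar V)'$ enters the calculation.
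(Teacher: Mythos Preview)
Your proof is correct and follows essentially the same approach as the paper: compute both radial derivatives along $\Gamma$, use \eqref{EqnRadialGraphUpper} to eliminate $\bar V$, and exploit the exponent $\delta$ in \eqref{EqnBV2} together with \eqref{EqnAvoidCorner} so that the $\bar V'/\bar V$ contribution to $\partial_{|x|}U$ is only of order $\delta^{-1/2}$, which loses to the $\delta^{-3/4}$ amplification on the $V$-side. The only cosmetic difference is normalization: the paper divides both sides by $\sqrt{U}$ (obtaining $\partial_{|x|}V/\sqrt{U}=\sqrt{20}/10$ versus $\partial_{|x|}U/\sqrt{U}\le \BO(\delta^{3/4})+\BO(\delta^{1/4})$), while you compare both sides against $R(\theta-\theta_0)$; since $\sqrt{U}\sim_d\delta^{-3/4}R(\theta-\theta_0)$ on $\Gamma$, these are equivalent.
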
 
\begin{proof}
The definition of $V$ in \eqref{EqnHorizontalFillingUpper} gives
\begin{equation}
\label{FirstEquationInAProof}
\frac{\partial}{\partial |x|}V=\frac{\sqrt{20}\sqrt{U}}{10} \text{ along }\Gamma.
\end{equation}

On the other hand,  we have
\begin{align*}
\frac{\partial}{\partial |x|}U&=R^{-1}(2u-\frac12v)\sin(\theta)+(R\BU'+R^{-3/2}\BV')\cos(\theta)\\
& =R^{-1}u(2-\delta^{-3/2}/2)\sin(\theta)+(R\BU'+R^{-3/2}\BV')\cos(\theta)\\
&\le (R\BU'+R^{-3/2}\BV') \hem\text{ along }\Gamma. 
\end{align*}
Note that we also used \eqref{EqnRadialGraphUpper}.

With the definition of $R(\theta)$ in  \eqref{EqnRadialGraphUpper} and the bound on $\BU$ in  Lemma \ref{LemGlueAS}, we further estimate
\begin{align*}
\frac{\partial U/\partial |x|}{\sqrt{U}}&\le \frac{\BU'}{\sqrt{1+\delta^{-3/2}}\BU^{1/2}}+\frac{\delta^{-3/2}\BU\BV'}{\sqrt{1+\delta^{-3/2}}\BU^{1/2}\BV}\\
&\le\BO(\delta^{3/4})+\frac{\BV'}{\BV}(\theta-\theta_0)\BO(\delta^{-3/4}).
\end{align*}
With the definition of $\BV$ in \eqref{EqnBV2} for the range of $\theta$  in \eqref{EqnAvoidCorner}, we conclude
\begin{equation}
\label{EqnUsingTheExponent}
\frac{\partial U/\partial |x|}{\sqrt{U}}\le\BO(\delta^{3/4})+\delta\BO(\delta^{-3/4}) \text{ along }\Gamma.
\end{equation}

Comparing with \eqref{FirstEquationInAProof}, the desired comparison follows for small $\delta>0$.
\end{proof} 

The extension $V$ in \eqref{EqnHorizontalFillingUpper} stays on top of the cone:
\begin{lem}
\label{LemOuterRegionOrdering}
Under the same assumptions as in Lemma \ref{LemNonlinearEquationInInterior}, we have
$$
V\ge u \text{ in }\CO.
$$
\end{lem}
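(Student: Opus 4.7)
The plan is to establish the inequality slice by slice. I fix a height $\bar{y} \in \R$ and let $(L(\bar{y}), \bar{y}) \in \Gamma$ correspond to angle $\bar{\theta}$ and radius $R = R(\bar{\theta})$ in the coordinates \eqref{EqnCoordinate}. By even symmetry in $y$ together with \eqref{EqnAvoidCorner}, I may assume $\bar{\theta} \in (\theta_0, (1+\delta)\theta_0)$. Parametrizing the slice by $s := L(\bar{y}) - |x|$, the function $V$ along the slice is the explicit quadratic $V(s) = \frac{1}{20}(c - s)_+^2$ with $c := \sqrt{20\, U(L(\bar{y}), \bar{y})}$, and I write $u(s)$ for the corresponding restriction of $u$.

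First I will show that the zero set $\{V = 0\} = \{s \ge c\}$ in the slice is contained in the contact set $\{u = 0\}$ of $u$, where the ordering is automatic. The contact set in the slice is $\{|x| \le |\bar{y}|\tan\theta_0\}$, whose outer edge sits at $s_* := L(\bar{y}) - |\bar{y}|\tan\theta_0 = R\sin(\bar{\theta} - \theta_0)/\cos\theta_0$, a quantity of order $R(\bar{\theta} - \theta_0)$. From \eqref{EqnConditionAlongGammaUpper} together with Lemma \ref{LemGlueAS}, I compute $\sqrt{u(L(\bar{y}), \bar{y})} \sim_d R(\bar{\theta} - \theta_0)$, and hence $c \sim_d \delta^{-3/4} R(\bar{\theta} - \theta_0)$, which dominates $s_*$ once $\delta$ is small.

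On the remaining region $s \in [0, s_*]$ both $V$ and $u$ are positive. Writing $u(0) := u(L(\bar{y}), \bar{y})$ and expanding the quadratic,
$$V(s) = (1 + \delta^{-3/2})\, u(0) - \frac{cs}{10} + \frac{s^2}{20} \ge (1 + \delta^{-3/2})\, u(0) - \frac{c\, s_*}{10}.$$
Since $\partial u / \partial |x| = 2 r \BU \sin\theta + r \BU' \cos\theta \ge 0$ throughout the slice by Lemma \ref{LemGlueAS}, $u$ is monotone non-decreasing in $|x|$, so $u(s) \le u(0)$. The scale estimates of the previous paragraph yield $c\, s_* \le C_d\, \delta^{-3/4}\, u(0)$, whence
$$V(s) - u(s) \ge \delta^{-3/2}\, u(0) - C_d\, \delta^{-3/4}\, u(0) \ge 0$$
for $\delta$ dimensionally small.

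I expect the only real subtlety to be precisely this scale competition: the interface margin $V(0) - u(0) = \delta^{-3/2}\, u(0)$ (coming from $U = u + v$ and $v = \delta^{-3/2}\, u$ on $\Gamma$) must beat the loss $c\, s_*/10 \sim \delta^{-3/4}\, u(0)$ incurred by the quadratic profile $V$ as it descends across the positive slab. The exponent $3/2$ in the definition \eqref{EqnAwayFromFBUpper} of $\Omega$ is calibrated exactly so that $\delta^{-3/2}$ dominates $\delta^{-3/4}$; a softer threshold would fail this step. Everything else reduces to routine bookkeeping with Lemma \ref{LemGlueAS} and the graph description \eqref{EqnRadialGraphUpper}.
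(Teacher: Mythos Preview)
Your argument is correct and mirrors the paper's proof: both work slice by slice, use the monotonicity of $u$ in $|x|$ to bound $u\le u(L(\bar y),\bar y)$, and exploit the scale comparison $L(\bar y)-L_0\sim_d\sqrt{u(L(\bar y),\bar y)}$ against the margin $V|_\Gamma=(1+\delta^{-3/2})u$ to conclude. The only cosmetic difference is that the paper integrates the linear relation $\partial_{|x|}\sqrt V=\sqrt{20}/20$ while you expand the quadratic $V(s)$ directly; your closing remark on why the exponent $3/2$ in \eqref{EqnAwayFromFBUpper} is calibrated to make $\delta^{-3/2}\gg\delta^{-3/4}$ is a nice addition.
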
 
Recall the domain $\CO$ from \eqref{EqnCOCO}.
\begin{proof}
We show the comparison in the horizontal slice for a fixed $\bar{y}$, namely, 
$$
\{(x,\bar{y}): L(\bar y)-\sqrt{U}(L(\bar y),\bar y)<|x|<L(\bar{y})\}.
$$
Recall the map $y\mapsto L(y)$ in \eqref{EqnYGraphUpper}. In our coordinate system \eqref{EqnCoordinate}, we have
$$
L(\bar y)=\bar r\sin(\bar\theta)
$$
form some $\bar r>0$ and $\bar\theta\in(0,\pi/2].$

For simplicity, denote by $L_0>0$ the constant such that 
\begin{equation}
\label{EqnL0}
\PosS\cap\{y=\bar y\}=\{|x|>L_0, y=\bar y\}.
\end{equation}
In our coordinate system \eqref{EqnCoordinate}, we have, for some $r_0>0$,
$$
L_0=r_0\sin(\theta_0),
$$
where $\theta_0$ is the angle from Lemma \ref{LemGlueAS}. 

It is elementary to see
\begin{equation}
\label{EqnLInLOut}
L(\bar y)-L_0\le \BO(\bar r(\bar\theta-\theta_0))\le \BO(\bar r\sqrt{\BU}(L(\bar y),\bar y))\le\BO(\sqrt{u}(L(\bar y),\bar y)).
\end{equation} 
Note that we used  \eqref{EqnTraceOfUAS} and Lemma \ref{LemGlueAS}.

\vem

Based on the definition of $V$ in \eqref{EqnHorizontalFillingUpper}, we have
$
\frac{\partial}{\partial |x|}V=\frac{\sqrt{20}}{10}\sqrt{V}.
$
Integrate this relation from $|x|=L_0$ to $|x|=L(\bar y)$, we get
$$
\sqrt{V}(L_0,\bar y)=\sqrt{V}(L(\bar y),\bar y)-\frac{\sqrt{20}}{20}(L(\bar y)-L_0).
$$
With \eqref{EqnHorizontalFillingUpper} and \eqref{EqnLInLOut}, this implies
$$
\sqrt{V}(L_0,\bar y)\ge \sqrt{U}(L(\bar y),\bar y)-\BO(\sqrt{u}(L(\bar y),\bar y))\ge (\delta^{-\frac34}-c_d)\sqrt{u}(L(\bar y),\bar y).
$$
With $\frac{\partial}{\partial |x|}V\ge0$, we get
\begin{equation}
\label{EqnVOneSlice}
V(|x|,\bar y)\ge \frac12\delta^{-\frac32}u(L(\bar y),\bar y)\hem \text{ for }L_0\le |x|\le L(\bar y)
\end{equation}
if $\delta$ is dimensionally small. 

\vem

In this slice, the homogeneity of $u$ and the monotonicity of $\BU$ in Lemma \ref{LemGlueAS} give
$$
u\le u(L(\bar{y}),\bar{y}).
$$
With \eqref{EqnVOneSlice}, we get
$$
V(|x|,\bar y)\ge u(|x|,\bar y)\hem \text{ for }L_0\le |x|\le L(\bar y)
$$
for small $\delta>0$.

With $u(|x|,\bar y)=0$ for $|x|\le L_0$ (see \eqref{EqnL0}), we get the desired ordering in the entire horizontal slice.
\end{proof} 

Combining $U$  from \eqref{EqnPerturbedSolution} and  $V$ from \eqref{EqnHorizontalFillingUpper},
we define the \textit{profile near infinity} as
\begin{equation}
\label{EqnProfileNearInfinityUpper}
\UOut:=\begin{cases}
U &\text{ in }\overline{\Omega},\\
V &\text{ in }\CO
\end{cases}
\end{equation} 
where $\Omega$ is given in \eqref{EqnAwayFromFBUpper} and $\CO$ is from \eqref{EqnCOCO}. 

We have established the following:
\begin{lem}
\label{LemPropertyOfUOut}
For $d\ge3$ and dimensionally small $\delta,\kappa$, we have
$$
\UOut\ge u, \text{ and } E(\UOut)\le 0 \text{ in }\R^{d+1}\backslash\overline{B}_{\ROut}.
$$
\end{lem}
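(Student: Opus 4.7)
The plan is to verify both assertions of Lemma \ref{LemPropertyOfUOut} by checking them on the three natural loci of $\R^{d+1}\setminus\overline{B}_{\ROut}$: the interior of $\Omega$, the interior of $\CO$, and the common boundary $\Gamma$. The ordering $\UOut \ge u$ is essentially automatic: on $\overline\Omega$ we have $\UOut = u + v > u$ since $v > 0$, and on $\CO$ the bound $V \ge u$ is Lemma \ref{LemOuterRegionOrdering}. Hence all the content is in the differential inequality $E(\UOut) \le 0$.

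Inside $\Omega$ the inequality $E(U) \le 0$ is precisely Lemma \ref{LemSupersolutionNonlinear}. Inside $\CO$ I would combine the Laplacian bound $\Delta V \le 1/5$ from Lemma \ref{LemNonlinearEquationInInterior} with a direct computation of the Hamilton–Jacobi term in \eqref{EqnTransformedEquation}: from \eqref{EqnHorizontalFillingUpper} one reads off $\partial_{|x|}V = (\sqrt{20}/10)\sqrt V$, hence $(\partial_{|x|}V)^2/V \equiv 1/5$, and the chain rule yields $(\partial_y V)^2/V = \tfrac{1}{5}\bigl(-L'(y)+\sqrt{20}(\sqrt U)'(y)\bigr)^2 \ge 0$. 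Because $\beta/2 - 1 < 0$, the lower bound $|\nabla V|^2/V \ge 1/5$ turns the coefficient inequality in the right direction, giving
\[E(V) \le \frac{1}{5} + \Bigl(\frac{\beta}{2}-1\Bigr)\cdot\frac{1}{5} - \frac{d+\beta-2}{d} = \frac{\beta}{10} - \frac{d+\beta-2}{d}.\]
For $\beta$ close to $2$ this is close to $-4/5$, and in particular strictly negative for dimensionally small $\kappa$.

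The interface $\Gamma$ is the delicate place. Using \eqref{EqnConditionAlongGammaUpper} I compute $U = (1+\delta^{-3/2})u$ on $\Gamma$, while the very definition of $V$ gives $V(L(y),y) = \tfrac{1}{20}(\sqrt{20}\sqrt U)^2 = U$, so $\UOut$ is continuous across $\Gamma$. The $\Omega$-side lies on $|x| > L(y)$ and the $\CO$-side on $|x| < L(y)$ in each horizontal slice, so the strict inequality $\partial_{|x|}V > \partial_{|x|}U$ from Lemma \ref{LemEqnAlongGammaUpper} says that the radial derivative of $\UOut$ jumps \emph{downward} as one crosses $\Gamma$ outward. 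Thus $\UOut$ forms a concave corner along $\Gamma$, which is compatible with the viscosity supersolution property: no smooth test function can touch a concave corner from below, so the differential inequality holds vacuously on $\Gamma$. The free boundary of $V$ inside $\CO$ poses no additional issue because $V$ is $C^1$ there with $|\nabla V|=0$.

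The main obstacle I foresee is not conceptual but the tightness of the bound inside $\CO$: the definite cushion of roughly $-4/5$ in the display above must absorb both the $O(\delta^{1/4})$-error already built into Lemma \ref{LemNonlinearEquationInInterior} and the $(\partial_y V)^2/V$ correction, both of which are of order $\theta_0^2\delta^{-3/2}$ by Lemmas \ref{LemTechnical3}–\ref{LemTechnical4}. Choosing $\delta$ and $\kappa$ dimensionally small closes the argument.
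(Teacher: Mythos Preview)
Your proposal is correct and is essentially the assembly the paper intends: the paper writes only ``We have established the following'' before stating the lemma, relying on the reader to combine Lemma~\ref{LemSupersolutionNonlinear} on~$\Omega$, Lemma~\ref{LemNonlinearEquationInInterior} and the sign of $(\beta/2-1)$ on~$\CO$, Lemma~\ref{LemEqnAlongGammaUpper} for the concave corner along~$\Gamma$, and Lemma~\ref{LemOuterRegionOrdering} together with $U=u+v>u$ for the ordering. Your explicit computation $V=U$ on~$\Gamma$ and $(\partial_{|x|}V)^2/V\equiv 1/5$ fills in details the paper leaves tacit.

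One harmless inaccuracy in your closing paragraph: the $(\partial_y V)^2/V$ contribution does \emph{not} need to be absorbed by the $-4/5$ cushion, because it enters $E(V)$ with coefficient $(\beta/2-1)<0$ and therefore only helps. The $O(\delta^{1/4})$ and $O(\kappa^{2/(d-2)})$ errors are already baked into the conclusion $\Delta V\le 1/5$ of Lemma~\ref{LemNonlinearEquationInInterior}, so nothing further is required there either.
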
 




\subsection{Region near the origin}
\label{SubsectionRegionNearTheOriginUpper}
In this subsection, we construct the leaf in the \textit{region near the origin}, that is, 
\begin{equation}
\label{EqnRegionNearTheOrigin}
\Omega_{\mathrm{in}}:=B_{\RIn}
\end{equation} 
for the large $\RIn$ chosen in Lemma \ref{LemConcaveAngleInConstantExtension}. 

In the coordinate system \eqref{EqnCoordinate}, we start with the profile $U$ in \eqref{EqnPerturbedSolution} and make it constant for small angles. To be precise, we define 
\begin{equation}
\label{EqnProfileNearOriginUpper}
\UIn(r,\theta):=\begin{cases}
U(r,\theta) &\text{ for }\theta\in(\theta_1,\pi/2],\\
U(r,\theta_1) &\text{ for }\theta\in[0,\theta_1].
\end{cases}
\end{equation} 
Recall that $\theta_1=(1+2\delta)\theta_0$ from \eqref{EqnTheta12}.

By choosing the proper $\RIn$, we have
\begin{lem}
\label{LemConcaveAngleInConstantExtension}
For $d\ge3$ and $\delta,\kappa$ dimensionally small, there is a large dimensional constant $M_d$ such that for  
$$
\RIn^{5/2}=M_d\delta^{-1}\theta_0^{-5/2},
$$
we have
$$
\frac{\partial}{\partial\theta}U(r,\theta_1)<0 \text{ for }r\le\RIn.
$$
\end{lem}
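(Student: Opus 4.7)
The plan is a direct computation of $\partial_\theta U(r,\theta_1)$, estimating each piece using the definitions of $\BU$, $\BV$ and the bounds already established. Since $U=u+v$ with $u=r^2\BU(\theta)$ and $v=r^{-1/2}\BV(\theta)$, one has
\begin{equation*}
\frac{\partial}{\partial\theta}U(r,\theta_1)=r^2\BU'(\theta_1)+r^{-1/2}\BV'(\theta_1)=r^{-1/2}\bigl[r^{5/2}\BU'(\theta_1)+\BV'(\theta_1)\bigr].
\end{equation*}
At $\theta=\theta_1$ the derivative of $\BV$ should be interpreted on the outer branch $(\theta_1,\pi/2]$ where $\BV=\sin^{-1/2}(\theta)$, since this is the formula relevant for the geometry on the side $\theta>\theta_1$ whose behavior we want to control.

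On this outer branch one computes
$\BV'(\theta_1)=-\tfrac12\sin^{-3/2}(\theta_1)\cos(\theta_1)$, and since $\theta_1=(1+2\delta)\theta_0$ with $\theta_0\sim_d\kappa^{1/(d-2)}$ small, this gives $-\BV'(\theta_1)\sim_d\theta_0^{-3/2}$. On the other hand, Lemma \ref{LemGlueAS} yields $\BU'(\theta_1)\sim_d (\theta_1-\theta_0)=2\delta\theta_0$, because $\theta_1$ lies in the interval $(\theta_0,\theta_m]$ on which the $\BU'\sim_d(\theta-\theta_0)$ bound holds (note $\theta_m/\theta_0=\mathcal{T}_d$ is a large dimensional constant while $\theta_1/\theta_0=1+2\delta<\mathcal{T}_d$ for dimensionally small $\delta$). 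Combining these two estimates,
\begin{equation*}
-\BV'(\theta_1)/\BU'(\theta_1)\sim_d \theta_0^{-3/2}/(\delta\theta_0)=\delta^{-1}\theta_0^{-5/2}.
\end{equation*}

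Hence there is a small dimensional constant $M_d>0$ such that $r^{5/2}\le M_d\delta^{-1}\theta_0^{-5/2}$ implies $r^{5/2}\BU'(\theta_1)<-\BV'(\theta_1)$, i.e., $\partial_\theta U(r,\theta_1)<0$. Choosing $\RIn$ as in the statement then gives the conclusion.

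The only subtlety, which is not really an obstacle, is the one-sided choice of $\BV'(\theta_1)$: the profile $\BV$ from \eqref{EqnBV2} has a concave corner at $\theta_1$, and one takes the derivative from the $\theta>\theta_1$ side so that the negativity of $\partial_\theta U$ produces the concave angle of $\UIn$ that will later yield its supersolution property (analogous to the viscosity discussion in Remark \ref{RemProblemAtTheta1} and in the proof of Lemma \ref{LemSupersolutionLU}). Everything else is a bookkeeping exercise once the bounds $\BU'(\theta_1)\sim_d\delta\theta_0$ and $|\BV'(\theta_1)|\sim_d\theta_0^{-3/2}$ are in hand.
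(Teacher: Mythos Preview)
Your argument is correct and is essentially the same computation as the paper's: both write $\partial_\theta U(r,\theta_1)=r^2\BU'(\theta_1)+r^{-1/2}\BV'(\theta_1)$, use Lemma~\ref{LemGlueAS} to get $\BU'(\theta_1)\le C_d\delta\theta_0$, evaluate $\BV'(\theta_1^+)=-\tfrac12\sin^{-3/2}(\theta_1)\cos(\theta_1)\sim_d-\theta_0^{-3/2}$ on the outer branch, and read off the threshold $r^{5/2}\sim_d\delta^{-1}\theta_0^{-5/2}$. The paper compresses this into the single displayed inequality $\partial_\theta U(r,\theta_1)\le C_d r^{-1/2}\delta\theta_0\bigl[r^{5/2}-c_d\delta^{-1}\theta_0^{-5/2}\bigr]$, but the content is identical. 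Your remark that the relevant one-sided derivative is from $\theta>\theta_1$ (so that $\UIn$ acquires a concave corner there) is a useful clarification the paper leaves implicit; note also that the adjective ``large'' in the statement refers to the scale of $\RIn$ rather than to the numerical size of $M_d$, which, as your computation shows, is simply a fixed dimensional constant.
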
 
Recall the small parameter $\kappa$ from \eqref{EqnKappa}, and the parameter $\delta$ appearing in \eqref{EqnTheta12}, \eqref{EqnBV2} and \eqref{EqnAwayFromFBUpper}.

\begin{proof}
With the definition of $U$ in \eqref{EqnPerturbedSolution}, the definition of $v$ in \eqref{EqnV2} and the bound on $\BU'$ in Lemma \ref{LemGlueAS}, we have
$$
\frac{\partial}{\partial\theta}U(r,\theta_1)\le C_dr^{-1/2}\delta\theta_0[r^{5/2}-c_d\delta^{-1}\theta_0^{-5/2}]
$$
for dimensional constants $C_d$ and $c_d$. 
\end{proof} 

For $\theta<\theta_1$ from \eqref{EqnTheta12}, the supersolution property is given by the following lemma:
\begin{lem}
\label{LemSuperSolutionInInteriorInner}
Under the same assumptions as in Lemma \ref{LemConcaveAngleInConstantExtension}, we have
$$
\Delta\UIn\le 0 \text{ in }B_{\RIn}\cap\{0\le\theta<\theta_1\}.
$$
\end{lem}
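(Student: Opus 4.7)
The plan is a direct computation. In the region $B_{\RIn} \cap \{0 \le \theta < \theta_1\}$, the definition \eqref{EqnProfileNearOriginUpper} shows that $\UIn$ depends only on $r$, namely
\[
\UIn(r,\theta) = U(r,\theta_1) = \BU(\theta_1)\, r^2 + \BV(\theta_1)\, r^{-1/2},
\]
by \eqref{EqnTraceOfUAS}, \eqref{EqnV2} and \eqref{EqnPerturbedSolution}. Since $\UIn$ is radial in $\R^{d+1}$, its Laplacian is $\Delta \UIn = \UIn'' + \frac{d}{r} \UIn'$, and an elementary calculation gives
\[
\Delta \UIn \;=\; 2(d+1)\, \BU(\theta_1) \;-\; \frac{2d-3}{4}\, \BV(\theta_1)\, r^{-5/2}.
\]
Hence the desired inequality $\Delta \UIn \le 0$ is equivalent to
\begin{equation}\label{EqnProofTargetInequality}
r^{5/2} \;\le\; \frac{2d-3}{8(d+1)}\, \frac{\BV(\theta_1)}{\BU(\theta_1)}.
\end{equation}

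Next I would estimate the ratio $\BV(\theta_1)/\BU(\theta_1)$ using the quantitative information already established. By Lemma \ref{LemGlueAS} and the choice $\theta_1 = (1+2\delta)\theta_0$ from \eqref{EqnTheta12}, we have $\BU(\theta_1) \sim_d (\theta_1-\theta_0)^2 \sim_d \delta^2 \theta_0^2$. On the other hand, by the definition \eqref{EqnBV2} of $\BV$ in the inner piece and the size $\theta_0 \sim_d \kappa^{1/(d-2)}$, we get $\BV(\theta_1) = \sin^{-1/2}(\theta_1) \sim_d \theta_0^{-1/2}$. Combining these, we obtain
\[
\frac{\BV(\theta_1)}{\BU(\theta_1)} \;\sim_d\; \delta^{-2}\, \theta_0^{-5/2}.
\]

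With the choice $\RIn^{5/2} = M_d\, \delta^{-1}\, \theta_0^{-5/2}$ from Lemma \ref{LemConcaveAngleInConstantExtension}, the left-hand side of \eqref{EqnProofTargetInequality} is at most $M_d\, \delta^{-1}\, \theta_0^{-5/2}$, while the right-hand side is at least $c_d\, \delta^{-2}\, \theta_0^{-5/2}$ for some dimensional $c_d > 0$. The inequality \eqref{EqnProofTargetInequality} therefore reduces to $M_d\, \delta \le c_d$, which holds once $\delta$ is chosen dimensionally small relative to the fixed constants $M_d$ and $c_d$. I do not expect any serious obstacle here: the proof is really just plugging \eqref{EqnTheta12}--\eqref{EqnBV2} and Lemma \ref{LemGlueAS} into the explicit formula for $\Delta \UIn$, and the only thing to keep track of is the cancellation of $\theta_0^{-5/2}$ on both sides, which forces the final smallness condition on $\delta$ (compatible with the smallness conditions already in force throughout Section \ref{SectionUFAS}).
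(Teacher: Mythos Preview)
Your proof is correct and follows essentially the same approach as the paper: both compute $\Delta\UIn = 2(d+1)\BU(\theta_1) - \tfrac{2d-3}{4}\BV(\theta_1)r^{-5/2}$ directly, estimate $\BU(\theta_1)\sim_d(\delta\theta_0)^2$ and $\BV(\theta_1)\sim_d\theta_0^{-1/2}$ via Lemma~\ref{LemGlueAS} and \eqref{EqnBV2}, and then invoke the choice of $\RIn$ from Lemma~\ref{LemConcaveAngleInConstantExtension} to close the inequality for dimensionally small $\delta$. The only cosmetic difference is that you phrase the final step as a bound on $r^{5/2}$ while the paper keeps it as a sign estimate on $\Delta\UIn$.
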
 
\begin{proof}
Direct computation gives
$$
\Delta\UIn=-(\frac{d}{2}-\frac34)r^{-5/2}\BV(\theta_1)+2(d+1)\BU(\theta_1)\le -c_d[r^{-5/2}\theta_0^{-1/2}-C_d(\delta\theta_0)^2],
$$
where we used the definition of $\BV$ in  \eqref{EqnBV2} and the bound on $\BU$ in  Lemma \ref{LemGlueAS}.

For the choice of $\RIn$  in Lemma \ref{LemConcaveAngleInConstantExtension},  we can further estimate 
$$
\Delta\UIn\le -c_d[\delta\theta_0^{2}-C_d\delta^2\theta_0^2]\le 0 \text{ in }B_{\RIn}
$$
if $\delta$ is small. 
\end{proof} 

With the monotonicity of $\BU$  in Lemma \ref{LemGlueAS}, we see that
\begin{lem}
\label{LemOrderingInnerRegionUpper}
Under the same assumption as in Lemma \ref{LemSuperSolutionInInteriorInner}, we have
$$
\UIn\ge u \text{ in }B_{\RIn}.
$$
\end{lem}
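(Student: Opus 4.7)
The plan is to split into the two regions in the definition of $\UIn$ from \eqref{EqnProfileNearOriginUpper} and handle each by a direct pointwise comparison. On the outer piece $\theta\in(\theta_1,\pi/2]$ the inequality is immediate from $\UIn=U=u+v$ and the non-negativity of $v=r^{-1/2}\BV$, recalling from \eqref{EqnBV2} that $\BV>0$ on $(\theta_0,\pi/2]$.

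On the inner piece $\theta\in[0,\theta_1]$, where $\UIn(r,\theta)$ is the constant-in-$\theta$ extension $U(r,\theta_1)$, the bound $\UIn\ge u$ reduces to showing
\[
u(r,\theta_1)+v(r,\theta_1)\ge u(r,\theta)\quad\text{for all }\theta\in[0,\theta_1].
\]
Here I would invoke the monotonicity of the trace $\BU$ from Lemma \ref{LemGlueAS}, which gives $\BU'\ge 0$ on $[0,\pi/2]$. Since $u(r,\theta)=r^2\BU(\theta)$ by \eqref{EqnTraceOfUAS}, this yields $u(r,\theta)\le u(r,\theta_1)$ for all $\theta\in[0,\theta_1]$. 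Combined with $v(r,\theta_1)\ge 0$, the inequality follows.

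The two halves match along $\theta=\theta_1$, so the comparison holds throughout the half-angle range $[0,\pi/2]$. The statement on $[\pi/2,\pi]$ is then obtained by even symmetry about the equator (see Remark \ref{RemHalfIntervalAndViscosity}), which is respected both by $u$ and by the extended $\UIn$. There is no real obstacle in this lemma — the only substantive input is the sign of $\BU'$, which has already been established — so this will be a short verification rather than a delicate estimate.
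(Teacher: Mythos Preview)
Your proof is correct and follows the same approach as the paper: the paper's proof is a single sentence invoking the monotonicity of $\BU$ from Lemma \ref{LemGlueAS}, and your argument simply spells out the two-region verification (with $v\ge 0$ on the outer piece and $\BU'\ge 0$ on the inner piece) that this monotonicity makes immediate.
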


\subsection{Upper foliation of the axially symmetric cone}
\label{SubsectionUpperFoliationOfASC}
We have constructed the profile $\UOut$  from \eqref{EqnProfileNearInfinityUpper} in the region near infinity $\Omega_{\mathrm{out}}$ in \eqref{EqnRegionNearInfinity}, and a profile $\UIn$   from \eqref{EqnProfileNearOriginUpper} in the region near the origin $\Omega_{\mathrm{in}}$ in \eqref{EqnRegionNearTheOrigin}. In this subsection, we glue them to form a leaf in the lower foliation. 

From our choices of $\ROut$ in  \eqref{EqnROutSim} and $\RIn$ in Lemma \ref{LemConcaveAngleInConstantExtension}, we have
\begin{equation}
\label{EqnRatioInOut}
(\ROut/\RIn)^{5/2}\sim_d\delta^{1/2}, \text{ and }\ROut,\RIn\sim_{d,\delta}\theta_0^{-1}.
\end{equation}
For $\delta>0$ small, the two regions, $\R^{d+1}\backslash\overline{B}_{\ROut}$ and $B_{\RIn}$, overlap in an annulus  whose diameter is comparable to $\theta_0^{-1}$, namely,
\begin{equation}
\label{EqnGluingAnnulus}
\mathcal{A}:=B_{\RIn}\backslash B_{\ROut}.
\end{equation} 
The gluing takes place in this annulus. 

To be precise, let $\zeta$ denote the continuous radial function of the form 
$$
\zeta(r,\theta)=Ar^{1-d}+B \text{ for }\ROut<r<\RIn,
$$
and
$$
\zeta(r,\theta)=1 \text{ for }r\le\ROut,\hem\text{ and }\hem \zeta(r,\theta)=0 \text{ for }r\ge\RIn.
$$
With \eqref{EqnRatioInOut}, the coefficient $A$ satisfies
\begin{equation}
A\ROut^{1-d}\sim_d 1.
\end{equation} 

\vem

With $\UIn$ from \eqref{EqnProfileNearOriginUpper} and $\UOut$ from \eqref{EqnProfileNearInfinityUpper}, the \textit{glued profile} is defined as
\begin{equation}
\label{EqnW}
W:=\zeta\UIn+(1-\zeta)\UOut.
\end{equation} 

This gives a supersolution to the transformed Euler-Lagrange equation \eqref{EqnTransformedEquation}. See also Proposition \ref{PropEquivalenceOfEquations}.
\begin{lem}
\label{LemW}
For $d\ge3$ and $\delta,\kappa$ dimensionally small, we have
$$
E(W)\le 0 \text{ in }\R^{d+1}\backslash\{0\}.
$$
\end{lem}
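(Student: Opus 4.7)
The plan is to verify $E(W) \le 0$ on each natural subregion determined by the cutoff $\zeta$ and to handle the interior interfaces in the viscosity sense.

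First I would dispose of the regions where the convex combination is trivial. In $B_\ROut$ we have $\zeta \equiv 1$ and $W = \UIn$: for $\theta \ge \theta_1$ the identity $\UIn = U$ combined with Lemma \ref{LemSupersolutionNonlinear} yields $E(W) \le 0$, while for $\theta < \theta_1$ the function $\UIn(r,\theta) = U(r,\theta_1)$ depends only on $r$, so Lemma \ref{LemSuperSolutionInInteriorInner} supplies $\Delta \UIn \le 0$; since $\beta/2-1 < 0$, the nonlinear term $(\beta/2-1)|\nabla \UIn|^2/\UIn$ is non-positive, giving $E(\UIn) \le -(d+\beta-2)/d < 0$. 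The interface $\{\theta = \theta_1\}$ is a concave corner of $\UIn$ by Lemma \ref{LemConcaveAngleInConstantExtension}. Symmetrically, on $\R^{d+1}\setminus\overline{B}_\RIn$ we have $W = \UOut$ and Lemma \ref{LemPropertyOfUOut} applies.

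Next I would turn to the annulus $\mathcal{A}$, where $\zeta$ is a radial harmonic function. By \eqref{EqnAvoidCorner}, $\Gamma$ is contained in $\{\theta \le (1+\delta)\theta_0\}$, which is strictly inside $\{\theta < \theta_1\}$ since $\theta_1 = (1+2\delta)\theta_0$. Consequently on $\mathcal{A} \cap \{\theta \ge \theta_1\}$ both $\UIn$ and $\UOut$ coincide with $U$, so $W = U$ and Lemma \ref{LemSupersolutionNonlinear} closes that subregion.

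The crux is the remaining sector $\mathcal{A} \cap \{\theta < \theta_1\}$. Using $\Delta \zeta = 0$ and the supersolution inequalities for $\UIn, \UOut$, a direct computation reduces $E(W) \le 0$ to
\begin{equation*}
\bigl(1-\tfrac{\beta}{2}\bigr)\Bigl[\zeta\tfrac{|\nabla \UIn|^2}{\UIn} + (1-\zeta)\tfrac{|\nabla \UOut|^2}{\UOut} - \tfrac{|\nabla W|^2}{W}\Bigr] + 2\nabla \zeta \cdot \nabla(\UIn - \UOut) \le 0.
\end{equation*}
To close this I would rely on the quantitative smallness of $|\nabla \zeta|$: from $\zeta = A r^{1-d}+B$ with $\ROut,\RIn \sim_{d,\delta} \theta_0^{-1}$ (see \eqref{EqnRatioInOut}), one has $|\nabla \zeta| \le \BO_{d,\delta}(\theta_0)$ throughout $\mathcal{A}$. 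Combined with the explicit forms of $\UIn$ and $\UOut$, the bounds of Lemma \ref{LemGlueAS} and Lemma \ref{LemCommutatorUpper}, and the $\theta$-monotonicity of $\BU$ and $\BV$ on $[\theta_0,\theta_1]$, both the gradient-squared discrepancy and the cross term tend to zero as $\kappa,\delta \to 0$, so the inequality holds for $\kappa,\delta$ dimensionally small. Across $\Gamma \cap \mathcal{A}$ and $\{\theta = \theta_1\}$, Lemma \ref{LemEqnAlongGammaUpper} and Lemma \ref{LemConcaveAngleInConstantExtension} yield concave corners, closing the viscosity inequality.

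The main obstacle is this last annulus estimate. A naive appeal to convexity of $(X,Y)\mapsto |X|^2/Y$ gives a \emph{lower} bound on $|\nabla W|^2/W$ relative to $\zeta|\nabla \UIn|^2/\UIn + (1-\zeta)|\nabla \UOut|^2/\UOut$, which becomes the wrong sign once multiplied by $\beta/2 - 1 < 0$. One therefore cannot merely combine the two supersolution inequalities abstractly; instead the estimate must be extracted quantitatively from the explicit form of $\zeta$ and the local formulas for $\UIn$, $\UOut$, exploiting the width of the annulus (equivalently, the smallness of $|\nabla \zeta|$) to absorb the cross term against the main negative constant $-(d+\beta-2)/d$.
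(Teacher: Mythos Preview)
Your regional decomposition and your handling of $B_{\ROut}$, $\R^{d+1}\setminus\overline{B}_{\RIn}$, $\mathcal{A}\cap\{\theta\ge\theta_1\}$, and the interfaces agree with the paper. The gap is in $\tilde{\mathcal{A}}:=\mathcal{A}\cap\{\theta<\theta_1\}$.

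Your displayed reduction invokes the full inequalities $E(\UIn)\le 0$ and $E(\UOut)\le 0$, and this already consumes the constant $-(d+\beta-2)/d$. The inequality you are then left to prove has no slack: showing that both summands are $o(1)$ yields only that the left side is $o(1)$, not that it is $\le 0$. Your final paragraph proposes to ``absorb the cross term against the main negative constant $-(d+\beta-2)/d$,'' but after your own reduction that constant no longer appears; as written the argument is circular.

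The paper sidesteps the nonlinear comparison entirely. In $\tilde{\mathcal{A}}$ it bounds $\Delta W$ directly, drops the nonpositive term $(\beta/2-1)|\nabla W|^2/W$, and uses the \emph{strict} Laplacian bounds rather than $E\le 0$: namely $\Delta\UIn\le 0$ (Lemma~\ref{LemSuperSolutionInInteriorInner}), $\Delta V\le 1/5$ on $\tilde{\mathcal{A}}\setminus\Omega$ (Lemma~\ref{LemNonlinearEquationInInterior}), and, on $\tilde{\mathcal{A}}\cap\Omega$, the explicit computation $\Delta v\le -c_\delta$ coming from the singular factor $\sigma(\theta-\theta_0)^\delta$ in $\BV$ together with the annulus scale $r\sim_{d,\delta}\theta_0^{-1}$ from \eqref{EqnRatioInOut}. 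With the cross term bounded by $\BO(\theta_0^2)$, this gives $E(W)\le 1/5-(d+\beta-2)/d+\BO(\theta_0^2)<0$ on $\tilde{\mathcal{A}}\setminus\Omega$ and $E(W)\le -c_\delta+\BO(\kappa)+\BO(\theta_0^2)<0$ on $\tilde{\mathcal{A}}\cap\Omega$. The input you are missing is not merely $E(\UOut)\le 0$ in $\Omega$ but that $\Delta v$ is bounded above by a \emph{negative $\delta$-dependent constant} there; this is what restores the margin once the potential term has been spent.
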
 
Recall the error $E$ defined in \eqref{EqnTheError}.

\begin{proof}
With \eqref{EqnAvoidCorner}, \eqref{EqnProfileNearInfinityUpper} and \eqref{EqnProfileNearOriginUpper}, we see that
$$
\UOut=\UIn=U \text{ for }\theta\in(\theta_1,\pi/2].
$$
For this range of $\theta$, the desired conclusion follows from Lemma \ref{LemSupersolutionNonlinear}.

For $\theta=\theta_1$, the desired conclusion follows from Lemma \ref{LemConcaveAngleInConstantExtension} for $\zeta>0$, and follows from Lemma \ref{LemPropertyOfUOut} when $\zeta=0.$

As a result, it suffices to consider $\theta\in[0,\theta_1)$. For this range, when $\zeta=1$, we have $W=\UIn$. The desired conclusion follows from Lemma \ref{LemSuperSolutionInInteriorInner}. When $\zeta=0$, we have $W=\UOut$. The desired conclusion follows from Lemma \ref{LemPropertyOfUOut}.

It remains to consider the region
$$
\tilde{\mathcal{A}}:=\mathcal{A}\cap\{\theta\in[0,\theta_1)\},
$$
where $\mathcal{A}$ is the annulus in \eqref{EqnGluingAnnulus}.

\vem

In $\tilde{\mathcal{A}}$, we first estimate $\nabla \zeta\cdot\nabla\UIn$ as
\begin{align*}
|\nabla\zeta\cdot\nabla\UIn|&\le C_dAr^{-d}\cdot|r\BU-\frac12r^{-3/2}\BV|\\
&\le C_dAr^{-d+1}(|\BU|+r^{-5/2}|\BV|)\\
&\le C_d\theta_0^2.
\end{align*}
The last inequality follows from the bound on $\BU$ in Lemma \ref{LemGlueAS}, the definition of $\BV$ in  \eqref{EqnBV2} and the bound on $\RIn$ in \eqref{EqnRatioInOut}.

Similarly, in $\tilde{\mathcal{A}}$, the term $\nabla\zeta\cdot\nabla\UOut$ can be estimated as
$
|\nabla\zeta\cdot\nabla\UOut|\le C_d\theta_0^2.
$

As a result, in $\tilde{\mathcal{A}}$, we have
\begin{equation}
\label{EqnEstimateOnMixedTerm}
|\nabla\zeta\cdot\nabla\UIn|+|\nabla\zeta\cdot\nabla\UOut|\le\BO(\theta_0^2).
\end{equation} 

\vem

In $\tilde{\mathcal{A}}\backslash\Omega$, where $\Omega$ is the domain in \eqref{EqnAwayFromFBUpper}, we have $\UOut=V$ in \eqref{EqnHorizontalFillingUpper} and $\UIn=U(r,\theta_1)$. With Lemma \ref{LemNonlinearEquationInInterior} and Lemma \ref{LemSuperSolutionInInteriorInner}, we have
$$
\Delta W=\zeta\Delta\UIn+(1-\zeta)\Delta\UOut+\nabla\zeta\cdot\nabla\UIn-\nabla\zeta\cdot\nabla\UOut\le\frac15+\BO(\theta_0^2).
$$
Recall that $\beta<2$ is close to $2$ (see \eqref{EqnScalingParameter}), we have
$$
E(W)\le\frac15+\BO(\theta_0^2)-\frac{d+\beta-2}{d}<0 \text{ in }\tilde{\mathcal{A}}\backslash\Omega
$$
if $\kappa$ is small. 

\vem

It remains to consider the region $\tilde{\mathcal{A}}\cap\Omega$, where $\UOut=U$ and $\UIn=U(r,\theta_1)$.

By definition of $U$ in \eqref{EqnPerturbedSolution}, we have
$$
\Delta\UOut=\Delta u+\Delta v \text{ in }\tilde{\mathcal{A}}\cap\Omega.
$$
Using the definition of $\BV$ in \eqref{EqnBV2}, we have
$$
\Delta v\le r^{-5/2}\sigma(\theta-\theta_0)^{\delta-2}\cdot\{\delta(\delta-\frac12)\}.
$$
Recall we have $\ROut<r<\RIn$ with  \eqref{EqnRatioInOut}, this can be further estimated as
$$
\Delta v\le -c_\delta \text{ for }\theta\in(\theta_0,\theta_1).
$$
As a result, we have
$$
\Delta\UOut\le\Delta u-c_\delta  \text{ in }\tilde{\mathcal{A}}\cap\Omega.
$$

Together with Lemma \ref{LemSuperSolutionInInteriorInner}, we have
$$
\Delta W\le(1-\zeta)[\Delta u-c_\delta]+\BO(\theta_0^2)   \text{ in }\tilde{\mathcal{A}}\cap\Omega.
$$
Since $u$ is a solution to \eqref{EqnTransformedEquation}, we have
$$
|\Delta u-\frac{d+\beta-2}{d}|\sim_d\kappa.
$$ 
Thus 
$$
E(W)\le -c_\delta+\BO(\theta_0)+\BO(\kappa)\le0    \text{ in }\tilde{\mathcal{A}}\cap\Omega
$$
if $\kappa$ is small. 
\end{proof} 

With these preparations, an \textit{upper leaf} is given as a truncated version of $W$ in \eqref{EqnW}. This truncation is to remove the singularity at $0$. 

To be precise, we cut-off $W$ in \eqref{EqnW} as
\begin{equation}
\label{EqnUpperLeaf}
\Psi_1:=\begin{cases}
\min\{W,2(\sup_{\mathcal{A}}W+\sup_{B_{\RIn}}u)\} &\text{ in }\Omega_{\mathrm{in}},\\
W &\text{ in }\Omega_{\mathrm{out}},
\end{cases}
\end{equation} 
where the annulus $\mathcal{A}$ is defined in \eqref{EqnGluingAnnulus}, $\Omega_{\mathrm{in}}$ is from \eqref{EqnRegionNearTheOrigin}, and $\Omega_{\mathrm{out}}$ is from \eqref{EqnRegionNearInfinity}.

This profile has the desired properties:
\begin{lem}
\label{LemPropertiesOfPsi1}
For $d\ge3$ and $\delta,\kappa$ dimensionally small, we have
$$
\Psi_1>u \text{ in }\overline{\PosS}, \text{ and }E(\Psi_1)\le 0 \text{ in }\R^{d+1}.
$$
\end{lem}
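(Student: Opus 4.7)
The plan is to assemble the pieces already in place: Lemma \ref{LemW} for the supersolution property of $W$, Lemmas \ref{LemPropertyOfUOut} and \ref{LemOrderingInnerRegionUpper} for the ordering $W \ge u$, together with the standard viscosity fact that the minimum of two supersolutions is itself a supersolution. The first task is to check that the two branches of \eqref{EqnUpperLeaf} agree on the overlap $\mathcal{A} = B_{\RIn}\setminus B_{\ROut}$: by construction $M := 2(\sup_\mathcal{A} W + \sup_{B_{\RIn}}u)$ dominates $W$ throughout $\mathcal{A}$, so $\min\{W,M\} = W$ there, and the truncation takes effect only inside $B_{\ROut}$, which is precisely where $W = \UIn$ blows up as $r \to 0$. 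In particular $\Psi_1$ is bounded and strictly positive in $B_{\RIn}$.

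For the supersolution inequality $E(\Psi_1) \le 0$ on $\R^{d+1}$: where $\Psi_1 = W$ it is Lemma \ref{LemW}; where $\Psi_1 = M$ a direct reading of \eqref{EqnTheError} gives $E(M) = -(d+\beta-2)/d < 0$; and on the crease $\{W \ge M\}$ we invoke the viscosity principle that the minimum of two supersolutions is a supersolution (a test function touching $\min\{W,M\}$ from below at $x_0$ touches whichever branch realizes the minimum there, and inherits $E \le 0$). Since $W$ and $M$ are strictly positive throughout $B_{\RIn}$, the truncation creates no new free boundary, so only the interior inequality needs to be verified.

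The ordering $\Psi_1 > u$ on $\overline{\PosS}$ splits in the natural way. On the open set $\PosS$, the perturbation $v > 0$ gives $\UOut = u + v > u$ on $\Omega$; the slice estimate in the proof of Lemma \ref{LemOuterRegionOrdering} gives $V \ge \tfrac12 \delta^{-3/2} u > u$ on $\CO$ for $\delta$ small; and $\UIn > u$ holds as well, either by identification with $\UOut$ (when $\theta \ge \theta_1$) or because $\UIn(r,\theta) = r^2\BU(\theta_1) + r^{-1/2}\BV(\theta_1) > r^2 \BU(\theta) = u$ by monotonicity of $\BU$ and positivity of $\BV(\theta_1)$ (when $\theta \le \theta_1$). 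Convex-combining these yields $W > u$ in $\PosS$, while in the truncation region $\Psi_1 = M \ge 2\sup_{B_{\RIn}}u > u$.

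The hardest point will be the strict inequality on the free boundary $\PPosS$, where both $u$ and the natural perturbation $v = r^{-1/2}\BV$ vanish (as $\BV(\theta_0) = 0$). Away from the origin, $\PPosS$ lies in $\CO$, so $\Psi_1$ coincides with the horizontal quadratic extension $V$; a point $(x_0,\bar y) \in \PPosS$ has $|x_0| = L_0$, and \eqref{EqnLInLOut} gives $L(\bar y) - L_0 \le \BO(\sqrt{u(L(\bar y),\bar y)})$, while the relation $U = (1+\delta^{-3/2})u$ on $\Gamma$ yields $\sqrt{U(L(\bar y),\bar y)} \sim_d \delta^{-3/4}\sqrt{u(L(\bar y),\bar y)}$. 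For $\delta$ small the first quantity is much less than the second, so $L_0 > L(\bar y) - \sqrt{20\,U(L(\bar y),\bar y)}$, placing the point strictly inside the support of $V$ and forcing $V > 0 = u$. In $B_{\ROut}$ we have $\UIn(r,\theta_0) = r^2\BU(\theta_1) + r^{-1/2}\BV(\theta_1) > 0$, and at the origin $\Psi_1(0) = M > 0 = u(0)$.
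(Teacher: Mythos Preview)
Your proof is correct and matches the paper's approach---Lemma~\ref{LemW} together with the fact that positive constants are strict supersolutions for $E(\Psi_1)\le 0$, and Lemmas~\ref{LemPropertyOfUOut}, \ref{LemOrderingInnerRegionUpper} plus convex combination for the ordering---though you are considerably more careful than the paper in verifying the \emph{strict} inequality on $\PPosS$, which the paper's two-line proof leaves implicit. One small slip: on the portion of $\PPosS$ lying in the annulus $\mathcal{A}$ you have $\Psi_1=W=\zeta\,\UIn+(1-\zeta)V$ rather than $\Psi_1=V$, but since your own arguments already give $\UIn(r,\theta_0)=r^2\BU(\theta_1)+r^{-1/2}\BV(\theta_1)>0$ for all $r>0$ and $V>0$ at such points, the strict inequality $W>0=u$ follows at once.
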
 
\begin{proof}
The ordering between $\Psi_1$ and $u$ follows from Lemma \ref{LemPropertyOfUOut}, Lemma \ref{LemOrderingInnerRegionUpper}, \eqref{EqnPerturbedSolution},  and \eqref{EqnW}.

Note that positive constants are strict supersolutions to \eqref{EqnTransformedEquation}, Lemma \ref{LemW} gives $E(\Psi_1)\le0$.
\end{proof} 

The leaf in \eqref{EqnUpperLeaf} generates an upper foliation. With this, we give the proof of Proposition \ref{PropUpperFoliationAS}.
\begin{proof}[Proof of Proposition \ref{PropUpperFoliationAS}]
For $t>0$, define
$$
\Psi_t(x,y)=t^2\Psi_1(x/t,y/t).
$$

With similar ideas as in the proof of Proposition \ref{PropLowerFoliationAS}, we have $\Psi_t>u $ in $\overline{\PosS}$ and $E(\Psi_t)\le 0$ in $\R^{d+1}$. Continuity of the map $(x,t)\mapsto\Psi_t(x)$ and the map $t\mapsto\partial\{\Psi_t>0\}$ follows from definition. 

With our definition \eqref{EqnUpperLeaf}, there is a constant $\eta>0$ such that $\Psi_1\ge\eta$ in $B_1$. This implies $\Psi_t\ge\eta t^2$ in $B_{t}$. Thus $\Psi_t\to+\infty$ locally uniformly as $t\to+\infty.$

Finally, given a compact subset $K$ of $\PosS$, we have $t_K>0$ such that $\Psi_t=u-t^{5/2}v$ for $t<t_K.$ This gives the convergence of $\Phi_t$ to $u$ as $t\to0.$ 
\end{proof} 

Combining the main results in Section \ref{SectionCAS}, Section \ref{SectionLowerFoliationAS} and this section, we have the proof of Theorem \ref{ThmMainAS}.
\begin{proof}[Proof of Theorem \ref{ThmMainAS}]
The construction of $\UAS$ and the estimate of the measure of its contact set were given in Proposition \ref{PropConstructionOfUAs}. 

With Theorem \ref{ThmMinimality}, its minimality follows from Proposition \ref{PropLowerFoliationAS} and Proposition \ref{PropUpperFoliationAS}.
\end{proof}

\appendix


\section{Expansion for solutions to second order ODEs}
\label{AppendixODE}
In this appendix, we collect some technicalities concerning several ODEs.
\subsection{Equation near the equator for the axially symmetric cone}
\label{AppendixODE1}
For a given function $\rho$ on $(0,\pi/2)$, we study the behavior of solutions to 
\begin{equation}
\label{EqnAppendixODE}
\LP u=\rho \text{ on }(0,\pi/2),  \text{ and }u'(\pi/2)=u(\pi/2)=0,
\end{equation} 
where the operator $\LP$ is given in \eqref{EqnLP}.

\vem

One of the fundamental solutions to the homogeneous ODE is given by $V_0$ from \eqref{EqnV0} with properties listed in Lemma \ref{LemPropertiesOfV0}.

The other fundamental solution, denoted by $\varphi$, solves
\begin{equation}
\label{EqnFundamentalSolution}
\LP\varphi=0 \text{ on }(0,\pi/2), \text{ and }\varphi'(\pi/2)=1, \varphi(\pi/2)=0.
\end{equation}
Liouville's formula gives
\begin{equation}
\label{EqnFirstLiouville}
\varphi'V_0-\varphi V_0'=\sin^{-d+1+\kappa}(\theta) \text{ on }(0,\pi/2),
\end{equation} 
that is,
\begin{equation}
\label{EqnLiouville}
(\varphi/V_0)'=\sin^{-d+1+\kappa}(\theta)/V_0^2 \text{ on }(0,\pi/2).
\end{equation} 

As a consequence, we have the following properties of  $\varphi$:
\begin{lem}
\label{LemPropertiesOfPhi}
For $d\ge3$ and $\kappa\in(0,\frac14)$, we have
$$
|\varphi|(\theta)\le \BO(\theta^{-d+2+\kappa}) \text{ and }\hem|\varphi'|(\theta)\le \BO(\theta^{-d+1+\kappa}) \text{ on }(0,\pi/2).
$$

Moreover, we have the following expansion
$$
\varphi'(\theta)=-\frac{1}{d}\sin^{-d+1}(\theta)(1+\kappa \BO(|\log(\theta)|)+\BO(\theta^2))
 \text{ on }(0,\pi/4).
$$
\end{lem}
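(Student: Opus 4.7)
The plan is to exploit Liouville's formula \eqref{EqnLiouville} on the subinterval $(0, \pi/4]$, where Lemma \ref{LemPropertiesOfV0} guarantees $V_0 \le -1/4$ and hence $1/V_0^2$ is well behaved. On the complementary interval $[\pi/4, \pi/2]$, the bounds are trivial: $\varphi$ and $\varphi'$ are bounded by standard ODE theory (the coefficients of $\LP$ are smooth on this compact subinterval of $(0,\pi)$), while $\theta^{-d+2+\kappa}$ and $\theta^{-d+1+\kappa}$ are themselves bounded away from zero there, so the claimed estimates hold with trivial constants.

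First I would integrate \eqref{EqnLiouville} from $\theta$ to $\pi/4$ to obtain
\[
\frac{\varphi(\theta)}{V_0(\theta)} \;=\; \frac{\varphi(\pi/4)}{V_0(\pi/4)} - \int_\theta^{\pi/4} \frac{\sin^{-d+1+\kappa}(\tau)}{V_0^2(\tau)}\,d\tau.
\]
Using $|V_0| \ge 1/4$ from Lemma \ref{LemPropertiesOfV0} and $\sin\tau \sim \tau$, the integrand is $\BO(\tau^{-d+1+\kappa})$, whose antiderivative is $\BO(\tau^{-d+2+\kappa})$ since $d \ge 3$ and $\kappa < 1/4$. This gives $|\varphi/V_0|(\theta) \le \BO(\theta^{-d+2+\kappa})$, and together with $|V_0| \le 2d$ upgrades to $|\varphi(\theta)| \le \BO(\theta^{-d+2+\kappa})$ on $(0, \pi/4]$. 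For $\varphi'$, I would apply the product rule $\varphi' = V_0 \cdot (\varphi/V_0)' + V_0' \cdot (\varphi/V_0) = \sin^{-d+1+\kappa}/V_0 + V_0'\cdot(\varphi/V_0)$: the first term is $\BO(\theta^{-d+1+\kappa})$ outright, while the second is controlled by $|V_0'| \le \BO(\theta)$ (Lemma \ref{LemPropertiesOfV0}) times $\BO(\theta^{-d+2+\kappa})$, giving $\BO(\theta^{-d+3+\kappa})$, which is subdominant by a factor of $\theta^2$.

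For the refined expansion of $\varphi'$ on $(0,\pi/4)$, I would sharpen both factors in $\sin^{-d+1+\kappa}/V_0$. From Lemma \ref{LemPropertiesOfV0}, $V_0(\theta) = (d+1)\sin^2\theta - d + \BO(\kappa) = -d\bigl(1 + \BO(\theta^2) + \BO(\kappa)\bigr)$, so a geometric series expansion gives $1/V_0 = -1/d + \BO(\theta^2) + \BO(\kappa)$. For the $\sin^\kappa$ factor, the elementary inequality $|e^u - 1| \le |u|$ for $u \le 0$ applied to $u = \kappa \log \sin\theta$ yields $|\sin^\kappa(\theta) - 1| \le \kappa|\log\sin\theta| = \kappa\,\BO(|\log\theta|)$ on $(0, \pi/4]$. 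Multiplying the two expansions gives $\sin^{-d+1+\kappa}/V_0 = -\frac{1}{d}\sin^{-d+1}(\theta)(1 + \kappa\,\BO(|\log\theta|) + \BO(\theta^2) + \BO(\kappa))$, and the $\BO(\kappa)$ term is absorbed into $\kappa\,\BO(|\log\theta|)$ since $|\log\theta| \ge \log(4/\pi) > 0$ on $(0, \pi/4]$. The second term $V_0'\cdot(\varphi/V_0)$ can be rewritten as $\sin^{-d+1}(\theta)\cdot\BO(\theta^2)$ using $\theta^\kappa \le 1$, and is therefore absorbed into the same error.

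There is no serious obstacle here --- the argument is a direct calculation from Liouville's formula. The main care is bookkeeping: combining the multiplicative error $(1 + \kappa\,\BO(|\log\theta|))$ from $\sin^\kappa$ with the additive error $(-1/d + \BO(\theta^2) + \BO(\kappa))$ from $1/V_0$, and then verifying that the leftover $\BO(\kappa)$ is indeed dominated by $\kappa\,\BO(|\log\theta|)$ in the relevant range.
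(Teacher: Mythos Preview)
Your proposal is correct and follows essentially the same approach as the paper: both split the interval at $\pi/4$, integrate the Liouville relation \eqref{EqnLiouville} on $(0,\pi/4]$ using the lower bound on $|V_0|$ there, and then recover $\varphi'$ from the Wronskian identity $\varphi' = \sin^{-d+1+\kappa}/V_0 + V_0'\cdot(\varphi/V_0)$. Your bookkeeping for the refined expansion (expanding $1/V_0$ and $\sin^\kappa$ separately, then absorbing the residual $\BO(\kappa)$ into $\kappa\,\BO(|\log\theta|)$ via $|\log\theta|\ge\log(4/\pi)>0$) is slightly more explicit than the paper's, but the argument is the same.
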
 
Recall the big-O notation from Remark \ref{RemSimAndO}.

\begin{proof}
\textit{Step 1: Upper bounds on $\varphi$ and $\varphi'$.}

Note that there is no singularity in the operator $\LP$ on $[\pi/4,\pi/2]$, standard analysis gives
$$
|(\varphi',\varphi)|\le C_d \text{ on }[\pi/4,\pi/2].
$$

Between $0$ and $\pi/4$, we integrate \eqref{EqnLiouville} to obtain
$$
\frac{\varphi(\theta)}{V_0(\theta)}=\frac{\varphi(\pi/4)}{V_0(\pi/4)}-\int_\theta^{\pi/4}\sin^{-d+1+\kappa}(\tau)V_0^{-2}(\tau)d\tau.
$$
From Lemma \ref{LemPropertiesOfV0}, we see that $V_0$ is  bounded away from $0$ and infinity on $[0,\pi/4]$, the desired bound on $\varphi$ follows. 

With this, the bound on $\varphi'$ follows from \eqref{EqnFirstLiouville}.

\vem

\textit{Step 2: The expansion of $\varphi'$.}

With the bound on $\varphi$ in \textit{Step 1} and the bound on $V_0'$ in Lemma \ref{LemPropertiesOfV0}, we can use \eqref{EqnFirstLiouville} to get
$$
\varphi'V_0=\sin^{-d+1+\kappa}(\theta)[1+\BO(\theta^2)].
$$
With $|\sin^\kappa(\theta)-1|=\kappa \BO(|\log(\theta)|)$ and the regularity of $V_0$, we conclude
$$
\varphi'(\theta)=-\frac{1}{d}\sin^{-d+1}(\theta)(1+\kappa \BO(|\log(\theta)|)+\BO(\theta^2)).
$$
\end{proof} 

By the method of variation of parameters, we have the following representation:
\begin{lem}
\label{LemVariationOfParameters}
Suppose that $u$ is a solution to \eqref{EqnAppendixODE}, then
$$
u(\theta)=\varphi(\theta) \alpha(\theta)+V_0(\theta) \beta(\theta), \text{ and }u'(\theta)=\varphi'(\theta)\alpha(\theta)+V_0'(\theta)\beta(\theta),
$$
where 
$$
\alpha(\theta)=-\int_{\theta}^{\pi/2}\rho(\tau)V_0(\tau)\sin^{d-1-\kappa}(\tau)d\tau
$$
and
$$
\beta(\theta)=\int_{\theta}^{\pi/2}\rho(\tau)\varphi(\tau)\sin^{d-1-\kappa}(\tau)d\tau.
$$
\end{lem}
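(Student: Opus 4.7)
The plan is to verify this by the classical method of variation of parameters, using $\varphi$ and $V_0$ as the two fundamental solutions of the homogeneous equation $L_p w = 0$, whose Wronskian has already been identified in \eqref{EqnFirstLiouville}. The key observation is that the weight $\sin^{d-1-\kappa}(\theta)$ appearing in the definitions of $\alpha$ and $\beta$ is precisely the Sturm--Liouville weight for $L_p$, since $\sin^{d-1-\kappa}(\theta)\,L_p v=(\sin^{d-1-\kappa}(\theta)v')'+2(d+1-\kappa)\sin^{d-1-\kappa}(\theta)v$.

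I would first define $\tilde u:=\varphi\alpha+V_0\beta$ and differentiate it directly. From the definitions,
\[
\alpha'(\theta)=\rho(\theta)V_0(\theta)\sin^{d-1-\kappa}(\theta),\qquad \beta'(\theta)=-\rho(\theta)\varphi(\theta)\sin^{d-1-\kappa}(\theta),
\]
so the cross term satisfies $\varphi\alpha'+V_0\beta'\equiv 0$. This gives the claimed formula $\tilde u'=\varphi'\alpha+V_0'\beta$ without any remainder, which is the usual trick behind variation of parameters.

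Next, I would differentiate once more to get $\tilde u''=\varphi''\alpha+\varphi'\alpha'+V_0''\beta+V_0'\beta'$, and then plug everything into $L_p$. Since $L_p\varphi=L_p V_0=0$ (by Lemma \ref{LemPropertiesOfV0} and the definition \eqref{EqnFundamentalSolution}), the $\alpha$-- and $\beta$--multiples vanish and one is left with $L_p\tilde u=\varphi'\alpha'+V_0'\beta'=\rho\sin^{d-1-\kappa}(\theta)\,(\varphi'V_0-\varphi V_0')$. Invoking \eqref{EqnFirstLiouville} the product collapses to $\rho$, so $L_p\tilde u=\rho$ on $(0,\pi/2)$. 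The terminal conditions $\tilde u(\pi/2)=\tilde u'(\pi/2)=0$ are immediate from $\alpha(\pi/2)=\beta(\pi/2)=0$.

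Finally, since the coefficients of $L_p$ are smooth at $\theta=\pi/2$, the Cauchy problem with vanishing data at $\pi/2$ has a unique solution by Picard--Lindel\"of, so any $u$ solving \eqref{EqnAppendixODE} must coincide with $\tilde u$, yielding both asserted identities. No step here is particularly obstructive: the only care needed is to ensure that the Wronskian formula \eqref{EqnFirstLiouville} matches the weight appearing in $\alpha,\beta$ so that the cancellation $\varphi'V_0-\varphi V_0'=\sin^{-(d-1-\kappa)}(\theta)$ produces exactly $\rho$ and not a multiple of it.
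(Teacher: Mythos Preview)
Your proof is correct and is exactly the standard variation-of-parameters verification the paper has in mind; the paper itself gives no proof beyond the phrase ``By the method of variation of parameters,'' and your computation supplies precisely those details, with the Wronskian identity \eqref{EqnFirstLiouville} furnishing the needed cancellation.
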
 

\subsection{Equation near the south pole for the axially symmetric cone} 
Given $\lambda\in(0,\pi/2)$ and a function $\rho$,  we study solutions to 
\begin{equation}
\label{EqnAppendixODE2}
Lu=\rho \text{ on } (0,\pi/2), \hem \text{ and }u'(\lambda)=u(\lambda)=0,
\end{equation} 
where the operator $L$ is given by
$$
Lu:=u''+(d-1)\cot(\theta)u'.
$$

The constant function $1$ is a fundamental solution to the homogeneous equation $Lu=0$. The other fundamental solution can be taken as
\begin{equation}
\label{EqnFundamentalSolutionODE2}
\varphi(\theta):=\int_\theta^{\pi/2}\sin^{-d+1}(\tau)d\tau \hem\text{ with }\hem
\varphi'(\theta)=-\sin^{-d+1}(\theta).
\end{equation}

With the method of variation of parameters, we have
\begin{lem}
\label{LemVariationOfParameters2}
Suppose that $u$ is a solution to \eqref{EqnAppendixODE2}, then
$$
u(\theta)=\varphi(\theta) \alpha(\theta)+\beta(\theta), \text{ and }u'(\theta)=\varphi'(\theta)\alpha(\theta),
$$
where 
$$
\alpha(\theta)=-\int_{\lambda}^{\theta}\rho(\tau)\sin^{d-1}(\tau)d\tau
$$
and
$$
\beta(\theta)=\int_{\lambda}^{\theta}\rho(\tau)\varphi(\tau)\sin^{d-1}(\tau)d\tau.
$$

\end{lem}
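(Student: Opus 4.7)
The plan is to reduce the equation to self-adjoint form and then integrate directly. Observe that
\[
Lu = u''+(d-1)\cot(\theta)u' = \sin^{1-d}(\theta)\bigl[\sin^{d-1}(\theta)\,u'\bigr]',
\]
so $Lu=\rho$ is equivalent to $[\sin^{d-1}(\theta)u']'=\rho(\theta)\sin^{d-1}(\theta)$. This already tells us that $1$ and $\varphi$ span the homogeneous kernel, consistent with \eqref{EqnFundamentalSolutionODE2}.

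First I would integrate the self-adjoint form from $\lambda$ to $\theta$, using the initial condition $u'(\lambda)=0$, to obtain
\[
\sin^{d-1}(\theta)\,u'(\theta) = \int_\lambda^\theta \rho(\tau)\sin^{d-1}(\tau)\,d\tau = -\alpha(\theta).
\]
Since $\varphi'(\theta)=-\sin^{-d+1}(\theta)$, rearranging yields $u'(\theta)=\varphi'(\theta)\alpha(\theta)$, which is exactly the formula for $u'$ claimed in the lemma.

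Next I would recover $u$ by integrating $u'(s)=\varphi'(s)\alpha(s)$ from $\lambda$ to $\theta$, using the condition $u(\lambda)=0$. Integration by parts gives
\[
u(\theta) = \bigl[\varphi(s)\alpha(s)\bigr]_{s=\lambda}^{s=\theta} - \int_\lambda^\theta \varphi(s)\alpha'(s)\,ds.
\]
Since $\alpha(\lambda)=0$ and $\alpha'(s)=-\rho(s)\sin^{d-1}(s)$, the boundary term collapses to $\varphi(\theta)\alpha(\theta)$ and the remaining integral is exactly $\beta(\theta)$, yielding $u(\theta)=\varphi(\theta)\alpha(\theta)+\beta(\theta)$.

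There is no substantive obstacle here: the argument is a direct application of the self-adjoint structure of $L$ together with a single integration by parts, and the choice of the constants of integration is fixed by the initial conditions $u(\lambda)=u'(\lambda)=0$. The only care needed is to note that $\alpha$ and $\beta$ are integrals starting at $\lambda$ (not at $\pi/2$ as in Lemma \ref{LemVariationOfParameters}), which is why both boundary terms in the integration by parts vanish automatically.
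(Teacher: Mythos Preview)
Your proof is correct. The paper does not give a detailed argument for this lemma; it simply states the result and attributes it to the method of variation of parameters. Your approach via the self-adjoint form $Lu=\sin^{1-d}(\theta)[\sin^{d-1}(\theta)u']'$ followed by two direct integrations is a clean and slightly more explicit way of arriving at the same formula, and it makes transparent why the initial conditions at $\lambda$ force $\alpha(\lambda)=\beta(\lambda)=0$.
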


\section{Linear expansion of a nonlinearity}
\label{AppendixLinearExpansion}
The nonlinearity we face in several equations is given by $h:\R^{d+1}\to\R$  of the form
$$
h(X)=h(X',x_{d+1})=\frac{|X'|^2}{x_{d+1}} \text{ for }x_{d+1}>0.
$$
Direct computation gives
$$
\nabla h(X)=(2X'/x_{d+1},-|X'|^2/x_{d+1}^2).
$$

We bound the error in the linear expansion:
\begin{lem}
\label{LemAppendixLinearExpansion}
Given a constant $H>0$, for $X\in\{h\le H\}$, we have
\begin{align*}
|h(X+Z)-h(X)-\nabla h(X)\cdot Z|&\le H\cdot (\frac{z_{d+1}}{x_{d+1}})^2[\frac{x_{d+1}}{x_{d+1}+z_{d+1}}+(\frac{x_{d+1}}{x_{d+1}+z_{d+1}})^2]\\
&+\frac{|Z'|^2}{x_{d+1}}(1+\frac{x_{d+1}}{x_{d+1}+z_{d+1}})
\end{align*}
as long as $x_{d+1}>0$ and $x_{d+1}+z_{d+1}>0.$
\end{lem}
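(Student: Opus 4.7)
To streamline notation, I will write $a := x_{d+1}$, $b := z_{d+1}$, $A := X'$, $B := Z'$, so that $h(X) = |A|^2/a$ and the hypothesis $h(X) \le H$ reads $|A|^2 \le Ha$. The plan is to rewrite the error term in closed form as a square, then split it by a weighted Cauchy–Schwarz with a carefully chosen weight that reproduces the target expression exactly.

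First, I will compute the remainder directly. Using $\nabla h(X) = (2A/a,-|A|^2/a^2)$ and expanding, the terms rearrange into
\[
h(X+Z)-h(X)-\nabla h(X)\cdot Z \;=\; \frac{|A|^2 b^2}{a^2(a+b)} \;-\; \frac{2(A\cdot B)\,b}{a(a+b)} \;+\; \frac{|B|^2}{a+b},
\]
and a direct verification shows the right-hand side equals the single square
\[
\frac{1}{a+b}\Bigl|B-\tfrac{b}{a}A\Bigr|^{2}.
\]
In particular the remainder is non-negative under the standing positivity assumption $a,\,a+b>0$, which is a reassuring sanity check (this is essentially the convexity of $h$ along horizontal slices).

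Next, I will split this square by weighted AM–GM. For any $\varepsilon>0$,
\[
\Bigl|B-\tfrac{b}{a}A\Bigr|^{2} \;\le\; (1+\varepsilon)|B|^{2} \;+\; \bigl(1+\tfrac{1}{\varepsilon}\bigr)\tfrac{b^{2}}{a^{2}}|A|^{2}.
\]
The key observation — and the only real choice to make in the whole proof — is to take $\varepsilon = (a+b)/a$. With this weight the two multiplicative factors telescope against $(a+b)^{-1}$ to produce exactly the structure $1+\tfrac{a}{a+b}$ in the $|B|^{2}$ term and $\tfrac{a}{a+b}+\bigl(\tfrac{a}{a+b}\bigr)^{2}$ in the $|A|^{2}$ term after writing $\tfrac{b^{2}}{a^{2}(a+b)} = \tfrac{1}{a}(b/a)^{2}\cdot\tfrac{a}{a+b}$. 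Bounding $|A|^{2}/a\le H$ then yields the claimed inequality.

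The main (indeed only) obstacle is recognizing the sharp weight $\varepsilon=(a+b)/a$; any other choice loses the precise balance between the two terms on the right-hand side of the lemma. Once this is observed, the proof is a short algebraic verification, so I expect the write-up to be just a few lines of display equations with no additional analytic input.
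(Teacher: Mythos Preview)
Your proof is correct and close in spirit to the paper's, but organized differently. The paper expands the remainder into the same three terms you wrote down, then bounds each term separately: the two diagonal terms are rewritten algebraically, and the cross term $-\tfrac{2b}{a(a+b)}A\cdot B$ is handled by the unweighted AM--GM splitting
\[
2\Bigl|\tfrac{A}{\sqrt{a}}\cdot\tfrac{b}{a}\cdot\tfrac{a}{a+b}\cdot\tfrac{B}{\sqrt{a}}\Bigr|
\le \Bigl|\tfrac{A}{\sqrt{a}}\cdot\tfrac{b}{a}\cdot\tfrac{a}{a+b}\Bigr|^{2}+\Bigl|\tfrac{B}{\sqrt{a}}\Bigr|^{2},
\]
which produces exactly the two ``squared'' contributions in the target bound. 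Your route instead first collapses the three terms into the single square $\tfrac{1}{a+b}\bigl|B-\tfrac{b}{a}A\bigr|^{2}$ and then splits with the weighted AM--GM at $\varepsilon=(a+b)/a$. The two computations are algebraically equivalent, but your packaging has the advantage of showing immediately that the remainder is nonnegative (a fact the paper never states and does not need, but which is a pleasant sanity check). Either way the only analytic input is a single Cauchy--Schwarz/AM--GM inequality on the cross term; the rest is bookkeeping.
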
 

\begin{proof}
Direct expansion gives
\begin{align*}
h(X+Z)-h(X)-\nabla h(X)\cdot Z&=|X'|^2[\frac{1}{x_{d+1}+z_{d+1}}-\frac{1}{x_{d+1}}+\frac{z_{d+1}}{x_{d+1}^2}]\\
+|Z'|^2[\frac{1}{x_{d+1}+z_{d+1}}]&+2X'\cdot Z'[\frac{-z_{d+1}}{x_{d+1}(x_{d+1}+z_{d+1})}].
\end{align*}
We deal with each term on the right-hand side.

For the first term, we note that 
$$
\frac{1}{x_{d+1}+z_{d+1}}-\frac{1}{x_{d+1}}+\frac{z_{d+1}}{x_{d+1}^2}=\frac{z_{d+1}^2}{x_{d+1}^2(x_{d+1}+z_{d+1})}.
$$
Thus the first term can be bounded as
\begin{align*}
||X'|^2[\frac{1}{x_{d+1}+z_{d+1}}-\frac{1}{x_{d+1}}+\frac{z_{d+1}}{x_{d+1}^2}]|&=\frac{|X'|^2}{x_{d+1}}(\frac{z_{d+1}}{x_{d+1}})^2\frac{x_{d+1}}{x_{d+1}+z_{d+1}}\\
&\le H(\frac{z_{d+1}}{x_{d+1}})^2\frac{x_{d+1}}{x_{d+1}+z_{d+1}}
\end{align*}
by our assumption on $X$.

The second term is
$$
|Z'|^2[\frac{1}{x_{d+1}+z_{d+1}}]=\frac{|Z'|^2}{x_{d+1}}\cdot \frac{x_{d+1}}{x_{d+1}+z_{d+1}}.
$$

For the last term, we note 
\begin{align*}
|2X'\cdot Z'[\frac{-z_{d+1}}{x_{d+1}(x_{d+1}+z_{d+1})}]|&=2|\frac{X'}{x_{d+1}^{1/2}}\frac{z_{d+1}}{x_{d+1}}(\frac{x_{d+1}}{x_{d+1}+z_{d+1}})\cdot\frac{Z'}{x_{d+1}^{1/2}}|\\
&\le |\frac{X'}{x_{d+1}^{1/2}}\frac{z_{d+1}}{x_{d+1}}(\frac{x_{d+1}}{x_{d+1}+z_{d+1}})|^2+|\frac{Z'}{x_{d+1}^{1/2}}|^2\\
&\le H(\frac{z_{d+1}}{x_{d+1}})^2(\frac{x_{d+1}}{x_{d+1}+z_{d+1}})^2+\frac{|Z'|^2}{x_{d+1}}.
\end{align*}

Putting all these back into the first equation, we get the desired bound. 
\end{proof}

\section{Quantities on the interface}
\label{AppendixQuantitiesOnGamma1}
In Section \ref{SectionLowerFoliationAS}, we constructed a  lower foliation of the axially symmetric cone $\UAS$. We need to divide the space into two regions, one close to the free boundary and one away from the free boundary. In this appendix, we perform some technical computations on the interfaces separating these regions.  

 To be precise, we focus on the interface $\Gamma$ from \eqref{EqnGammaLower}, described by either \eqref{EqnRadialGraphLower} or \eqref{EqnYGraphLower}. 

We first prove Lemma \ref{LemTechnical1}.
\begin{proof}[Proof of Lemma \ref{LemTechnical1}]
\textit{Step 1: The bound on $\frac{d\theta}{dy}$.}

Differentiating both sides of  \eqref{EqnRadialGraphLower}, we get
\begin{equation}
\label{EqnDRinR}
\frac{dR}{d\theta}=\frac25R[\frac{\BV'}{\BV}-\frac{\BU'}{\BU}].
\end{equation} 
Taking the $y$-derivative of the first equation in \eqref{EqnYGraphLower}, we get
\begin{equation}
\label{EqnDTDY}
1=-[R'\cos(\theta)-R\sin(\theta)]\frac{d\theta}{dy}=R\frac{d\theta}{dy}\cdot\{\frac25[\frac{\BU'}{\BU}-\frac{\BV'}{\BV}]\cos(\theta)+\sin(\theta)\}.
\end{equation} 

With Lemma \ref{LemCommutator}, we see that $[\frac{\BU'}{\BU}-\frac{\BV'}{\BV}]\ge0$. The previous equality implies $\frac{d\theta}{dy}\ge0.$ 

Moreover, Lemma \ref{LemCommutator} implies
$$
\frac25[\frac{\BU'}{\BU}-\frac{\BV'}{\BV}]\cos(\theta)+\sin(\theta)\ge c_d\cos^2(\theta)+\sin(\theta)\ge c_d
$$
for a dimensional constant $c_d>0$. It follows from \eqref{EqnDTDY} that 
\begin{equation}
\label{EqnDThetaDY}
\frac{d\theta}{dy}\le \BO(1/R).
\end{equation}

\vem

\textit{Step 2: The bound on $\frac{d^2\theta}{dy^2}$.}

Differentiating \eqref{EqnDTDY} in the $y$-variable, we get, on $(\theta_0,\theta_1)\cup(\theta_1,\pi/2)$,
$$
0=\frac{d^2\theta}{dy^2}R\cdot \{\frac25[\frac{\BU'}{\BU}-\frac{\BV'}{\BV}]\cos(\theta)+\sin(\theta)\}+(\frac{d\theta}{dy})^2\frac{d}{d\theta}\{R\cdot[\frac25(\frac{\BU'}{\BU}-\frac{\BV'}{\BV})\cos(\theta)+\sin(\theta)]\}.
$$
To simplify our notations, take 
$$
Q:=|R\cdot \{\frac25[\frac{\BU'}{\BU}-\frac{\BV'}{\BV}]\cos(\theta)+\sin(\theta)\}|.
$$

With \eqref{EqnDRinR}, Lemma \ref{LemGlueAS} and Lemma \ref{LemCommutator}, we have, 
\begin{align*}
|R'[\frac25(\frac{\BU'}{\BU}-\frac{\BV'}{\BV})\cos(\theta)+\sin(\theta)]|&\le R\BO((\theta-\theta_0)^{-1})\cdot [(\frac{\BU'}{\BU}-\frac{\BV'}{\BV})\cos(\theta)+\sin(\theta)]\\
&\le Q \BO((\theta-\theta_0)^{-1}).
\end{align*}
With Lemma \ref{LemGlueAS} and Lemma \ref{LemCommutator}, we have, on $(\theta_0,\theta_1)\cup(\theta_1,\pi/2)$,
\begin{align*}
|R\cdot\frac{d}{d\theta}[\frac25(\frac{\BU'}{\BU}-\frac{\BV'}{\BV})\cos(\theta)+\sin(\theta)]|&\le R\cdot[\cos(\theta)\BO((\theta-\theta_0)^{-2})+\sin(\theta)\BO((\theta-\theta_0)^{-1})]\\
&\le Q\BO((\theta-\theta_0)^{-1}).
\end{align*}

Putting these two estimates into the first equation of this step, we get
$$
|\frac{d^2\theta}{dy^2}|Q\le (\frac{d\theta}{dy})^2Q\BO((\theta-\theta_0)^{-1}) \text{ on }(\theta_0,\theta_1)\cup(\theta_1,\pi/2).
$$
This implies
$$
|\frac{d^2\theta}{dy^2}|\le (\frac{d\theta}{dy})^2 \BO((\theta-\theta_0)^{-1})  \text{ on }(\theta_0,\theta_1)\cup(\theta_1,\pi/2).
$$

\vem

\textit{Step 3: The bound on $\frac{dL}{dy}$ and $\frac{d^2L}{dy^2}$.}

Differentiating the second equation in \eqref{EqnYGraphLower} and using \eqref{EqnDRinR}, we get
\begin{equation}
\label{EqnDLDY}
\frac{dL}{dy}=R\frac{d\theta}{dy}\cdot\{\frac25[\frac{\BV'}{\BV}-\frac{\BU'}{\BU}]\sin(\theta)+\cos(\theta)\}.
\end{equation}
With \eqref{EqnDThetaDY} and Lemma \ref{LemCommutator}, we see the right-hand side is bounded by a dimensional constant. 

Differentiating \eqref{EqnDLDY}, we get
$$
\frac{d^2L}{dy^2}=\frac{d^2\theta}{dy^2}\cdot R\{\frac25[\frac{\BV'}{\BV}-\frac{\BU'}{\BU}]\sin(\theta)+\cos(\theta)\}+(\frac{d\theta}{dy})^2\frac{d}{d\theta}\{R[\frac25(\frac{\BV'}{\BV}-\frac{\BU'}{\BU})\sin(\theta)+\cos(\theta)]\}.
$$

With Lemma \ref{LemCommutator}, the first term is  bounded by $R|\frac{d^2\theta}{dy^2}|\BO(\theta(\theta-\theta_0)^{-1})$. This is in turn controlled by $R(\frac{d\theta}{dy})^2\BO(\theta(\theta-\theta_0)^{-2})$ on $(\theta_0,\theta_1)\cup(\theta_1,\pi/2)$ with the estimate from Step 2. 

For the second term, we use \eqref{EqnDRinR} and Lemma \ref{LemCommutator} to bound it by $R(\frac{d\theta}{dy})^2\BO(\theta(\theta-\theta_0)^{-2})$ on $(\theta_0,\theta_1)\cup(\theta_1,\pi/2)$.

Combining these, we get the desired bound on $\frac{d^2L}{dy^2}$ on $(\theta_0,\theta_1)\cup(\theta_1,\pi/2)$.

\vem

\textit{Step 4: The lower bound on $|\frac{dL}{dy}|.$}

With Lemma \ref{LemCommutator}, we see that
$$
|[\frac{\BV'}{\BV}-\frac{\BU'}{\BU}]\sin(\theta)|\ge c_d\theta(\theta-\theta_0)^{-1}\ge c_d(1+\eta^{-1}) \text{ on }(\theta_0,(1+\eta)\theta_0).
$$

For $\eta$ dimensionally small, this gives 
$$
|\frac25[\frac{\BV'}{\BV}-\frac{\BU'}{\BU}]\sin(\theta)+\cos(\theta)|\ge c_d\eta^{-1}\theta(\theta-\theta_0)^{-1} \text{ on }(\theta_0,(1+\eta)\theta_0).
$$
Together with \eqref{EqnDLDY}, this gives the desired lower bound on $|\frac{dL}{dy}|$.
\end{proof}

Now we turn to Lemma \ref{LemTechnical2}.

\begin{proof}[Proof of Lemma \ref{LemTechnical2}]
From \eqref{EqnGammaLower}, along $y\mapsto(L(y),y)$ in \eqref{EqnYGraphLower}, we have
$$
\sqrt{U}=\sqrt{\kappa^\frac18 u}=\kappa^{1/16}R(\theta)\BU^{1/2}(\theta).
$$
Differentiating this in the $\theta$-variable, we use \eqref{EqnDRinR} to get
\begin{equation}
\label{EqnTechnicalTwo}
\frac{d\sqrt{U}}{d\theta}=\kappa^{1/16}\BU^{1/2}R\cdot[\frac25\frac{\BV'}{\BV}+\frac{1}{10}\frac{\BU'}{\BU}]\le R\BO(\kappa^{1/16}).
\end{equation}
Note that we used Lemma \ref{LemGlueAS} and Lemma \ref{LemCommutator}.
The chain rule gives
$$
|\frac{d\sqrt{U}}{dy}|=R\frac{d\theta}{dy}\BO(\kappa^{1/16}).
$$
\vem

Differentiating \eqref{EqnTechnicalTwo}, we get, on $(\theta_0,\theta_1)\cup(\theta_1,\pi/2)$,
\begin{align*}
\frac{d^2\sqrt{U}}{d\theta^2}&=\frac25\kappa^{1/16}R\BU^{1/2}\cdot(\frac{\BV'}{\BV}-\frac{\BU'}{\BU})[\frac25\frac{\BV'}{\BV}+\frac{1}{10}\frac{\BU'}{\BU}]\\
&+\frac12\kappa^{1/16}R\frac{\BU'}{\BU^{1/2}}\cdot[\frac25\frac{\BV'}{\BV}+\frac{1}{10}\frac{\BU'}{\BU}]\\
&+\kappa^{1/16}R\BU^{1/2}[\frac25(\frac{\BV'}{\BV})'+\frac{1}{10}(\frac{\BU'}{\BU})'].
\end{align*}
With Lemma \ref{LemGlueAS} and Lemma \ref{LemCommutator}, each term on the right-hand side is of the order $\kappa^{1/16}R\BO((\theta-\theta_0)^{-1})$. 
As a result, we have
\begin{equation}
\label{EqnTechnical3}
|\frac{d^2\sqrt{U}}{d\theta^2}|\le \kappa^{1/16}R\BO((\theta-\theta_0)^{-1}) \text{ on }(\theta_0,\theta_1)\cup(\theta_1,\pi/2).
\end{equation}

With Lemma \ref{LemTechnical1}, we have, on $(\theta_0,\theta_1)\cup(\theta_1,\pi/2)$,
$$
\frac{d^2\sqrt{U}}{dy^2}=\frac{d^2\sqrt{U}}{d\theta^2}(\frac{d\theta}{dy})^2+\frac{d\sqrt{U}}{d\theta}\frac{d^2\theta}{dy^2}=(\frac{d\theta}{dy})^2\cdot\{\frac{d^2\sqrt{U}}{d\theta^2}+\frac{d\sqrt{U}}{d\theta}\BO((\theta-\theta_0)^{-1}).
$$
The conclusion follows from \eqref{EqnTechnicalTwo} and \eqref{EqnTechnical3}.
\end{proof}


\end{document}